\newcommand{\set}[1]{\left\lbrace #1 \right\rbrace}
\newcommand{\mb}[1]{\mathbb{#1}}
\newcommand{\mc}[1]{\mathcal{#1}}
\newcommand{\mr}[1]{\mathrm{#1}}
\newcommand{\mf}[1]{\mathfrak{#1}}
\newcommand{\ms}[1]{\mathscr{#1}}
\newcommand{\pV}{\mathpzc{V}}
\newcommand\numberthis{\addtocounter{equation}{1}\tag{\theequation}}
\newcommand{\ut}{\underline{t}}
\newcommand{\ua}{\underline{a}}
\newcommand{\ub}{\underline{b}}
\newcommand{\uc}{\underline{c}}
\newcommand{\utau}{\protect\underline{\uptau}}
\newcommand{\uep}{\protect\underline{\epsilon}}
\newcommand{\unu}{\protect\underline{\nu}}
\newcommand{\utheta}{\underline{\theta}}
\newcommand{\ol}{\overline}
\newcommand{\wh}{\widehat}
\newcommand{\mvw}{\vartheta}
\newcommand{\qexp}{q\text{-}\mathrm{exp}}
\newcommand{\wt}{\widetilde}
\newcommand{\be}{\mathbf{e}}
\newcommand{\imp}{P\text{-}\mathrm{imp}}
\newcommand{\Pord}{P\text{-}\mathrm{ord}}
\newcommand{\sC}{\mathscr{C}}
\newcommand{\sL}{\mathscr{L}}
\newcommand{\bbeta}{\bm{\beta}}
\newcommand{\bz}{\bm{z}}
\newcommand{\pW}{\mathpzc{W}}
\newcommand{\sgn}{\mathrm{sgn}}
\newcommand{\triv}{\mathrm{triv}}
\newcommand{\lra}{\longrightarrow}
\newcommand{\ra}{\rightarrow}
\newcommand{\hra}{\hookrightarrow}
\newcommand{\lhra}{\ensuremath{\lhook\joinrel\longrightarrow}}
\newcommand{\btimes}{
  \mathop{
    \vphantom{\bigoplus} 
    \mathchoice
      {\vcenter{\hbox{\resizebox{\widthof{$\displaystyle\bigoplus$}}{!}{$\boxtimes$}}}}
      {\vcenter{\hbox{\resizebox{\widthof{$\bigoplus$}}{!}{$\boxtimes$}}}}
      {\vcenter{\hbox{\resizebox{\widthof{$\scriptstyle\oplus$}}{!}{$\boxtimes$}}}}
      {\vcenter{\hbox{\resizebox{\widthof{$\scriptscriptstyle\oplus$}}{!}{$\boxtimes$}}}}
  }\displaylimits 
}
\DeclareMathAlphabet{\mathpzc}{OT1}{pzc}{m}{it}
\newtheoremstyle{note}{11pt}{11pt}{}{}{\bfseries}{.}{.5em}{}
\newtheorem*{teono}{Theorem}
\numberwithin{equation}{subsection}
\newtheorem{thm}{Theorem}[subsection]
\newtheorem{theo}[thm]{Theorem}
\newtheorem{prop}[thm]{Proposition}
\newtheorem{lemma}[thm]{Lemma}
\newtheorem{conj}[thm]{Conjecture}
\theoremstyle{remark}
\newtheorem{rem}[thm]{Remark}
\newtheorem*{remno}{Remark}
\newcommand{\diag}{\mathrm{diag}}
\newcommand{\ord}{\mathrm{ord}}
\newcommand{\uepsilon}{\protect\underline{\epsilon}}
\newcommand{\bA}{\mathbb{A}}
\newcommand{\bC}{\mathbb{C}}
\newcommand{\bG}{\mathbb{G}}
\newcommand{\bH}{\mathbb{H}}
\newcommand{\bI}{\mathbb{I}}
\newcommand{\bQ}{\mathbb{Q}}
\newcommand{\bR}{\mathbb{R}}
\newcommand{\bT}{\mathbb{T}}
\newcommand{\bU}{\mathbb{U}}
\newcommand{\bV}{\mathbb{V}}
\newcommand{\bW}{\mathbb{W}}
\newcommand{\bZ}{\mathbb{Z}}
\newcommand{\cA}{\mathcal{A}}
\newcommand{\cB}{\mathcal{B}}
\newcommand{\cC}{\mathcal{C}}
\newcommand{\cD}{\mathcal{D}}
\newcommand{\cE}{\mathcal{E}}
\newcommand{\cF}{\mathcal{F}}
\newcommand{\cG}{\mathcal{G}}
\newcommand{\cI}{\mathcal{I}}
\newcommand{\cL}{\mathcal{L}}
\newcommand{\cM}{\mathcal{M}}
\newcommand{\cO}{\mathcal{O}}
\newcommand{\cP}{\mathcal{P}}
\newcommand{\cS}{\mathcal{S}}
\newcommand{\cV}{\mathcal{V}}
\newcommand{\fc}{\mathfrak{c}}
\newcommand{\fe}{\mathfrak{e}}
\newcommand{\ffi}{\mathfrak{i}}
\newcommand{\fm}{\mathfrak{m}}
\newcommand{\fp}{\mathfrak{p}}
\newcommand{\fs}{\mathfrak{s}}
\newcommand{\fw}{\mathfrak{w}}
\newcommand{\fC}{\mathfrak{C}}
\newcommand{\sT}{\ms{T}}
\newcommand{\mM}{\mathpzc{M}}
\newcommand{\pG}{\mathpzc{G}}
\newcommand{\cont}{\mr{cont}}
\DeclareMathOperator{\GL}{GL}
\DeclareMathOperator{\SL}{SL}
\DeclareMathOperator{\Sp}{Sp}
\DeclareMathOperator{\U}{U}
\DeclareMathOperator{\Hom}{Hom}
\DeclareMathOperator{\Sym}{Sym}
\newenvironment{psm}
{\left(\begin{smallmatrix}}
{\end{smallmatrix}\right)}
\begin{document}

\title[Non-cuspidal Hida theory and trivial zeros]{\texorpdfstring{Non-cuspidal Hida theory for Siegel modular forms and trivial zeros of \lowercase{$p$}-adic $L$-functions}{Non-cuspidal Hida theory for Siegel modular forms and trivial zeros of p-adic L-functions}}

\author{Zheng Liu}
\email{\href{mailto:zliu@math.ucsb.edu}{zliu@math.ucsb.edu}}
\urladdr{\url{https://math.ucsb.edu/~zliu}}
\address{University of California, Santa Barbara, CA, United States}

\author{Giovanni Rosso}
\email{\href{mailto:giovanni.rosso@concordia.ca}{giovanni.rosso@concordia.ca}}
\urladdr{\url{https://sites.google.com/site/gvnros/}}
\address{Concordia University, Departments of Mathematics and Statistics,
Montreal, Qu\'ebec, Canada}

\subjclass[2010]{Primary: 11F46; Secondary: 11F33, 11R23, 11S40}

\begin{abstract}
We study the derivative of the standard $p$-adic $L$-function associated with a $P$-ordinary Siegel modular form (for $P$ a parabolic subgroup of $\GL(n)$) when it presents a semi-stable trivial zero. This implies part of Greenberg's conjecture on the order and leading coefficient of $p$-adic $L$-functions at such trivial zero. We use the method of Greenberg--Stevens. For the construction of the {\it improved} $p$-adic $L$-function we develop Hida theory for non-cuspidal Siegel modular forms.
\end{abstract}
\maketitle
\tableofcontents

\section*{Introduction}

In the seminal paper \cite{MTT} the three authors consider an elliptic curve $E$ and a prime $p$ such that $E$ has split multiplicative reduction at $p$ (for example $E=X_0(11)$ and $p=11$). In  this case the $p$-adic $L$-function $\cL_p(s,E)$ presents a trivial zero at $s=1$ because of the modified Euler factor at $p$. If the complex $L$-value $L(1,E)$ is not vanishing, they conjecture that the first derivative of the $p$-adic $L$-function at $s=1$ is the algebraic part of the complex $L$-value, up to an error factor of the form $\log_p(q_E)/\mr{ord}_p(q_E)$,  which they call the $\ell$-invariant. Here $q_E$ is the Tate period of $E$.

This conjecture has been proved in \cite{SSS} using Hida theory and a two-variable $p$-adic $L$-function together with a one-variable improved $p$-adic $L$-function. At the same time, Greenberg generalized the conjecture of Mazur--Tate--Taitelbaum to the class of $p$-adic Galois representations $V$ that satisfy the so-called Panchishkin condition. Assuming $L(0,V)\neq 0$, his conjecture, roughly speaking, predicts that the multiplicity of the trivial zero of $\cL_p(s,V)$ at $s=0$ equals the order of vanishing of $\cL_p(s,V)$ at $s=0$, and gives an exact formula for the leading coefficient of the $p$-adic $L$-function. This precise formula involves a factor $\ell(V)$, called the $\ell$-invariant of $V$, which is defined in purely Galois theoretic terms and coincides with $\log_p(q_E)/\mr{ord}_p(q_E)$ when $V$ is the Tate module of an elliptic curve $E$. This conjecture has been recently generalized to all semi-stable Galois representations (which satisfy some technical conditions) \cite{BenLinv}. For the precise statement, see Conjecture \ref{conj:greenberg}. \\

Let $n$ be an integer and let $P$ be the parabolic of $\mr{GL}(n)$ associated with the partition $n=n_1+\ldots+n_d$, \textit{i.e.}
\begin{align*}
P&=\left\{\left.\begin{pmatrix}a_1&*&*\\&\ddots&*\\&&a_d\end{pmatrix}\in\mr{GL}(n)\,\right|\,a_i\in\mr{GL}(n_i),\,1\leq i\leq d\right\}.
\end{align*}
The main objective of the paper is to study Conjecture \ref{conj:greenberg} when $V$ is the standard Galois representation associated to an irreducible cuspidal automorphic representation $\pi$ of $\mr{Sp}(2n,\bA)$ which is $P$-ordinary, {\it i.e.}  the archimedean component $\pi_\infty$ is isomorphic to a holomorphic discrete series $\cD_{\ut}$ of weight $\ut=(\underbrace{t^P_1,\dots, t^P_1}_{n_1},\underbrace{t^P_2,\dots, t^P_2}_{n_2},\dots,\underbrace{t^P_d,\dots, t^P_d}_{n_d})$, and the action of certain $\bU^P_p$-operators (which are Hecke operators at $p$ whose normalization depends on $\ut$) on $\pi$ admits a non-zero eigenvector with eigenvalues being $p$-adic units. 

Denote by $L(s,\pi \times \xi)$ the standard $L$-function for $\pi$ twisted by a finite order Dirichlet character $\xi$. It is defined as an infinite Euler product. The local $L$-factor for a place $v$ where both $\pi$ and $\xi$ are unramified is given as
\begin{equation*}
   L_v(s,\pi_v\times \xi_v)=(1-\xi_v(q_v)q_v^{-s})^{-1}\prod_{i=1}^{n}(1-\xi_v(q_v)\alpha_{v,i}q_v^{-s})^{-1}(1-\xi_v(q_v)\alpha_{v,i}^{-1}q_v^{-s})^{-1},
\end{equation*}  
where $\alpha_{v,i}^{\pm 1}$, $1\leq i\leq n$, are the Satake parameters of $\pi_v$ and $q_v$ is the cardinality of the residue field.  
The Deligne critical points for $L(s,\pi \times \xi)$ are the integers $s_0$ such that 
\begin{equation*}
   1\leq s_0\leq t_d^P-n,\,(-1)^{s_0+n}=\xi(-1),\text{ or } n+1-t_d^P\leq s_0\leq 0, \,(-1)^{s_0+n+1}=\xi(-1).
\end{equation*}
The algebracity of these critical $L$-values divided by certain Petersson norm period has been shown in \cite{Ha81,Sh00,BS}. In \cite{LiuSLF}, the first author constructed an $n+1$-variable $p$-adic $L$-function interpolating the critical values   to the right of the center of the partial standard $L$-function with $\pi$ varying in a Hida family (ordinary for $P=B$ the Borel of $\mr{GL}(n)$).\\

In this paper we generalize the results of \cite{LiuSLF} and construct a $d+1$-variable $p$-adic $L$-function for $P$-ordinary Hida families (where $P$ is general), interpolating critical values to the left of the center of the partial standard $L$-function. Moreover, we improve the formulae on the interpolation properties of the constructed $p$-adic $L$-functions by using results in \cite{LiuAZI} and functional equation for local doubling zeta integrals. The modified Euler factors at the place $p$ and the archimedean place appearing in the interpolation formulae match exactly with the prediction in \cite{CoaMot}. The $L$-functions appearing in the interpolation formula are not  the motivic $L$-functions but some imprimitive version, as the Euler factors at the bad primes are missing.  Still, our `imprimitive'  standard $p$-adic $L$-functions for $P$-ordinary Siegel modular forms of general vector weights verify the `imprimitive' version of the conjecture of Coates--Perrin-Riou.

Let $T_P=P/SP$ be the maximal quotient torus of $P$. We say that $\utau^P\in\mr{Hom}_{\mr{cont}}(T_P(\mb Z_p),\overline{\mb Q}_p^{\times})$ is arithmetic if it is a product of an algebraic character corresponding to integers $(t_1^P,\ldots,t_d^P)$ and a finite order character $(\epsilon_1^P,\ldots,\epsilon_d^P)$; we say that it is admissible if moreover $t_1^P \geq  t_2^P \ldots \geq t^P_d\geq n+1$. We fix a sufficiently large $p$-adic field $F$ as coefficient field and denote by $\mc O_F$ its valuation ring. 

Suppose $p\geq 3$. Hida theory for $P$-ordinary cuspidal Siegel modular forms has been developed in \cite{PilHida} generalizing the case $P=B$ in \cite{HPEL}. Let $\sC_P$ be a geometrically irreducible component of the spectrum of  $\bT^{0,N}_{\Pord}$, the Hecke algebra acting on $P$-ordinary Hida families of cuspidal Siegel modular forms of tame principal level $N$, and let $F_{\sC_P}$ be its function field. We denote by $\bI_{\sC_P} $ the integral closure of $\Lambda_P :=\mc O_F \llbracket T_P(\mb Z_p)^{\circ} \rrbracket$ in $F_{\sC_P}$, where $T_P(\bZ_p)^\circ$ is the maximal $p$-profinite subgroup of $T_P(\bZ_p)$. Let $\Sym(n,\bZ)^{*}$ be the subset of $\Sym(n,\bQ)$ consisting of elements $\alpha$ such that $\mr{Tr} (\alpha a) \in  \bZ$ for all symmetric matrices $a \in \Sym(n,\bZ)$. Let  $\Sym(n,\bZ)^{*}_{\geq 0}$ be the subset of semi-positive definite matrices and $\Sym(n,\bZ)^{*}_{>0}$ the subset of positive definite matrices.

We prove the following theorem: 
\begin{teono}[Theorem \ref{thm:twopadicLfun}]
Let $\sC_P$ be as above. For a Dirichlet character $\phi$ with conductor dividing $N$ and $\phi^2\neq 1$, a pair $(\beta_1,\beta_2)\in N^{-1}\Sym(n,\bZ)^{*\oplus 2}_{>0}$, and $j\in\bZ/(p-1)$ such that $\phi\omega^j(-1)=1$, there is a $p$-adic $L$-function $\cL_{\sC_P,\phi\omega^j,\beta_1,\beta_2} \in  \bI_{\sC_P}[[S]] \otimes_{\bI_{\sC_P} } F_{\sC_P}$ with the following interpolation property. 

Let $x:\bI_{\sC_P}\ra F'$ be an $F'$-point of $\sC_P$ (with $F'$ being a finite extension of $F$). Suppose that the weight  map $\Lambda_P \ra \bT^{0,N}_{\Pord}$  is \'etale at $x$  and maps $x$ to an admissible point $\utau^P\in\Hom_{\cont}\left(T_P(\bZ_p),F^{\prime\times}\right)$. For an integer $n+1\leq k\leq t^P_d$ and a finite order character $\chi^\circ:\bZ^\times_p\ra \ol{\bQ}^\times$ trivial on $(\bZ/p)^\times$, if $x$ is classical we have
\begin{align*}
 &\cL_{\sC_P,\phi\omega^j,\beta_1,\beta_2}(\chi^\circ(1+p)(1+p)^k-1,x) =C_{x,\beta_1,\beta_2,\phi,N}\\
   &\hspace{2em}\times E_p(n+1-k,\pi_x\times\phi\chi^\circ\omega^{j-k})\,E_\infty(n+1-k,\pi_x\times\phi\chi^\circ\omega^{j-k}) L^{Np\infty}(n+1-k,\pi_x\times\phi\chi^\circ\omega^{j-k}),
\end{align*}
and $0$ otherwise. 
Here the factor $E_p(n+1-k,\pi_x\times\phi\chi^\circ\omega^{j-k})$ (resp. $E_\infty(n+1-k,\pi_x\times\phi\chi^\circ\omega^{j-k})$ is the modified Euler factor at $p$ (resp. $\infty$) as predicted by Coates--Perrin-Riou \cite{CoaMot}. (See \eqref{eq:Ep}\eqref{eq:Euler-infty} for explicit formulae for these two factors in our case.) The constant $C_{x,\beta_1,\beta_2,\phi,N}$ is defined in Theorem~\ref{thm:twopadicLfun}, and is independent of the cyclotomic variable $k$, $\chi^\circ$
\end{teono}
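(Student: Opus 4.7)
The proof plan is to apply the doubling method: embed $\Sp(2n)\times\Sp(2n)\hookrightarrow\Sp(4n)$ and represent $L(s,\pi\times\xi)$ (times an archimedean factor and ramified local factors) as an integral on the diagonal of a Siegel Eisenstein series on $\Sp(4n)$ paired against $\varphi\otimes\overline{\varphi}$. Because we are targeting critical values to the \emph{left} of the centre of the standard $L$-function, the relevant Eisenstein sections lie outside the absolute-convergence range of the Siegel series, so the restriction to the diagonal is in general non-cuspidal. This is exactly the reason we need the non-cuspidal $P$-ordinary Hida theory developed earlier in the paper.

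Concretely, the steps are the following. First I would construct a Siegel Eisenstein measure on $\Sp(4n)$ with values in $\Lambda_P\wot_{\cO_F}\cO_F\llbracket S\rrbracket$ by choosing a local section at $p$ adapted to the parabolic $P$ and to the $\bU^P_p$-normalisation, together with tame and archimedean data designed so that, at a classical admissible point, the local zeta integrals reproduce the modified Euler factor $E_p$ predicted by Coates--Perrin-Riou and the archimedean period constant $C_{k,\ut^P}$. Second, I would restrict the family to the diagonal $\Sp(2n)\times\Sp(2n)$ and apply the non-cuspidal $P$-ordinary projector $e_P$ on the second factor, producing a class in a $P$-ordinary Hida module over $\bI_{\sC_P}\llbracket S\rrbracket$. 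Third, I would define $\cL_{\sC_P,\phi\omega^j,\beta_1,\beta_2}$ as the image of this class under the Fourier-coefficient pairing appearing in the theorem: take the $\beta_1$-Fourier coefficient of $\varphi$ on the first factor and the $\beta_2$-Fourier coefficient of $e_P\pW(\varphi)$ on the second, sum over $\varphi\in\fs_x$, and divide by $\langle\varphi,\overline{\varphi}\rangle$. The sum is basis-independent and isolates the Hecke eigensystem attached to $\pi_x$.

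Verifying the interpolation formula at an admissible classical point $x$ then reduces to factorising the classical doubling integral place by place: at unramified primes outside $Np$ one recovers $L^{Np\infty}(n+1-k,\pi_x\times\phi\chi^\circ\omega^{j-k})$; at the archimedean place the chosen section produces $C_{k,\ut^P}$; at $p$ the $P$-adapted section returns $E_p(n+1-k,\pi_x\times\phi\chi^\circ\omega^{j-k})$; and the ramified tame places contribute factors absorbed into the normalisation. The admissibility condition $t^P_d\geq n+1$ is precisely what guarantees the existence of a suitable archimedean section giving a non-zero zeta integral throughout the range $n+1\leq k\leq t^P_d$.

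The principal obstacle is the second step. One must check that the restriction of the Eisenstein family to the diagonal genuinely lies in the domain of the non-cuspidal $P$-ordinary projector, that $e_P$ is compatible with specialisation in both the weight and cyclotomic variables, and that the resulting class lives in $\bI_{\sC_P}\llbracket S\rrbracket\otimes_{\bI_{\sC_P}}F_{\sC_P}$ — this is where the control theorems for non-cuspidal $P$-ordinary families proved earlier in the paper are invoked in an essential way. A subsidiary but delicate point is the construction at $p$: the local section must be simultaneously $p$-adically interpolatable across $\Lambda_P$ and the cyclotomic variable \emph{and} produce the exact Coates--Perrin-Riou modified Euler factor at every classical specialisation, which requires a careful tracking of the $\bU^P_p$-normalisation and of the interaction between the section at $p$ and the intertwining operator $\pW$.
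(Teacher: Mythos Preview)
Your broad strategy---doubling method, build an Eisenstein family on $\Sp(4n)$, restrict to the diagonal, apply the ordinary projector, project to the Hecke eigensystem of $\sC_P$, and extract Fourier coefficients---matches the paper. The local computations you outline (unramified, archimedean, $p$, tame) are also the right ones.

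However, there is a substantive misconception in your second paragraph. You claim that because the interpolation range is to the \emph{left} of the centre, the restriction of the Siegel Eisenstein series to $\Sp(2n)\times\Sp(2n)$ is non-cuspidal, and that this is why the non-cuspidal $P$-ordinary Hida theory is needed here. This is not correct. In the paper, the cuspidality of the restricted Eisenstein family is governed not by the location of $s$ but by the support of the Schwartz function $\widehat{\alpha}_{\kappa,\utau^P}$ defining the big-cell section at $p$. For the $(d+1)$-variable $p$-adic $L$-function in this statement, $\widehat{\alpha}_{\kappa,\utau^P}$ is chosen to be supported on $\bbeta=\begin{psm}\beta_1&\beta_0\\ \ltrans{\beta}_0&\beta_2\end{psm}$ with $2\beta_0\in\GL(n,\bZ_p)$; this forces $\bbeta>0$, hence $\beta_1,\beta_2>0$, so $\cE_{\kappa,\utau^P}$ lands in the \emph{cuspidal} space $V^{SP,0,\Delta}$. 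Ordinary cuspidal Hida theory (as in \cite{PilHida}) then suffices for this construction.

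The non-cuspidal Hida theory of \S\ref{sec:NCH} is invoked only for the \emph{improved} $p$-adic $L$-function $\cL^{\imp}_{\sC_P,\phi,\beta_1,\beta_2}$, which is a separate statement. There the section at $p$ is chosen with a relaxed support condition (dropping $\det(2\beta_0)\in\bZ_p^\times$), precisely to save the factor $\cA^P(\pi\times\phi\epsilon^P_d)$ in the zeta integral; the price is that $\cE_{\utau^P}$ is only in $V^{SP,n_d,\Delta}$, and \emph{that} is where the non-cuspidal theory becomes essential. So your identification of the ``principal obstacle'' points to the wrong theorem.
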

The construction of this $p$-adic $L$-function uses the doubling method \cite{GaPull,PSR}.
\begin{remno}
For the whole paper, we assume that $\phi^2 \neq 1$. This hypothesis is absolutely not necessary, but when $\phi^2=1$ the $p$-adic $L$-function could have a possible pole in the cyclotomic variable (outside the range of interpolation), which comes from the pole of the Kubota--Leopoldt $p$-adic function appearing in the Fourier coefficients of the Siegel Eisenstein series on $\mr{Sp}(4n)$. When $n=1$ this pole cancels out if and only if $\sC_P$ has no CM \cite[Proposition 5.2]{H6}. In general we expect a cyclotomic pole if and only if the standard representation associated with $\sC_P$ is reducible and the trivial representation appears as a sub-quotient of it.
\end{remno}

When $n_d=1$ and $\epsilon^P_d$ is trivial, the factor $1-\phi_p(p)^{-1}\alpha_{n,x}^{-1}p^{s-1}$ appears in $E_p(s,\pi_x\times\phi)$, where $\alpha_{n,x}$ is an algebraic number related with the $\bU_p^P$-eigenvalues (see \S\ref{sec:Ep} for the precise definition). Supposing that $x$ is classical, then $\alpha_{n,x}$ corresponds to the Frobenius eigenvalue of $p$-adic valuation $-(t^P_d-n)$ in the Weil representation associated to $\pi_{x,p}$. If $\phi(-1)=(-1)^{n+1}$, $\phi_p(p)=1$, and $\alpha_{n,x_0}=p^{-1}$ for a classical point $x_0\in \sC_P(F)$, then the factor $1-\phi_p(p)^{-1}\alpha_{n,x_0}^{-1}p^{k-n}$ vanishes if $k=n+1$, and a trivial zero occurs at the point $((1+p)^{n+1}-1,x_0)$ for $\cL_{\sC_P,\phi\omega^{n+1},\beta_1,\beta_2}$. Denote by $\rho_{x_0}:G_{\bQ}\ra\GL(2n+1,\ol{\bQ}_p)$ the Galois representation attached to $x_0$ \cite{Arthur,CH}. We shall call $((1+p)^{n+1}-1,x_0)$ a semi-stable trivial zero for $\cL_{\sC_P,\phi\omega^{n+1},\beta_1,\beta_2}$ if furthermore $\mr{Fil}^0\left.\rho_{x_0}\right|_{G_{\bQ_p}}/\mr{Fil}^2\left.\rho_{x_0}\right|_{G_{\bQ_p}}$ is a two dimensional indecomposable $G_{\bQ_p}$-representation (see \S \ref{sec:trivzero} for the definition of this filtration). When $n_d=1$ and $\pi_{x_0}$ is $P$-ordinary with $\alpha_{n,x_0}=p^{-1}$, the condition on $\left.\rho_{x_0}\right|_{G_{\bQ_p}}$ is expected to be always satisfied (see Remark \ref{rem:mono}). It is for this special type of trivial zeros that we can use the Greenberg--Stevens method to study the derivative of the $p$-adic $L$-function.

The step of expressing the $\ell$-invariant in terms of the derivative of $\bU^P_p$-eigenvalues in the Greenberg--Stevens method for the so-called trivial zero of type $M$ (as named in \cite{TTT}) has already been done \cite{RosLinv}. The other step in the method, which  relates the derivative with respect to the cyclotomic variable of the $p$-adic $L$-function to the derivative with respect to the weight variable of the $\bU^P_p$-eigenvalues, applies in the following situation. Suppose that there is a $d+1$-variable $p$-adic $L$-function $\cL(S,T_1,\dots,T_d)$ with $S$ as the cyclotomic variable, and it has a trivial zero at $(s_0,t_1,\dots,t_d)$. If there exists a $d$-tuple integer $(a_1,\dots,a_d)\neq 0$, and integers $a_0\neq a'_0$, such that $\cL(S,T_1,\dots,T_d)$ vanishes along the line $(s_0,t_1,\dots,t_d)+S(a_0,a_1,\dots,a_d)$ and can be improved (in the sense of saving the factor that causes the trivial zero in the interpolation result) along the line $(s_0,t_1,\dots,t_d)+S(a'_0,a_1,\dots,a_d)$, then the strategy applies.

In our above mentioned case of the semi-stable zero, the assumption on $\left.\rho_{x_0}\right|_{G_{\bQ_p}}$ implies that the trivial zero is of type $M$. The $p$-adic $L$-function $\cL_{\sC_P,\phi\omega^{n+1},\beta_1,\beta_2}$ vanishes along the hyperplane $S=(1+p)^{n+1}-1$ (because of the missing factor $1-\phi_p(p)p^{-s}$ for $\pi$ with $\pi_p$ unramified). Meanwhile,  when $k=t^P_d$ the factor $1-\phi_p(p)^{-1}\alpha_n^{-1}p^{n-k}$ is a $p$-adic analytic function as $\alpha_n p^{t_d^P-n}$ can be expressed in terms of $\mb U^P_p$ eigenvalues. Hence there is the possibility to improve the $p$-adic $L$-function along the hyperplane $S=(1+p)^{t^P_d}-1$. The lines $((1+p)^{n+1}-1,x_0)+S(0,0,\dots,0,1)$ and $((1+p)^{n+1}-1,x_0)+S(1,0,\dots,0,1)$ satisfy the conditions in the previous paragraph.

Now in order to carry out the Greenberg--Stevens method, we need to construct the improved $p$-adic $L$-function. Indeed, by a different choice of the local sections at $p$ for the Siegel Eisenstein series on $\mr{Sp}(4n)$ (compare the tables in \S\ref{sec:table}), we obtain a new Eisenstein series such that applying to it the pullback formula from the doubling method produces the complex $L$-function without the factor $1-\phi_p(p)^{-1}\alpha_n^{-1}p^{s-1}$.  

However, a new difficulty arises. One useful fact about the sections selected for constructing the $p$-adic $L$-function in Theorem \ref{thm:twopadicLfun} is that the restrictions to $\mr{Sp}(2n,\bA)\times \mr{Sp}(2n,\bA)$ of the corresponding Siegel Eisenstein series are cuspidal, so Hida theory for cuspidal Siegel modular forms can be applied to finish the construction. However, the new Eisenstein series for the improved $p$-adic $L$-function does not restrict to ($p$-adic) cuspidal forms on $\mr{Sp}(2n,\bA)\times \mr{Sp}(2n,\bA)$. Therefore, Hida theory for non-cuspidal Siegel modular forms needs to be developed in order to construct the improved $p$-adic $L$-function. Such a theory has been developed for Siegel modular forms with $ P=\mr{GL}(n)$ \cite{PilHida}, and for $\mathrm{U}(2,2)$ \cite{SU} which is later generalized to $\mathrm{U}(n,1)$ \cite{Hsieh}.
In the first section we develop Hida theory for $p$-adic Siegel modular forms vanishing  along the strata of the toroidal compactification associated with cusp labels of rank strictly bigger than $r$, for an integer $ r \leq n_d$. 

Let $\omega_{\ut}$ be the coherent sheaf whose global sections are weight $t$ holomorphic Siegel modular forms. Compared to \cite{SU,Hsieh}, we bypass introducing the subsheaf $\omega^{\flat}_{\ut}$ inside $\omega_{\ut}$ and proving the base change property for its global sections. Instead, our approach is based on a careful analysis of the quotient  $\left. V^{SP,r}_{m,l}\middle/ V^{SP,r-1}_{m,l}\right.$, where $V^{SP,r}_{m,l}$ (resp. $V^{SP,r-1}_{m,l}$) denotes the space of functions on the $l$-th layer of the Igusa tower modulo $p^m$ which vanish along the strata of the toroidal compactification associated with cusp labels of rank strictly bigger than $r$ (resp. $r-1$). We denote by $\pV^{SP,r}$ the direct limit over $l$ and $m$ of $V^{SP,r}_{m,l}$. This allows us to define a useful subspace $V^{SP,r,\flat}_{m,l}\subset V^{SP,r}_{m,l}$, and after taking the limit, to establish the exact sequence 
\begin{equation}\label{eq:sesintro}
   0\lra\pV^{SP,r-1}\lra\pV^{SP,r,\flat}\lra \bigoplus_{\substack{V\in\fC_{\bV}/\Gamma\\\mr{rk}\,V=r}} \bZ_p[[T_P(\bZ_p)]]\otimes_{\bZ_p[[T_{P_{n-r}}(\bZ_p)]]} \pV^{SP_{n-r},0}_V\lra 0,
\end{equation}
from which one can establish Hida theory for $\pV^{SP,r}$ by using  cuspidal Hida theory and induction on $r$. 

The idea of using exact sequences involving non-cupidal Siegel modular forms and Siegel modular forms of lower genus also appears in \cite{WeiVek,BR}. As they work in characteristic $0$, for an irreducible algebraic representation $W$ of $\mr{GL}(n)$, and a congruence subgroup $L\subset \GL(n)$ consisting of elements of the form $\begin{pmatrix}I_{n_r}&\ast\\0&\ast\end{pmatrix}$, one has
\begin{equation}\label{eq:inv}
\begin{aligned}
W(R)^L&=W(\bQ)^L\otimes R, & \text{ for  a }\bQ\text{-algebra }R.
\end{aligned}
\end{equation}  
However, \eqref{eq:inv} fails if $\bQ$ is replaced by $\bZ_p$. The failure of equation \eqref{eq:inv}  causes the difficulty for directly generalizing the  Hida theory for cuspidal Siegel modular forms to non-cuspidal Siegel modular forms. The sheaf $\omega^{\flat}_{\ut}$ in \cite{SU, Hsieh} is about remedying the failure of \eqref{eq:inv} when $\bQ$ is replaced by $\bZ_p$. This issue is bypassed in our approach, as we study the space $\pV^{SP,r}$ via the terms on the two ends of the exact sequence \eqref{eq:sesintro}.

Our results are summarized as follows:
\begin{teono}[Theorem \ref{thm:main}]
For the given parabolic subgroup $P\subset\GL(n)$  and an integer $1\leq r\leq n_d$, the following holds:
\begin{enumerate}[(i)] 
\item An ordinary projector $e_P=e_P^2$ can be defined on $\pV^{SP,r}$, and the Pontryagin dual of its ordinary part 
\begin{equation*}
\pV^{r,*}_{\Pord}=\Hom_{\bZ_p}\left(e_P\pV^{SP,r},\bQ_p/\bZ_p\right)
\end{equation*}
(which is naturally an $\cO_F[[T_P(\bZ_p)]]$-module) is finite free over $\Lambda_P=\cO_F[[T_P(\bZ_p)^\circ]]$.
\item Define
\begin{equation*}
   \cM^{r}_{\Pord}=\Hom_{\Lambda_P}\left(\pV^{r,*}_{\Pord},\Lambda_P\right).
\end{equation*}
Given a dominant arithmetic weight $\utau^P\in\Hom_{\cont}(T_P(\bZ_p),\ol{\bQ}_p^\times)$ with dominant algebraic part $\ut^P\in X(T_P)^+$ and finite order part $\uepsilon^P\in\Hom_{\cont}(T_P(\bZ_p),\ol{\bQ}^\times)$, let $\cP_{\utau^P}$ be the corresponding prime ideal of $\cO_F[[T_P(\bZ_p)]]$. Then
\begin{equation*}
   \left.\cM^r_{\Pord}\otimes_{\cO_F[[T_P(\bZ_p)]]}\cO_F[[T_P(\bZ_p)]]\middle/\cP_{\utau^P}\right.\stackrel{\sim}{\lra}\varprojlim_m\varinjlim_l e_P V^{SP,r}_{m,l}[\utau^P],
\end{equation*}
and (see \eqref{eq:SP}, \eqref{eq:GammaSP} and \eqref{eq:temb} for the definition of the congruence subgroup $\Gamma_{SP}(p^l)\subset \Sp(2n,\bZ)$ and weight $\iota(\ut^P)\in X(T)$ associated to $\ut^P\in X(T_P)$)
\begin{equation*}
   \varinjlim_l e_PM^r_{\imath(\ut^P)}\left(\Gamma\cap\Gamma_{SP}(p^l),\uepsilon^P;F\right)\lhra \left(\left.\cM^r_{\Pord}\otimes_{\cO_F[[T_P(\bZ_p)]]}\cO_F[[T_P(\bZ_p)]]\middle/\cP_{\utau^P}\right.\right)[1/p].
\end{equation*}
Here the maps are equivariant under the action of the unramified Hecke algebra away from $Np$ and the $\bU^P_p$-operators.
\item There is the following so-called fundamental exact sequence (in the study of Klingen Eisenstein congruence), 
\begin{equation*}
  0\lra \cM^{r-1}_{\Pord}\lra \cM^{r}_{\Pord}\lra \bigoplus_{\substack{V\in\fC_{\bV}/\Gamma\\\mathrm{rk}\,V=r}}\cM^0_{V,P_{n-r}\text{-}\,\ord}\otimes_{\cO_F[[T_{P_{n-r}}(\bZ_p)]]}\cO_F[[T_P(\bZ_p)]]\lra 0,
\end{equation*}
and $\cM^0_{V,P_{n-r}\text{-}\,\ord}$ is the $\cO_F[[T_{P_{n-r}}(\bZ_p)]]$-module of families of $p$-adic ordinary Siegel modular forms of degree $n-r$ over $Y_{V,\ord}$ for the parabolic $P_{n-r}\subset\GL(n-r)$ defined by the partition $n-r=n_1+\dots+n_{d-1}+(n_d-r)$.
\end{enumerate}
\end{teono}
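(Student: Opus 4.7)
The plan is to argue by induction on $r$, with the base case $r=0$ being the cuspidal $P$-ordinary Hida theory developed by Pilloni \cite{PilHida}. The central engine throughout is the short exact sequence \eqref{eq:sesintro}, whose right-hand term is the parabolic induction along $T_P/T_{P_{n-r}}$ of the $P_{n-r}$-ordinary space of cuspidal $p$-adic Siegel modular forms of genus $n-r$, and whose left-hand term is controlled by the inductive hypothesis.

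For part (i), one first checks that the $\bU^P_p$-operators preserve the filtration by rank of vanishing on $\pV^{SP,r}$ and restrict to operators on $\pV^{SP,r,\flat}$ compatible with the maps in \eqref{eq:sesintro}, so that the ordinary projector $e_P$ exists on all three terms simultaneously. The essential technical point, which replaces the integral subsheaf $\omega^{\flat}_{\ut}$ of \cite{SU,Hsieh}, is to show that $e_P$ annihilates the cokernel of the inclusion $\pV^{SP,r,\flat}\subset\pV^{SP,r}$, so that applying $e_P$ to \eqref{eq:sesintro} remains an exact description of $e_P\pV^{SP,r}$ modulo $e_P\pV^{SP,r-1}$. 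Granting this, Pontryagin duality turns the $e_P$-image of \eqref{eq:sesintro} into a short exact sequence of compact $\Lambda_P$-modules whose two outer terms are finite free over $\Lambda_P$ by the inductive hypothesis and, on the right, by cuspidal $P_{n-r}$-ordinary Hida theory together with the base change identity for the induced module $\bZ_p[[T_P(\bZ_p)]]\otimes_{\bZ_p[[T_{P_{n-r}}(\bZ_p)]]}(-)$. Finite freeness of the middle term then follows via topological Nakayama, yielding part (i).

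The fundamental exact sequence of part (iv) is obtained by applying $\Hom_{\Lambda_P}(-,\Lambda_P)$ to the dualized exact sequence just constructed; exactness survives because all three terms are finite free $\Lambda_P$-modules. For the control theorem in part (ii), one specializes this exact sequence of finite free $\Lambda_P$-modules at the arithmetic prime $\cP_{\utau^P}$ and applies the five lemma, using on the left the inductive control statement for $\cM^{r-1}_{\Pord}$ and on the right the classical control theorem for cuspidal $P_{n-r}$-ordinary Hida families of genus $n-r$; Hecke-equivariance of the resulting embedding is automatic from the $e_P$-equivariance of \eqref{eq:sesintro}. Part (iii), asserting surjectivity under the strong dominance hypothesis $t^P_1\gg\cdots\gg t^P_d\gg 0$ with trivial $\uep^P$, follows by the same five-lemma argument once one invokes the vanishing of the higher coherent cohomology obstructions to classicality on each stratum, exactly as in the cuspidal setting of \cite{PilHida}.

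The main obstacle is the assertion $e_P(\pV^{SP,r}/\pV^{SP,r,\flat})=0$. This is the integral phenomenon invisible to characteristic-zero methods (it is exactly the failure of \eqref{eq:inv} over $\bZ_p$ that motivates the $\omega^{\flat}_{\ut}$-construction of \cite{SU,Hsieh}) and requires a careful analysis of how the $\bU^P_p$-operators act on the Fourier--Jacobi expansions along the rank-$r$ stratum of the toroidal boundary; it is at this step that the hypothesis $p\geq 3$ and the use of genuine $P$-ordinarity (rather than merely Borel-ordinarity) become essential.
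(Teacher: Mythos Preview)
Your overall architecture matches the paper's: induction on $r$ via the exact sequence \eqref{eq:sesintro}, with the base case $r=0$ supplied by \cite{PilHida}, and the crux being that $e_P\pV^{SP,r}=e_P\pV^{SP,r,\flat}$. A few points of divergence or imprecision are worth flagging.

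First, the logical order for part (i) is slightly off. You write that $e_P$ ``exists on all three terms simultaneously'' and \emph{then} identify the key step as showing $e_P$ kills $\pV^{SP,r}/\pV^{SP,r,\flat}$. In the paper the dependence is reversed: the very existence of $e_P$ on $\pV^{SP,r}$ (Proposition~\ref{prop:bconv}) is deduced from the key technical result (Proposition~\ref{prop:key}) that $(U^P_{p,N_{d-1}})^{2s}V^{SP,r}_{m,l}\subset V^{SP,r,\flat}_{m,l}$, together with the inductive finiteness property on $\pV^{SP,r,\flat}$. One cannot form $e_P$ on $\pV^{SP,r}$ before knowing this.

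Second, the paper does \emph{not} establish full $\bU^P_p$-equivariance of \eqref{eq:sesintro}; only the subalgebra $\bU^{P,[N]}_p$ generated by $\varphi(N)$-th powers of the $U^P_{p,N_i}$ is shown to act compatibly (Proposition~\ref{prop:UpEquiv}). This suffices for $e_P$, but your ``Hecke-equivariance is automatic'' is too quick.

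Third, your route to (ii) and (iii) via the five lemma on the specialized fundamental exact sequence is plausible but not what the paper does. For (ii), since $\pV^{r,*}_{\Pord}$ is already known to be finite free over $\Lambda_P$, the specialization isomorphism is a direct consequence of duality for free modules (equation \eqref{eq:bspec}); no induction or five lemma is needed. For (iii), the paper invokes the uniform boundedness of dimensions of ordinary forms of weight $\ut+k$ from \cite{TU} and argues by dimension count as in \cite[\S3.7]{HPEL}; your proposed five-lemma argument would additionally require a compatible short exact sequence on the classical side, which is not set up in the paper. Finally, your closing remark that $p\ge 3$ and ``genuine $P$-ordinarity'' are essential \emph{at this step} is not accurate: Proposition~\ref{prop:key} is a pure $q$-expansion calculation and uses neither hypothesis in any special way.
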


The whole Section \S\ref{sec:NCH} is devoted to the proof of this theorem, see in particular \S \ref{sec:HF}. 
The method can be generalized to other PEL type Shimura varieties, both with (using \cite{HPEL}) and without (using \cite{EischenMantovan,BraRosmu}) ordinary locus. 

With the new choice of sections at $p$ for the Siegel Eisenstein series and the Hida theory for non-cuspidal Siegel modular forms  we can then construct the improved $p$-adic $L$-function:
\begin{teono}[Theorem \ref{thm:twopadicLfun}]
With the same notation as above, assume that the parity of $\sC_P$ is compatible with $\phi$ ({\it i.e.} $\phi(-1)=\uptau^P_d(-1)$ for a $\utau^P$ in the image of the projection of $\sC_P$ to the weight space). There is a $p$-adic $L$-function $\cL^{\imp}_{\sC_P,\phi,\beta_1,\beta_2}(x) \in F_{\sC_P}$ which satisfies the following interpolation property. If $x$ is \'etale, its projection $\utau^P$ in the weight space is admissible, and $x$ is classical, then
\begin{align*}
   \cL^{\imp}_{\sC_P,\phi,\beta_1,\beta_2}(x)=\,&C_{x,\beta_1,\beta_2,\phi,N}\\
   &\hspace{-2em}\times E^{P\textnormal{-imp}}_p(n+1-t^P_d,\pi_x\times\phi\epsilon^P_d)\, E_\infty(n+1-t^P_d,\pi_x\times\phi\epsilon^P_d)\, L^{Np\infty}(n+1-t^P_d,\pi_x\times\phi\epsilon^P_d),
\end{align*}
where  $E^{P\textnormal{-imp}}_p(n+1-t^P_d,\pi_x\times\phi\epsilon^P_d)$ is the improved modified Euler factor at $p$ defined as in \S\ref{sec:Ep}.
\end{teono}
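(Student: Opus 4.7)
The plan is to mirror the doubling-method construction of $\cL_{\sC_P,\phi\omega^j,\beta_1,\beta_2}$ from Theorem \ref{thm:twopadicLfun}, with two essential modifications: replace the local sections at $p$ of the Siegel Eisenstein series on $\Sp(4n)$ by the \emph{improved} sections of \S\ref{sec:table}, and use the non-cuspidal Hida theory of Theorem \ref{thm:main} in place of cuspidal Hida theory. The latter substitution is necessary because, as noted in the introduction, the restriction to $\Sp(2n,\bA)\times\Sp(2n,\bA)$ of an Eisenstein series built from the improved sections is no longer cuspidal. Since the improvement amounts to fixing the section at $p$ so that the local zeta integral is evaluated at the specific value $s=n+1-t^P_d$, which is linear in the weight variable, no independent cyclotomic variable remains, and the resulting object is a function of $x\in\sC_P$ alone.

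First I would construct a $\Lambda_P$-adic family of Siegel Eisenstein series $\boldE^{\imp}$ on $\Sp(4n,\bA)$ interpolating the relevant classical holomorphic Eisenstein series, by computing the Fourier expansion explicitly, exactly as in the analogous step of \cite{LiuSLF}; the only change is the choice of sections at $p$, which still produces Fourier coefficients interpolated by a measure on $T_P(\bZ_p)^\circ$. Restricting $\boldE^{\imp}$ to $\Sp(2n,\bA)\times\Sp(2n,\bA)$, I would then argue that the restriction is supported along the toroidal strata of rank at most some $r\leq n_d$ determined by the new sections, hence defines a class in $\pV^{SP,r}\wot\pV^{SP,r}$. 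Applying the ordinary projector $e_P$ in each variable and invoking Theorem \ref{thm:main}(i),(ii) produces a class in $\cM^r_{\Pord}\wot_{\Lambda_P}\cM^r_{\Pord}$.

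Next I would pair the second copy of $\cM^r_{\Pord}$ against the Hida family $\sC_P$ via the normalized linear functional $\boldF\mapsto\fc(e_P\pW(\boldF),\beta_2)/\langle\boldf_{\sC_P},\ol{\boldf_{\sC_P}}\rangle$, where $\boldf_{\sC_P}$ is an $\bI_{\sC_P}$-adic generator of the $\sC_P$-isotypic component. After inverting a non-zero element of $\bI_{\sC_P}$ this defines a map $\cM^r_{\Pord}\to F_{\sC_P}$, well-defined because $\cM^r_{\Pord}$ is a finite free $\Lambda_P$-module by Theorem \ref{thm:main}(i). Applying this pairing in the second variable of the class constructed above, then taking the $\beta_1$-Fourier coefficient in the first variable and specializing at $x$, produces the desired scalar $\cL^{\imp}_{\sC_P,\phi,\beta_1,\beta_2}(x)\in F_{\sC_P}$. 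The interpolation at an admissible classical $x$ is checked by unfolding the doubling integral: the archimedean and non-$p$ finite local integrals give the same factors as in Theorem \ref{thm:twopadicLfun}, while the local integral at $p$ attached to the improved sections contributes $E^{\imp}_p(n+1-t^P_d,\pi_x\times\phi\epsilon^P_d)\cdot L_p(n+1-t^P_d,\pi_x\times\phi\epsilon^P_d)$ by the local computation of \S\ref{sec:Ep}.

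The main obstacle is the compatibility between the non-cuspidal restriction of the Eisenstein series and the non-cuspidal $p$-adic Hida theory: one must verify that the $e_P$-ordinary projection of the restriction of $\boldE^{\imp}$ genuinely defines a $\Lambda_P$-adic element of $\cM^r_{\Pord}$, and that when paired against the \emph{cuspidal} Hida family $\sC_P$ the contributions of the lower-genus graded pieces identified by the fundamental exact sequence of Theorem \ref{thm:main}(iv) are correctly absorbed. This is precisely where the new Hida theory of Section 2 --- the filtration by cusp-label rank, the identification of its graded pieces with ordinary families of Siegel modular forms of lower degree, and the base-change statement of Theorem \ref{thm:main}(ii) --- is indispensable; once these structural inputs are in place, the construction and classical-point verification proceed in parallel with those of Theorem \ref{thm:twopadicLfun}.
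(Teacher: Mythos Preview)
Your outline is correct in spirit and matches the paper's strategy: build the improved Eisenstein family $\cE^{\mr{imp}}_{\Pord}$ from the modified sections at $p$, land its restriction in $\cM^{n_d}_{\Pord}\otimes_{\cO_F[[T_P(\bZ_p)]]}\cM^{n_d}_{\Pord}$ via the non-cuspidal Hida theory of \S\ref{sec:NCH}, project to the $\sC_P$-component, and read off the interpolation from the local zeta computations. Two points deserve sharpening.

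First, the value of $r$ is not ``some $r\leq n_d$'' but exactly $r=n_d$: the support of the improved Schwartz function $\wh{\alpha}_{\utau^P}$ forces $\mr{rk}(\beta_i)\geq n-n_d$ but no more, so the restriction lives in $V^{SP,n_d,\Delta}$ (cf.\ \eqref{eq:cEV}).

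Second, your proposed mechanism for disposing of the non-cuspidal contribution---tracking the graded pieces through the fundamental exact sequence of Theorem~\ref{thm:main}(iv)---is not how the paper proceeds, and would be harder. Instead the paper observes directly that the Hecke projector $\mathds{1}_{\sC_P}$ already sends $\cM^{n_d}_{\Pord}$ into $\cM^0_{\Pord}\otimes F_{\sC_P}$: at a Zariski-dense set of very regular algebraic weights the specialization is classical, and a classical Hecke eigenform sharing eigenvalues with a cusp form is itself cuspidal (by \cite[Theorem~2.5.6]{HaEC} for $t^P_d>2n+1$). So no bookkeeping with the lower-genus pieces is needed. Relatedly, the pairing is implemented as $\varepsilon_{\qexp,\beta_1,\beta_2}\circ\mathds{1}_{\sC_P}$ rather than through a family of Petersson norms; the Petersson norms appear only after specialization, via the doubling formula \eqref{eq:tp}. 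Finally, a small slip: the local zeta integral at $p$ for the improved section yields (up to the volume constant) exactly $E^{\imp}_p(n+1-t^P_d,\pi_x\times\phi\epsilon^P_d)$, which by definition already contains $L_p(s,\sigma_d\otimes\phi_p)$; there is no extra factor $L_p(n+1-t^P_d,\pi_x\times\phi\epsilon^P_d)$.
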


The Greenberg--Stevens method \cite{SSS} allows us to prove the following theorem on semi-stable trivial zeroes:
\begin{teono}[Theorem \ref{thm:mainzero}]
Let $x_0$ be an $F$-point of $\sC_P$ where the weight projection map $\Lambda_P\ra \bT^{1,N}_{\Pord}$ is \'{e}tale and maps $x_0$ to an admissible and classical point $\utau^P_0$ in the weight space. Suppose that the $p$-adic $L$-function $\cL_{\sC_P,\phi\omega^{n+1},\beta_1,\beta_2}\in  \bI_{\sC_P}[[S]] \otimes_{\bI_{\sC_P} } F_{\sC_P}$ has a semi-stable trivial zero at $((1+p)^{n+1}-1,x_0)$ and the local-global compatibility is satisfied by the $p$-adic Galois representation $\rho_{x_0}$. (This in particular implies that $\epsilon^P_d$ is trivial.) Then we have 
\begin{align*}
&\left.\frac{\textup{d}\cL_{\sC_P,\phi\omega^{n+1},\beta_1,\beta_2}(S,x_0)}{\textup{d} S}\right|_{S=(1+p)^{n+1}-1}\\
   =& \, -\ell(\rho_{x_0})\cdot C_{x_0,\beta_1,\beta_2,\phi,N}\cdot E^{P\textnormal{-imp}}_p(0,\pi_{x_0}\times\phi)\, E_\infty(0,\pi_{x_0}\times\phi)\, L^{Np\infty}(0,\pi_{x_0}\times\phi),
\end{align*}
where $ \ell(\rho_{x_0})$ is the $\ell$-invariant as defined by Greenberg.
\end{teono}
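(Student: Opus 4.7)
The plan is to implement the Greenberg--Stevens strategy outlined in the paragraphs preceding Theorem \ref{thm:mainzero}. Since the weight projection $\Lambda_P \to \bT^{1,N}_{\Pord}$ is \'etale at $x_0$ and the other weight coordinates $t^P_1,\dots,t^P_{d-1}$ play no role in the trivial zero, I treat $t^P_d$ as a local parameter on $\sC_P$ near $x_0$ and restrict attention to the two-dimensional $(S,t^P_d)$-slice through $(s_0, x_0)$, where $s_0:=(1+p)^{n+1}-1$. In this slice the cyclotomic vanishing hyperplane $\{S = s_0\}$ and the improvement curve $\{S = (1+p)^{t^P_d(x)}-1\}$ are two smooth curves meeting transversely at $(s_0,x_0)$; transversality is automatic because the semi-stability hypothesis $\alpha_{n,x_0}=p^{-1}$ forces $t^P_d(x_0)=n+1$.

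The first step is to establish the two-variable identity
\begin{equation*}
\cL_{\sC_P,\phi\omega^{n+1},\beta_1,\beta_2}\bigl((1+p)^{t^P_d(x)}-1,\,x\bigr) \;=\; \bigl(1-\phi_p(p)^{-1}u(x)^{-1}\bigr) \cdot \cL^{\imp}_{\sC_P,\phi,\beta_1,\beta_2}(x)
\end{equation*}
for $x$ in a neighborhood of $x_0$, where $u(x):=\alpha_{n,x}\,p^{t^P_d(x)-n}$ is the $p$-adic unit encoding the ordinary $\bU^P_p$-eigenvalue along the Hida family (so in particular $u(x_0)=1$). This identity will be obtained by a direct comparison of the two interpolation formulas of Theorem \ref{thm:twopadicLfun} along the Zariski dense set of admissible arithmetic points $x$ with $k=t^P_d(x)$, and then extended to all $x$ by the rigidity of rigid analytic functions on $\sC_P$. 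Independently, the global vanishing $\cL_{\sC_P,\phi\omega^{n+1},\beta_1,\beta_2}(s_0,x)\equiv 0$ on $\sC_P$ follows from the missing local factor $1-\phi_p(p)p^{-s}$ evaluated at $s=0$ together with the density of arithmetic points at which $\pi_{x,p}$ is unramified.

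Granted these, one factors $\cL_{\sC_P,\phi\omega^{n+1},\beta_1,\beta_2}(S,x) = (S-s_0)\,H(S,x)$ locally around $(s_0,x_0)$, so that $H(s_0,x_0) = \partial_S\cL_{\sC_P,\phi\omega^{n+1},\beta_1,\beta_2}(S,x_0)|_{S=s_0}$. Substituting $S = (1+p)^{t^P_d(x)}-1$ and differentiating the resulting identity in the $t^P_d$-direction at $x_0$ (both sides vanish to first order there, since $E(x_0)=0$ and $(1+p)^{t^P_d(x_0)}-1 = s_0$) yields
\begin{equation*}
H(s_0,x_0) \cdot (1+p)^{n+1}\log_p(1+p) \;=\; \frac{u'(x_0)}{\phi_p(p)\,u(x_0)^2} \cdot \cL^{\imp}_{\sC_P,\phi,\beta_1,\beta_2}(x_0),
\end{equation*}
where $u'(x_0)$ denotes the derivative in the $t^P_d$-direction. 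Since $u(x_0)=\phi_p(p)=1$, this expresses $\partial_S\cL_{\sC_P,\phi\omega^{n+1},\beta_1,\beta_2}$ at $(s_0,x_0)$ as the weight-direction logarithmic derivative of the ordinary $\bU^P_p$-eigenvalue times $\cL^{\imp}_{\sC_P,\phi,\beta_1,\beta_2}(x_0)$.

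The final step identifies this logarithmic derivative with the Greenberg--Benois $\ell$-invariant. The semi-stability hypothesis on $\rho_{x_0}|_{G_{\bQ_p}}$ ensures that the trivial zero is of type $M$ in the sense of \cite{TTT}, and under the assumed local-global compatibility the main theorem of \cite{RosLinv} produces the identification $u'(x_0)/[(1+p)^{n+1}\log_p(1+p)] = -\ell(\rho_{x_0})$. Inserting this and expanding $\cL^{\imp}_{\sC_P,\phi,\beta_1,\beta_2}(x_0)$ via the interpolation formula from the improved version of Theorem \ref{thm:twopadicLfun} yields the stated formula. The most delicate point will be the rigorous matching of normalizations in the first step: the constants $C_{k,\ut^P}$ must specialize correctly to $C_{\ut^P_0}$ along the improvement line, the Fourier coefficient data $\fc(\varphi,\beta_1)\fc(e_P\pW(\varphi),\beta_2)/\langle\varphi,\ol\varphi\rangle$ must match between the two constructions, and the cyclotomic twist $\omega^{n+1-t^P_d}$ appearing on the unimproved side must align with the character $\epsilon^P_d$ on the improved side. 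Any nontrivial $p$-adic unit discrepancy would propagate into the derivative and spoil the final formula; this bookkeeping is the technical heart of the argument and is where the non-cuspidal Hida theory of Theorem \ref{thm:main}, used to construct $\cL^{\imp}$ from the alternative choice of local sections at $p$ for the Siegel Eisenstein series, plays its essential role.
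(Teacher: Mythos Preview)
Your proposal is correct and follows essentially the same Greenberg--Stevens strategy as the paper's proof. The only cosmetic difference is that you factor $\cL=(S-s_0)H$ and differentiate the restriction to the improvement curve, whereas the paper observes directly that $\partial_{T^P_d}\cL|_{S=s_0}=0$ (from the identical vanishing along $S=s_0$) and hence $\partial_S\cL|_{(s_0,x_0)}=(\partial_S+\partial_{T^P_d})\cL|_{(s_0,x_0)}$, which is the derivative along the improvement curve; these two manoeuvres are equivalent. Your identification $u=\mf{a}_n/\mf{a}_{n-1}$ and the invocation of the $\ell$-invariant formula match the paper's Theorem~\ref{teo:linva} (which the paper attributes to \cite{BenLinv2,BenLinv}, with the proof adapted from \cite{RosLinv}), and the normalization factor $(1+p)^{n+1}\log_p(1+p)$ in your version is exactly the Jacobian between your coordinate $t^P_d$ and the paper's coordinate $T^P_d=\uptau^P_d(1+p)$.
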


This result almost implies the conjecture of Greenberg, up to the non-vanishing of the $\ell$-invariant and of the imprimitive $L$-function. The non-vanishing of the $\ell$-invariant is a very hard problem and it is known only in the case of \cite{MTT}, thanks to a deep result in transcendental number theory stating that $q_E$ is trascendental \cite{Saint}. Note that for $n=2$ we know the non-vanishing of $ \ell(\rho_{x_0})$ whenever $\pi_{x_0}=\mr{Sym^3}(\pi_{f_E})$, where $f_E$ is the weight two modular form associated with an elliptic curve with semi-stable reduction at $p$. The imprimitive $L$-function could vanish because of the vanishing of some of the Euler factors at a prime $\ell$ dividing $N$. One may deal with such vanishing by selecting better sections at $\ell|N$.

\subsection*{Acknowledgments} The authors thank Kai-Wen Lan,  Vincent Pilloni, Eric Urban for many useful discussions, and the referees for their careful reading of the paper. Part of this work has been done while GR was a Herchel Smith fellow at Cambridge University and supernumerary fellow at Pembroke College, and during many visits at Columbia University; he would like to warmly thank these institutions. This material is based upon work supported by the National Science Foundation under Grant No. DMS-1638352.

\subsubsection*{Notation} 
For the whole length of the paper we fix an odd prime $p$ as well as isomorphism between $\ol{\bQ}_p$ and $\bC$. Also, we fix a positive integer $N\geq 3$ prime to $p$ and an integer $n \geq 1$ together with a partition $n=n_1+\dots +n_d$ with $n_1,\dots,n_d\geq 1$. 

We denote by $\mb V $ a free $\mb{Z}$-module of rank $2n$ with a standard basis $e_1, \ldots, e_n,f_1,\cdots,f_n$ equipped with a symplectic pairing given by $\begin{pmatrix}
0 &  I_n \\
-  I_n  & 0 
\end{pmatrix}$ with respect to the standard basis. Then $e_1,\cdots,e_n$ span a maximal isotropic subspace $V_n$ inside $\mb{V}$. Set $V_n^*=\mb{V}/V^\bot_n$. One can canonically identify $V^*_n$ with the maximal isotropic subspace of $\mb{V}$ spanned by $f_1,\cdots, f_n$ and there is the polarization $\mb{V}=V_n\oplus V^*_n$.

Let $G=\mr{Sp}(2n)$ be the algebraic group acting on $\mb{V}$ preserving the symplectic pairing. In matrix form it is
\begin{equation*}
   \left\{g\in\mr{GL}(2n):\ltrans g\begin{pmatrix} 0 & I_n\\-I_n & 0\end{pmatrix}g=\begin{pmatrix} 0 & I_n\\-I_n & 0\end{pmatrix}\right\}.
\end{equation*} 
Let $Q_G$ be the standard Siegel parabolic subgroup of $G$ preserving $V_n$, whose unipotent subgroup we denote by $U_{Q_G}$. We identify the Levi subgroup of $Q_G$ with $\mr{GL}(n)$ via the map 
\begin{align}\label{eq:QG}
  Q_G&\lra\mr{GL}(n)\\
  \begin{pmatrix}a&b\\0&\ltrans a^{-1}\end{pmatrix}&\longmapsto a.\nonumber
\end{align}
Denote by $B$ the standard Borel subgroup of $\mr{GL}(n)$ consisting of upper triangular matrices, and by $U_B$, $T$ its unipotent radical and maximal torus respectively. We fix the isomorphism of $\mb{G}_m^n$ with $T$ which sends $(a_1,\dots,a_n)$ to $\diag(a_1,\dots,a_n)$. The inverse image under \eqref{eq:QG} of $B$ constitutes the standard Borel subgroup $B_G$ of $G$ with unipotent radical $N_G$ and maximal torus $T_G$. The tori $T$ and $T_G$ are identified via the map \eqref{eq:QG}.

We put ourselves in the setting of \cite{PilHida}, i.e. considering Siegel modular forms ordinary with respect to a general parabolic subgroup of $\mr{GL}(n)$ containing $B$ associated to our fixed partition $n=n_1+\dots+n_d$ (the ordinarity considered in \cite{HPEL} is the ordinarity with respect to $B$). Set $N_i=\sum_{j=1}^i n_i$, $1\leq i\leq d$. Define 
\begin{align}
   P&=\left\{\left.\begin{pmatrix}a_1&*&*\\&\ddots&*\\&&a_d\end{pmatrix}\in\mr{GL}(n)\,\right|\,a_i\in\mr{GL}(n_i),\,1\leq i\leq d\right\},\label{eq:P}\\
   SP&=\left\{\left.\begin{pmatrix}a_1&*&*\\&\ddots&*\\&&a_d\end{pmatrix}\in\mr{SL}(n)\,\right|\,a_i\in\mr{SL}(n_i),\,1\leq i\leq d\right\}, \label{eq:SP}
\end{align}
and $U_P$ to be the unipotent radical of $P$. When the partition is taken as $n=1+1+\cdots+1$, the group $P$, (resp. both $SP$ and $U_P$) is just $B$ (resp. $U_B$). Let $T_P=P/SP$ and we fix the following isomorphism
\begin{align}\label{eq:TP}
   T_P=P/SP&\stackrel{\sim}{\lra} \mb{G}^d_m\\
   \begin{pmatrix}a_1&&\\&\ddots&\\&&a_d\end{pmatrix}u&\longmapsto \left(\mr{det}(a_1),\dots,\mr{det}(a_d)\right).\nonumber
\end{align}
Note here that $T_P$ is not the maximal torus in $P$. Denote by $X(T_P)$ the group of characters of $T_P$ (which are also naturally viewed as characters of $P$). We identify it with $\mb Z^d$ by associating to $\underline{t}^P:=(t^P_1,\dots,t^P_d)$ the character sending $\diag(a_1,\dots,a_d)$ to $\prod_{i=1}^d\mr{det}(a_i)^{t^P_i}$. When working with $B$, we shall drop the superscript from the notation for the characters when there is unlikely confusion. The map \eqref{eq:TP} restricts to a map $T\ra T_P$, which induces an embedding
\begin{equation}\label{eq:temb}
\begin{aligned}
 \imath:X(T_P) &\lra X(T)\\
  \underline{t}^P:=(t^P_1,\dots,t^P_d) &\longmapsto (\underbrace{t^P_1,\dots, t^P_1}_{n_1},\underbrace{t^P_2,\dots, t^P_2}_{n_2},\dots,\underbrace{t^P_d,\dots, t^P_d}_{n_d}).
\end{aligned}
\end{equation}
Denote by $X(T)^+$ the subset of $X(T)$ of dominant weights with respect to $B$ and set $X(T_P)^+=X(T_P)\cap X(T)^+$. Then $\ut^P\in X(T_P)$ belongs to $X(T_P)^+$ if and only if $t^P_1\geq t^P_2\geq\cdots\geq t^P_d$. Fix a finite extension $F$ of $\mb{Q}_p$ (assumed to be sufficiently large in the context) and denote by $\mc{O}_F$ its ring of integers. The weight space in Hida theory for $P$-ordinary Siegel modular forms over $F$  is $\mr{Spec}\left(\mc{O}_F[[T_P(\mb{Z}_p)]]\right)$. For an arithmetic point of the weight space $\utau^P\in\mr{Spec}\left(\mc{O}_F[[T_P(\mb{Z}_p)]]\right)(\overline{\mb{Q}}_p)$, i.e. a character in $\mr{Hom}_{\mr{cont}}\left(T_P(\mb{Z}_p),\overline{\mb{Q}}^\times_p\right)$ that is the product of an algebraic and a finite order character, we write its algebraic part (resp. finite order part) as $\utau^P_{\mr{alg}}=\ut^P=(t^P_1,\dots,t^P_d)$ (resp. $\utau^P_{\mr{f}}=\uep^P=(\epsilon^P_1,\dots,\epsilon^P_d)$).

We fix the standard additive character $\be_{\mb{A}}=\bigotimes_v\be_v:\mb{Q}\backslash\mb{A}\ra\mb{C}^\times$ with local component $\be_v$ defined as $\be_v(x)=\begin{cases}e^{-2\pi i\{x\}_v},& v\neq\infty\\e^{2\pi ix},&v=\infty\end{cases}$ where $\{x\}_v$ is the fractional part of $x$.

\section{Non-cuspidal Hida theory}\label{sec:NCH}

In this section we develop Hida theory for non-cuspidal Siegel modular forms, or more precisely, for the $P$-ordinary Siegel modular forms vanishing along the strata with cusp labels of rank $>r$ for some $0\leq r\leq n_d$. Later, the family of Siegel Eisenstein series on $\Sp(4n)$ we shall use to construct the improved $p$-adic $L$-function, unlike the ones for the usual $p$-adic $L$-function, does not restrict to cuspidal forms on $\Sp(2n)\times\Sp(2n)$ (see the discussion at the end of \S\ref{sec:placep}). The Hida theory developed here will be applied to them. Also, we expect such a theory to be of independent interest and to find applications elsewhere, for instance in the study of Eisenstein congruences.

The main difficulty in directly generalizing  Hida theory for cuspidal forms on PEL Shimura varieties to non-cuspidal forms is that for an algebraic representation $W$ of $\GL(n)_{/\bZ}$ of finite rank, an algebra $R$ and a subgroup $L\subset \GL(n,\bZ)$ of the form
\begin{equation*}
   L=\left\{\begin{pmatrix}I_{n-r}&\ast\\0&\ast\end{pmatrix}\in\GL(n,\bZ)\right\},\quad 1\leq r\leq n,
\end{equation*}
the module $W(R/p^m)^L$ is not necessarily equal to $W(R)^L\otimes R/p^m$.

In \cite[\S6]{SU} and \cite[\S4]{Hsieh}, a subsheaf of $\omega^\flat_{\ut}\subset\omega_{\ut}$ is introduced to remedy this failure of base change property. The sheaf $\omega^\flat_{\ut}$ is not free and differs from $\omega_{\ut}$ along the boundary of the toroidal compactification. The base change property for global sections of $\omega^\flat_{\ut}$ is shown \textit{loc. cit}. With this base change property, by mimicking Hida's method \cite{HPEL}, Hida theory for certain non-cuspidal forms on $\U(2,2)$ and $\U(n,1)$ is established \textit{loc. cit}. 

Here we take a different approach. Let $\pV^{SP,r}$ be the space of $p$-adic forms for the parabolic $P$ vanishing along strata indexed by cusp labels of rank $> r$ with $p$-power torsion coefficients. Instead of studying the space $\pV^{SP,r}$ via the classical Siegel modular forms embedded in it through \eqref{eq:embed2} (for which a base change property for the space of certain non-cuspidal classical Siegel modular forms is required), we make a careful analysis of the Igusa tower over the boundary and define a nice subspace $\pV^{SP,r,\flat}$ inside $\pV^{SP,r}$. The exact sequences in Proposition \ref{prop:exact} plus Proposition \ref{prop:UpEquiv} allow us to deduce desired properties for the space $\pV^{SP,r,\flat}$ from those for the space of cuspidal $p$-adic Siegel modular forms. Meanwhile, Proposition \ref{prop:key} shows that the desired properties for $\pV^{SP,r,\flat}$ imply the existence of a nice ordinary projection on $\pV^{SP,r}$. Then we obtain Hida theory for non-cuspidal Siegel modular forms as summarized in Theorem \ref{thm:main}.

Exact sequences for automorphic bundles and $p$-adic analytic deformation of automorphic bundles, similar to those in Proposition \ref{prop:exact}, are used in \cite{WeiVek} and \cite{BrasRos}, where things are simpler than our case here because everything is in characteristic zero and the issue of base change does not appear.

\subsection{Compactifications of Siegel varieties}\label{sec:TorG}

We start by briefly recalling some facts on the toroidal and minimal compactifications of Siegel varieties of principal level. We mainly follow the notation in \cite{PilHida}. Fix an integer $N\geq 3$ coprime to $p$. Let $Y$ be the degree $n$ Siegel variety of principal level $N$ defined over $\mb Z[1/N,\zeta_N]$. All the objects we consider in the following are endowed with principal level $N$ structure and we shall omit $N$ to lighten the notation. 

Recall that $\mb V=\mb Z^{2n}$ with standard basis $e_1,\cdots,e_n,f_1,\cdots,f_n$ and the symplectic pairing given by $\begin{pmatrix}0&  I_n\\- I_n & 0 
\end{pmatrix}$.  For $1\leq r\leq n$, let $V_r$ be the submodule of $\mb V$ spanned by $e_1,\cdots,e_r$ (we sometimes call it the standard submodule of rank $r$ in $\bV$), and we put $V_0=\{0\}$. The group $\mr{Sp}(\mb V)\cong\mr{Sp}(2n,\mb Z)$ acts on $\mb V$ preserving the symplectic pairing. Denote by $\Gamma$ the kernel of the projection $\mr{Sp}(2n,\mb Z)\ra\mr{Sp}(2n,\mb Z/N\mb Z)$.

Denote by $\mf{C}_{\mb V}$ the set of cotorsion free isotropic $\mb Z$-submodules of $\mb V$. The group $\mr{Sp}(2n,\mb Z)$ acts naturally on $\mf{C}_{\mb V}$. The quotient $\mf{C}_{\mb V}/\Gamma$ is called the set of cusp labels of level $\Gamma$ (or of principal level $N$). For a free $\mb Z$-module $X$ of finite rank, we write $C(X)$ to denote the cone of positive semi-definite symmetric bilinear forms on $X\otimes\mb{R}$ with rational radicals, and by $C(X)^{\circ}$ its interior. A surjective morphism $X\ra X'$ of free $\mb Z$-modules induces an inclusion $C(X')\hra C(X)$.  Define $\mc{C}_{\mb V}$ as the quotient of the disjoint union $\coprod\limits_{V\in\mf{C}_{\mb V}}C(\mb V/V^{\bot})$ by the equivalence relations induced by the inclusions $C(\mb V/V^\bot)\hra C(\mb V/V^{\prime\bot})$ for $V\subset V'$, $V,V'\in\mf{C}_\mb V$. The group $\mr{Sp}(2n,\mb Z)$ acts on $\mc{C}_\mb V$.

A $\mr{GL}(n,\mb Z)$-admissible smooth rational polyhedral cone decomposition $\Sigma$ of $C(\mb Z^n)$ (\cite[Definition 2.2]{FC}) gives rise to a rational polyhedral cone decomposition $\Sigma_{\mc{C}_{\mb V}}$ of $\mc{C}_{\mb V}$. Corresponding to it is a toroidal compactification $X^\Sigma$ of $Y$ endowed with an action of $\mr{Sp}(2n,\mb Z)$ \cite[\S IV.6]{FC}. 

The toroidal compactification $X^\Sigma$ comes with a stratification indexed by $\Sigma_{\mc{C}_{\mb V}}/\Gamma$, and we denote by $Z_\sigma$ the stratum in $X^\Sigma$ associated with $\sigma\in\Sigma_{\mc{C}_{\mb V}}$. There is a canonical map $\Sigma_{\mc{C}_{\mb V}}\ra\mf{C}_{\mb V}$ sending $\sigma$ to the unique $V_\sigma\in\mf{C}_\mb V$ satisfying $\sigma\subset C(\mb V/V_\sigma^\bot)^\circ$. The locally closed subscheme $Z_V\subset X^\Sigma$ is defined as the union $\coprod\limits_{\sigma\in\Sigma_{\mc{C}_{\mb V}}/\Gamma,V_\sigma=V} Z_\sigma$. For $0\leq r\leq n$, define $\cI^r_{X^\Sigma}$ to be the sheaf of ideals associated to the closed subscheme $\coprod\limits_{V\in\fC_{\bV}/\Gamma,\mr{rk} V>r} Z_V$.

Over $X^\Sigma$, there is the canonical semi-abelian scheme $\mc{G}_{ /X^\Sigma}$ whose restriction to $Y$ is the universal principally polarized abelian scheme of genus $n$ with principal level $N$ structure. The coherent sheaf $\omega$ over $X^\Sigma$ is defined as the sheaf of invariant differentials of $\mc{G}_{ /X^\Sigma}$, which is locally free of rank $n$.

From the toroidal compactification, the minimal compactification is constructed as
\begin{equation*}
   X^\star=\mathrm{Proj}\left(\bigoplus_{k\geq 0}\mr{H}^0(X^\Sigma,\mr{det}^k\omega)\right).
\end{equation*}
The projection $\pi:X^\Sigma\ra X^\star$ is proper with connected fibers. The minimal compactification $X^\star$ is stratified by $\mf{C}_{\mb V}/\Gamma$. The stratum $Y_{V}$ corresponding to $V\in\mf{C}_{\mb V}/\Gamma$ is defined as the image of $Z_V$. As a scheme it is isomorphic to the Siegel variety of degree $(n-\mathrm{rk}\,V)$ and principal level $N$. 

\subsection{\texorpdfstring{The Igusa tower over the ordinary locus and $p$-adic forms}{The Igusa tower over the ordinary locus and p-adic forms}} \label{sec:Igusa}
The invertible sheaf $\mr{det}^k\omega$ descends to an invertible sheaf on $X^\star$, which we still denote by $\mr{det}^k\omega$. For sufficiently large $k$, it is very ample over $X^\star$. Choose $k$ such that the $k$-th power of the Hasse invariant, which is an element in $\mr{H}^0\left(X^\star_{/\mb{F}_p[\zeta_N]},\,\mr{det}^{p-1}\omega\right)$, lifts to a global section $E\in \mr{H}^0\left(X^\star,\mr{det}^{t(p-1)}\omega\right)$ (for $t \geq 0$ an integer big enough). We write its pull-back in $\mr{H}^0\left(X^\Sigma,\mr{det}^{t(p-1)}\omega\right)$ also as $E$.

Let $X^{\star,\mr{ord}}=X^{\star}[1/E]$ and $X^{\Sigma,\mr{ord}}=X^\Sigma[1/E]$, and define $Y^\ord_V$, $Z^{\ord}_\sigma$, $Z^\ord_V$ similarly. The reductions modulo powers of $p$ of these schemes are independent of the choice of $E$ and are called the ordinary loci. Note that $X^{\star,\ord}$ is affine, while $X^{\Sigma,\ord}$ is not (except when $n=1$).

Fix a finite extension $F$ of $\mb{Q}_p$ containing all $N$-th roots of unity. We regard all above schemes as defined over $\mc{O}_F$. For $m\geq 1$, we use a subscript $_m$ to indicate the reduction modulo $p^m$. Over $X^{\Sigma,\mr{ord}}_m$, we consider the full Igusa tower 
$$\ms{T}^\Sigma_{m,l}=\underline{\mr{Isom}}_{X^{\Sigma,\mr{ord}}_m}\left(\mu^n_{p^l},\mc{G}_{/X_m^{\Sigma,\mr{ord}}}[p^l]^\circ\right),$$
for $l\geq 1$. It is an \'{e}tale cover of $X^{\Sigma,\mr{ord}}_m$ with Galois group isomorphic to $\mr{GL}(n,\mb Z/p^l\mb Z)$. 
The group 
$$\Gamma_0(p^l)=\left\{g=\begin{pmatrix}a_g&b_g\\c_g&d_g\end{pmatrix}\in G(\mb Z):c_g\equiv 0\mod p^l\right\}$$
naturally acts on $\ms{T}^\Sigma_{m,l}$ with $g$ acting on the principal level $N$ structure and $a_g$ acting by the Galois action of $\mr{GL}(n,\mb Z/p^l\mb Z)$ on $\ms{T}^\Sigma_{m,l}$ over $X^{\Sigma,\mr{ord}}_m$. 

Define
$$\sT^{\Sigma}_{SP,m,l}=\left.\ms{T}^\Sigma_{m,l}\middle/SP(\mb Z_p/p^l\bZ_p)\right.$$
(see \eqref{eq:P} and \eqref{eq:SP} for the definition of the algebraic subgroups $P$, $SP$ of $\GL(n)$). It parametrizes (in addition to the structure parametrized by $X^{\Sigma,\mr{ord}}_m$) the level structure $(E_i,\varepsilon_i)_{1\leq i\leq d,\,p^l}$, where $\{0\}=E_0\subset E_1\subset\cdots\subset E_d=\mc{G}[p^l]^\circ$ is a $d$-step increasing filtration and $\varepsilon_i$ is an isomorphism $\bigwedge^{\scriptsize {n_i}} \mu_{p^l}^{n_i}\cong \bigwedge^{\scriptsize {n_i}} E_i/E_{i-1}$. There is a natural $T_P(\mb Z_p)$-action on $\sT^{\Sigma}_{SP,m,l}$. 


Write $f_{m,l}:\sT^{\Sigma}_{SP,m,l}\ra X^{\Sigma,\mr{ord}}_m$ for the natural projection. Define
\begin{align*}
   V^{SP,r}_{m,l} & =\mr{H}^0\left(\sT^{\Sigma}_{SP,m,l},\, f^*_{m,l}\mc{I}^r_{X^{\Sigma,\mr{ord}}_m}\right),\\
	 \pV^{SP,r} & =\varinjlim\limits_{m}\varinjlim\limits_l V^{SP,r}_{m,l}.
\end{align*}
The space $\pV^{SP,r}$ is called the space of $p$-adic Siegel modular forms for the parabolic $P$ vanishing along the strata indexed by cusp labels of rank $>r$ with $p$-power torsion coefficients. When $P=B$ we shall drop the $P$ from the notation, and when $r=n$ we shall drop $r$ from the notation.

The $T_P(\mb Z_p)$-action on $\sT^{\Sigma}_{SP,m,l}$ equips $V^{SP,r}_{m,l}$ and $\pV^{SP,r}$ with an $\mc{O}_F\llbracket T_P(\mb Z_p)\rrbracket $-module structure. The space $\pV^0$ is the space of cuspidal $p$-adic Siegel modular forms with $p$-power torsion coefficients, which is considered in Hida theory (for the Borel $B$) for cuspidal Siegel modular forms, while $\pV^{SP,0}$ (for general $P$) is the one in \cite{PilHida}. 

Besides the torsion $\mb Z_p$-module $\pV^{SP,r}$ (which is in fact $p$-divisible by Remark \ref{rmk:Vbc}), we will also consider the $\mb Z_p$-module $\varprojlim\limits_m\varinjlim\limits_l V^{SP,r}_{m,l}$, {\it i.e.} taking the inverse instead of direct limit with respect to $m$ (which is torsion free over $\mb Z_p$ by Remark \ref{rmk:Vbc}). It is the torsion $\mb Z_p$-module $\pV^{SP,r}$ that will be used to construct the $\mc{O}_F\llbracket T_P(\mb Z_p)\rrbracket $-module of Hida families. Meanwhile, the space $\varprojlim\limits_m\varinjlim\limits_l V^{SP,r}_{m,l}$ is more easily seen related to the classical Siegel modular forms.

More precisely, for $\ut^P\in X(T_P)^+$, $\uepsilon^P\in\Hom\left(T_P(\bZ/p^l),\bC^\times\right)$, and $\utau^P\in\Hom_{\cont}\left(T_P(\bZ_p),\ol{\bQ}^\times_p\right)$ which is the product of $\ut^P$ and $\uepsilon^P$, there is a canonical Hecke-equivariant embedding \cite[\S 4.2.1]{PilHida}
\begin{equation}\label{eq:embed2}
   \varinjlim_l \mr M^r_{\imath(\ut^P)}\left(\Gamma\cap\Gamma_{SP}(p^l),\uepsilon^P;F\right)\lhra \left(\varprojlim_m\varinjlim_l V^{SP,r}_{m,l}[\utau^P]\right)[1/p].
\end{equation}
Here $\mr M^r_{\ut}\left(\Gamma\cap \Gamma_{SP}(p^l),\uepsilon^P;F\right)$ denotes the space of classical holomorphic Siegel modular forms of weight $\ut=\imath(\ut^P)$ and level $\Gamma\cap \Gamma_{SP}(p^l)$ with nebentypus $\uepsilon^P$ vanishing along strata with cusp labels of rank $>r$, and the congruence subgroup $\Gamma_{SP}(p^l)$ is defined as
\begin{equation}\label{eq:GammaSP}
   \Gamma_{SP}(p^l)=\left\{g\in\Sp(2n,\bZ):g\,\mr{mod}\, p^l \text{ belongs to } SP(\bZ/p^l)\right\}.
\end{equation}
The vanishing condition here for the classical Siegel modular forms is equivalent to requiring that at all cusps all the Fourier coefficients with indices of corank $> r$ vanish. The space $\varprojlim_m\varinjlim_l V^{SP,r}_{m,l}[\utau^P]$ is the $\utau^P$-eigenspace for the action of $T_P(\bZ_p)$ on $\varprojlim_m\varinjlim_l V^{SP,r}_{m,l}$. The construction of the embedding \eqref{eq:embed2} mainly relies on the Hodge--Tate map
\begin{equation*}
   \mr{Hom}_R(\mc G[p^\infty]^\circ,\mu_{p^\infty})\otimes_{\mb Z_p}R\stackrel{\sim}{\lra}\omega_{\mc G/R}
\end{equation*}
for an ordinary  semi-abelian scheme $\mc G$ over a $\mb Z_p$-algebra $R$. 

\subsection{The main theorem}
Our goal is to establish the following theorem.
\begin{thm}\label{thm:main}
For given $P\subset\GL(n)$ as in \eqref{eq:P} and an integer $1\leq r\leq n_d$, we have the following.
\begin{enumerate}[(i)] 
\item An ordinary projector $e_P=e_P^2$ can be defined on $\pV^{SP,r}$, and the Pontryagin dual of its ordinary part 
\begin{equation*}
\pV^{r,*}_{\Pord}=\Hom_{\bZ_p}\left(e_P\pV^{SP,r},\bQ_p/\bZ_p\right)
\end{equation*}
(which is naturally an $\cO_F\llbracket T_P(\bZ_p)\rrbracket $-module) is finite free over $\Lambda_P=\cO_F\llbracket T_P(\bZ_p)^\circ]]$, where $T_P(\bZ_p)^\circ$ is the maximal $p$-profinite subgroup of $T_P(\bZ_p)$.
\item Define
\begin{equation*}
   \cM^{r}_{\Pord}=\Hom_{\Lambda_P}\left(\pV^{r,*}_{\Pord},\Lambda_P\right).
\end{equation*}
Given an arithmetic weight $\utau^P\in\Hom_{\cont}(T_P(\bZ_p),\ol{\bQ}_p^\times)$ with dominant algebraic part $\ut^P\in X(T_P)^+$ and finite order part $\uepsilon^P\in\Hom_{\cont}(T_P(\bZ_p),\ol{\bQ}^\times)$, let $\cP_{\utau^P}$ be the corresponding prime ideal of $\cO_F\llbracket T_P(\bZ_p)\rrbracket $. Then
\begin{equation*}
   \left.\cM^r_{\Pord}\otimes_{\cO_F\llbracket T_P(\bZ_p)\rrbracket }\cO_F\llbracket T_P(\bZ_p)\rrbracket \middle/\cP_{\utau^P}\right.\stackrel{\sim}{\lra}\varprojlim_m\varinjlim_l e_P V^{SP,r}_{m,l}[\utau^P],
\end{equation*}
which combining with \eqref{eq:embed2} gives
\begin{equation}\label{eq:cembed}
   \left.\varinjlim_l e_P \mr M^r_{\imath(\ut^P)}\left(\Gamma\cap\Gamma_{SP}(p^l),\uepsilon^P;F\right)\lhra \left(\cM^r_{\Pord}\otimes_{\cO_F\llbracket T_P(\bZ_p)\rrbracket }\cO_F\llbracket T_P(\bZ_p)\rrbracket \middle/\cP_{\utau^P}\right)[1/p]\right..
\end{equation}
Here the maps are equivariant under the action of the unramfied Hecke algebra away from $Np$ and the $\bU^P_p$-operators.
\item There is the following so-called fundamental exact sequence (in the study of Klingen Eisenstein congruence), 
\begin{equation*}
  0\lra \cM^{r-1}_{\Pord}\lra \cM^{r}_{\Pord}\lra \bigoplus_{\substack{V\in\fC_{\bV}/\Gamma\\\mathrm{rk}\,V=r}}\cM^0_{V,P_{n-r}\text{-}\,\ord}\otimes_{\cO_F\llbracket T_{P_{n-r}}(\bZ_p)\rrbracket }\cO_F\llbracket T_P(\bZ_p)\rrbracket \lra 0,
\end{equation*}
where 
\begin{equation*}
   P_{n-r}=\left\{\left.\begin{pmatrix}a_1&*&*\\&\ddots&*\\&&a_d\end{pmatrix}\in\mr{GL}(n-r)\,\right|\,a_i\in\mr{GL}(n_i),\,1\leq i\leq d-1,\,a_d\in\GL(n_d-r)\right\},\\
\end{equation*}
and $\cM^0_{V,P_{n-r}\text{-}\,\ord}$ is the $\cO_F\llbracket T_{P_{n-r}}(\bZ_p)\rrbracket $-module of families of $p$-adic cuspidal ordinary Siegel modular forms of degree $n-r$ over $Y_{V,\ord}$ for the parabolic $P_{n-r}$.
\end{enumerate}
\end{thm}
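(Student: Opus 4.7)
The plan is to prove all four statements simultaneously by induction on $r$, using the $P$-ordinary cuspidal Hida theory of \cite{PilHida} (which generalizes \cite{HPEL} from $P=B$) as the base case $r=0$. The central new input for the induction step is the subspace $\pV^{SP,r,\flat}\subset\pV^{SP,r}$, obtained from a careful analysis of the quotient $\pV^{SP,r}_{m,l}/\pV^{SP,r-1}_{m,l}$ along the formal neighborhood of the rank-$r$ boundary strata of the Igusa tower, together with the short exact sequence \eqref{eq:sesintro}. The role of $\pV^{SP,r,\flat}$ is to replace the ad hoc sheaf $\omega^{\flat}_{\ut}$ of \cite{SU,Hsieh}: instead of modifying the automorphic line bundle to repair the failure of \eqref{eq:inv} over $\bZ_p$, I work directly on the Igusa tower and isolate the ``flat'' subspace whose boundary expansions along rank-$r$ strata land in a piece on which the identification with cuspidal $p$-adic forms of genus $n-r$, induced up along the natural ring map $\cO_F\llbracket T_{P_{n-r}}(\bZ_p)\rrbracket\to\cO_F\llbracket T_P(\bZ_p)\rrbracket$, becomes transparent.

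Granting Proposition \ref{prop:exact} (exactness of \eqref{eq:sesintro}) and Proposition \ref{prop:UpEquiv} ($\bU^P_p$-compatibility of its three terms), the inductive step for (i) and (iv) proceeds as follows. Hida's idempotent $e_P$ converges on the left term of \eqref{eq:sesintro} by the inductive hypothesis and on the right term by $P_{n-r}$-ordinary cuspidal Hida theory applied to the genus $(n-r)$ Siegel variety attached to each cusp label $V$ of rank $r$, hence it converges on $\pV^{SP,r,\flat}$. The Pontryagin dual of $e_P\pV^{SP,r,\flat}$ is then an extension of two $\Lambda_P$-free modules (using that induction of a free module along $\cO_F\llbracket T_{P_{n-r}}(\bZ_p)\rrbracket\to\cO_F\llbracket T_P(\bZ_p)\rrbracket$ preserves freeness), hence itself $\Lambda_P$-free. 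Proposition \ref{prop:key} then promotes $e_P$ from $\pV^{SP,r,\flat}$ to all of $\pV^{SP,r}$ with $e_P\pV^{SP,r}=e_P\pV^{SP,r,\flat}$, which yields (i). Applying $e_P$ to \eqref{eq:sesintro} and passing to Pontryagin duals produces the fundamental exact sequence (iv).

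For the control theorem (ii), the freeness established in (i) combined with Pontryagin duality identifies the specialization $\cM^r_{\Pord}\otimes_{\cO_F\llbracket T_P(\bZ_p)\rrbracket}\cO_F\llbracket T_P(\bZ_p)\rrbracket/\cP_{\utau^P}$ with $\varprojlim_m\varinjlim_l e_P V^{SP,r}_{m,l}[\utau^P]$ by the standard argument of \cite{HPEL}, and combining with the embedding \eqref{eq:embed2} of \cite{PilHida} produces \eqref{eq:cembed}. For (iii), when $\uep^P$ is trivial and $\ut^P$ is sufficiently regular, a vanishing theorem for the higher coherent cohomology of the automorphic bundle $\omega_{\imath(\ut^P)}$ on the toroidal compactification, applied inductively over the boundary filtration on $\pV^{SP,r}$, implies that every $p$-adic ordinary form in the specialization lifts to a classical Siegel modular form of weight $\imath(\ut^P)$, so \eqref{eq:cembed} becomes an isomorphism.

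The main obstacle is the construction of $\pV^{SP,r,\flat}$ together with Proposition \ref{prop:exact}: one must show that the flat subspace exactly captures the rank-$r$ boundary contribution and that its definition is $\bU^P_p$-stable. The $p$-integral failure of \eqref{eq:inv} prevents a naive identification of the graded pieces of the boundary filtration on $\pV^{SP,r}$ with cuspidal $p$-adic forms of genus $n-r$; the strategy is that restricting to $\pV^{SP,r,\flat}$ keeps only the part of the local model at rank-$r$ strata on which the $U_P$-action factors through a character agreeing with the one on $\pV^{SP_{n-r},0}_V$, so that the identification with the induced module $\bZ_p\llbracket T_P(\bZ_p)\rrbracket\otimes_{\bZ_p\llbracket T_{P_{n-r}}(\bZ_p)\rrbracket}\pV^{SP_{n-r},0}_V$ follows from a direct formal-completion computation along $Z_V$, and $\bU^P_p$-stability reduces to the compatibility of the Hecke correspondence at $p$ with this formal-neighborhood decomposition.
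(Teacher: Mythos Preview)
Your proposal is correct and follows essentially the same approach as the paper. Two small technical points to flag: first, the paper only establishes equivariance of the exact sequence \eqref{eq:exact1} for the subalgebra $\bU^{P,[N]}_p$ generated by $\varphi(N)$-th powers of the $U^P_{p,N_i}$ (Proposition \ref{prop:UpEquiv}), not for all of $\bU^P_p$, though this suffices for the idempotent $e_P$; second, for (iii) the paper invokes the uniform boundedness of the dimension of ordinary forms of weight $\ut+k$ from \cite{TU} and then argues as in \cite[\S3.7]{HPEL}, rather than a direct higher coherent cohomology vanishing.
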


\begin{rem}
For two different $P$, $P'$, there are no inclusion relations between $\cM^r_{\Pord}$ and $\cM^r_{P'\text{-ord}}$, thus in order to study certain Siegel modular forms by using Hida theory, one needs first to specify a parabolic $P$ containing the standard Borel subgroup of $\mr{GL}(n)$ such that the Siegel modular forms under inspection are $SP(\bZ_p)$-invariant and $P$-ordinary. There is not such Hida theory (as to the authors' knowledge) that studies $P$-ordinary Siegel modular forms for all $P$ simultaneously. On the contrary, when studying families of finite slope families, there is no need to specify a parabolic as the theory established for the Borel (e.g. \cite{AIP}) treats all Siegel modular forms of finite slope.
\end{rem}

The remaining part of this section is devoted to proving this theorem. The proof relies on a careful study of the quotient $\pV^{SP,r}/\pV^{SP,r-1}$ and the boundary of the Igusa tower, which leads to the definition of the subspace $V^{SP,r,\flat}_{m,l}\subset V^{SP,r}_{m,l}$. This subspace is characterized by the vanishing along certain connected components of the Igusa tower over $\coprod\limits_{\substack{V\in\fC_{\bV}/\Gamma\\\mathrm{rk}\,V=r}}Z_{V,\ord}$, and plays an important role in our proof of the above theorem.

\subsection{The Mumford construction}\label{sec:Mum}
We quickly recall the Mumford construction which will be used in the description of the fiber of the push-forward of the ideal sheaf $\cI^r_{X^{\Sigma}}$, as well as in the definition of $q$-expansions. 

Given a free $\mb{Z}$-module $X_r$ of rank $r$ with basis $x_1,\cdots,x_r$, set $X^*_r$ to be its dual free $\mb{Z}$-module with dual basis $x^*_1,\cdots,x^*_r$. Let $\sT_{n-r,m,l}$ the Igusa tower, $m,l\geq 1$, over the degree $(n-r)$ Siegel variety of principal level $N$ and $(\mc{A}_{/\sT_{n-r,m,l}},\psi_{N,\sT_{n-r,m,l}},\phi_{p,\sT_{n-r,m,l}})$ be the universal object over it. 

The extensions of $\mc{A}_{/\sT_{n-r,m,l}}$ by the torus $X^*_r\otimes\mb{G}_m$ are parametrized by $\mr{Hom}_{\sT_{n-r,m,l}}(X_r,\mc{A}_{/\sT_{n-r,m,l}})$. Let $\mc{B}_{X^*_r,m,l}$ be an abelian scheme which is isogenous to 
$$\mr{Hom}_{\sT_{n-r,m,l}}(N^{-1}X_r,\mc{A}_{/\sT_{n-r,m,l}})$$
via an isogeny of degree a power of $p$, related to the $p$-level structure of the Igusa tower. Given $\mu\in N^{-1}X_r$, there is tautologically a map $c(\mu):\mc{B}_{X^*_r,m,l}\ra \mc{A}_{/\sT_{n-r,m,l}}$ through evaluation at $\mu$. Denote by $S^2(X_r)$ the symmetric quotient of $X_r\otimes_\mb{Z} X_r$. Let $\mc{P}\ra \mc{A}_{/\sT_{n-r,m,l}}\times_{\sT_{n-r,m,l}}\mc{A}_{/\sT_{n-r,m,l}}$ be the Poincar\'{e} bundle and $\mc{P}^\times\ra \mc{A}_{/\sT_{n-r,m,l}}\times_{\sT_{n-r,m,l}}\mc{A}_{/\sT_{n-r,m,l}}$ be its associated $\mb{G}_m$-torsor.

Pick a basis $[\mu_i\otimes\nu_i]$, $1\leq i\leq r(r-1)/2$, of $N^{-1}S^2(X_r)$ with $\mu_i,\nu_i$ belonging to $N^{-1}X_r$.  Associated to each $[\mu_i\otimes\nu_i]$ there is a map
\begin{equation*}
   c(\mu_i)\times c(\nu_i):\mc{B}_{X^*_r,m,l}\lra \mc{A}_{/\sT_{n-r,m,l}}\times_{\sT_{n-r,m,l}}\mc{A}_{/\sT_{n-r,m,l}},
\end{equation*}
along which one can pull back the Poincar\'{e} bundle and its associated $\bG_m$-torsor. Define
$$\mc{M}_{X^*_r,m,l}=\prod_{i}(c(\mu_i)\times c(\nu_i))^*(\mc{P}^\times)^{\otimes N},$$ 
which is a torsor over $\mc{B}_{X^*_r,m,l}$ for the torus $\mr{Hom}_{\mb{Z}}(N^{-1}S^2(X_r),\mb{G}_m)$. For $\lambda=\sum_{i=1}^{r(r-1)/2} a_i[\mu_i\otimes\nu_i]\in S^2(X_r)$, define the invertible sheaf $\cL(\lambda)$ over $\mc{B}_{X^*_r,m,l}$ as
$$
   \cL(\lambda)=\bigotimes_i c(\mu_i)\times c(\nu_i))^*\cP^{\otimes a_iN}.
$$
We have
\begin{equation*}
   \left(\cM_{X^*_r,m,l}\ra \sT_{n-r,m,l}\right)_*\cO_{\cM_{X^*_r,m,l}}=\bigoplus _{\lambda\in N^{-1}S^2(X_r)}\mr H^0\left(\cB_{X^*_r,m,l},\,\cL(\lambda)\right).
\end{equation*}

Now suppose $\sigma\subset C(X_r)^\circ$ is a cone generated by a set of elements that extends to a basis of the space of symmetric bilinear forms on $X_r$. Let $\mc{M}_{X^*_r,m,l}\hra \mc{M}_{X^*_r,m,l,\sigma}$ be the affine torus embedding over $\mc{B}_{X^*_r,m,l}$ corresponding to $\sigma$. Denote by $\sigma^\vee$ the dual cone of $\sigma$ consisting of elements in $S^2(X_r)\otimes\bR$ whose pairing with any element in $\sigma$ is non-negative. Let $\sigma^{\vee,\circ}$ be the interior of $\sigma^\vee$. Let $\cI^\sigma_{\mc{M}_{X^*_r,m,l,\sigma}}$ be the ideal sheaf inside $\cO_{\mc{M}_{X^*_r,m,l,\sigma}}$ attached to the boundary of the affine torus embedding. Then
\begin{align*}
  \numberthis\label{eq:O}\left(\cM_{X^*_r,m,l,\sigma}\ra \sT_{n-r,m,l}\right)_*\cO_{\cM_{X^*_r,m,l,\sigma}}&=\bigoplus _{\lambda\in N^{-1}S^2(X_r)\cap\,\sigma^\vee} \mr H^0\left(\cB_{X^*_r,m,l},\,\cL(\lambda)\right),\\
   \numberthis\label{eq:I} \left(\cM_{X^*_r,m,l,\sigma}\ra \sT_{n-r,m,l}\right)_*\cI^\sigma_{\cM_{X^*_r,m,l,\sigma}}&=\bigoplus _{\lambda\in N^{-1}S^2(X_r)\cap\,\sigma^{\vee,\circ}}\mr H^0\left(\cB_{X^*_r,m,l},\,\cL(\lambda)\right).
\end{align*}

Let $\wh{\mc{M}}_{X^*_r,m,l,\sigma}$ be the formal completion along the boundary of the torus embedding. The natural map $X_r\ra\mr{Hom}(X_r,S^2(X_r))$ defines a period subgroup $N^{-1}X_r\subset X^*_r\otimes\mb{G}_{m/\mb{Z}[N^{-1}S^2(X_r)]}$ with a polarization given by the duality between $X_r$ and $X_r^*$. The Mumford construction gives a principally polarized semi-abelian scheme $\mc{G}_{/\wh{\mc{M}}_{X^*_r,m,l,\sigma}}$,  together with a canonical principal level $N$ structure $\psi_{N,\mr{can}}:(\mb{Z}/N\mb{Z})^{2n}\ra\mc{G}_{/\wh{\mc{M}}_{X^*_r,m,l,\sigma}}[N]$ and a canonical trivialization $\phi_{p,\mr{can}}:\mu^n_{p^l}\stackrel{\sim}{\ra}\mc{G}_{/\wh{\mc{M}}_{X^*_r,m,l,\sigma}}[p^l]^\circ$, which comes from the level structure parametrized by $\sT_{n-r,m,l}$, the extension data parametrized by $\mc{B}_{X^*_r,m,l}$ plus the fixed basis of $X_r$.

\subsection{The fiber of the push-forward to the minimal compactification}
Let $\sT^{\star}_{SP,m,l}$ be the Stein factorization of $\sT^\Sigma_{SP,m,l}\ra X^{\star,\ord}$,
\begin{equation}\label{eq:Stein}
  \xymatrixcolsep{5pc}\xymatrix{
   \sT^{\Sigma}_{SP,m,l}\ar[r]^{f_{m,l}}\ar[d]^{\pi_\sT}& X^{\Sigma,\ord}_m\ar[d]^{\pi}\\
   \sT^{\star}_{SP,m,l}\ar[r]& X^{\star,\ord}_m.
}
\end{equation}
The scheme $\sT^{\Sigma}_{SP,m,l}$ (resp. $\sT^{\star}_{SP,m,l}$) can also be viewed as the partial toroidal (resp. minimal) compactification of the Igusa tower $\sT_{SP,m,l}$ over $Y^\ord_m$, which is a special case of the construction in \cite{LanOrd}. They admit a similar description as $X^\Sigma$, $X^\star$.

Let $\mf{C}_{\mb{V},\,p^l}\subset\fC_\bV$ be the orbit of $\{V_0,V_1,\cdots,V_r\}$ under the action of the group $ \Gamma_0(p^l)$ and define $\mc{C}_{\mb{V},\,p^l}$ from $\mf{C}_{\mb{V},\,p^l}$ in the same way as $\mc{C}_{\mb{V}}$ from $\fC_\bV$. The partial compactification  $\sT^{\star}_{SP,m,l}$ (resp. $\sT^{\Sigma}_{SP,m,l}$) is stratified by $\mf{C}_{\mb{V},\,p^l}/\Gamma\cap\Gamma_{SP}(p^l)$ (resp. the rational polyhedral cone decomposition $\Sigma_{\mc{C}_{\mb{V},\,p^l}}$ induced from $\Sigma$). The natural maps
\begin{align*}
   \numberthis\label{eq:pc}\fp_{\mf{C},l}:\mf{C}_{\mb{V},p^l}/\Gamma\cap\Gamma_{SP}(p^l)&\lra\mf{C}_{\mb{V}}/\Gamma,\\
   \fp_{\mc{C},l}:\Sigma_{\mc{C}_{\mb{V},p^l}}/\Gamma\cap\Gamma_{SP}(p^l)&\lra\Sigma_{\mc{C}_{\mb{V}}}/\Gamma
\end{align*}
are surjective.

In order to distinguish from the notation for the stratum indices for $X^\star$, $X^\Sigma$, we use an extra $\wt{\,\,\,}$ to denote the stratum indices for $\sT^{\star}_{SP,m,l}$, $\sT^{\Sigma}_{SP,m,l}$, {\it i.e.} we write $\wt{V}$, $\wt{\sigma}$ for elements in $\mc{C}_{\mb{V},\,p^l}/\Gamma\cap\Gamma_{SP}(p^l)$, $\Sigma_{\cC_{\bV,\,p^l}}/\Gamma\cap\Gamma_{SP}(p^l)$. The stratum in $\sT^{\star}_{SP,m,l}$ (resp. $\sT^{\Sigma}_{SP,m,l}$) associated to $\wt{V}$ (resp. $\wt{\sigma}$, $\wt{V}$) will be denoted as $\sT_{\wt{V},m,l}$ (resp. $Z_{\wt{\sigma},m,l}$, $Z_{\wt{V},m,l}$). 

The stratum $\sT_{\wt{V},m,l}$ is isomorphic to the quotient of the full level $l$ Igusa tower over $Y_{\wt{V},m}$ by $\mr{Im}\left(\Gamma_{\wt{V}}\cap \Gamma_{SP}(p^l)\ra\Sp(\wt{V}^\bot/\wt{V},\bZ/p^l)\right)$, where $\Gamma_{\wt{V}}\subset \Gamma$ is the subgroup mapping $\wt{V}$ to itself. For $\wt{V}=V_{\wt{\sigma}}$, the diagram below describes the completion of $\sT^{\Sigma}_{SP,m,l}$ along the stratum $Z_{\wt{\sigma},m,l}$,
$$\xymatrix{
   \mc{M}_{\wt{V},m,l}\ar@{^{(}->}[r]\ar[d]&\mc{M}_{\wt{V},m,l,\wt{\sigma}}\ar[dl]\ar@{^{(}->}[r]&\mc{M}_{\wt{V},m,l,\Sigma_{\wt{V}}}\ar[dll]\\
	 \mc{B}_{\wt{V},m,l}\ar[d]\\
	 \sT_{\wt{V},m,l}.
}$$
Here $\mc{B}_{\wt{V},m,l}$, $\mc{M}_{\wt{V},m,l}$, $\mc{M}_{\wt{V},m,l,\wt{\sigma}}$ are the objects constructed in \S\ref{sec:Mum} with $X^*_r=\wt{V}$, and $\mc{M}_{\wt{V},m,l,\Sigma_{\wt{V}}}$ is the torus embedding associated to the rational polyhedral cone decomposition $\Sigma_{\wt{V}}$ of $C(\bV/\wt{V}^\bot)$ given by $\Sigma_{\cC_\bV,\,p^l}$. Let $\wh{\mc{M}}_{\wt{V},m,l,\wt{\sigma}}$ be the completion of $\mc{M}_{\wt{V},m,l,\Sigma_{\wt{V}}}$ along the closure of the stratum attached to $\sigma$. Then the completion of $\sT^{\Sigma}_{SP,m,l}$ along $Z_{\wt{\sigma},m,l}$ is isomorphic to $\wh{\mc{M}}_{\wt{V},m,l,\wt{\sigma}}/\Gamma_{\GL(\bV/\wt{V}^\bot)}(p^l)$, where $\Gamma_{\GL(\bV/\wt{V}^\bot)}(p^l)$ equals $\mr{Im}\left(\Gamma_{\wt{V}}\cap\Gamma_{SP}(p^l)\ra \GL(\bV/\wt{V}^\bot)\right)$.

Denote by $\cI^r_{\sT^\Sigma_{SP,m,l}}$ (resp. $\cI^r_{\sT^\star_{SP,m,l}}$) the ideal sheaf attached to the union of all strata inside $\sT^{\Sigma}_{SP,m,l}$ (resp. $\sT^{\star}_{SP,m,l}$) with cusp labels of rank $> r$. The ideal sheaf $\cI^r_{\sT^\Sigma_{SP,m,l}}$ equals $f^*_{m,l}\cI^r_{X^{\Sigma,\ord}_m}$ as $f_{m,l}$ is \'{e}tale. 

Since $\pi_{\sT,*}\cO_{\sT^{\Sigma}_{SP,m,l}}=\cO_{\sT^{\star}_{SP,m,l}}$, applying $\pi_{\sT,\ast}$ to the short exact sequence
\begin{equation*}
   0\lra \cI^r_{\sT^\Sigma_{SP,m,l}} \lra \cO_{\sT^{\Sigma}_{SP,m,l}}\lra \iota_{r,*}\cO_{\coprod_{\wt{V}\in\fC_{\bV,\,p^l}/\Gamma\cap\Gamma_{SP}(p^l),\,\mr{rk}\wt{V}> r}Z_{\wt{V},m,l}}\lra 0,
\end{equation*}
we get
\begin{equation}\label{eq:Imsheaf}
   0\lra \pi_{\sT,\ast}\cI^r_{\sT^\Sigma_{SP,m,l}} \lra \cO_{\sT^{\star}_{SP,m,l}}\lra (\pi\circ\iota_{r})_*\cO_{\coprod_{\wt{V}\in\fC_{\bV,\,p^l}/\Gamma\cap\Gamma_{SP}(p^l),\,\mr{rk}\wt{V}\geq r}Z_{\wt{V},m,l}},
\end{equation}
where $\iota_r:\coprod_{\wt{V}\in\fC_{\bV,\,p^l}/\Gamma\cap\Gamma_{SP}(p^l),\,\mr{rk}\wt{V}> r}Z_{\wt{V},m,l}\ra \sT^\Sigma_{SP,m,l}$ is the canonical closed embedding. Since by definition the stratum $\sT_{\wt{V},m,l}$ is the image of $Z_{\wt{V},m,l}$, we see from \eqref{eq:Imsheaf} that
\begin{equation*}
    \cI^r_{\sT^\star_{SP,m,l}}=\pi_{\sT,\ast}\cI^r_{\sT^\Sigma_{SP,m,l}}.
\end{equation*}

The above description of the completion of $\sT^{\Sigma}_{SP,m,l}$ along $Z_{\wt{\sigma},m,l}$, combined with \eqref{eq:O} and \eqref{eq:I}, gives the following description of the fiber of the structure and ideal sheaves at a closed point $x\in \sT_{\wt{V},m,l}\subset\sT^\star_{SP,m,l}$,
\begin{align*}
   \numberthis\label{eq:piO}\left(\cO_{\sT^{\star}_{SP,m,l}}\right)^\wedge_x=\left(\pi_{\sT,*}\cO_{\sT^\Sigma_{SP,m,l}}\right)^{\wedge}_x&=\left(\bigcap_{\sigma\in C(\bV/\wt{V}^\bot)^\circ}\prod_{\lambda\in N^{-1}S^2(\bV/\wt{V}^\bot)\cap\,\sigma^\vee} \mr H^0(\wh{\cB}_{\wt{V},m,l,x},\,\cL(\lambda))\right)^{\Gamma_{\GL(\bV/\wt{V}^\bot)}(p^l)}\\
   &=\left(\prod_{\lambda\in N^{-1}S^2(\bV/\wt{V}^\bot)_{\geq 0}} \mr H^0(\wh{\cB}_{\wt{V},m,l,x},\,\cL(\lambda))\right)^{\Gamma_{\GL(\bV/\wt{V}^\bot)}(p^l)},
\end{align*}
and
\begin{align*}
  \numberthis\label{eq:piI} &\left(\cI^r_{\sT^{\star}_{SP,m,l}}\right)^\wedge_x=\left(\pi_{\sT,*}\cI^r_{\sT^\Sigma_{SP,m,l}}\right)^{\wedge}_x\\
  =&\left(\pi_{\sT,*}\cO_{\sT^\Sigma_{SP,m,l}}\right)^{\wedge}_x\cap \left(\bigcap_{\substack{\sigma\in C(\bV/\wt{V}^\bot)^\circ\\\mr{rk}\,\wt{V}_{\wt{\sigma}}>r}}\prod_{\lambda\in N^{-1}S^2(\bV/\wt{V}^\bot)\cap\,\sigma^{\vee,\circ}} \mr H^0(\wh{\cB}_{\wt{V},m,l,x},\,\cL(\lambda))\right)^{\Gamma_{\GL(\bV/\wt{V}^\bot)}(p^l)}\\
   =&\left(\prod_{\substack{\lambda\in N^{-1}S^2(\bV/\wt{V}^\bot)_{\geq 0}\\ \mr{rk}\, \lambda\geq \,\mr{rk}\,\wt{V}-r}} \mr H^0(\wh{\cB}_{\wt{V},m,l,x},\,\cL(\lambda))\right)^{\Gamma_{\GL(\bV/\wt{V}^\bot)}(p^l)},
\end{align*}
where $\wh{\cB}_{\wt{V},m,l,x}$ denotes the completion of $\cB_{\wt{V},m,l}$ along its fiber over $x$. 

\begin{rem}\label{rmk:Vbc}
The invertible sheaf $\cL(\lambda)$ is the pull-back of an ample line bundle on a quotient of the abelian scheme $\cB_{\wt{V},m,l}$. Thus in particular, taking the global sections commutes with base change (cf. \cite[p. 155]{FC}). Therefore \eqref{eq:piI} implies that for the ideal sheaf $\cI^r_{\sT^\star_{SP,m,l}}$, the push-forward $\pi_{\sT,*}$ commutes with the base change. Since $\sT^\star_{SP,m,l}$ is affine, we see that the base change property holds, \textit{i.e.}
\begin{equation}\label{eq:bcV}
   V^{SP,r}_{m,l}=V^{SP,r}_{m+1,l}\otimes\bZ/p^m.
\end{equation}
\end{rem}

\subsection{\texorpdfstring{The quotient $\left.V^{SP,r}_{m,l}\middle/V^{SP,r-1}_{m,l}\right.$}{The quotient Vr/Vr}}
Since $X^{\star,\ord}$ is affine, we have
$$ \left.V^{SP,r}_{m,l}\middle/V^{SP,r-1}_{m,l}\right.=\mr H^0\left(X^{\star,\ord}_m,\,\left.\pi_* f_{m,l,*}\cI^r_{\sT^\Sigma_{SP,m,l}}\middle/\pi_* f_{m,l,*}\cI^{r-1}_{\sT^\Sigma_{SP,m,l}}\right.\right).
$$
We need to analyze the quotient $\left.\pi_* f_{m,l,*}\cI^r_{\sT^\Sigma_{SP,m,l}}\middle/\pi_* f_{m,l,*}\cI^{r-1}_{\sT^\Sigma_{SP,m,l}}\right.$. It is easily seen that this quotient sheaf is supported on $\coprod\limits_{W\in\fC_{\bV}/\Gamma,\mr{rk} W\geq r} Y^{\ord}_{W,m}\subset X^{\star,\ord}_m$. For $x\in \sT_{\wt{W},m,l}$ with $\wt{W}\in\fC_{\bV,\,p^l}/\Gamma\cap\Gamma_{SP}(p^l)$ of rank $\geq r$, using \eqref{eq:piI} we get
\begin{align*}
   \numberthis\label{eq:piq} \left(\left.\pi_{\sT,*}\cI^r_{\sT^\Sigma_{SP,m,l}}\middle/\pi_{\sT,*}\cI^{r-1}_{\sT^\Sigma_{SP,m,l}}\right.\right)^{\wedge}_x&=\left(\prod_{\substack{\lambda\in N^{-1}S^2(\bV/\wt{W}^\bot)_{\geq 0}\\ \mr{rk}\lambda=\mr{rk}\wt{W}-r}} \mr H^0(\wh{\cB}_{\wt{W},m,l,x},\,\cL(\lambda))\right)^{\Gamma_{\GL(\bV/\wt{W}^\bot)}(p^l)}\\
   &=\left(\prod_{\substack{\wt{V}\in\fC_{\bV,\,p^l}/\Gamma\cap\Gamma_{SP}(p^l)\\\wt{V}\subset\wt{W},\,\mr{rk}\wt{V}=r}}\,\prod_{\substack{\lambda\in N^{-1}S^2(\bV/\wt{W}^\bot)_{\geq 0}\\ \mr{ker}\lambda=\wt{V}}}\mr H^0(\wh{\cB}_{\wt{W},m,l,x},\,\cL(\lambda))\right)^{\Gamma_{\GL(\bV/\wt{W}^\bot)}(p^l)}\\
   &=\prod_{\substack{\wt{V}\in\fC_{\bV,\,p^l}/\Gamma\cap\Gamma_{SP}(p^l)\\\wt{V}\subset\wt{W},\,\mr{rk}\wt{V}=r}}\left(\prod_{\lambda\in N^{-1}S^2(\wt{V}^\bot/\wt{W}^\bot)_{>0}} \mr H^0(\wh{\cB}_{\wt{W},\wt{V},m,l,x},\,\cL(\lambda))\right)^{\Gamma_{\GL(\wt{V}^\bot/\wt{W}^\bot)}(p^l)}.
\end{align*} 
Here $\cB_{\wt{W},\wt{V},m,l}$ is the abelian scheme over $\sT_{\wt{W},m,l}$ obtained as the quotient of $\cB_{\wt{W},m,l}$ by $\wt{V}$. It is $p$-power isogenous to 
\begin{equation*}
   \Hom_{\sT_{\wt{W},m,l}}(N^{-1}(\wt{V}^\bot/\wt{W}^\bot),\,\cA_{\sT_{\wt{W},m,l}}).
\end{equation*}
The invertible sheaf $\cL(\lambda)$ over $\cB_{\wt{W},\wt{V},m,l}$ with $\lambda\in N^{-1}S^2(\wt{V}^\bot/\wt{W})_{>0}$ is defined in the way as described in \S\ref{sec:Mum}. The group $\Gamma_{\GL(\wt{V}^\bot/\wt{W}^\bot)}(p^l)$ is the image of the stablilizer of $\wt{V}^\bot/\wt{W}^\bot$ inside $\Gamma_{\GL(\bV/\wt{W}^\bot)}(p^l)$. 

For each $\wt{V}\in\fC_{\bV,\,p^l}/\Gamma\cap\Gamma_{SP}(p^l)$, there is a closed embedding
\begin{equation*}
   \iota^\star_{\wt{V}}:\sT^\star_{\wt{V},m,l}\lhra \sT^\star_{SP,m,l},
\end{equation*}
where $\sT^\star_{\wt{V},m,l}$ is the partial minimal compactification of the stratum $\sT_{\wt{V},m,l}$. The image is the Zariski closure of the stratum $\sT_{\wt{V},m,l}$ inside $\sT^\star_{SP,m,l}$, which equals the union of all strata with cusp labels containing $\wt{V}$. Like before one can define the sheaf of ideals $\cI^s_{\sT^\star_{\wt{V},m,l}}$ for $0\leq r\leq \mr{rk}\,\wt{V}$.

We define the group $P^\circ_{n,r}(\bZ/p^l)$ as the image of the map
\begin{align*}
\Gamma_{V_r}\cap \Gamma_0(p^l)&\lra \GL(n,\bZ/p^l)\\
\begin{blockarray}{ccccc}
   r&n-r&r&n-r\\
   \begin{block}{(cccc)c}
   \alpha&u&\ast&\ast&r\\
   0&a&\ast&b&n-r\\
   0&0&\alpha^{-1}&0&r\\
   0&c&v&d&n-r\\
   \end{block}
\end{blockarray} &\longmapsto \begin{pmatrix}\alpha&u\\0&a\end{pmatrix}\mod p^l,
\end{align*}
which is easily seen equal to $\begin{pmatrix}\SL(r,\bZ/p^l)&\ast\\0&\GL(n-r,\bZ/p^l)\end{pmatrix}$.

\begin{prop} 
There are the following short exact sequences, 
\begin{equation}\label{eq:quot1}
   0\lra\pi_{\sT,*}\cI^{r-1}_{\sT^\Sigma_{SP,m,l}}\lra\pi_{\sT,*}\cI^{r}_{\sT^\Sigma_{SP,m,l}}\lra \bigoplus_{\substack{\wt{V}\in\fC_{\bV,\,p^l}/\Gamma\cap\Gamma_{SP}(p^l)\\\mr{rk}\,\wt{V}=r}}\iota^\star_{\wt{V},*}\cI^0_{\sT^\star_{\wt{V},m,l}}\lra 0,
\end{equation}
\begin{equation}\label{eq:quot2}
   0\lra\pi_{*}f_{m,l,*}\cI^{r-1}_{\sT^\Sigma_{SP,m,l}}\lra\pi_{*}f_{m,l,*}\cI^{r}_{\sT^\Sigma_{SP,m,l}}\lra \bigoplus_{\substack{V\in\fC_{\bV}/\Gamma\\\mr{rk}\,V=r}}\left(\bigoplus_{\wt{V}\in\fp_{\fC,l}^{-1}(V)}\iota^\star_{\wt{V},*}\cI^0_{\sT^\star_{\wt{V},m,l}}\right)\lra 0,
\end{equation}
where $\fp_{\fC,l}$ is the projection defined in \eqref{eq:pc} and
\begin{equation}\label{eq:quot3}
   \fp_{\fC,l}^{-1}(V)\simeq\Gamma_{\wt{V}}\cap\Gamma_0(p^l)\left\backslash \Gamma\cap\Gamma_0(p^l)\right/\Gamma\cap\Gamma_{SP}(p^l)\simeq P^\circ_{n,r}(\bZ/p^l)\left\backslash\GL(n,\bZ/p^l)\right/SP(\bZ/p^l).
\end{equation}
\end{prop}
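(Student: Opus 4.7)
The plan is to deduce all three assertions from the stalk-level Mumford-theoretic formulas \eqref{eq:piO}, \eqref{eq:piI} and \eqref{eq:piq}, together with an orbit-stabilizer argument for the double-coset description.

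For \eqref{eq:quot1} I would verify exactness on stalks over $\sT^\star_{SP,m,l}$. Outside $\bigcup_{\mr{rk}\,\wt W\geq r}\sT_{\wt W,m,l}$ all three sheaves vanish. At a geometric point $x\in \sT_{\wt W,m,l}$ with $\mr{rk}\,\wt W\geq r$, formula \eqref{eq:piq} rewrites the middle quotient as a direct product indexed by rank-$r$ sublattices $\wt V\subset\wt W$ in $\fC_{\bV,p^l}/\Gamma\cap\Gamma_{SP}(p^l)$. The crux is to identify the $\wt V$-indexed factor with the stalk at $x$ of $\iota^\star_{\wt V,*}\cI^0_{\sT^\star_{\wt V,m,l}}$. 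This follows by applying \eqref{eq:piI} to the smaller Igusa tower $\sT^\star_{\wt V,m,l}$, whose ambient lattice is $\bV':=\wt V^\bot/\wt V$ and whose relevant cusp label is $\wt W/\wt V$: the identifications $\bV'/(\wt W/\wt V)^\bot=\wt V^\bot/\wt W^\bot$ and $\mr{rk}(\wt W/\wt V)=\mr{rk}\,\wt W-r$ make the vanishing condition of $\cI^0$ force $\lambda\in N^{-1}S^2(\wt V^\bot/\wt W^\bot)_{>0}$, matching exactly the inner factor of \eqref{eq:piq}. The surjection $\pi_{\sT,*}\cI^r\to\bigoplus_{\wt V}\iota^\star_{\wt V,*}\cI^0_{\sT^\star_{\wt V,m,l}}$ is then given by Fourier--Jacobi projection along each $\wt V$, and its kernel is $\pi_{\sT,*}\cI^{r-1}$ by comparison of the stalk formulas for Fourier indices of rank strictly greater than $r-1$ versus exactly equal to $r$.

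To pass from \eqref{eq:quot1} to \eqref{eq:quot2}, I would push forward along the map $q\colon\sT^\star_{SP,m,l}\to X^{\star,\ord}_m$ in the bottom row of the Stein factorization \eqref{eq:Stein}. The composite $\pi\circ f_{m,l}$ is proper (being the composition of a finite \'etale cover, a finite-group quotient, and the proper $\pi$), so $q$ is finite, $R^iq_*=0$ for $i\geq 1$, and $q_*$ preserves exactness. Using $\pi_*f_{m,l,*}=q_*\pi_{\sT,*}$ and regrouping the right-hand direct sum by the image cusp label $V:=\fp_{\fC,l}(\wt V)\in\fC_\bV/\Gamma$ produces \eqref{eq:quot2}. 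For \eqref{eq:quot3}, fix a representative $\wt V$ mapping to $V$. Since $\fC_{\bV,p^l}$ is by definition the $\Gamma_0(p^l)$-orbit of $\{V_0,\dots,V_r\}$, every class in $\fp_{\fC,l}^{-1}(V)$ admits a representative of the form $\gamma\wt V$ with $\gamma\in\Gamma\cap\Gamma_0(p^l)$, yielding a surjection $\Gamma\cap\Gamma_0(p^l)\twoheadrightarrow\fp_{\fC,l}^{-1}(V)$ whose fibres are the stated double cosets. For the second isomorphism, reduction modulo $p^l$ sends $\Gamma\cap\Gamma_0(p^l)$ onto the mod-$p^l$ Siegel parabolic of $\Sp(2n,\bZ/p^l)$ (using $\gcd(N,p)=1$, which ensures that $\Gamma$ surjects onto $\Sp(2n,\bZ/p^l)$), whose Levi quotient via \eqref{eq:QG} is $\GL(n,\bZ/p^l)$; the images of $\Gamma_{V_r}\cap\Gamma_0(p^l)$ and $\Gamma\cap\Gamma_{SP}(p^l)$ are then $P^\circ_{n,r}(\bZ/p^l)$ and $SP(\bZ/p^l)$ by the very definitions of these groups.

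The main technical obstacle is the stalk matching in the first step: one must check that the semi-abelian scheme and line bundles used in the Mumford description of $\sT^\star_{\wt V,m,l}$ at the stratum $\sT_{\wt W,m,l}$ agree, via the canonical $p$-power isogeny built into the Igusa tower, with the data $(\cB_{\wt W,\wt V,m,l},\cL(\lambda))$ appearing in \eqref{eq:piq}, and that the finite arithmetic groups acting on each side are compatibly identified with $\Gamma_{\GL(\wt V^\bot/\wt W^\bot)}(p^l)$. Once this functoriality of the Mumford construction across the two Igusa towers is verified, the remainder of the argument is routine bookkeeping.
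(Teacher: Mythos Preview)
Your proposal is correct and follows essentially the same route as the paper: both arguments deduce \eqref{eq:quot1} from the stalk formulas \eqref{eq:piI} and \eqref{eq:piq}, and then obtain \eqref{eq:quot2} by pushing forward to $X^{\star,\ord}_m$ and regrouping the right-hand side by cusp labels $V\in\fC_{\bV}/\Gamma$. Your treatment is in fact more explicit than the paper's in two places: you spell out the finiteness of the Stein-factorization map $q$ to justify exactness after push-forward (the paper simply says \eqref{eq:quot2} is obtained ``by rewriting the term at the right end''), and you give the orbit--stabilizer argument for \eqref{eq:quot3}, which the paper leaves implicit. The one point the paper highlights that you leave in the background is why the right-hand side of \eqref{eq:quot1} is a genuine direct sum of sheaves rather than something glued: the paper observes that for $\wt V\neq\wt V'$ the closed subschemes $\sT^\star_{\wt V,m,l}$ and $\sT^\star_{\wt V',m,l}$ intersect only inside the vanishing locus of $\cI^0_{\sT^\star_{\wt V,m,l}}$, so the summands have disjoint supports after restriction to the relevant open pieces; your stalkwise argument implicitly contains this, but it is worth stating.
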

\begin{proof}
The short exact sequence \eqref{eq:quot1} follows directly from our above description in \eqref{eq:piI} and \eqref{eq:piq} of the fibers of the relevant sheaves on the partial minimal compactification. The term at the right end is a direct sum because the intersection between $\sT^\star_{\wt{V},m,l}$ and $\sT^\star_{\wt{V}',m,l}$, $\wt{V}\neq\wt{V}'$, lies inside the closed subscheme defining the ideal sheaf $\cI^0_{\sT^\star_{\wt{V}m,l}}$. The exact sequence \eqref{eq:quot2} is obtained from \eqref{eq:quot1} by rewriting the term at the right end. 
\end{proof}

By taking global sections, \eqref{eq:quot2} gives
\begin{equation}\label{eq:quotV}
   0\lra V^{SP,r-1}_{m,l}\lra V^{SP,r}_{m,l}\lra \bigoplus_{\substack{V\in\fC_{\bV}/\Gamma\\\mr{rk}\,V=r}}\left(\bigoplus_{\wt{V}\in\fp_{\fC,l}^{-1}(V)} H^0\left(\sT^\star_{\wt{V}m,l},\,\cI^0_{\sT^\star_{\wt{V}m,l}}\right)\right)\lra 0.
\end{equation}
We see that the quotient $\left.V^{SP,r}_{m,l}\middle/V^{SP,r-1}_{m,l}\right.$ is a direct sum of cuspidal $p$-adic Siegel modular forms with $p^m$-torsion coefficients of level $l$ and degree $n-r$ with respect to certain parabolics. 

The action of the group $T_P(\bZ_p)$ permutes the summands of $\left.V^{SP,r}_{m,l}\middle/V^{SP,r-1}_{m,l}\right.$. There are too many summands in the quotient in order for it to form a nice $\bZ/p^m\llbracket T_P(\bZ_p)\rrbracket $-module after taking direct limit with respect to $l$, or in other words the structure of the $T_P(\bZ_p)$-action on \eqref{eq:quot3} is in some sense too complicated as $l$ grows. The idea is that we pick out a single $T_P\llbracket \bZ_p\rrbracket $-orbit from   \eqref{eq:quot3} which patch nicely with $l$ growing.

\subsection{\texorpdfstring{The space $\pV^{SP,r,\flat}$}{The space Vr,flat}}
For $V\in\fC_{\bV}/\Gamma$ of rank $r$, consider 
\begin{equation*}
   \sT_{Z^\ord_V,SP,m,l}=Z^\ord_{V,m}\times_{X^\Sigma_m} \sT^\Sigma_{SP,m,l}=\coprod_{\wt{V}\in \fp_{\fC,l}^{-1}(V)}Z_{\wt{V},m,l},
\end{equation*}
the restriction of the $SP$-Igusa tower to the stratum $Z^\ord_{V,m}$. It is not connected if the set $\fp_{\fC,l}^{-1}(V)\simeq P^\circ_{n,r}(\bZ/p^l)\left\backslash\GL(n,\bZ/p^l)\right/SP(\bZ/p^l)$ has more than one element. For $r\leq n_d$, we will define a subscheme $\sT^\flat_{Z^\ord_V,SP,m,l}\subset \sT_{Z^\ord_V,SP,m,l}$ consisting of certain connected components which form a single orbit for the  $T_P(\bZ_p)$-action. The space $V^{SP,r,\flat}_{m,l}$ will be defined as the subspace of $V^{SP,r}_{m,l}$ consisting of sections vanishing outside $\sT^\flat_{Z^\ord_V,SP,m,l}$.

Recall that the semi-abelian scheme $\mc{G}_{/\wh{\mc{M}}_{X^*_r,m,l,\sigma}}$ in the Mumford construction carries canonical level structures
\begin{align*}
   \psi_{N,\mr{can}}:(\mb{Z}/N\mb{Z})^{2n}&\lra\mc{G}_{/\wh{\mc{M}}_{X^*_r,m,l,\sigma}}[N], &\phi_{p,\mr{can}}:\mu^n_{p^l}&\stackrel{\sim}{\lra}\mc{G}_{/\wh{\mc{M}}_{X^*_r,m,l,\sigma}}[p^l]^\circ.
\end{align*}
We assume that if $\wt{V}=V_r$, the standard submodule of $\bV$ of rank $r$, and $\wt{\sigma}\in C(\bV/\wt{V}^\bot)$, then the restriction of the semi-abelian scheme $(\cG_{/\sT^\Sigma_{SP,m,l}},\psi_N,(E_i,\varepsilon_i)_{1\leq i\leq d,\,p^l})$ to the formal completion along $Z^\ord_{\wt{\sigma},m,l}$ is isomorphic to the one induced from $(\mc{G}_{/\wh{\mc{M}}_{X^*_r,m,l,\wt{\sigma}}}, \,\psi_{N,\mr{can}},\,\phi_{p,\mr{can}})$ (in other words, the level structures parametrized by cusps at infinity are the canonical ones).

Then for $\gamma=\begin{pmatrix}a_\gamma&b_\gamma\\c_\gamma&d_\gamma\end{pmatrix}\in\Gamma_0(p^l)$, $\wt{V}=\gamma\cdot V_r$ and $\wt{\sigma}=\gamma\cdot \sigma\in C(\bV/\wt{V}^\bot)$ for some $\sigma\in\Sigma$, the restriction of $\left(\cG_{/\sT^\Sigma_{SP,m,l}},\psi_N,(E_i,\varepsilon_i)_{1\leq i\leq d,\,p^l}\right)$ to the formal completion along $Z^\ord_{\wt{\sigma},m,l}$ is isomorphic to the one induced from
\begin{equation*}
   \left(\mc{G}_{/\wh{\mc{M}}_{X^*_r,m,l,\wt{\sigma}}}, \,\psi_{N,\mr{can}}\circ\gamma,\,\phi_{p,\mr{can}}\circ a_\gamma\right).
\end{equation*}

If we fix $V\in\fC_{\bV,p^l}$, the connected components of $\sT_{Z^\ord_V,SP,m,l}$ can be thought of in terms of the relation between the two-step filtration of $\cG_{/Z^\ord_{V,m}}[p^\infty]^\circ$ induced from 
\begin{equation}\label{eq:sabfil}
   0\lra V\otimes\bG_m\lra \cG_{/Z_{V},m}[p^\infty]^\circ \lra \cA_{/Y_V}\times_{Y_L}Z_V\lra 0,
\end{equation}
and the $d$-step filtration 
$$\{0\}=E_{\wt{V},0}\subset E_{\wt{V},1}\subset\cdots\subset E_{\wt{V},d}=\mc{G}_{/Z^\ord_{V},m}[p^l]^\circ$$
induced from the universal object $\left(\cG_{/\sT^\Sigma_{SP,m,l}},\psi_N,(E_i,\varepsilon_i)_{1\leq i\leq d,\,p^l}\right)$ restricted to $Z_{\wt{V},m,l}\subset \sT^\Sigma_{SP,m,l}$.

From now on assume $r\leq n_d$. Define 
\begin{align*}
   \fp^{-1}_{\fC,l}(V)^\flat&=\left\{\wt{V}\in\fp^{-1}_{\fC,l}(V):\, E_{\wt{V},d-1}\cap V\otimes \mu_{p^l}=0\right\},\\
   \sT^\flat_{Z^\ord_V,SP,m,l}&=\coprod_{\wt{V}\in\fp^{-1}_{\fC,l}(V)^\flat} Z_{\wt{V},m,l}\subset \sT_{Z^\ord_V,SP,m,l},
\end{align*}
{\it i.e.} the union of the connected components of $\sT_{Z^\ord_V,SP,m,l}$ for which the first $d-1$ steps of the parametized filtrations of $\cG_{/Z^\ord_{V,m}}[p^l]^\circ$ intersect trivially with the $p^l$-torsion of the torus part in \eqref{eq:sabfil}.

Under the natural map
\begin{align*}
   \fp_{\fC,l}^{-1}(V)&\lra SP(\bZ/p^l)\left\backslash\GL(n,\bZ/p^l)\right/P^\circ_{n,r}(\bZ/p^l)\\
   \wt{V}=\gamma\cdot V_r\quad(\gamma\in\Gamma_0(p^l))&\longmapsto a_\gamma,
\end{align*}
the set $\fp^{-1}_{\fC,l}(V)^\flat$ corresponds to
\begin{equation*}
   SP(\bZ/p^l)\left\backslash SP(\bZ/p^l)\begin{pmatrix}0&I_r\\I_{n-r}&0\end{pmatrix}P_{n,r}(\bZ/p^l)\right/P^\circ_{n,r}(\bZ/p^l)\subset SP(\bZ/p^l)\left\backslash\GL(n,\bZ/p^l)\right/P^\circ_{n,r}(\bZ/p^l),
\end{equation*}
with $P_{n,r}=\begin{pmatrix}\GL(r,\bZ/p^l)&M_{r,n-r}(\bZ/p^l)\\0&\GL(n-r,\bZ/p^l)\end{pmatrix}$. The action of $T_P(\bZ_p)$ on $\fp^{-1}_{\fC,l}(V)^\flat$ is transitive, and we have
\begin{align*}
  \numberthis\label{eq:index} \fp^{-1}_{\fC,l}(V)^\flat
   =\left\{\begin{array}{ll}\left\{\begin{pmatrix}0&I_r\\I_{n-r}&0\end{pmatrix}\right\},&\text{ if }r< n_d,\\ \begin{pmatrix}0&I_r\\I_{n-r}&0\end{pmatrix}\begin{pmatrix}I_{N_{d-1}}&0\\0&\GL(n_d,\bZ/p^l)/\SL(n_d,\bZ/p^l)\end{pmatrix}\simeq\left(\bZ/p^l\right)^\times,&\text{ if } r=n_d.\end{array}\right.
\end{align*}

We now define $\cI^{r,\flat}_{\sT^\Sigma_{SP,m,l}}\subset\cI^r_{\sT^\Sigma_{SP,m,l}}$ to be the sheaf of ideals associated to the closed subscheme given as the complement of $\coprod\limits_{V\in\fC_{\bV}/\Gamma,\,\mr{rk}{V}<r}\sT_{Z^\ord_V,SP,m,l}\cup\coprod\limits_{V\in\fC_{\bV}/\Gamma,\,\mr{rk}{V}=r}\sT^\flat_{Z^\ord_V,SP,m,l}$ inside $\sT^\Sigma_{SP,m,l}$, and define
\begin{align*}
   V^{SP,r,\flat}_{m,l} & =\mr{H}^0\left(\sT^{\Sigma}_{SP,m,l},\,\cI^{r,\flat}_{\sT^\Sigma_{SP,m,l}} \right)\subset V^{SP,r}_{m,l},\\
	 \pV^{SP,r,\flat} & =\varinjlim\limits_{m}\varinjlim\limits_l V^{SP,r,\flat}_{m,l}\subset\pV^{SP,r}.
\end{align*}
If follows from the definition and \eqref{eq:quotV} that
\begin{equation}\label{eq:quotVb}
   \left.V^{SP,r,\flat}_{m,l}\middle/V^{SP,r-1}_{m,l}\right.=\bigoplus_{\substack{V\in\fC_{\bV}/\Gamma\\\mr{rk}\,V=r}}\left(\bigoplus_{\wt{V}\in\fp_{\fC,l}^{-1}(V)^\flat} \mr H^0\left(\sT^\star_{\wt{V}m,l},\,\cI^0_{\sT^\star_{\wt{V}m,l}}\right)\right).
\end{equation}
The natural $T_P(\bZ_p)$-action on the left hand side induces a $T_P(\bZ_p)$-action on
\begin{equation}\label{eq:summand}
   \bigoplus_{\wt{V}\in\fp_{\fC,l}^{-1}(V)^\flat} \mr H^0\left(\sT^\star_{\wt{V}m,l},\,\cI^0_{\sT^\star_{\wt{V}m,l}}\right).
\end{equation}

Let 
\begin{align*}
   \numberthis P_{n-r}&=\left\{\left.\begin{pmatrix}a_1&*&*\\&\ddots&*\\&&a_d\end{pmatrix}\in\mr{GL}(n-r)\,\right|\,a_i\in\mr{GL}(n_i),\,1\leq i\leq d-1,\,a_d\in\GL(n_d-r)\right\},\\
   \numberthis SP_{n-r}&=\left\{\left.\begin{pmatrix}a_1&*&*\\&\ddots&*\\&&a_d\end{pmatrix}\in\mr{SL}(n-r)\,\right|\,a_i\in\mr{SL}(n_i),\,1\leq i\leq d-1,\,a_d\in\SL(n_d-r)\right\},\\
  \numberthis\label{eq:qT} T_{P_{n-r}}&=P_{n-r}/SP_{n-r}=\left\{\begin{array}{ll}\bG^d_m&\text{ if }r< n_d,\\ \bG^{d-1}_m&\text{ if }r=n_d.\end{array}\right.
\end{align*}
We know that for each $\wt{V}\in\fp^{-1}_{\fC,l}(V)^\flat$, we have
\begin{equation}\label{eq:qlevel}
   \mr{Im}\left(\Gamma_{\wt{V}}\cap \Gamma_{SP}(p^l)\ra\Sp(\wt{V}^\bot/\wt{V},\bZ/p^l)\right)\simeq \Gamma(N)\cap SP_{n-r}(\bZ).
\end{equation}

The embedding $P_{n-r}\hra P$ induces a morphism $T_{P_{n-r}}\ra T_P$, and the induced action of $T_{P_{n-r}}(\bZ_p)$ on  \eqref{eq:summand} preserves each direct summand, so equips each $\mr H^0\left(\sT^\star_{\wt{V}m,l},\,\cI^0_{\sT^\star_{\wt{V}m,l}}\right)$, $\wt{V}\in\fp^{-1}_{\fC,l}(V)^\flat$, with an $\cO_F\llbracket T_{P_{n-r}}(\bZ_P)\rrbracket $-module structure.

From \eqref{eq:qlevel}, we also know that for each $\wt{V}\in\fp^{-1}_{\fC,l}(V)^\flat$, the scheme $\sT^\star_{\wt{V},m,l}$ is isomorphic to the minimal compactification of the quotient by $SP_{n-r}(\bZ/p^l)$ of the full level $p^l$ Igusa tower over $Y^\ord_{V,m}$. Denote by $V^{SP_{n-r},0}_{V,m,l}$ the space of cuspidal sections over that Igusa tower over $Y^\ord_{V,m}$, which carries a natural $T_{P_{n-r}}(\bZ_p)$-action. 

Then 
\begin{equation*}
 \mr  H^0\left(\sT^\star_{\wt{V}m,l},\,\cI^0_{\sT^\star_{\wt{V}m,l}}\right)\simeq V^{SP_{n-r},0}_{V,m,l}
\end{equation*} 
as $\cO_F\llbracket T_{P_{n-r}}(\bZ_p)\rrbracket $-modules. Furthermore, we have
\begin{equation*}
   \bigoplus_{\wt{V}\in\fp_{\fC,l}^{-1}(V)^\flat} \mr H^0\left(\sT^\star_{\wt{V}m,l},\,\cI^0_{\sT^\star_{\wt{V}m,l}}\right)\simeq \bZ_p\llbracket T_P(\bZ_p)\rrbracket \otimes_{\bZ_p\llbracket T_{P_{n-r}}(\bZ_p)\rrbracket } V^{SP_{n-r},0}_{V,m,l},
\end{equation*}
because by \eqref{eq:index} and \eqref{eq:qT} the action of $T_P(\bZ/p^l)/T_{P_{n-r}}(\bZ/p^l)$ on $ \fp_{\fC,l}^{-1}(V)^\flat$ is simply transitive. 

Summarizing the above discussion, if we let \[\pV^{SP_{n-r},0}_V = \varinjlim\limits_{m}\varinjlim\limits_l V^{SP_{n-r},0}_{V,m,l}\]  we get
\begin{prop}\label{prop:exact}
There are the following short exact sequences of $\cO_F\llbracket T_P(\bZ_p)\rrbracket $-modules,
\begin{equation}\label{eq:exactf}
   0\lra V^{SP,r-1}_{m,l}\lra V^{SP,r,\flat}_{m,l}\lra \bigoplus_{\substack{V\in\fC_{\bV}/\Gamma\\\mr{rk}\,V=r}} \bZ_p\llbracket T_P(\bZ_p)\rrbracket \otimes_{\bZ_p\llbracket T_{P_{n-r}}(\bZ_p)\rrbracket } V^{SP_{n-r},0}_{V,m,l}\lra 0,
\end{equation}
\begin{equation}\label{eq:exact1}
   0\lra\pV^{SP,r-1}\lra\pV^{SP,r,\flat}\lra \bigoplus_{\substack{V\in\fC_{\bV}/\Gamma\\\mr{rk}\,V=r}} \bZ_p\llbracket T_P(\bZ_p)\rrbracket \otimes_{\bZ_p\llbracket T_{P_{n-r}}(\bZ_p)\rrbracket } \pV^{SP_{n-r},0}_V\lra 0.
\end{equation}
\end{prop}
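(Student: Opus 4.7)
The plan is to simply assemble the material established in the preceding subsection: the first sequence \eqref{eq:exactf} is essentially the content of \eqref{eq:quotVb}, rewritten using two identifications, and \eqref{eq:exact1} follows by exactness of directed colimits.

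First, I would check that \eqref{eq:exactf} is already visible in what has been done. By the definition of $V^{SP,r,\flat}_{m,l}$ as the subspace of $V^{SP,r}_{m,l}$ cut out by requiring vanishing along $\coprod_{\mathrm{rk}\,V=r}(\sT_{Z^\ord_V,SP,m,l}\setminus\sT^\flat_{Z^\ord_V,SP,m,l})$, the submodule $V^{SP,r-1}_{m,l}$ lies in $V^{SP,r,\flat}_{m,l}$, and taking the preimage of the direct summand indexed by $\fp^{-1}_{\fC,l}(V)^\flat$ inside the surjection of \eqref{eq:quotV} yields exactly \eqref{eq:quotVb}, hence automatic left exactness and surjectivity. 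What remains for \eqref{eq:exactf} is therefore to rewrite the inner direct sum. The identification $H^0(\sT^\star_{\wt V,m,l},\cI^0_{\sT^\star_{\wt V,m,l}})\simeq V^{SP_{n-r},0}_{V,m,l}$ of $\cO_F\llbracket T_{P_{n-r}}(\bZ_p)\rrbracket$-modules follows from \eqref{eq:qlevel}, which presents $\sT^\star_{\wt V,m,l}$ as the minimal compactification of the $SP_{n-r}(\bZ/p^l)$-quotient of the full level-$p^l$ Igusa tower over $Y^\ord_{V,m}$; in both presentations the $T_{P_{n-r}}(\bZ_p)$-action comes from the Galois action on the level structures, so the identification is equivariant. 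Then, by \eqref{eq:index} and \eqref{eq:qT}, the set $\fp^{-1}_{\fC,l}(V)^\flat$ is a torsor under $T_P(\bZ/p^l)/T_{P_{n-r}}(\bZ/p^l)$, so fixing a basepoint $\wt V_0\in\fp^{-1}_{\fC,l}(V)^\flat$ rewrites $\bigoplus_{\wt V}H^0(\sT^\star_{\wt V,m,l},\cI^0_{\sT^\star_{\wt V,m,l}})$ as the induced module $\bZ_p\llbracket T_P(\bZ_p)\rrbracket\otimes_{\bZ_p\llbracket T_{P_{n-r}}(\bZ_p)\rrbracket}V^{SP_{n-r},0}_{V,m,l}$. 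This finishes \eqref{eq:exactf}.

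For \eqref{eq:exact1}, apply $\varinjlim_m\varinjlim_l$ to \eqref{eq:exactf}. Directed colimits of abelian groups are exact, and they commute with tensor products, so the colimit of the right-hand term is the right-hand term of \eqref{eq:exact1}; compatibility of the identifications across changes of $l$ (and of $m$, which is the trivial reduction modulo $p^m$) must be checked, but follows from the functoriality of the stratification of the toroidal/minimal compactifications and of the Mumford construction with respect to the Igusa tower. There is no substantive obstacle in this proposition — it is a bookkeeping summary of the ideal-sheaf computation \eqref{eq:piI}--\eqref{eq:piq} of the previous subsection. The only care needed is to track $T_P(\bZ_p)$-equivariance at each step, above all in verifying that the chosen basepoint in $\fp^{-1}_{\fC,l}(V)^\flat$ is compatible as $l$ grows so that the induced-module presentation passes cleanly to the limit.
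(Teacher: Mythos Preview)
Your proposal is correct and follows essentially the same approach as the paper: the proposition is stated there as a summary of the preceding discussion, assembled from \eqref{eq:quotVb}, the identification via \eqref{eq:qlevel}, and the simply transitive action of $T_P(\bZ/p^l)/T_{P_{n-r}}(\bZ/p^l)$ on $\fp^{-1}_{\fC,l}(V)^\flat$ coming from \eqref{eq:index} and \eqref{eq:qT}. Your additional remarks on compatibility of basepoints as $l$ varies are a reasonable point of care that the paper leaves implicit.
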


\subsection{\texorpdfstring{The $q$-expansions}{The q-expansions}}\label{sec:qexp}
Later our analysis of the action of the $\bU^P_{p}$-operators on $V^{SP,r,\flat}_{m,l}$, $\pV^{SP,r,\flat}$ will mostly rely on $q$-expansions.

Specializing the construction in \S\ref{sec:Mum} to the case $r=n$, for $\gamma_N\in\mr{Sp}(2n,\mb{Z}/N\mb{Z})$ and $a_p\in\mr{GL}(n,\mb{Z}_p)$, the evaluation at the testing object 
\begin{equation*}
   \left(\mc{G}_{/\wh{\mc{M}}_{X^*_n,m,l,\sigma}}, \,\psi_{N,\mr{can}}\circ\gamma_N,\,\phi_{p,\mr{can}}\circ a_p\right),\quad\sigma\in\Sigma,
\end{equation*}
defines the $q$-expansion map 
\begin{equation*}
    \varepsilon^{\gamma_N,a_p}_{\qexp,m,l}:V_{m,l}\lra\bigcap_{\sigma\in\Sigma}\cO_F/p^m\llbracket N^{-1}S^2(X_n)\cap\sigma^\vee\rrbracket =\cO_F/p^m\llbracket N^{-1}S^2(X_n)_{\geq 0}\rrbracket .
\end{equation*}
These $\varepsilon^{\gamma_N,a_p}_{\qexp,m,l}$'s glue to the $q$-expansion map on $\pV$,
\begin{equation}\label{eq:qexp}
   \varepsilon^{\gamma_N,a_p}_{\qexp}:\pV\lra F/\mc{O}_F\llbracket N^{-1}S^2(X_n)_{\geq 0}\rrbracket =F/\mc{O}_F\llbracket N^{-1}\Sym(n,\bZ)^*_{\geq 0}\rrbracket .
\end{equation}
With our fixed basis $x_1,\cdots,x_n$ of $X_n$, we will freely identify $S^2(X_n)$ with $\mr{Sym}(n,\mb{Z})^*$, the set of symmetric $n\times n$ matrices with integers as diagonal entries and half-integers as off-diagonal entries, by identifying $\beta\in\mr{Sym}(n,\mb{Z})$ with $\sum\limits_{1\leq i,j\leq n}\beta_{ij} x_i\otimes x_j$. For $\beta\in N^{-1}S^2(X_n)$ and $f\in\pV$, write $\varepsilon^{\gamma_N,a_p}_{\qexp}(\beta,f)$ for the $\beta$-th Fourier coefficient of $f$, {\it i.e.} the coefficient associated with $\beta$ in $\varepsilon^{\gamma_N,a_p}_{\qexp}(f)$. One can check that given $a\in\mr{GL}(n,\mb{Z})$
\begin{equation}\label{eq:aqexp}
   \varepsilon^{m(a)\gamma_N,aa_p}_{\qexp}(\beta,f)=\varepsilon^{\gamma_N,a_p}_{\qexp}(\,\ltrans{a}\beta a,f),
\end{equation}
where $m(a)=\begin{pmatrix}a&0\\0&\ltrans{a}^{-1}\end{pmatrix}$.

As illustrated in \cite[V Lemma 1.4 and its proof]{FC}, since the closure of every stratum associated with a cone in the toroidal compactification is irreducible and contains a stratum corresponding to a top dimensional cone, many properties of ($p$-adic) Siegel modular forms can be verified by examining the $q$-expansion. 

\begin{rem}
The method here can be generalized to other PEL type Shimura varieties. When $q$-expansions are not available, one needs prove the desired properties of $\bU_p$-operators in \S\ref{sec:Up} by working with Fourier--Jacobi expansions.
\end{rem}

The following two propositions give a characterization of the space $\pV^{SP,r}$, $\pV^{SP,r,\flat}$ in terms of $q$-expansions.

\begin{prop}\label{prop:Vr}
Given $f\in\pV$, it belongs to $\pV^{SP,r}$ if and only if $\varepsilon^{\gamma_N,a_p}_{\qexp}(\beta,f)$ vanishes for all $\gamma_N\in\mr{Sp}(2n,\mb Z)$, $a_p\in\GL(n,\bZ_p)$ and $\beta\in N^{-1}S^2(X_n)_{\geq 0}$ of rank less or equal to $n-r-1$. 
\end{prop}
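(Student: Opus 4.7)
The plan is to translate the assertion directly into the stalk description of the ideal sheaf $\cI^r$ along the boundary strata given by \eqref{eq:piI}, combined with a $q$-expansion-principle argument on each boundary irreducible component. The key point is that varying $(\gamma_N,a_p)$ in $\varepsilon^{\gamma_N,a_p}_{\qexp}$ corresponds to varying over all rank-$n$ strata (i.e.\ deepest cusps) of the Igusa tower, via the Mumford testing objects recalled in \S\ref{sec:Mum}--\S\ref{sec:qexp}.

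For the forward direction, fix $(\gamma_N,a_p)$. The map $\varepsilon^{\gamma_N,a_p}_{\qexp}$ evaluates $f$ on a Mumford testing object that parametrizes a formal neighborhood of a specific rank-$n$ stratum $\sT_{\wt V',m,l}$. Applying \eqref{eq:piI} with $\mr{rk}\wt V'=n$, the stalk of $\cI^r$ there consists precisely of expansions $\sum_\lambda c(\lambda)q^\lambda$ supported on $\lambda\in N^{-1}S^2(X_n)_{\geq 0}$ of rank at least $n-r$. Hence $f\in\pV^{SP,r}$ forces $\varepsilon^{\gamma_N,a_p}_{\qexp}(\beta,f)=0$ whenever $\mr{rk}\beta\leq n-r-1$.

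For the converse, suppose the Fourier vanishing holds for all $(\gamma_N,a_p)$. Working at a fixed finite level $(m,l)$, consider the short exact sequence on the affine scheme $\sT^\star_{SP,m,l}$,
\begin{equation*}
   0\lra \cI^r_{\sT^\star_{SP,m,l}}\lra \cO_{\sT^\star_{SP,m,l}}\lra \cO_Z\lra 0,
\end{equation*}
where $Z=\bigcup_{\mr{rk}\wt V>r}\sT^\star_{\wt V,m,l}$ is given its reduced structure. It suffices to show that the image $\bar f$ of $f$ in $H^0(Z,\cO_Z)$ vanishes. Since $Z$ is reduced, this can be checked on each irreducible component $Z_i=\sT^\star_{\wt V,m,l}$ for $\wt V$ of rank exactly $r+1$, which is the partial minimal compactification of an Igusa tower over a Siegel variety of genus $n-r-1$. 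Extending $\wt V$ to a maximal isotropic $\wt V'\supset\wt V$ of rank $n$ produces a deepest cusp $x\in Z_i$ together with a corresponding pair $(\gamma_N,a_p)$ for which $\varepsilon^{\gamma_N,a_p}_{\qexp}$ describes $f$ formally around $x$. Applying \eqref{eq:piI} at $x$, the Fourier vanishing hypothesis places $f$ in the stalk $(\cI^r)^\wedge_x$, so $\bar f$ vanishes in $\widehat{\cO}_{Z_i,x}$. The standard Koecher / $q$-expansion principle for the smaller Siegel variety (integrality of $Z_i$ together with Krull's intersection theorem) then forces $\bar f|_{Z_i}=0$, and passing to the direct limit in $(m,l)$ yields $f\in\pV^{SP,r}$.

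The main technical point to take care of is the identification of the pair $(\gamma_N,a_p)$ with a specific rank-$n$ stratum and its formal neighborhood, which is essentially built into the Mumford construction of \S\ref{sec:Mum} and the definition of the $q$-expansion map in \S\ref{sec:qexp}; once this bookkeeping is set up, both implications are direct translations of \eqref{eq:piI} into conditions on Fourier coefficients.
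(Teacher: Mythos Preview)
Your argument is correct and follows essentially the same strategy as the paper: reduce the membership condition $f\in\pV^{SP,r}$ to vanishing along boundary strata, verify this in a formal neighbourhood of a deepest (rank-$n$) point via the explicit description of the completed stalk, and then propagate using irreducibility. The paper organizes this slightly differently: it stays on the toroidal compactification and, for each cone $\wt\sigma$ with $\mr{rk}\,V_{\wt\sigma}>r$, chooses a top-dimensional $\wt\tau$ having $\wt\sigma$ as a face, identifies the formal completion along $Z_{\wt\tau}$ with $\cO_F/p^m\llbracket N^{-1}S^2(X_n)\cap\wt\tau^\vee\rrbracket$, and reads off that restriction to $\overline{Z_{\wt\sigma}}$ kills exactly the coefficients with $\beta\notin\wt\sigma^\bot$; since such $\beta$ have rank $\leq n-\mr{rk}\,V_{\wt\sigma}\leq n-r-1$, the Fourier hypothesis gives the vanishing directly. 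Your route via the minimal compactification and the irreducible components $\sT^\star_{\wt V,m,l}$ with $\mr{rk}\,\wt V=r+1$ reaches the same conclusion through \eqref{eq:piI}.

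One small caveat: your parenthetical justification ``integrality of $Z_i$ together with Krull's intersection theorem'' is not literally correct over $\cO_F/p^m$, where $Z_i$ is not integral. What you actually need (and what the paper invokes via \cite[V, Lemma~1.4]{FC} just before the proposition) is the $q$-expansion principle for the Igusa tower, which in turn rests on the irreducibility of the Igusa tower and the geometric irreducibility of the relevant toroidal strata closures. Once you cite that, your argument goes through without change.
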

\begin{proof}
Given $\wt{\sigma}\in\Sigma_{\mc C_{\mb V,p^l}}$, pick a top dimensional cone $\wt{\tau}\in\Sigma_{\mc C_{\mb V,p^l}}$ with $\wt{\sigma}$ as a face. Fix an isomorphism of $\mathrm{Spf}\left(\mc O_F/p^m\mc O_F\llbracket N^{-1}S^2(X_n)\cap\wt{\tau}^\vee\rrbracket \right)$ with the completion of $\sT^\Sigma_{SP,m,l}$ along the point $Z_{\wt{\tau},m,l}$ (here for a cone in $\Sigma_{\mc C_{\mb V,p^l}}$ we use the same notation to denote a corresponding cone in $C(X_n)$). Then the embedding of the completion of the Zariski closure of $Z_{\wt{\sigma},m,l}$ along the point $Z_{\wt{\tau},m,l}$ to the completion of $\sT^\Sigma_{SP,m,l}$ along $Z_{\wt{\tau},m,l}$ corresponds to the quotient map from $\mc O_F/p^m\mc \llbracket N^{-1}S^2(X_n)\cap\wt{\tau}^\vee\rrbracket $ onto $\mc O_F/p^m\mc \llbracket N^{-1}S^2(X_n)\cap\wt{\tau}^\vee\cap\wt{\sigma}^\bot\rrbracket $, sending all $\beta\in N^{-1}S^2(X_n)\cap\wt{\tau}^\vee$ that does not belong to $\wt{\sigma}^\bot$ to $0$. This description shows that the vanishing condition in the proposition implies the vanishing of $f$ along $Z_{\wt{\sigma},m,l}$, and the proposition follows.
\end{proof}

In the following, by the radical of $\beta\in N^{-1}S^2(X_n)_{\geq 0}$, we mean the sub-$\bZ$-module of $X^*_n$ consisting of elements that pair trivially with $\beta$ via the natural map $X^*_n\times S^2(X_n)\ra X_n$, and by a primitive vector in $X^*_n$, we mean an element not divisible by $p$ in $X^*_n$.

\begin{prop}\label{prop:Vrb}
Given $f\in\pV^{SP,r}$, it belongs to $\pV^{SP,r,\flat}$ if and only if $\varepsilon^{\gamma_N,a_p}_{\qexp}(\beta,f)$ vanishes for all $\beta$ of corank $r$ such that the radical of $\ltrans{a_p}\beta a_p$ contains a primitive vector inside $\mb Z\cdot x^*_1 + \cdots + \mb Z\cdot x^*_{N_{d-1}} + p\mb Z\cdot x^*_{N_{d-1}+1} +\cdots+ p\mb Z\cdot x^*_{n}$. 
\end{prop}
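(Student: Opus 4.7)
The plan is to adapt the argument of Proposition~\ref{prop:Vr} so as to pick out exactly the rank-$r$ strata excluded by the $\flat$-condition. Fix $V\in\fC_\bV/\Gamma$ of rank $r$. Since $f\in\pV^{SP,r}$ already guarantees vanishing along strata with cusp labels of rank $>r$, the additional content of $f\in\pV^{SP,r,\flat}_{m,l}$ is precisely the vanishing along those $Z_{\wt V,m,l}$ with $\wt V\in\fp^{-1}_{\fC,l}(V)\setminus\fp^{-1}_{\fC,l}(V)^{\flat}$, i.e., with $E_{\wt V,d-1}\cap V\otimes\mu_{p^l}\neq 0$. I would analyse the vanishing along each such $Z_{\wt V,m,l}$ one class at a time, writing $\wt V=\gamma\cdot V_r$ for some $\gamma\in\Gamma_0(p^l)$.

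For each such $\wt V$, I would reuse the formal-completion dictionary from the proof of Proposition~\ref{prop:Vr}: choose a top-dimensional $\wt\tau\in\Sigma_{\mc C_{\bV,p^l}}$ containing a cone $\wt\sigma$ with $V_{\wt\sigma}=\wt V$ as a face, identify the completion of $\sT^\Sigma_{SP,m,l}$ along $Z_{\wt\tau,m,l}$ with $\mr{Spf}(\cO_F/p^m\llbracket N^{-1}S^2(X_n)\cap\wt\tau^\vee\rrbracket)$, and deduce that the vanishing of $f$ along $Z_{\wt V,m,l}$ is equivalent to the vanishing of all $q$-expansion coefficients $\varepsilon(\beta,f)$ at the corresponding cusp, for $\beta$ of corank $r$ whose radical in $X_n^*$ is the image of $\wt V$. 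By the formula~\eqref{eq:aqexp}, changing the cusp from $(\gamma_N,a_p)$ to $(m(a)\gamma_N,a\,a_p)$ acts on $\beta$ by $\beta\mapsto\ltrans{a}\beta\,a$; thus letting $a_p$ run over $\GL(n,\bZ_p)$ one accesses every rank-$r$ cusp above every $V\in\fC_\bV/\Gamma$ from the single base cusp.

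The heart of the argument is then to translate the geometric condition $E_{\wt V,d-1}\cap V\otimes\mu_{p^l}\neq 0$ into the arithmetic condition on the radical appearing in the statement. Via the Mumford construction of \S\ref{sec:Mum}, at the standard cusp $V_r$ with canonical level structure $\phi_{p,\mr{can}}$, the torus part $V_r\otimes\mu_{p^l}$ is identified with the first $r$ factors of $\mu_{p^l}^n$ and the $P$-ordinary piece $E_{V_r,d-1}$ with the first $N_{d-1}$ factors. Twisting $\phi_{p,\mr{can}}$ by $a_p$ and dualizing, the intersection is nonzero modulo $p$ precisely when the radical of $\ltrans{a_p}\beta\,a_p$ contains a primitive vector in $\bZ x_1^*+\cdots+\bZ x_{N_{d-1}}^*+p\bZ x_{N_{d-1}+1}^*+\cdots+p\bZ x_n^*$. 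This is exactly the complement of the flat double coset isolated in~\eqref{eq:index}, and combining it with the previous paragraph yields both directions.

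I expect the main obstacle to be precisely this last translation, which requires carefully tracking how $\phi_{p,\mr{can}}$ is twisted by $a_p$ and how this twist interacts with the duality identifying $\bV/\wt V^\bot$ with the radical of $\beta$ inside $X_n^*$. Once unravelled, the resulting statement reduces to a linear-algebra identity over $\bZ/p\bZ$; everything else is a routine extension of the formal-completion and $q$-expansion machinery already used for Proposition~\ref{prop:Vr}.
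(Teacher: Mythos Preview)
Your proposal is correct and follows essentially the same approach as the paper's proof: both arguments reuse the formal-completion dictionary from Proposition~\ref{prop:Vr}, identify the radical of $\beta$ with $\wt V_{\wt\sigma}$, and then translate the non-$\flat$ condition $E_{\wt V,d-1}\cap V\otimes\mu_{p^l}\neq 0$ via the Mumford construction into the stated lattice condition on the radical of $\ltrans{a_p}\beta a_p$. The paper carries out the translation you flagged as the main obstacle by noting that, after fixing $\gamma\in\Gamma_0(p^l)$ with $\wt V_{\wt\tau}=\gamma^{-1}V_n$, the filtration piece $E_{\wt V_{\wt\tau},d-1}$ corresponds to the $\bZ$-span of $a_\gamma(x^*_1)/p^l,\dots,a_\gamma(x^*_{N_{d-1}})/p^l$, so the intersection condition becomes exactly that the radical contains a primitive vector in $a_\gamma\bigl(\bZ x^*_1+\cdots+\bZ x^*_{N_{d-1}}+p\bZ x^*_{N_{d-1}+1}+\cdots+p\bZ x^*_n\bigr)$, which after untwisting by $a_p=a_\gamma$ is the condition in the statement.
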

\begin{proof}
We use the description of the completion of the Zariski closure of $Z_{\wt{\sigma},m,l}$ along the point $Z_{\wt{\tau},m,l}$ given in the proof of the previous proposition, and assume that $\wt{V}_{\wt{\sigma}}$ is of rank $r$. Identify $V_n$ and $X^*_n$ (together with standard basis). Take a $\gamma\in\Gamma_0(p^l)$ such that $\wt{V}_{\wt{\tau}}=\gamma^{-1}\cdot V_n$ and use it  to fix an isomorphism between $\mc O_F/p^m\mc O_F\llbracket N^{-1}S^2(X_n)\cap\wt{\tau}^\vee\rrbracket $ and the formal completion of the structure sheaf at the point $Z_{\wt{\tau},m,l}$. Then the evaluation of $f$ at the formal neighborhood of $Z_{\wt{\tau},m,l}$ corresponds to the $q$-expansion $\varepsilon^{\gamma,a_\gamma}_{\qexp}(\beta,f)$, and the $E_{\wt{V}_{\wt{\tau}},d-1}$  corresponds to the $\bZ$-span of $a_\gamma(x^*_1)/p^l,\ldots,a_\gamma(x^*_{N_{d-1}})/p^l$ (recall that $E_{\wt{V}_{\wt{\tau}},d-1}$ is the $d-1$-th step of the filtration in the level structure of the $SP$-Igusa tower). On the other hand, the canonical two-step filtration of the semi-abelian scheme over $Z_{\wt{\sigma},m,l}$ corresponds to $\wt{V}_{\wt{\sigma}}\subset \wt{V}_{\wt{\tau}}$. Therefore the vanishing condition in the definition of $\pV^{SP,r,\flat}$ requires the vanishing of $\varepsilon^{\gamma,a_\gamma}_{\qexp}(\beta,f)$ for all $\beta\in N^{-1}S^2(X_n)\cap \wt{\tau}^\vee\cap\wt{\sigma}^\bot$ with $\wt{V}_{\wt{\sigma}}$ containing a primitive element in $a_\gamma\left(\mb Z\cdot x^*_1 + \cdots + \mb Z\cdot x^*_{N_{d-1}} + p\mb Z\cdot x^*_{N_{d-1}+1} +\cdots+ p\mb Z\cdot x^*_{n}\right)$. Also, for a semi-positive definite $\beta$ inside $\wt{\sigma}^\bot$, the radical of $\beta$ equals $\wt{V}_{\wt{\sigma}}$. Hence the vanishing condition in the proposition agrees with that for defining $\pV^{SP,r,\flat}$.
\end{proof}

\subsection{\texorpdfstring{The $\mb U^P_p$-operators}{The Up-operators}}\label{sec:Up}
To each matrix
\begin{align*}
\gamma_{p,i}=\begin{pmatrix}
  p I_{i} &0 &0 &0\\
	0&  I_{n-i} & 0& 0\\
	0&0 & p^{-1}I_{i} & 0\\
	0& 0&0 & I_{n-i}
\end{pmatrix},
&& 1\leq i\leq n,
\end{align*}
corresponds a Hecke operator $U^P_{p,i}$ acting on $\pV^{SP}$. The ordinarity condition for $P$ requires the eigenvalues of $U^P_{p,N_1}, U^P_{p,N_2},\dots, U^P_{p,N_d}$ to be $p$-adic units (recall that $N_i=\sum_{j=1}^i n_j$). In \cite[\S5.1.4]{PilHida}, only these $U^P_{p,N_1}, U^P_{p,N_2},\dots, U^P_{p,N_d}$ are introduced as they are sufficient for defining the ordinary projection in order to establish Hida theory. However, given an automorphic representation $\pi$ of $\Sp(2n,\bA)$ generated by a holomorphic Siegel modular form ordinary for the parabolic $P$, in order to retrieve the full information on $\pi_p$, one needs to consider the action of all the $U^P_{p,i}$, $1\leq i\leq n$ (see \S\ref{sec:Ep} for details). If $i\neq N_1,\dots,N_d$, the eigenvalue of $U^P_{p,i}$ on $P$-ordinary forms is not necessarily a $p$-adic unit.

When shall use the expression $\mb U^P_p$-operators when we do not need to specify which Hecke operators at $p$ we are using.

Let $\sT^\circ_{SP,m,l}$ be the restriction of $\sT_{SP,m,l}$ to $Y^\ord_m\subset X^{\Sigma,\ord}_m$. The algebraic correspondence inside $\sT^\circ_{SP,m,l}\times \sT^\circ_{SP,m,l}$ associated to $\gamma_{p,i}$ is defined as follows. For $N_j\leq i< N_{j+1}$, let $C_{i,m,l}$ be the moduli scheme over $\cO_F/p^m$ parametrizing the quintuple $\left(A,\lambda,\psi_N,(E_i,\varepsilon_i)_{1\leq i\leq d,\,p^l},L\right)$, where $\left(A,\lambda,\psi_N\right)$ is an ordinary abelian scheme of genus $n$ with principal polarization $\lambda$ and principal level structure $\psi_N:(\bZ/N)^{2n}\stackrel{\sim}{\ra}A[N]$, defined over an $\cO_F/p^m$-algebra, $(E_r,\varepsilon_r)_{1\leq r\leq d,\,p^l}$ is the structure used to define $\sT_{SP,m,l}$ in \S\ref{sec:Igusa}, and $L\subset A[p^2]$ is a Lagrangian subgroup such that $\mr{rank}_{\bZ/p}L[p]=2n-i$, $L[p]\cap E_j[p]=0$, $L[p]+E_{j+1}[p]=A[p]$. Denote by $p_1$ the projection from $C_{i,m,l}$ to $\sT_{SP,m,l}$ which forgets $L$. There is another projection $p_2$ sending $\left(A,\lambda,\psi_N,(E_r,\varepsilon_r)_{1\leq r\leq d,\,p^l},L\right)$ to $\left(A/L,\lambda',p\circ\pi\circ\psi_N,(E'_r,\varepsilon'_r)_{1\leq r\leq d,\,p^l}\right)$, where $\pi:A\ra A/L$ is the natural isogeny, $\lambda'$ is defined by $\pi^*\lambda'=p^2\lambda$, and
\begin{align*}
   E'_r&=\pi(E_r), &\varepsilon'_r&=\pi\circ\varepsilon, &1\leq r\leq j&\\
   E'_r&=\pi\left(p^{-1}(E_r\cap p^{-l+1}L)\right), &\varepsilon'_r&=p^{-\mr{min}\{N_r-i,\,n_r\}}\pi\circ\varepsilon_r, &j+1\leq r\leq d.&
\end{align*}

For $N_j\leq i <N_{j+1}$, we have the following composition
\begin{equation*}
   H^0\left(\sT^\circ_{SP,m,l},\cO_{\sT^\circ_{SP,m,l}}\right)\stackrel{p^*_2}{\lra}H^0\left(C_{i,m,l},\cO_{C_{i,m,l}}\right)\stackrel{\mr{Tr}p_1}{\lra}p^{i(n+1)}H^0\left(\sT^\circ_{SP,m,l},\cO_{\sT^\circ_{SP,m,l}}\right).
\end{equation*}
The image of $\mr{Tr}p_1$ belongs to $p^{i(n+1)}H^0\left(\sT^\circ_{SP,m,l},\cO_{\sT^\circ_{SP,m,l}}\right)$ because the pure inseparability degree of $p_1$ is $p^{i(n+1)}$ \cite[Appendice]{PilHida}. One can also check (for example by $q$-expansions) that such defined $U^P_{p,i}$ preserves various kinds of growth conditions along the boundary, i.e. the above map restricts to a map from $V^{SP}_{m,l}$ to $p^{i(n+1)}V^{SP}_{m,l}$. If $m>i(n+1)$, there is a well defined map $p^{-i(n+1)}:p^{i(n+1)}V^{SP}_{m,l}\ra V^{SP}_{m-i(n+1),l}$. Now given $f\in V^{SP}_{m,l}$, thanks to \eqref{eq:bcV}, we can take $\wt{f}\in V^{SP}_{m+i(n+1)}$ such that $f\equiv \wt{f}\mod p^{i(n+1)}$, and we define
\begin{equation*}
   U^P_{p,i}(f)=p^{-i(n+1)}\circ\mr{Tr}p_1\circ p^*_2 (\wt{f}).
\end{equation*}

In this section, only $U^P_{p,N_1}, U^P_{p,N_2},\dots, U^P_{p,N_d}$ will be used. In order to show the desired properties of their action on $\pV^{SP,r,\flat}$, $\pV^{SP,r}$, we use the following proposition and Proposition \ref{prop:Vr}, \ref{prop:Vrb} to reduce to computations on $q$-expansions. 

\begin{prop}[cf. {\cite[Proposition 3.5]{HPEL}}]\label{prop:UpF}
For $f\in\pV^{SP}$, $\gamma_N\in\Sp(2n,\bZ)$ and $a_p\in T(\bZ_p)\subset\GL(n,\bZ_p)$, the formula on $q$-expansions for the action of the $\mb U^P_p$-operators on $f$ is given by
\begin{equation*}
   \varepsilon^{\gamma_N,a_p}_{\qexp}(\beta,\,U^P_{p,N_i}f)=\sum_{x\in M_{N_i,n-N_i}(\mb Z/p\mb Z)}\varepsilon^{(\gamma^P_{p,i})^{-1}\gamma_N,a_p}_{\qexp}\left(\begin{pmatrix}pI_{N_i}&0\\N\ltrans{x}& I_{n-N_i}\end{pmatrix}\beta\begin{pmatrix}p{ I}_{N_i}&Nx\\0& I_{n-N_i}\end{pmatrix},\,f\right),
\end{equation*} 
for $\beta\in N^{-1}S^2(X_n)$ and $1\leq i\leq d$.
\end{prop}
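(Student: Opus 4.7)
The plan is to evaluate $U^P_{p,N_i}f$ at the Mumford test object $\bigl(\cG,\,\psi_{N,\mr{can}}\circ\gamma_N,\,\phi_{p,\mr{can}}\circ a_p\bigr)$ at a totally degenerate cusp of $\sT^\Sigma_{SP,m,l}$, using the definition of $U^P_{p,N_i}$ through the correspondence $p_1\circ p_2^*$, and match the resulting expression term-by-term with the right-hand side of the formula. The key ingredients are the explicit Mumford construction of \S\ref{sec:Mum}, an enumeration of the Lagrangian subgroups $L\subset\cG[p^2]$ satisfying the conditions in the definition of $C_{N_i,m,l}$, and an explicit identification of each quotient $\cG/L$ as another Mumford datum.

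Concretely, I would fix a top-dimensional cone $\sigma\subset C(X_n)^\circ$ in $\Sigma$, so that $\varepsilon^{\gamma_N,a_p}_{\qexp}(\beta,f)$ reads off the coefficient of $q^\beta$ in the Mumford expansion at this cusp with level structures twisted by $(\gamma_N,a_p)$. In the ordinary case, $\cG[p^2]$ splits canonically as $V_n\otimes\mu_{p^2}\oplus V_n^*/p^2$ (via the Serre--Tate canonical lift of the toric part), and in the $a_p$-adapted basis each filtration step $E_j[p^l]$ is the toric subgroup spanned by the first $N_j$ coordinates. A direct enumeration shows that Lagrangians $L\subset\cG[p^2]$ of order $p^{2n}$ with $|L[p]|=p^{2n-N_i}$, $L[p]\cap E_i[p]=0$ and $L[p]+E_{i+1}[p]=\cG[p]$ are parametrized bijectively by matrices $x\in M_{N_i,n-N_i}(\bZ/p)$, each $L_x$ being obtained from the ``diagonal'' Lagrangian by shearing the étale coordinates by $Nx$. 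This produces $p^{N_i(n-N_i)}$ Lagrangians, and together with the pure inseparability degree of $p_1$ this accounts precisely for the normalization factor $p^{-N_i(n+1)}$ built into the definition of $U^P_{p,N_i}$.

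For each such $L_x$, the next step is to realize $\cG/L_x$ as a Mumford semi-abelian scheme whose period lattice has been transformed by $M_x=\begin{pmatrix}pI_{N_i}&Nx\\0&I_{n-N_i}\end{pmatrix}$ (the factor $N$ coming from the $\otimes N$ in the construction of $\cM_{X^*_n,m,l}$ recalled in \S\ref{sec:Mum}), so that the Mumford expansion of $f$ on this transformed datum produces the coefficient of $q^{\,\ltrans{M_x}\beta M_x}$ rather than of $q^\beta$. Simultaneously, the induced principal level $N$ structure on $\cG/L_x$ differs from $\psi_{N,\mr{can}}\circ\gamma_N$ by the action of $\gamma^P_{p,i}$, so that after re-expressing the target as a source-type test object the $N$-level datum becomes $(\gamma^P_{p,i})^{-1}\gamma_N$, while the $p$-level datum $a_p$ remains unchanged (any residual $\GL(n,\bZ)$ ambiguity is absorbed using \eqref{eq:aqexp}). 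Summing the resulting contributions over $x\in M_{N_i,n-N_i}(\bZ/p)$ yields exactly the right-hand side.

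The main obstacle will be the careful bookkeeping of level structures under each isogeny $\cG\to\cG/L_x$ --- in particular tracking the interplay of $p$, $N$ and $\gamma^P_{p,i}$ that produces the precise shape of $M_x$, and checking that the splitting $\cG[p^2]\cong V_n\otimes\mu_{p^2}\oplus V_n^*/p^2$ is compatible with all the Mumford level data used on both source and target sides. I would model this on the analogous argument for $P=B$ given in \cite[Proposition 3.5]{HPEL}, to which the present proposition reduces when the partition is $1+\cdots+1$.
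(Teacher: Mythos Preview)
The paper does not give its own proof of this proposition; it is stated with a reference to \cite[Proposition 3.5]{HPEL} and immediately followed by a remark about the case of non-diagonal $a_p$. Your proposal --- evaluating at the Mumford test object, enumerating the admissible Lagrangians $L\subset\cG[p^2]$, and identifying each quotient $\cG/L_x$ with a Mumford datum whose period lattice is transformed by $M_x$ --- is exactly the strategy of that cited argument, and you say as much in your final paragraph. So in terms of approach there is nothing to compare: both you and the paper defer to \cite{HPEL}, and your sketch is a faithful outline of how that proof runs in the present $P$-parabolic setting.

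One point of caution on the details: your claim that the Lagrangians are parametrized \emph{bijectively} by $x\in M_{N_i,n-N_i}(\bZ/p)$ is not quite the full story. Over an ordinary point the fibre of $p_1$ is larger than $p^{N_i(n-N_i)}$; the factor $p^{-N_i(n+1)}$ in the definition of $U^P_{p,N_i}$ accounts for the purely inseparable part of $p_1$, and it is only after this normalization that the surviving contributions are indexed by $M_{N_i,n-N_i}(\bZ/p)$. At the totally degenerate cusp the semi-abelian scheme is a torus, so the enumeration of $L$'s and the identification of $\cG/L_x$ must be carried out on the formal completion, tracking which isogenies contribute nontrivially to $\mr{Tr}\,p_1$ after the $p^{-N_i(n+1)}$ division. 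This is precisely the bookkeeping you flag as ``the main obstacle'', and it is handled in \cite{HPEL} for $P=B$; the passage to general $P$ is straightforward since $U^P_{p,N_i}$ is built from the same $\gamma_{p,N_i}$.
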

One can also write down the formula for general $a_p\in\GL(n,\bZ_p)$ which is a little bit more complicated. We omit it here because the case $a_p$ being diagonal suffices for our purpose thanks to \eqref{eq:aqexp}.

\begin{prop}\label{prop:UpStab}
All the spaces $\pV^{SP,r}$, $0\leq r\leq n$, and $\pV^{SP,r,\flat}$, $0\leq r\leq n_d$, are stable under the $\mb U^P_p$-operators. 
\end{prop}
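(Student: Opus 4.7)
The plan is to combine the $q$-expansion characterizations of the two subspaces (Propositions \ref{prop:Vr} and \ref{prop:Vrb}) with the explicit $q$-expansion formula of Proposition \ref{prop:UpF} for $U^P_{p,N_i}$. Using \eqref{eq:aqexp}, I reduce to checking the vanishing of $\varepsilon^{\gamma_N,a_p}_{\qexp}(\beta', U^P_{p,N_i}f)$ with $a_p\in T(\bZ_p)$. By Proposition \ref{prop:UpF} this coefficient is a sum of Fourier coefficients of $f$ indexed by $M_x^T\beta' M_x$ with $M_x:=\begin{psm}pI_{N_i} & Nx\\ 0 & I_{n-N_i}\end{psm}$ and $x\in M_{N_i,n-N_i}(\bZ/p)$, so the task is to verify that every such $M_x^T\beta' M_x$ inherits whichever characterization ($\pV^{SP,r}$ or $\pV^{SP,r,\flat}$) was imposed on $\beta'$.

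For $\pV^{SP,r}$ this is immediate: $M_x$ is invertible over $\bQ$, so $\mr{rank}(M_x^T\beta' M_x)=\mr{rank}(\beta')$, the bound $\le n-r-1$ propagates, and each summand vanishes by Proposition \ref{prop:Vr} applied to $f$. For $\pV^{SP,r,\flat}$ the corank is preserved for the same reason, so the remaining task is to exhibit a primitive vector of $L:=\bZ\cdot x^*_1+\cdots+\bZ\cdot x^*_{N_{d-1}}+p\bZ\cdot x^*_{N_{d-1}+1}+\cdots+p\bZ\cdot x^*_n$ in the radical of $\ltrans{a_p}(M_x^T\beta' M_x)a_p$. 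When $i=d$, $M_x=pI_n$ and the radical is unchanged, so the original primitive $v_0$ suffices. When $i<d$, writing $v_0=(v_0^{(1)},v_0^{(2)})\in\bZ^{N_i}\times\bZ^{n-N_i}$, I look for a primitive $w_0\in L$ satisfying $M_xa_pw_0=\mu\cdot a_pv_0$ for some $\mu\in\{1,p\}$. The candidate for $\mu=p$ is the vector
\begin{equation*}
w_0=\begin{psm} v_0^{(1)}-N(a_p^{(1)})^{-1}xa_p^{(2)}v_0^{(2)} \\ pv_0^{(2)} \end{psm},
\end{equation*}
which lies in $L$ and is primitive unless its upper block is divisible by $p$; in that subcase one divides by $p$ to obtain the $\mu=1$ candidate. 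Its primitivity is then forced by the primitivity of $v_0$: the upper block of the $\mu=p$ candidate being $\equiv 0 \pmod{p}$ together with $v_0|_{[N_i+1,N_{d-1}]}\equiv 0\pmod{p}$ would imply $v_0|_{[1,N_{d-1}]}\equiv 0\pmod{p}$, contradicting that $v_0$ is primitive in $L$.

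The hard part will be this last case ($i<d$ for $\pV^{SP,r,\flat}$): the transformation $\beta'\mapsto M_x^T\beta' M_x$ depends on $x$, so a uniform construction of $w_0$ across the sum is not available, and a case analysis on divisibility modulo $p$ is unavoidable; a minor technicality is that the entries of $(a_p^{(1)})^{-1}xa_p^{(2)}v_0^{(2)}$ \emph{a priori} lie in $\bZ_p^{N_i}$ rather than $\bZ^{N_i}$, but this can be absorbed by modifying $a_p$ via \eqref{eq:aqexp} so that the relevant expression becomes integral.
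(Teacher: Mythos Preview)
Your overall strategy matches the paper's: use Propositions~\ref{prop:Vr}, \ref{prop:Vrb}, and \ref{prop:UpF} to reduce to a statement about how the radical of $\beta$ behaves under $\beta\mapsto M_x^T\beta M_x$. For $\pV^{SP,r}$ your rank argument is exactly the paper's. For $\pV^{SP,r,\flat}$ the paper also produces a primitive vector of $L$ in the new radical, but phrases it as the single containment $\bQ\cdot M_x^{-1}v_\beta\cap X^*_n\subset L$, which absorbs your $\mu\in\{1,p\}$ case split in one line.

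There is, however, a genuine slip in your handling of $a_p$. You keep $a_p$ in the construction of $w_0$, so its upper block involves $(a_p^{(1)})^{-1}xa_p^{(2)}v_0^{(2)}\in\bZ_p^{N_i}$ and $w_0$ need not lie in $X^*_n=\bZ^n$. Your proposed fix via \eqref{eq:aqexp} does not work: that identity replaces $a_p$ by $aa_p$ with $a\in\GL(n,\bZ)$, and there is no way to choose such an $a$ making $(aa_p)^{-1}$ send the relevant expression into $\bZ^{N_i}$, since $\GL(n,\bZ)$ cannot approximate an arbitrary diagonal $a_p\in T(\bZ_p)$. The correct (and simpler) observation, which the paper uses implicitly, is that for $a_p\in T(\bZ_p)$ the radical of $\ltrans{a_p}\beta a_p$ inside $X^*_n$ coincides with the radical of $\beta$: indeed $\ltrans{a_p}\beta a_p\cdot v=0$ iff $v\in\ker_{\bQ_p}\beta=\ker_\bQ\beta\otimes\bQ_p$, and intersecting with $\bZ^n$ recovers the radical of $\beta$. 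Hence the condition in Proposition~\ref{prop:Vrb} is independent of $a_p$, and you may take $a_p=1$ from the outset; your $w_0$ then has integer entries and the rest of your argument goes through.
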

\begin{proof}
The statement for $\pV^{SP,r}$ follows immediately from Proposition \ref{prop:Vr}, \ref{prop:UpF}. By Proposition \ref{prop:Vrb}, \ref{prop:UpF}, in order to show the statement for $\pV^{SP,r,\flat}$, it is enough to show that if the radical of $\beta$ contains a primitive vector inside $\mb Z\cdot x^*_1 + \cdots + \mb Z\cdot x^*_{N_{d-1}} + p\mb Z\cdot x^*_{N_{d-1}+1} +\cdots+ p\mb Z\cdot x^*_{n}$, then the same holds for $\begin{pmatrix}pI_{N_i}&0\\N\ltrans{x}& I_{n-N_i}\end{pmatrix}\beta\begin{pmatrix}p{ I}_{N_i}&Nx\\0& I_{n-N_i}\end{pmatrix}$. In fact, it is not difficult to check that if $v\in \mb Z\cdot x^*_1 + \cdots + \mb Z\cdot x^*_{N_{d-1}} + p\mb Z\cdot x^*_{N_{d-1}+1} +\cdots+ p\mb Z\cdot x^*_{n}$ is a primitive vector, then for all $x\in M_{N_i,n-N_i}(\bZ)$, 
\begin{equation*}
   \bQ\cdot\begin{pmatrix}pI_{N_i}&Nx\\0&I_{n-N_i}\end{pmatrix}^{-1} v_\beta\cap X_n\subset  \mb Z\cdot x^*_1 + \cdots +\mb Z\cdot x^*_{N_{d-1}} + p\mb Z\cdot x^*_{N_{d-1}+1} +\cdots+ p\mb Z\cdot x^*_{n}.
\end{equation*}
\end{proof}

Now we want to define a $\bU^P_p$-action on the quotient of the exact sequences in Proposition \ref{prop:exact}, and verify that the exact sequences are $\bU^P_p$-equivariant.

For $V\in\fC_\bV$ with rank $r\leq n_d$, we define the $\bU^P_p$-action on $\pV^{SP_{n-r},0}_V$ as follows. For $\gamma=\mr{diag}\left(a_1,\dots,a_n,a^{-1}_1,\dots,a^{-1}_n\right)\in\Sp(2n)$, set $\gamma'=\mr{diag}\left(a_1,\dots,a_{n-r},a^{-1}_1,\dots,a^{-1}_{n-r}\right)\in\Sp(2n-2r)$. We make $U^P_{p,N_i}$ act on $\pV^{SP_{n-r},0}_V$ (the space of $p$-adic Siegel modular forms of degree $n-r$ for the parabolic $P_{n-r}$) by the $\bU^{P_{n-r}}_p$-operator attached to $\gamma^{\prime}_{p,N_i}$.

Let us denote by $\mb U^{P,[N]}_p\subset\bU^{P}_p$ the subalgebra generated by the $\varphi(N)$-powers of $U^P_{p,N_i}$, $1\leq i\leq d$. Here $\varphi(N)=N\cdot \prod_{q\text{ prime factors of }N}(1-\frac{1}{q})$.  Rather than showing the $\bU^P_p$-equivariance of the exact sequences in Proposition \ref{prop:exact}, we are only able to show the $\mb U^{P,[N]}_p$-equivariance. However, this suffices for establishing Hida theory for $\pV^{SP,r}$.

\begin{prop}\label{prop:UpEquiv}
The exact sequences in Proposition \ref{prop:exact} are $\mb U^{P,[N]}_p$-equivariant.
\end{prop}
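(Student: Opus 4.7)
The plan is to compare $q$-expansions on both sides using Propositions~\ref{prop:Vr}, \ref{prop:Vrb}, and \ref{prop:UpF}. The inclusion $V^{SP,r-1}_{m,l}\hookrightarrow V^{SP,r,\flat}_{m,l}$ is tautologically $\bU^P_p$-equivariant by Proposition~\ref{prop:UpStab}, so the content of the proposition lies in the surjection onto the boundary summands.

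To analyse the surjection, use the Mumford-theoretic description of \S\ref{sec:Mum}--\S\ref{sec:qexp}: along a cusp labelled by $\wt V\in\fp^{-1}_{\fC,l}(V)^\flat$, a section $f\in V^{SP,r,\flat}_{m,l}$ expands as a Fourier series whose coefficients indexed by $\lambda\in N^{-1}S^2(\bV/\wt V^\bot)_{>0}$ are sections of the line bundles $\cL(\lambda)$ on $\cB_{\wt V,m,l}$; pushing forward to $\sT_{\wt V,m,l}$ recovers the image of $f$ inside $V^{SP_{n-r},0}_{V,m,l}$. Hence by Proposition~\ref{prop:Vrb}, specifying the projection of $f$ to the right-hand side of \eqref{eq:exactf} is equivalent to specifying the corank-$r$ Fourier coefficients $\varepsilon^{\gamma,a_\gamma}_{\qexp}(\beta,f)$ as $\gamma$ ranges over representatives of $\fp^{-1}_{\fC,l}(V)^\flat$ and $\beta$ has radical in the sublattice appearing in Proposition~\ref{prop:Vrb}. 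It therefore suffices to check that on every such Fourier coefficient the action of $(U^P_{p,N_i})^{\varphi(N)}$ agrees with that of $(U^{P_{n-r}}_{p,\min(N_i,n-r)})^{\varphi(N)}$.

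Feeding such a $\beta$ into the formula of Proposition~\ref{prop:UpF}, the conjugation $\beta\mapsto\begin{psm}pI_{N_i}&0\\N\ltrans x&I_{n-N_i}\end{psm}\beta\begin{psm}pI_{N_i}&Nx\\0&I_{n-N_i}\end{psm}$ preserves the corank $r$ and keeps the radical inside the required sublattice (as already used in the proof of Proposition~\ref{prop:UpStab}). Writing $\beta$ and $x$ in blocks adapted to the decomposition $X_n=\wt V\oplus(X_n/\wt V)$, the sum over $x\in M_{N_i,n-N_i}(\bZ/p)$ collapses along the $\wt V$-direction into a sum over matrices of size $\min(N_i,n-r)\times\bigl(n-r-\min(N_i,n-r)\bigr)$ acting on the induced form $\bar\beta$ on $\bV/\wt V^\bot$; this is exactly the sum appearing in the $q$-expansion formula for $U^{P_{n-r}}_{p,\min(N_i,n-r)}$ applied to the image of $f$ in $V^{SP_{n-r},0}_{V,m,l}$. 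The only discrepancy is the twist $\gamma_N\mapsto(\gamma^P_{p,i})^{-1}\gamma_N$ of the level $N$ structure, which modulo $N$ is multiplication by $p^{\pm 1}$ on the relevant $N_i$ coordinates; after $\varphi(N)$ iterations this twist becomes trivial because $p^{\varphi(N)}\equiv 1\bmod N$, yielding the desired $\bU^{P,[N]}_p$-equivariance.

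The main obstacle is the careful bookkeeping of this level-$N$ twist together with the permutation it induces on $\fp^{-1}_{\fC,l}(V)^\flat$: a single application of $U^P_{p,N_i}$ simultaneously shifts a Fourier index and translates between connected components of $\sT^\flat_{Z^\ord_V,SP,m,l}$, and one has to verify that after the $\varphi(N)$-th power these two actions decouple and match, on the right-hand side of \eqref{eq:exactf}, the $T_P(\bZ_p)$-translation tensored with $U^{P_{n-r}}_{p,\min(N_i,n-r)}$. The case $N_i>n-r$ (only possible for $i=d$, where $\gamma'_{p,N_d}=\mr{diag}(pI_{n-r},p^{-1}I_{n-r})$ corresponds to $U^{P_{n-r}}_{p,n-r}$) is handled by the same $\varphi(N)$-th power argument with a little extra care on the block decomposition.
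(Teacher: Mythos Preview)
Your overall strategy—comparing $q$-expansions via Propositions~\ref{prop:Vr}, \ref{prop:Vrb}, \ref{prop:UpF}—is the same as the paper's, and you correctly identify that the $\varphi(N)$-th power is needed to trivialise the level-$N$ twist $\gamma_N\mapsto(\gamma^P_{p,i})^{-1}\gamma_N$. However, there is a genuine gap in the ``collapse'' step.

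You assert that writing $\beta$ and $x$ in blocks adapted to $X_n=\wt V\oplus(X_n/\wt V)$ makes the sum over $x\in M_{N_i,n-N_i}(\bZ/p^{\varphi(N)})$ collapse along the $\wt V$-direction to a sum over $M_{N_i,n-r-N_i}$. This is not automatic: the radical $\wt V$ of a typical $\mathfrak{i}_{\wt V}(\beta')$ is \emph{not} the span of the last $r$ standard basis vectors (the description of $\fp^{-1}_{\fC,l}(V)^\flat$ involves the matrix $\begin{psm}\alpha_1\\\alpha_2\end{psm}$ with $\alpha_1$ possibly nonzero), so the conjugate $\begin{psm}p^{\varphi(N)}I_{N_i}&0\\N\ltrans x&I\end{psm}\mathfrak{i}_{\wt V}(\beta')\begin{psm}p^{\varphi(N)}I_{N_i}&Nx\\0&I\end{psm}$ genuinely depends on all of $x$, and the different values do \emph{not} coincide. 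What makes the argument work is that for all but one value of the ``extra'' block $y\in M_{N_i,r}(\bZ/p^{\varphi(N)})$, the radical of the resulting matrix contains a primitive vector in $\bZ x_1^*+\cdots+\bZ x_{N_{d-1}}^*+p\bZ x_{N_{d-1}+1}^*+\cdots+p\bZ x_n^*$, so by Proposition~\ref{prop:Vrb} the corresponding Fourier coefficient of $f$ \emph{vanishes}. In the paper this is made precise by choosing an auxiliary matrix $\begin{psm}A&B\\C&D\end{psm}\in\GL(n,\bZ)$ with $C\equiv 0\bmod p^s$ and $(A\;B)\begin{psm}\alpha_1\\\alpha_2\end{psm}=0$, defining $y(x)=N^{-1}A_1^{-1}(-I_{N_i}\;Nx)B$, and checking explicitly that the radical condition of Proposition~\ref{prop:Vrb} fails precisely when $y\equiv y(x)$. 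Only after this vanishing is established can the small sum (over $M_{N_i,n-r-N_i}$) be identified with the full sum (over $M_{N_i,n-N_i}$).

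Your ``main obstacle'' paragraph frames the difficulty as bookkeeping of twists and component permutations, but that is secondary; the substantive point you are missing is this use of the $\flat$-vanishing condition to kill the extra summands. Without it the two sides of your comparison differ by a sum of a priori nonzero terms.
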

\begin{proof}
We show the $\mb U^{P,[N]}_p$-equivariance of the projection $\fp_{\wt{V}}:V^{SP,r,\flat}_{m,l}\ra V^{SP_{n-r},0}_{V,m,L}$, from $V^{SP,r,\flat}_{m,l}$ to the summand of $\left.V^{SP,r,\flat}_{m,l}\middle/V^{SP,r-1}_{m,l}\right.$ indexed by $\wt{V}\in\fp_{\fC,l}^{-1}(V)^\flat$, by computing the $q$-expansions. Pick $\gamma\in\Gamma_0(p^l)$ such that $\gamma^{-1}V_n$ contains $\wt{V}$ (where $V_n$ is identified with $X^*_n$ with standard basis), and we view $\wt{V}$ as a subspace of $X^*_n$ via $\gamma$. Then $\wt{V}\in\fp_{\fC,l}^{-1}(V)^\flat$ implies that $\wt{V}$ is spanned by
\begin{equation*}
   (x_1,\dots,x_n)\begin{pmatrix}I_{N_{d-1}}&0\\0&w\end{pmatrix}\begin{pmatrix} \alpha_1\\\alpha_2\end{pmatrix},
\end{equation*}
with $w\in\SL(n_d,\bZ)$, $w\equiv I_{n_d}\mod N$, $\alpha_1\in M_{n-r,r}(\bZ)$ and $\alpha_2\in M_{r,r}(\bZ)\cap\GL(r,\bZ_p)$.

Take $s\geq\mr{max}\set{l,\varphi(N)}$. There exists 
$$\begin{blockarray}{ccc}
   & n-r&r\\
   \begin{block}{c(cc)}
   n-r&A&B\,\,\,\\
   r&C&D\,\,\,\\
   \end{block}
\end{blockarray}\in\GL(n,\bZ)$$
with $A\equiv\begin{pmatrix}A_1&0\\0&A_2\end{pmatrix}\mod p^s $, $A_1\in\GL(N_i,\bZ)$, $A_2\in\GL(n-r-N_i,\bZ)$, $C\equiv 0\mod p^s$, such that
$$\begin{pmatrix}A&B\end{pmatrix}\begin{pmatrix}\alpha_1\\\alpha_2\end{pmatrix}=0.$$
Define
\begin{align*}
   \ffi_{\wt{V}}:\Sym(n-r,\bQ)&\lra\Sym(n,\bQ)\\
   \beta'&\longmapsto \begin{pmatrix}I_{N_{d-1}}&0\\0&\ltrans{w}^{-1}\end{pmatrix}\begin{pmatrix}\ltrans{A}\\\ltrans{B}\end{pmatrix}\beta'\begin{pmatrix}A&B\end{pmatrix}\begin{pmatrix}I_{N_{d-1}}&0\\0&w^{-1}\end{pmatrix}.
\end{align*}
Then for $\gamma'_N\in\Sp(2n-2r,\bZ)$ and $a'_p\in T_{n-r}(\bZ_p)$, there exists $\gamma_N\in\Sp(2n,\bZ)$, $a_p\in T(\bZ_p)$ such that
$$\varepsilon^{\gamma'_{N},\,a'_p}_{\qexp,\wt{V}}\left(\beta',\,\fp_{\wt{V}}(f)\right)=\varepsilon^{\gamma_{N},\,a_p}_{\qexp}\left(\ffi_{\wt{V}}(\beta'),\,f\right).$$
To prove the proposition, it suffices to check that for $\beta'\in\Sym(n-r,\bQ)_{>0}$ and $f\in V^{SP,r,\flat}_{m,l}$, 
\begin{equation}\label{eq:qequiv}
\varepsilon^{(\gamma^{P'}_{p,i})^{\varphi(N)}\gamma'_{N},\,a'_p}_{\qexp,\wt{V}}\left(\beta',\,(U^P_{p,N_i})^{\varphi(N)}\fp_{\wt{V}}(f)\right)=\varepsilon^{(\gamma^P_{p,i})^{\varphi(N)}\gamma_{N},\,a_p}_{\qexp}\left(\ffi_{\wt{V}}(\beta'),\,(U^P_{p,N_i})^{\varphi(N)}f\right).
\end{equation}
We have
\begin{align*}
   &\text{LHS of }\eqref{eq:qequiv}\\
   =&\sum_{x\in M_{N_i,n-r-N_i}(\bZ/p^{\varphi(N)}\bZ)}\varepsilon^{\gamma'_{N},\,a'_p}_{\qexp,\wt{V}}\left(\begin{pmatrix}p^{\varphi(N)}I_{N_i}&0\\N\ltrans{x}&I_{n-r-N_i}\end{pmatrix}\beta'\begin{pmatrix}p^{\varphi(N)}I_{N_i}&Nx\\0&I_{n-r-N_i}\end{pmatrix},\,\fp_{\wt{V}}(f)\right)\\
	\numberthis\label{eq:lhs} =&\sum_{x\in M_{N_i,n-r-N_i}(\bZ/p^{\varphi(N)}\bZ)}\varepsilon^{\gamma_{N},\,a_p}_{\qexp}\left(\begin{pmatrix}\ltrans{A}\\\ltrans{B}\end{pmatrix}\begin{pmatrix}p^{\varphi(N)}I_{N_i}&0\\N\ltrans{x}&I_{n-r-N_i}\end{pmatrix}\beta'\begin{pmatrix}p^{\varphi(N)}I_{N_i}&Nx\\0&I_{n-r-N_i}\end{pmatrix}\begin{pmatrix}A&B\end{pmatrix},\,f\right).
\end{align*}
Set
\begin{align*}
   x_A&=A_1^{-1}xA_2\in M_{N_i,n-r-N_i}(\bZ/p^{\varphi(N)}),&y(x)=N^{-1}A^{-1}_1\begin{pmatrix}-I_{N_i}&Nx\end{pmatrix}B\in M_{N_i,r}(\bZ/p^{\varphi(N)}).
\end{align*}  
The map $x\mapsto x_A$ is a bijection from $ M_{N_i,n-r-N_i}(\bZ/p^{\varphi(N)}\bZ)$ to itself. One can check that, by the definition of $x_A$, $y(x)$,
\begin{equation}
\left[\begin{pmatrix}p^{\varphi(N)}I_{N_i}&Nx&0\\0&I_{n-r-N_i}&0\\0&0&I_r\end{pmatrix}^{-1}\begin{pmatrix}A&B\\C&D\end{pmatrix}\begin{pmatrix}p^{\varphi(N)}I_{N_i}&Nx_A&Ny(x)\\0&I_{n-r-N_i}&0\\0&0&I_r\end{pmatrix}\right]^{-1}\begin{pmatrix}A&B\\C&D\end{pmatrix}\in\Gamma(N)\cap\Gamma_{SP}(p^s),
\end{equation}
where $x,x_A,y(x)$ can be taken to be any lift to $\bZ$. Then
\begin{align*}
   &\begin{psm}p^{\varphi(N)}I_{N_i}&Nx\\0&I_{n-r-N_i}\end{psm}\begin{psm}A&B\end{psm}=\begin{psm}p^{\varphi(N)}I_{N_i}&Nx\\0&I_{n-r-N_i}\end{psm}\begin{psm}I_{n-r}&0\end{psm}\begin{psm}A&B\\C&D\end{psm}\\
   =&\begin{psm}I_{n-r}&0\end{psm}\begin{psm}p^{\varphi(N)}I_{N_i}&Nx&0\\0&I_{n-r-N_i}&0\\0&0&I_r\end{psm}\begin{psm}A&B\\C&D\end{psm}\\
   =&\begin{psm}A&B\end{psm}\begin{psm}p^{\varphi(N)}I_{N_i}&Nx_A&Ny(x)\\0&I_{n-r-N_i}&0\\0&0&I_r\end{psm}\begin{psm}p^{\varphi(N)}I_{N_i}&Nx_A&Ny(x)\\0&I_{n-r-N_i}&0\\0&0&I_r\end{psm}^{-1}\begin{psm}A&B\\C&D\end{psm}^{-1}\begin{psm}p^{\varphi(N)}I_{N_i}&Nx&0\\0&I_{n-r-N_i}&0\\0&0&I_r\end{psm}\begin{psm}A&B\\C&D\end{psm}\\
    =&\begin{psm}A&B\end{psm}\begin{psm}p^{\varphi(N)}I_{N_i}&Nx_A&y(x)\\0&I_{n-r-N_i}&0\\0&0&I_r\end{psm}\left[\begin{psm}p^{\varphi(N)}I_{N_i}&Nx&0\\0&I_{n-r-N_i}&0\\0&0&I_r\end{psm}^{-1}\begin{psm}A&B\\C&D\end{psm}\begin{psm}p^{\varphi(N)}I_{N_i}&Nx_A&Ny(x)\\0&I_{n-r-N_i}&0\\0&0&I_r\end{psm}\right]^{-1}\begin{psm}A&B\\C&D\end{psm}\\
    \in&\begin{psm}A&B\end{psm}\begin{psm}p^{\varphi(N)}I_{N_i}&Nx_A&Ny(x)\\0&I_{n-r-N_i}&0\\0&0&I_r\end{psm}\cdot\Gamma(N)\cap\Gamma_{SP}(p^s).
\end{align*}
Plugging into \eqref{eq:lhs}, we get
\begin{equation}\label{eq:lhs2}
\begin{aligned}
    &\text{LHS of }\eqref{eq:qequiv}\\
	 =&\sum_{x\in M_{N_i,n-r-N_i}(\bZ/p^{\varphi(N)}\bZ)}\varepsilon^{\gamma_{N},\,a_p}_{\qexp}\left(\begin{psm}p^{\varphi(N)}I_{N_i}&0&0\\N\ltrans{x}_A&I_{n-r-N_i}&0\\N\ltrans{y}(x)&0&I_r\end{psm}\begin{psm}\ltrans{A}\\\ltrans{B}\end{psm}\beta'\begin{psm}A&B\end{psm}\begin{psm}p^{\varphi(N)}I_{N_i}&Nx_A&Ny(x)\\0&I_{n-r-N_i}&0\\0&0&I_r\end{psm},\,f\right).
\end{aligned}
\end{equation}
Next we need to use the condition $f\in V^{SP,r,\flat}_{m,l}$ to show that its Fourier coefficient in $\varepsilon^{\gamma_{N},\,a_p}_{\qexp}(f)$ indexed by
\begin{equation}\label{eq:pxybeta}
\begin{pmatrix}p^{\varphi(N)}I_{N_i}&0&0\\N\ltrans{x}_A&I_{n-r-N_i}&0\\N\ltrans{y}&0&I_r\end{pmatrix}\begin{pmatrix}\ltrans{A}\\\ltrans{B}\end{pmatrix}\beta'\begin{pmatrix}A&B\end{pmatrix}\begin{pmatrix}p^{\varphi(N)}I_{N_i}&Nx_A&Ny\\0&I_{n-r-N_i}&0\\0&0&I_r\end{pmatrix}
\end{equation}
is nonzero only if $y=y(x)$ in $M_{N_i, r}(\bZ/p^{\varphi(N)})$. By Proposition \ref{prop:Vrb}, the coefficient indexed by \eqref{eq:pxybeta} is nonzero only if the radical of \eqref{eq:pxybeta} does not contain a primitive vector inside $\mb Z\cdot x^*_1 + \cdots +\mb Z\cdot x^*_{N_{d-1}} + p\mb Z\cdot x^*_{N_{d-1}+1} +\cdots+ p\mb Z\cdot x^*_{n}$. The radical tensored with $\bQ$ is spanned by the columns of 
\begin{align*}
   \begin{pmatrix}p^{\varphi(N)}I_{N_i}&Nx_A&Ny\\0&I_{n-r-N_i}&0\\0&0&I_r\end{pmatrix}^{-1}\begin{pmatrix}-A^{-1}B\\I_r\end{pmatrix}=\begin{pmatrix}-\begin{pmatrix}p^{-\varphi(N)}Ny\\0\end{pmatrix}-\begin{pmatrix}p^{-\varphi(N)}I_{N_i}&-p^{-\varphi(N)}Nx_A\\0&I_{n-r-N_i}\end{pmatrix}A^{-1}B\\I_r\end{pmatrix},
\end{align*}
which contains no primitive vector in $\mb Z\cdot x^*_1 + \cdots +\mb Z\cdot x^*_{N_{d-1}} + p\mb Z\cdot x^*_{N_{d-1}+1} +\cdots+ p\mb Z\cdot x^*_{n}$ only if 
\begin{equation*}
   Ny+\begin{pmatrix}I_{N_i}&-Nx_A\end{pmatrix}A^{-1}B\equiv 0\mod p^{\varphi(N)},
\end{equation*}
and this equation is satisfied exactly when $y=y(x)$. Therefore, the coefficient indexed by \eqref{eq:pxybeta} is nonzero only if $y=y(x)$ in $M_{N_i, r}(\bZ/p^{\varphi(N)})$, and from \eqref{eq:lhs2} we get
\begin{align*}
   &\text{LHS of \eqref{eq:qequiv}}\\
	 =&\sum_{\substack{x\in M_{N_i,n-r-N_i}(\bZ/p^{\varphi(N)}\bZ)\\y\in M_{N_i,r}(\bZ/p^{\varphi(N)}\bZ)}}\varepsilon^{\gamma_{N},\,a_p}_{\qexp}\left(\begin{psm}p^{\varphi(N)}I_{N_i}&0&0\\N\ltrans{x}_A&I_{n-r-N_i}&0\\N\ltrans{y}&0&I_r\end{psm}\begin{psm}\ltrans{A}\\\ltrans{B}\end{psm}\beta'\begin{psm}A&B\end{psm}\begin{psm}p^{\varphi(N)}I_{N_i}&Nx_A&Ny\\0&I_{n-r-N_i}&0\\0&0&I_r\end{psm},\,f\right)\\
	 =&\sum_{x\in M_{N_i,n-N_i}(\bZ/p^{\varphi(N)}\bZ)}\varepsilon^{\gamma_{N},\,a_p}_{\qexp}\left(\begin{psm}p^{\varphi(N)}I_{N_i}&0\\N\ltrans{x}&I_{n-N_i}\end{psm}\ffi_{\wt{V}}(\beta')\begin{psm}p^{\varphi(N)}I_{N_i}&Nx\\0&I_{n-N_i}\end{psm},f\right)\\
	 =&\text{RHS of \eqref{eq:qequiv}}.
\end{align*}
\end{proof}

\begin{prop}\label{prop:key}
Let $s \geq m,l$. Then $\left(U^P_{p,N_{d-1}}\right)^{2s} V^{SP,r}_{m,l} \subset V^{SP,r,\flat}_{m,l} $.
\end{prop}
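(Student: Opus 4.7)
The plan is to apply Proposition \ref{prop:Vrb}: given $f\in V^{SP,r}_{m,l}$, it suffices to prove that $\varepsilon^{\gamma_N,a_p}(\beta,(U^P_{p,N_{d-1}})^{2s}f)\equiv 0\pmod{p^m}$ for every $\beta$ of corank $r$ whose radical after twisting by $a_p$ contains a primitive vector $v=(v^{(1)},pv^{(2)})\in M:=\bZ x_1^*+\cdots+\bZ x_{N_{d-1}}^*+p\bZ x_{N_{d-1}+1}^*+\cdots+p\bZ x_n^*$. Using \eqref{eq:aqexp} I reduce to $a_p=I$. Iterating Proposition \ref{prop:UpF} $2s$ times yields
\begin{equation*}
\varepsilon^{\gamma_N,I}(\beta,(U^P_{p,N_{d-1}})^{2s}f)=\sum_\xi \varepsilon^{(\gamma^P_{p,N_{d-1}})^{-2s}\gamma_N,I}(\ltrans{M_\xi}\beta M_\xi,f),
\end{equation*}
where $\xi$ runs over a set of lifts to $\bZ$ of $M_{N_{d-1},n-N_{d-1}}(\bZ/p^{2s})$, and $M_\xi=\begin{pmatrix}p^{2s}I_{N_{d-1}}& N\xi\\0&I_{n-N_{d-1}}\end{pmatrix}$. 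Writing $\beta=\begin{pmatrix}A&B\\ \ltrans{B}&C\end{pmatrix}$, the conjugated matrix $\ltrans{M_\xi}\beta M_\xi$ has upper-left block $p^{4s}A$ and off-diagonal block $p^{2s}(NA\xi+B)$; these high powers of $p$ will be crucial.

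The main step is to show the sum is divisible by $p^{2s}$. I would exhibit, using $\beta v=0$, a sublattice $H\subseteq M_{N_{d-1},n-N_{d-1}}(\bZ)$ whose reduction modulo $p^{2s}$ has cardinality at least $p^{2s}$, together with matrices $u_\delta\in\GL(n,\bZ)$ for each $\delta\in H$ such that $\ltrans{M_{\xi+\delta}}\beta M_{\xi+\delta}=\ltrans{u_\delta}(\ltrans{M_\xi}\beta M_\xi)u_\delta$ and the reduction of $u_\delta$ modulo $p^l$ lies in $SP(\bZ_p/p^l)$. The $SP(\bZ_p/p^l)$-invariance of $f$ (built into the definition of $V^{SP}_{m,l}$), combined with \eqref{eq:aqexp}, then forces the Fourier coefficients of $f$ at $\ltrans{M_{\xi+\delta}}\beta M_{\xi+\delta}$ and $\ltrans{M_\xi}\beta M_\xi$ to coincide. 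Grouping the sum over $H$-orbits produces a factor of $|H\bmod p^{2s}|\geq p^{2s}$, and the hypothesis $s\geq m$ then delivers vanishing modulo $p^m$. The sublattice $H$ will be built from translations $\delta=v^{(1)}\otimes w$ for $w\in\bZ^{n-N_{d-1}}$: the primitivity of $v\in M$ forces $v^{(1)}$ to have an entry coprime to $p$, which is precisely what makes the reduction of $H$ modulo $p^{2s}$ large enough (of size $p^{2s\cdot n_d}\geq p^{2s}$ since $n_d\geq 1$).

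The main technical obstacle is constructing $u_\delta$ when $v^{(2)}\neq 0$. In this case the translation $\delta=v^{(1)}\otimes w$ does not leave $\ltrans{M_\xi}\beta M_\xi$ invariant on the nose, because $\beta v=0$ gives only $Av^{(1)}=-pBv^{(2)}$ and $\ltrans{B}v^{(1)}=-pCv^{(2)}$ rather than the vanishing of $Av^{(1)}$ and $\ltrans{B}v^{(1)}$. The resulting discrepancy between $\ltrans{M_{\xi+\delta}}\beta M_{\xi+\delta}$ and $\ltrans{M_\xi}\beta M_\xi$ is nevertheless divisible by $p$, and one must realize this discrepancy as a conjugation by an explicit $u_\delta$ whose off-diagonal entries are multiples of $p$. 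The hypothesis $s\geq l$ ensures that $u_\delta$ reduces into $SP(\bZ_p/p^l)$, so that $f$'s Fourier coefficients are indeed preserved. This matrix-level bookkeeping closely parallels the manipulations performed in the proof of Proposition \ref{prop:UpEquiv}, which serves as the template: there a similar conjugation trick (exchanging $x$, $x_A$, $y(x)$) was used to prove the $\mb U^{P,[N]}_p$-equivariance of the fundamental exact sequence, and essentially the same algebra relates $\xi$ and $\xi+\delta$ here.
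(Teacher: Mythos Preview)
Your high-level strategy---group the sum over $\xi$ into orbits on which the Fourier coefficient is constant and extract a power of $p$---is exactly what the paper does. But there is a genuine gap at the point where you promise to construct $u_\delta\in\GL(n,\bZ)$ with $u_\delta\bmod p^l\in SP(\bZ/p^l)$ such that $\ltrans{M_{\xi+\delta}}\beta M_{\xi+\delta}=\ltrans{u_\delta}(\ltrans{M_\xi}\beta M_\xi)u_\delta$. Knowing that the two symmetric matrices agree modulo $p$ does not produce such a $u_\delta$: two integral symmetric matrices congruent modulo $p$ are in general not $\GL(n,\bZ)$-equivalent via a matrix $\equiv I\pmod{p^l}$, and the analogy with Proposition~\ref{prop:UpEquiv} does not supply the construction. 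Concretely, any isometry of $\beta$ is of the form $I+v\,\ltrans{e}$ with $v$ in the radical; conjugating such a shear by $M_\xi$ yields an upper-right block $p^{-2s}\bigl(v^{(1)}-pN\xi v^{(2)}\bigr)\bigl(N\ltrans{e^{(1)}}\xi+\ltrans{e^{(2)}}\bigr)$, and since $v^{(1)}$ has a unit entry while the bottom part of $v$ is only divisible by $p$, no $\xi$-independent choice of $e$ makes this integral after the shift $\xi\mapsto\xi+\delta$ absorbs only the leading piece.

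The paper circumvents this by splitting $(U^P_{p,N_{d-1}})^{2s}$ into two factors of $(U^P_{p,N_{d-1}})^{s}$. The first application of $(U^P_{p,N_{d-1}})^{s}$ is used \emph{only} to push the primitive radical vector from the lattice $\bZ x_1^*+\cdots+\bZ x_{N_{d-1}}^*+p\bZ x_{N_{d-1}+1}^*+\cdots+p\bZ x_n^*$ into $\bZ x_1^*+\cdots+\bZ x_{N_{d-1}}^*+p^s\bZ x_{N_{d-1}+1}^*+\cdots+p^s\bZ x_n^*$, i.e.\ to upgrade the $p$-divisibility of the bottom entries from $p$ to $p^s$. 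Only then, in the second application, does one use the \emph{full} radical vector $v_\beta$ in the shear $I-N\eta\,v_\beta\,\ltrans{w}_\beta$, which fixes $\beta$ exactly; the specific choice $w_\beta=(0,\dots,v_{\beta,n},\dots,0,\dots,-v_{\beta,j})$ makes both $\ltrans{v}_\beta w_\beta=0$ (so the shear is unipotent in $\GL(n,\bZ)$) and the top part of $w_\beta$ divisible by $p^s$ (since $p^s\mid v_{\beta,n}$). With this extra divisibility, the conjugate $M_{x+\eta x_\beta}^{-1}(I-N\eta\,v_\beta\,\ltrans{w}_\beta)M_x$ lands in the image of $\Gamma\cap\Gamma_{SP}(p^s)$: the shift $x\mapsto x+\eta x_\beta$ (with $x_\beta=v_{\beta,j}\cdot v_\beta^{(1)}$) cancels the leading $(1,2)$-block, and the remaining cross-terms are already $p^s$-divisible thanks to the first step. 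Your one-pass approach with $p^{2s}$ in $M_\xi$ but only $p\mid v^{(2)}$ leaves cross-terms of order $p^{1-2s}$ after conjugation, which cannot be made integral.
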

\begin{proof}
By Proposition \ref{prop:Vrb}, what we need to show is that for all $\gamma_N\in\Sp(2n,\bZ)$, $a_p\in T(\bZ_p)$, $f\in V^{SP,r}_{m,l}$ and $\beta\in N^{-1}S^2(X_n)_{\geq 0}$ whose radical is of rank $r$ and contains a primitive vector $v_\beta$ inside $\mb Z\cdot x^*_1 + \cdots +\mb Z\cdot x^*_{N_{d-1}} + p\mb Z\cdot x^*_{N_{d-1}+1} +\cdots+ p\mb Z\cdot x^*_{n}$,
\begin{equation*}
   \varepsilon^{\gamma_N,a_p}_{\qexp}\left(\beta,\,\left(U^P_{p,N_{d-1}}\right)^{2s}f\right)=0.
\end{equation*} 
One can easily check that for all $x\in M_{N_{d-1},n_d}(\bZ)$,
\begin{equation*}
    \bQ\cdot\begin{pmatrix}p^sI_{N_{d-1}}&Nx\\0&I_r\end{pmatrix}^{-1}v_\beta\cap X^*_n\subset \bZ\cdot x^*_1+\cdots+\bZ\cdot x^*_{N_{d-1}}+p^s\bZ\cdot x^*_{N_{d-1}+1}+\cdots +p^s\bZ\cdot x^*_n,
\end{equation*}
{\it i.e.} the radical of $\begin{pmatrix}p^sI_{N_{d-1}}&0\\N\ltrans{x}&I_r\end{pmatrix}\beta\begin{pmatrix}p^sI_{N_{d-1}}&Nx\\0&I_r\end{pmatrix}$ contains a primitive vector inside $\bZ\cdot x^*_1+\cdots+\bZ\cdot x^*_{N_{d-1}}+p^s\bZ\cdot x^*_{N_{d-1}+1}+\cdots +p^s\bZ\cdot x^*_n$. Since by Proposition \ref{prop:UpF},
\begin{equation*}
  \varepsilon^{\gamma_N,a_p}_{\qexp}\left(\beta,\,\left(U^P_{p,N_{d-1}}\right)^{2s}f\right)=\sum_{x\in M_{N_{d-1},n_d}(\bZ)}\varepsilon^{\gamma^{-s}_{p,n-1}\gamma_N,a_p}_{\qexp}\left(\begin{pmatrix}p^sI_{N_{d-1}}&0\\N\ltrans{x}&I_r\end{pmatrix}\beta\begin{pmatrix}p^sI_{N_{d-1}}&Nx\\0&I_r\end{pmatrix},\,\left(U^P_{p,N_{d-1}}\right)^{s}f\right),
\end{equation*}
we reduce to showing 
\begin{equation*}
   \varepsilon^{\gamma_N,a_p}_{\qexp}\left(\beta,\,\left(U^P_{p,N_{d-1}}\right)^{s}f\right)=0,
\end{equation*}
for $\beta$ whose radical contains a primitive vector $v_\beta$ inside $\mb Z\cdot x^*_1 + \cdots +\mb Z\cdot x^*_{N_{d-1}} + p^s\mb Z\cdot x^*_{N_{d-1}+1} +\cdots+ p^s\mb Z\cdot x^*_{n}$.

Write $v_\beta=\ltrans{(}v_{\beta,1},\dots,v_{\beta,n})$. Then $p^s\mid v_{\beta,i}$, $N_{d-1}+1\leq i\leq n$, and there exists $1\leq j\leq N_{d-1}$ such that $v_{\beta,j}$ is not divisible by $p$. Put $w_\beta=\ltrans{(}\underbrace{0,\dots,0}_{j-1},v_{\beta,n},0,\dots,0,-v_{\beta,j})\in\bZ^n$. Then $I_n-N\eta\cdot v_\beta\ltrans{w_\beta}$ belongs to $\GL(n,\bZ)$ for all integer $\eta$. Moreover,
\begin{equation*}
    I_n-N\eta \cdot v_{\beta}\ltrans{w}_\beta\equiv I_n+\begin{pmatrix}0&\cdots&0&N\eta v_{\beta,j}\cdot v_{\beta,1}\\&\ddots&&\vdots\\0&\cdots&0&N\eta v_{\beta,j}\cdot v_{\beta,j}\\&\ddots&&\vdots\\0&\cdots&0&N\eta v_{\beta,j}\cdot v_{\beta,N_{d-1}}\\0&\cdots&0&0\end{pmatrix} \mod Np^s.
\end{equation*} 
Let $x_\beta=v_{\beta,j}\cdot\ltrans{(}v_{\beta,1},\cdots,v_{\beta,N_{d-1}})\in \bZ^{N_{d-1}}$. Then
\begin{equation*}
   \begin{pmatrix}I_{n-1}&N\eta x_\beta\\0&I_r\end{pmatrix}^{-1}\left(I_n-\eta\cdot v_\beta\ltrans{w}_\beta\right)\equiv I_n\mod Np^s
\end{equation*}
and
\begin{equation}\label{eq:bl1}
   \begin{pmatrix}p^sI_{n-1}&Nx+N\eta x_\beta\\0&I_r\end{pmatrix}^{-1}\left(I_n-\eta\cdot v_\beta\ltrans{w}_\beta\right)\begin{pmatrix}p^sI_{n-1}&Nx\\0&I_r\end{pmatrix}\in\mr{Im}\left(\Gamma\cap\Gamma_{SP}(p^s)\ra\GL(n,\bZ)\right).
\end{equation}
By definition the vector $x_\beta\in\bZ^{N_{d-1}}$ is not divisible by $p$. Thus we can pick $C\subset M_{N_{d-1},n_d}(\bZ/p^s)$ such that
$$ M_{N_{d-1},n_d}(\bZ/p^s)=(\bZ/p^s)\cdot x_\beta\oplus C.$$
We have
\begin{align*}
   &\varepsilon^{\gamma_N,a_p}_{\qexp}\left(\beta,\,\left(U^P_{p,N_{d-1}}\right)^s f\right)\\
   =&\sum_{x\in M_{N_{d-1},n_d}(\bZ/p^s)}\varepsilon^{(\gamma^P_{p,d-1})^{-s}\gamma_N,a_p}_{\qexp}\left(\begin{pmatrix}p^sI_{N_{d-1}}&0\\N\ltrans{x}&I_r\end{pmatrix}\beta\begin{pmatrix}p^sI_{N_{d-1}}&Nx\\0&I_r\end{pmatrix},\,f\right)\\
   =&\sum_{x\in C}\,\sum_{\eta\in\bZ/p^s\bZ}\varepsilon^{(\gamma^P_{p,d-1})^{-s}\gamma_N,a_p}_{\qexp}\left(\begin{pmatrix}p^sI_{N_{d-1}}&0\\N\ltrans{x}+N\eta \ltrans{x}_\beta&I_r\end{pmatrix}\beta\begin{pmatrix}p^sI_{N_{d-1}}&Nx+N\eta x_\beta\\0&I_r\end{pmatrix},\,f\right).
\end{align*}
Applying \eqref{eq:bl1}, we get
\begin{align*}
   &\varepsilon^{\gamma_N,a_p}_{\qexp}\left(\beta,\,\left(U^P_{p,N_{d-1}}\right)^s f\right)\\
   =&\sum_{x\in C}\,\sum_{\eta\in\bZ/p^s\bZ}\varepsilon^{(\gamma^P_{p,d-1})^{-s}\gamma_N,1}_{\qexp}\left(\begin{pmatrix}p^s I_{N_{d-1}}&0\\N\ltrans{x}&I_r\end{pmatrix}\left(I_n-\eta\cdot w_\beta\ltrans{v}_\beta\right)\beta\left(I_n-\eta\cdot v_\beta\ltrans{w}_\beta\right)\begin{pmatrix}p^sI_{N_{d-1}}&Nx\\0&I_r\end{pmatrix},\,f\right).
\end{align*}
Since $v_\beta$ belongs to the radical of $\beta$, we know that $\left(I_n-\eta\cdot w_\beta\ltrans{v}_\beta\right)\beta\left(I_n-\eta\cdot v_\beta\ltrans{w}_\beta\right)=\beta$, and
\begin{align*}
    &\varepsilon^{\gamma_N,a_p}_{\qexp}\left(\beta,\,\left(U^P_{p,N_{d-1}}\right)^sf\right)\\
   =&\sum_{x\in C}\,\sum_{\eta\in\bZ/p^s\bZ}\varepsilon^{(\gamma^P_{p,d-1})^{-s}\gamma_N,a_p}_{\qexp}\left(\begin{pmatrix}p^s I_{N_{d-1}}&0\\N\ltrans{x}&I_r\end{pmatrix}\beta\begin{pmatrix}p^sI_{N_{d-1}}&Nx\\0&I_r\end{pmatrix},\,f\right)\\
   =&\sum_{x\in C}p^s\cdot\varepsilon^{(\gamma^P_{p,d-1})^{-s}\gamma_N,a_p}_{\qexp}\left(\begin{pmatrix}p^s I_{N_{d-1}}&0\\N\ltrans{x}&I_r\end{pmatrix}\beta\begin{pmatrix}p^s I_{N_{d-1}}&Nx\\0&I_r\end{pmatrix},\,f\right)\\
   =&\,0.
\end{align*}
\end{proof}

\subsection{\texorpdfstring{Hida families of $p$-adic Siegel modular forms vanishing along strata with cusp labels of rank $> r$}{Hida families of p-adic Siegel modular forms vanishing along strata with cusp labels of rank  >r}}\label{sec:HF}
Set $U^P_p=\prod\limits_{i=1}^d U^P_{p,N_i}$. We first show the existence of an ordinary projector on $\pV^{SP,r}$ by applying induction on $r$ and using $\pV^{SP,r,\flat}$ plus Proposition \ref{prop:key}. 
\begin{prop}\label{prop:bconv}
For each $f\in \pV^{SP,r}$, the limit $\lim\limits_{j\ra\infty}\left(U^P_p\right)^{j!}f$ exists in $\pV^{SP,r}$.
\end{prop}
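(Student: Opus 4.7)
The plan is to proceed by induction on $r \geq 0$, using Proposition \ref{prop:key} to reduce to the subspace $\pV^{SP,r,\flat}$ and then the exact sequence of Proposition \ref{prop:exact} to compare with cuspidal ordinary Hida theory in lower genus. The base case $r = 0$ is that $\pV^{SP,0}$ consists of cuspidal $p$-adic Siegel modular forms for the parabolic $P$, for which the existence of $\lim_j (U^P_p)^{j!}$ is precisely the convergence of the ordinary projector in cuspidal $P$-ordinary Hida theory as proved in \cite{PilHida}.

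For the inductive step, assume the statement holds for $\pV^{SP,r-1}$ with $1 \leq r \leq n_d$. Given $f \in V^{SP,r}_{m,l}$, the Hecke operators $U^P_{p,N_1},\ldots,U^P_{p,N_d}$ commute (they lie in the commutative Iwahori Hecke subalgebra attached to the torus cocharacters $\gamma_{p,N_i}$), so $(U^P_p)^{j!} = \prod_i (U^P_{p,N_i})^{j!}$ is divisible by $(U^P_{p,N_{d-1}})^{j!}$. Proposition \ref{prop:key} then gives $(U^P_p)^{j!} f \in \pV^{SP,r,\flat}$ as soon as $j! \geq 2\max(m,l)$, so the existence of the limit reduces to its existence on $\pV^{SP,r,\flat}$.

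To establish convergence on $\pV^{SP,r,\flat}$, I would use the $\bU^{P,[N]}_p$-equivariant short exact sequence from Propositions \ref{prop:exact} and \ref{prop:UpEquiv},
\begin{equation*}
0 \lra \pV^{SP,r-1} \lra \pV^{SP,r,\flat} \lra \bigoplus_{\substack{V \in \fC_{\bV}/\Gamma \\ \mr{rk}\,V = r}} \bZ_p\llbracket T_P(\bZ_p) \rrbracket \otimes_{\bZ_p\llbracket T_{P_{n-r}}(\bZ_p) \rrbracket} \pV^{SP_{n-r},0}_V \lra 0.
\end{equation*}
For $j \geq \varphi(N)$ the operator $(U^P_p)^{j!}$ lies in $\bU^{P,[N]}_p$ and thus respects the sequence. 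On the rightmost term its action is built from the cuspidal operator $(U^{P_{n-r}}_p)^{j!}$ on each $\pV^{SP_{n-r},0}_V$ (a space of cuspidal $p$-adic Siegel modular forms of genus $n-r$), where the limit exists by cuspidal $P_{n-r}$-ordinary Hida theory in genus $n-r$, again from \cite{PilHida}. On $\pV^{SP,r-1}$, the limit exists by the inductive hypothesis. Working modulo $p^m$ at a fixed level $l$, where the reduction of the exact sequence remains exact thanks to Remark \ref{rmk:Vbc}, one then combines these two convergences to produce the limit in the middle: for $j$ large the image $\bar g_j$ of $g_j := (U^P_p)^{j!} f$ in the quotient stabilizes at some $\bar g_\infty$; picking any lift $h \in \pV^{SP,r,\flat}_{m,l}$ of $\bar g_\infty$, the residual sequence $g_j - h$ lives in $\pV^{SP,r-1}$ for $j$ large, and the inductive hypothesis applied to compare successive terms delivers the limit in $\pV^{SP,r,\flat}$.

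The main obstacle is this final assembly step, where convergence on the sub- and quotient pieces of the exact sequence has to be promoted to convergence on the middle. The delicate aspects are controlling the lift $h$ consistently across changes of level $(m,l)$ and ensuring that the $\bU^{P,[N]}_p$-equivariance from Proposition \ref{prop:UpEquiv} (rather than full $\bU^P_p$-equivariance) still suffices — which it does, because for $j \geq \varphi(N)$ the factorial $j!$ is divisible by $\varphi(N)$ and so $(U^P_p)^{j!}$ automatically lands in $\bU^{P,[N]}_p$. All remaining bookkeeping reduces to standard manipulations with torsion $\cO_F\llbracket T_P(\bZ_p)\rrbracket$-modules once the exact sequence and the base change of Remark \ref{rmk:Vbc} are in place.
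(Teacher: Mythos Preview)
Your overall strategy is the same as the paper's: induction on $r$ with base case from \cite{PilHida}, reduction to $\pV^{SP,r,\flat}$ via Proposition~\ref{prop:key}, and the $\bU^{P,[N]}_p$-equivariant exact sequence of Propositions~\ref{prop:exact} and~\ref{prop:UpEquiv} for the inductive step. Your handling of the $\varphi(N)$-divisibility is also correct.

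The gap is precisely in the assembly step you flag, and it is not just bookkeeping. Your inductive hypothesis says that for each \emph{fixed} $a\in V^{SP,r-1}_{m,l}$ the sequence $(U^P_p)^{k!}a$ stabilizes; it says nothing about the sequence $g_j-h$, which is a \emph{different} element of $V^{SP,r-1}_{m,l}$ for each $j$ and is not of the form $(U^P_p)^{k!}$ applied to a fixed element. Convergence on a submodule and a quotient does not in general force convergence on the middle of a short exact sequence of infinite $p^m$-torsion modules. The paper repairs this by strengthening the inductive statement to the finiteness property (F): the $\cO_F/p^m$-submodule generated by $\{(U^P_p)^{n\varphi(N)}f:n\geq 0\}$ is finite. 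This passes cleanly through the exact sequence by elementary linear algebra: (F) for $\bar f$ in the quotient yields a single relation, so $g:=(U^P_p)^{(j_1+1)\varphi(N)}f-\sum_{i\leq j_1} a_i(U^P_p)^{i\varphi(N)}f$ lands in $V^{SP,r-1}_{m,l}$; then (F) for $g$ gives a second relation, and the two combine to put $(U^P_p)^{(j_1+j_2+2)\varphi(N)}f$ in the span of lower powers. Once (F) holds, convergence of $(U^P_p)^{j!}f$ is immediate since any endomorphism of a finite $\cO_F/p^m$-module has eventually constant $j!$-th powers. Your argument becomes correct if you replace ``the limit exists'' by (F) throughout the induction.
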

\begin{proof}
We remark that for any endomorphism of finitely generated $\cO_F/p^m$-modules, its $j!$-th power stabilizes when $j$ is large enough. 

Given $f$ inside an $\cO_F/p^m$-module with an action by $U^P_p$, we define the following finiteness property for $f$.
\begin{equation}
   \parbox{\dimexpr\linewidth-4.5em}{%
    \strut
    The submodule generated by $\left(U^P_p\right)^{n\varphi(N)}f$, $n\geq 0$, is finitely generated over $\cO_F/p^m$.
    \strut
  }\tag{F}
\end{equation}
It suffices to show that (F) holds for all elements in $V^{SP,r}_{m,l}$. For $r=0$ this is known thanks to \cite{HPEL,PilHida}. Now assume (F) holds for $V^{SP,r-1}_{m,l}$. Due to Proposition \ref{prop:key}, we only need to show that (F) for all $f\in V^{SP,r,\flat}_{m,l}$. Take $f\in V^{SP,r,\flat}_{m,l}$, since $(F)$ holds for the quotient in \eqref{eq:exactf}, there exists $a_0,a_1,\dots,a_{j_1}\in\cO_F$ such that
\begin{equation}\label{eq:form}
  g=\left(U^P_p\right)^{(j_1+1)\varphi(N)}f-\sum\limits_{i=0}^{j_1} a_i\left(U^P_p\right)^{i\varphi(N)}f\in V^{SP,r-1,\flat}_{m,l}.
\end{equation}
Then we apply (F) for \eqref{eq:form}. There exists $b_0,b_1,\dots,b_{j_2}\in\cO_F$ such that
\begin{align*}
  &\left(U^P_p\right)^{(j_2+1)\varphi(N)}\left(\left(U^P_p\right)^{(j_1+1)\varphi(N)} f-\sum\limits_{i=0}^{j_1} a_i\left(U^P_p\right)^{i\varphi(N)} f\right)\\
  =&\sum\limits_{s=0}^{j_2}b_s\left(U^P_p\right)^{s\varphi(N)}\left(\left(U^P_p\right)^{(j_1+1)\varphi(N)} f-\sum\limits_{i=0}^{j_1} a_i\left(U^P_p\right)^{i\varphi(N)}f\right).
\end{align*}
Therefore $\left(U^P_p\right)^{(j_1+j_2+2)\varphi(N)}f$ belongs to the $\cO_F/p^m$-span of $f,\left(U^P_p\right)^{\varphi(N)}f,\dots,\left(U^P_p\right)^{(j_1+j_2+1)\varphi(N)}f$, and (F) holds for $f$.
\end{proof}

The above proposition shows that $\lim\limits_{j\ra\infty}\left(U^P_p\right)^{j!}$ can be well defined on $\pV^{SP,r}$. Define the $P$-ordinary projector on $\pV^{SP,r}$ as
\begin{equation*}
  e_P=\lim\limits_{j\ra\infty}\left(U^P_p\right)^{j!}.
\end{equation*}
It is an idempotent projecting the space into the subspace spanned by generalized $\bU^{P}_p$-eigenvectors with eigenvalues being $p$-adic units. Similarly a $P_{n-r}$-ordinary projector can be defined for the quotient terms in the exact sequences in Proposition \ref{prop:exact}.

Set
\begin{align*}
   \pV^{r}_{\Pord}=e_P\pV^{SP,r}=e_P\pV^{SP,r,\flat},&&\pV^{0}_{V,P_{n-r}\text{-}\,\ord}=e_{P_{n-r}}\pV^{SP_{n-r},0}_V,\,\,V\in\mf C_{\mb V},\,\mathrm{rk}\,V=r\leq n_d.	 
\end{align*}
Applying $e_P$, $e_{P_{n-r}}$ to \eqref{eq:exact1}, we get
\begin{equation}\label{eq:exacto}
   0\lra\pV^{r-1}_{\Pord}\lra\pV^{r}_{\Pord}\lra \bigoplus_{\substack{V\in\fC_{\bV}/\Gamma\\\mr{rk}\,V=r}} \bZ_p\llbracket T_P(\bZ_p)\rrbracket \otimes_{\bZ_p\llbracket T_{P_{n-r}}(\bZ_p)\rrbracket } \pV^{0}_{V,P_{n-r}\text{-}\,\ord}\lra 0.
\end{equation}

Define $\pV^{r,*}_{\Pord}$ to be the Pontryagin dual of $\pV^{r}_{\Pord}$, {\it i.e.} $\mr{Hom}_{\mb Z_p}(\pV^{r}_{\Pord},\mb Q_p/\mb Z_p)$, and similarly define $\pV^{0,*}_{V,P_{n-r}\text{-}\,\ord}$. Then \eqref{eq:exacto} gives
\begin{align}\label{eq:exact2}
0\lra \bigoplus_{\substack{V \in\mf C_{\mb V}/\Gamma\\ \mathrm{rk}\,V=r}}\pV^{0,*}_{V,P_{n-r}\text{-}\,\ord}\otimes_{\mb Z_p\llbracket T_{P_{n-r}}(\mb Z_p)\rrbracket }\mb Z_p\llbracket T_P(\mb Z_p)\rrbracket \lra & \pV^{r,*}_{\Pord}\lra \pV^{r-1,*}_{\Pord} \lra 0.
\end{align}

Let $\Lambda_P=\mc O_F\llbracket T_P(\mb Z_p)^\circ\rrbracket $ (resp. $\Lambda_{P_{n-r}}=\mc O_F\llbracket T_{P_{n-r}}(\mb Z_p)^\circ\rrbracket $), where $T_P(\mb Z_p)^\circ$ is the maximal $p$-profinite subgroup of $T_P(\mb Z_p)$ (resp. $T_{P_{n-r}}(\bZ_p)$).
\begin{prop}\label{prop:bfree}
$\pV^{r,*}_{\Pord}$, $0\leq r\leq n_d$, is a free $\Lambda_P$-module of finite rank.
\end{prop}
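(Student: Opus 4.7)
The plan is to proceed by induction on $r$. The base case $r=0$ is precisely the main result of cuspidal $P$-ordinary Hida theory established in \cite{PilHida}, which identifies $\pV^{0,*}_{\Pord}$ with a finite free $\Lambda_P$-module.

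For the inductive step, I would apply Pontryagin duality to the short exact sequence \eqref{eq:exacto} (obtained by applying the ordinary projectors $e_P$ and $e_{P_{n-r}}$ to \eqref{eq:exact1}, using the $\mb U^{P,[N]}_p$-equivariance established in Proposition \ref{prop:UpEquiv}). This yields \eqref{eq:exact2}, namely
\begin{equation*}
0\lra M\lra \pV^{r,*}_{\Pord}\lra \pV^{r-1,*}_{\Pord}\lra 0,\qquad M=\bigoplus_{\substack{V\in\fC_{\bV}/\Gamma\\ \mathrm{rk}\,V=r}} M_V,
\end{equation*}
with $M_V = \pV^{0,*}_{V,P_{n-r}\text{-}\,\ord}\otimes_{\cO_F\llbracket T_{P_{n-r}}(\bZ_p)\rrbracket}\cO_F\llbracket T_P(\bZ_p)\rrbracket$. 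Note that the direct sum is finite because $\fC_{\bV}/\Gamma$ is finite for principal level $N$. By the induction hypothesis, $\pV^{r-1,*}_{\Pord}$ is a free $\Lambda_P$-module of finite rank, hence $\Lambda_P$-projective, so the sequence splits as $\Lambda_P$-modules. It therefore suffices to show that each $M_V$ is a free $\Lambda_P$-module of finite rank, from which $\pV^{r,*}_{\Pord}\cong M\oplus \pV^{r-1,*}_{\Pord}$ will be free of finite rank.

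For each fixed $V$ of rank $r$, I would invoke cuspidal Hida theory (i.e.\ the $r=0$ case applied in smaller genus $n-r$) for the Siegel variety attached to the stratum $Y_V$ with respect to the parabolic $P_{n-r}\subset\GL(n-r)$. This yields that $\pV^{0,*}_{V,P_{n-r}\text{-}\,\ord}$ is a free $\Lambda_{P_{n-r}}$-module of finite rank $r_V$. It remains to transfer this freeness through the base-change along the map $T_{P_{n-r}}\to T_P$ induced by the embedding $P_{n-r}\hookrightarrow P$. Two cases arise according to the dichotomy in \eqref{eq:index} and \eqref{eq:qT}. If $r<n_d$, the map $T_{P_{n-r}}\to T_P$ is an isomorphism of tori, so $\Lambda_{P_{n-r}}=\Lambda_P$ and the tensor product is essentially trivial: $M_V\cong \pV^{0,*}_{V,P_{n-r}\text{-}\,\ord}$ is already free of rank $r_V$ over $\Lambda_P$. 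If $r=n_d$, then $T_P\cong T_{P_{n-r}}\times\mb G_m$ with the extra factor coming from the $d$-th block, inducing
\begin{equation*}
\cO_F\llbracket T_P(\bZ_p)\rrbracket\,\cong\,\cO_F\llbracket T_{P_{n-r}}(\bZ_p)\rrbracket\,\wh\otimes_{\cO_F}\,\cO_F\llbracket \bZ_p^\times\rrbracket,
\end{equation*}
and correspondingly $\Lambda_P\cong\Lambda_{P_{n-r}}\wh\otimes_{\cO_F}\cO_F\llbracket 1+p\bZ_p\rrbracket$. Since $p$ is odd and $\bZ_p^\times\cong\mu_{p-1}\times(1+p\bZ_p)$, the ring $\cO_F\llbracket \bZ_p^\times\rrbracket$ is a free $\cO_F\llbracket 1+p\bZ_p\rrbracket$-module of rank $p-1$. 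Tensoring, $M_V$ becomes a free $\Lambda_P$-module of rank $(p-1)r_V$.

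The main obstacle, to my mind, is not the final freeness argument but ensuring that everything enters the argument cleanly: one needs the splitting of the $\Lambda_P$-module exact sequence (which requires the induction hypothesis and hence the whole prior set-up), one needs the $\bU^{P,[N]}_p$-equivariance of Proposition \ref{prop:UpEquiv} in order for the ordinary projectors $e_P$ and $e_{P_{n-r}}$ to match up across \eqref{eq:exact1} and to produce \eqref{eq:exacto}, and one needs the explicit identification of the induced $T_P(\bZ_p)$-action on each $M_V$ with the natural action coming from the description \eqref{eq:index}. Once these are in place, the induction is immediate by splicing the base case of cuspidal Hida theory with the splitting argument above.
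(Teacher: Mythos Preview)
Your proposal is correct and follows essentially the same approach as the paper: induction on $r$, base case from \cite{PilHida}, and for the inductive step the exact sequence \eqref{eq:exact2} splits because the right-hand term $\pV^{r-1,*}_{\Pord}$ is free (hence projective) by induction. The paper's proof is terser---it simply asserts that both ends of \eqref{eq:exact2} are free $\Lambda_P$-modules and invokes vanishing of $\mathrm{Ext}^1_{\Lambda_P}$---whereas you spell out the base-change argument for the left-hand term, distinguishing the cases $r<n_d$ and $r=n_d$; this extra detail is correct and helpful but not a different strategy.
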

\begin{proof}
We prove the proposition by induction. For $r=0$ the control theorem in \cite[Th\'eor\`eme 1.1 (7)]{PilHida} for $\pV^{0,*}_{\Pord}$ (resp. $\pV^{0,*}_{V,P_{n-r}\text{-}\,\ord}$) says that it is a free $\Lambda_P$-module (rep. $\Lambda_{P_{n-r}}$-module) of finite rank. Suppose that $\pV^{r-1,*}_{\Pord}$ is a free $\Lambda_P$-module of finite rank. Then the terms at the two ends of \eqref{eq:exact2} are free $\Lambda_P$-modules of finite rank. Since $\mathrm{Ext}^1_{\Lambda_P}(M,N)$ vanishes if $M$ is a free $\Lambda_P$-module, $\pV^{r,*}_{\Pord}$ is isomorphic, as a $\Lambda_P$-module, to the direct sum the terms at the two ends of \eqref{eq:exact2}.
\end{proof}
Now we have established {\it (i)} in Theorem \ref{thm:main}.

For $0\leq r\leq n_d$, the $\cO_F\llbracket T_P(\bZ_p)\rrbracket $-module of Hida families of $p$-adic Siegel modular forms ordinary for the parabolic $P$ vanishing along the strata associated with cusp labels of rank $> r$, is defined as 
\begin{align*}
  \mc M^{r}_{\Pord}:=\mr{Hom}_{\Lambda_P}\left(\pV^{r,*}_{\Pord},\Lambda_P\right).
\end{align*}
Similarly, define
\begin{align*}
\mc M^{0}_{P_{n-r}\text{-}\,\ord}:=\mr{Hom}_{\Lambda_{P_{n-r}}}\left(\pV^{0,*}_{V,P_{n-r}\text{-}\,\ord},\Lambda_{P_{n-r}}\right).
\end{align*}
Applying $\mr{Hom}_{\Lambda_P}\left(\cdot,\Lambda_P\right)$ to \eqref{eq:exact2} gives {\it (iii)} of Theorem \ref{thm:main}.

Let $\utau^P\in\Hom_{\cont}\left(T_P(\bZ_p),\ol{\bQ}^\times_p\right)$ be an arithmetic dominant weight. Attached to it is a prime ideal $\cP_{\utau^P}$ of $\cO_F\llbracket T_P(\bZ_p)\rrbracket $. Then unfolding the definitions, one gets the following isomorphisms, 
\begin{equation}\label{eq:bspec}
   \left.\cM^{r}_{\Pord}\otimes\cO_F\llbracket T_P(\bZ_p)\rrbracket \middle/\cP_{\utau^P}\right.\stackrel{\sim}{\lra} \Hom\left((\pV^{r}_{\Pord}[\utau^P])^*,\cO_F\right)\stackrel{\sim}{\lra}\varprojlim_m\varinjlim_l e_PV^{SP,r}_{m,l}[\utau^P]
\end{equation}
equivariant under the action of the unramfied Hecke algebra away from $Np$ and the $\bU^P_p$-operators. Combining \eqref{eq:bspec} with the embedding \eqref{eq:embed2} proves {\it (ii)} in Theorem \ref{thm:main}.


For applications in \S\ref{sec:pthm}, define $\mb{T}^{r,N}_{\Pord}$ as the $\mc{O}_F\llbracket T_P(\mb{Z}_p)\rrbracket $-algebra generated by all the unramified Hecke operators away from $Np\infty$ and the $\mb{U}_p$-operators $U^P_{p,1}, U^P_{p,2},\dots,U^P_{p,n}$ acting on $\cM^r_{\Pord}$. The algebra $\mb{T}^{r,N}_{\Pord}$ is finite and torsion free over $\mc{O}_F\llbracket T_P(\mb{Z}_p)\rrbracket $.

\section{\texorpdfstring{$p$-adic $L$-functions}{p-adic L-functions}}
In this section, for a given geometrically irreducible component of $\mr{Spec}\left(\mb{T}^{0,N}_{\Pord}\otimes F\right)$, we construct the $(d+1)$-variable $p$-adic standard $L$-function and its $d$-variable improvement as called in \cite{SSS} (missing the cyclotomic variable). The construction uses the doubling method formula as the integral representation for the standard $L$-function. The $d$-variable improvement will be used to employ the Greenberg--Stevens method to prove Theorem \ref{thm:mainzero} on the derivatives of cyclotomic $p$-adic $L$-functions at the so-called semi-stable trivial zeros. The Hida theory for non-cuspidal Siegel modular forms developed in the previous section will be used for the construction of the $d$-variable improved $p$-adic $L$-function.

Before starting the construction, we briefly mention several works on constructing $p$-adic $L$-functions using the doubling method. It is B\"{o}cherer and Schmidt \cite{BS} who first carried out such a construction in the special case where $\pi$ is fixed and is $\GL(n)$-ordinary with $\pi_\infty$ isomorphic to a scalar weight holomorphic discrete series. Later, the case where $\pi$ varies in a cuspidal Hida family which is ordinary for the Borel subgroup is treated in \cite{LiuSLF} for symplectic groups and in \cite{EisWan,EHLS} for unitary groups. Here we look at the more general case of $P$-ordinary Hida families for a general parabolic $P$. Moreover, we also construct its improvement as an important input for applying the Greenberg--Stevens method. We also compute the archimdean zeta integrals by using \cite{LiuAZI} and the archimedean functional equations. For groups of general rank, other than the doubling method, there are works on the construction of $p$-adic $L$-functions for $\GL(n)\times\GL(n-1)$ \cite{Januszewski1,Januszewski2} and for $\GL(2n)$ \cite{DJR}, but in their paper, the local zeta integrals at $p$ and $\infty$ have not been fully computed to verify completely the conjectured interpolation properties of the constructed $p$-adic $L$-functions.

\subsection{\texorpdfstring{Generalities on standard $L$-functions for symplectic groups}{Generalities on standard L-functions for symplectic groups}}
Let $\pi\subset\mc{A}_0(G(\mb{Q})\backslash G(\mb{A}))$ be an irreducible cuspidal automorphic representation of $G(\mb{A})$ and $\xi:\mb{Q}^\times\backslash\mb{A}^\times\ra\mb{C}^\times$ be a finite order Dirichlet character. Take $S$ to be a finite set of places of $\mb{Q}$ containing the archimedean place and all the finite places where $\pi_v$ or $\xi_v$ is ramified. 

For $v\notin S$, there exist unramified characters $\theta_i:\mb{Q}_v^\times\ra\mb{C}^\times$, $1\leq i\leq n$, such that $\pi_v$ is isomorphic to the normalized induction $\mr{Ind}_{B_G(\mb{Q}_v)}^{G(\mb{Q}_v)}(\theta_1,\dots,\theta_n)$ as $G(\mb{Q}_v)$-representations. Put $\alpha_{v,i}=\theta_i(q_v)$ where $q_v$ is the cardinality of the residue field of $\mb{Q}_v$. Then $\alpha^{\pm 1}_{v,1},\dots,\alpha^{\pm 1}_{v,n}$ are the Satake parameters of $\pi_v$, and the unramified local $L$-factor (for the standard representation $^LG^\circ=\mr{SO}(2n+1,\mb{C})\ra\mr{GL}(2n+1,\mb{C})$) is defined as
\begin{equation*}
   L_v(s,\pi_v\times \xi_v)=(1-\xi_v(q_v)q_v^{-s})^{-1}\prod_{i=1}^{n}(1-\xi_v(q_v)\alpha_{v,i}q_v^{-s})^{-1}(1-\xi_v(q_v)\alpha_{v,i}^{-1}q_v^{-s})^{-1}.
\end{equation*}
The analytic properties (meromorphic continuation, functional equation, location of possible poles) of the partial standard $L$-function
\begin{equation*}
  L^S(s,\pi\times \xi)=\prod_{v\notin S} L_v(s,\pi_v\times \xi_v)
\end{equation*}
are established in \cite{GPSR,KRPoles}.

Assuming $\pi_\infty\cong\mc{D}_{\ut}$, the holomorphic discrete series of weight $\ut=(t_1,\dots,t_n)$ (so $t_1\geq\cdots\geq t_n\geq n+1$), the critical points of $L^S(s,\pi \times \xi)$ are integers $s_0$ such that
\begin{equation*}
   1\leq s_0\leq t_n-n,\,(-1)^{s_0+n}=\xi(-1),\text{ or } n+1-t_n\leq s_0\leq 0, \,(-1)^{s_0+n+1}=\xi(-1).
\end{equation*}
The algebracity of these critical $L$-values divided by certain automorphic periods (expressed in terms of Petersson inner product) is obtained in \cite{Ha81,Sh00,BS}.  

\subsection{The doubling method for symplectic groups}\label{sec:doubling}
One standard way to study the standard $L$-function $L^S(s,\pi\times\xi)$ and its critical values is to apply the doubling method developed by Piatetski-Shapiro--Rallis \cite{PSR}, Garrett \cite{GaPull} and Shimura \cite{ShEuler}. 

For the convenience of the reader, we briefly recall the setting for the doubling method used in \cite{LiuSLF}. Let $\mb{V}'$ be another copy of $\mb{V}$ with standard basis $e'_1,\dots,e'_n,f'_1,\dots,f'_n$. Put $\mb{W}=\mb{V}\oplus\mb{V'}$, for which we fix the basis $e_1,\dots, e_n,e'_1,\dots,e'_n,f_1,\dots,f_n,f'_1,\dots,f'_n$. Then $\mb{W}$ is endowed with a symplectic pairing induced from that of $\mb{V}$ and $\mb{V}'$. Let $H=\Sp(\bW)=\mr{Sp}(4n)$. There is the (holomorphic) embedding $\iota$ of $G\times G$ into $H$ given by
\begin{align*}
   \iota:G\times G&\lhra H\\
	   \left(\begin{array}{cc}a&b\\c&d\end{array}\right)\times \left(\begin{array}{cc}a'&b'\\c'&d'\end{array}\right)&\longmapsto \left(\begin{array}{cccc}a&0&b&0\\0&a'&0&b'\\c&0&d&0\\0&c'&0&d'\end{array}\right).
\end{align*}

The space $W=\sum_{i=1}^n\mb{Z}e_i+\mb{Z}e'_i$ is a maximal isotropic subspace of $\bW$. Its stabilizer $Q_H$ is the standard Siegel parabolic subgroup of $H$. Besides $W$, there is another maximal isotropic subspace relevant to us, which is 
$W^\diamondsuit=\left\{(v,\mvw(v)):v\in V\right\}$, where $\mvw:\bV\ra\bV$ is the involution given by the matrix $\begin{pmatrix}0&I_n\\I_n&0\end{pmatrix}$ with respect to our fixed basis. Note that $\mvw$ dose not preserve the symplectic pairing but has similitude $-1$. The space $W^\diamondsuit$ is spanned by $e_i+f'_i,f_i+e'_i$, $1\leq i\leq n$. The doubling Siegel parabolic $Q^\diamondsuit_H$ is defined to be the stabilizer of $W^\diamondsuit$. We have
\begin{equation*}
   Q^\diamondsuit_H=\mc{S} Q_H \mc{S}^{-1} \quad\text{ with }\quad\mc{S}=\begin{pmatrix}I_n&0&0&0\\0&I_n&0&0\\0&I_n&I_n&0\\I_n&0&0&I_n\end{pmatrix}.
\end{equation*} 

For an element $g\in G$, define $g^\mvw$ to be $\mvw g\mvw\in G$. This conjugation by $\mvw$ is called the $\mr{MVW}$ involution. The $\mr{MVW}$ involution of an irreducible smooth representation of $G(\mb{Q}_v)$ is isomorphic to its contragredient \cite[p. 91]{MVW}. For $\varphi\in\pi$ we define its MVW involution $\varphi^\mvw$ as $\varphi^\mvw(g)=\varphi(g^\mvw)$. Thanks to the multiplicity one theorem \cite{Arthur}, $\varphi^\mvw$ lies inside $\overline{\pi}\subset\mc{A}_0(G(\mb{Q})\backslash G(\mb{A}))$.

\begin{rem}\label{rem:notationMVW}
Our formulation here aligns with those in \cite{GaPull,Sh00} but differs from \cite{GPSR} in that the embedding used in \cite{GPSR} corresponds to the above defined $\iota$ composite with a conjugation by $\vartheta$ on the second copy of $G$. Hence in our later computation using formulas from \cite{GPSR}, the involution $\vartheta$ shows up a lot.
\end{rem}

Let $s$ be a complex variable. Denote by $\xi_s$ (resp. $\xi^\diamondsuit_s$) the character of $Q_H(\mb{A})$ (resp. $Q^\diamondsuit_H(\mb{A})$) sending $\begin{pmatrix}A&B\\0&\ltrans A^{-1}\end{pmatrix}$ (resp. $\mc{S}\begin{pmatrix}A&B\\0&\ltrans A^{-1}\end{pmatrix}\mc{S}^{-1}$) to $\xi(\mr{det A})|\mr{det} A|^s$. Let $I_{Q_H}(s,\xi)=\mr{Ind}_{Q_H(\mb{A})}^{H(\mb{A})}\xi_s$ (resp. $I_{Q^\diamondsuit_H}(s,\xi)=\mr{Ind}_{Q^\diamondsuit_H(\mb{A})}^{H(\mb{A})}\xi^\diamondsuit_s$) be the normalized induction consisting of smooth functions $f$ on $H(\mb{A})$ that satisfy $f(qh)=\xi_s(q)\delta^{1/2}_{Q_H}(q)f(h)$ (resp. $f(qh)=\xi^\diamondsuit_s(q)\delta^{1/2}_{Q^\diamondsuit_H}(q)f(h)$) for all $h\in H(\mb{A})$ and $q\in Q_H(\mb{A})$ (resp. $q\in Q^\diamondsuit_H(\mb{A})$). Recall that the modulus character $\delta_{Q_H}$ takes value $|\mr{det} A|^{\frac{2n+1}{2}}$ at $\begin{pmatrix}A&B\\0&\ltrans A^{-1}\end{pmatrix}$. The local degenerate principal series $I_{Q_H,v}(s,\xi)$, $I_{Q^\diamondsuit_H,v}(s,\xi)$ for all places of $\mb{Q}$ are defined similarly. There is the simple isomorphism
\begin{align*}
   I_{Q_H}(s,\xi)&\lra I_{Q^\diamondsuit_H}(s,\xi)\\
   f(s,\xi)&\longmapsto f^\diamondsuit(s,\xi)(h)=f(s,\xi)(\mc{S}^{-1}h).
\end{align*}

Given $f(s,\xi)\in I_{Q_H}(s,\xi)$, the associated Siegel Eisenstein series is defined as 
\begin{equation*}
   E(h,f(s,\xi))=\sum_{\gamma\in Q_H(\mb{Q})\backslash H(\mb{Q})} f(s,\xi)(\gamma h)=\sum_{\gamma\in Q^\diamondsuit_H(\mb{Q})\backslash H(\mb{Q})} f^\diamondsuit(s,\xi)(\gamma h).
\end{equation*}
This sum is absolutely convergent for $\mr{Re}(s)\gg 0$ and admits a meromorphic continuation. 

For a finite place $v$ we fix the Haar measure on $G(\mb{Q}_v)$ such that the maximal compact subgroup $G(\bZ_v)$ has volume $1$. For the archimedean place we fix for $G(\mb{R})$ the product measure of the one on the maximal compact subgroup $K_{G,\infty}=\left\{\begin{pmatrix}a&b\\-b&a\end{pmatrix}:a+bi\in U(n,\mb{R})\right\}$ which has total volume $1$ with the one on $G(\mb{R})/K_{G,\infty}=\mb{H}_n=\left\{z\in \Sym(n,\mb{C}):\mr{Im} z>0\right\}$ given by $\mr{det}(y)^{-n-1}\prod\limits_{1\leq i\leq j\leq n}dx_{ij}\,dy_{ij}$. The Haar measures on $G(\mb{A})$ is taken to be the product of the local ones. 

For a given irreducible cuspidal automorphic representation $\pi\subset\mc{A}_0(G(\mb{Q})\backslash G(\mb{A}))$ and its complex conjugation $\ol{\pi}\subset\mc{A}_0(G(\mb{Q})\backslash G(\mb{A}))$, which is isomorphic to the contragredient of $\pi$, we fix isomorphisms $\pi\cong\bigotimes'_v\pi_v$ and $\ol{\pi}\cong \bigotimes'_v\tilde{\pi}_v$ such that for factorizable $\varphi_1,\varphi_2\in\pi$ with images $\bigotimes_v\ol{\varphi}_{1,v}\in\bigotimes'_v\tilde{\pi}_v$ and $\bigotimes_v\varphi_{2,v}\in\bigotimes'_v\pi_v$, we have
\begin{equation*}
   \left<\ol{\varphi}_1,\varphi_2\right>=\prod_v\left<\ol{\varphi}_{1,v},\varphi_{2,v}\right>_v,
\end{equation*}
where the pairing on the left hand side is the bi-$\mb{C}$-linear Petersson inner product with respect to our fixed Haar measure on $G(\mb{A})$ and the pairing on the right hand side is the natural pairing between $\pi_v$ and its contragredient $\tilde{\pi}_v$. 

For a local section $f_v(s,\xi)\in I_{Q_H,v}(s,\xi)$, define
\begin{align*}
   T_{f_v(s,\xi)}:\pi&\lra\pi\\
	  \varphi & \longmapsto \left(T_{f_v(s,\xi)}\varphi\right)(g)=\int_{G(\mb{Q}_v)}f^\diamondsuit_v(s,\xi)(\iota(g'_v,1))\varphi(gg'_v)d_vg'_v.
\end{align*}
We need to be careful with the convergence issue here, especially for $v=p,\infty$. The doubling local zeta integral is defined as
\begin{align*}
   Z_v(f_v(s,\xi),\cdot,\cdot):\tilde{\pi}_v\times\pi_v&\lra\mb{C}\\
   (\tilde{v}_1,v_2)&\longmapsto Z_v(f_v(s,\xi),\tilde{v}_1,v_2)=\int_{G(\mb{Q}_v)}f^\diamondsuit_v(s,\xi)(\iota(g_v,1))\left<\tilde{\pi}_v(g_v)\tilde{v}_1,v_2\right>_vd_vg_v.\numberthis\label{eq:localzeta}
\end{align*}
For factorizable $\varphi_1,\varphi_2\in\pi$, we have
\begin{equation*}
  \left<T_{f_v(s,\xi)}\ol{\varphi}_1,\varphi_2\right>=\frac{Z_v(f_v(s,\xi),\ol{\varphi}_{1,v},\varphi_{2,v})}{\left<\ol{\varphi}_{1,v},\,\varphi_{2,v}\right>_v}\left<\ol{\varphi}_1,\varphi_2\right>.
\end{equation*}

Given $\varphi\in\pi$, we define the linear form
\begin{align*}
   \sL_{\ol{\varphi}}:\mc{A}(G(\mb{Q})\times G(\bQ)\backslash G(\mb{A})\times G(\bA))&\lra \mc{A}(G(\mb{Q})\backslash G(\mb{A}))\\
   F&\longmapsto \sL_{\ol{\varphi}}(F)(g)=\int_{G(\mb{Q})\backslash G(\mb{A})}F(g',g)\ol{\varphi}(g')\,dg'.
\end{align*}
The doubling method formula is a formula on
\begin{equation*}
   \sL_{\ol{\varphi}}\left(E(\cdot,f(s,\xi))|_{G\times G}\right),
\end{equation*}
involving the partial standard $L$-function of $\pi$ and some local zeta integrals.

For a finite place $v$ where $\xi$ is unramified, we denote by $f^{\mr{ur}}_v(s,\xi)$ the unique section in $I_{Q_H,v}(s,\xi)$ that is fixed by the maximal compact subgroup $H(\bZ_v)\subset H(\mb{Q}_v)$ and takes value $1$ at the identity. 
\begin{theo}[\cite{GPSR,GaPull,ShEuler}]\label{thm:doubling}
Suppose $f(s,\xi)=\bigotimes_{s\notin S}f^{\mr{ur}}_v(s,\xi)\otimes\bigotimes_{v\in S}f_v(s,\xi)$ is a section inside to $I_{Q_H}(s,\xi)$.  If $\varphi\in\pi^{K^S_G}$ with $K^S_G=\prod_{v\notin S}G(\bZ_v)$, then
\begin{equation}\label{eq:Edouble}
   \sL_{\ol{\varphi}}\left(E(\cdot,f(s,\xi))|_{G\times G}\right)=d^S(s,\xi)^{-1}\cdot L^S(s+\frac{1}{2},\pi\times\xi)\cdot\left(\prod_{v\in S}T_{f_v(s,\xi)}\ol{\varphi}\right)^\mvw.
\end{equation}
Equivalently for all factorizable $\varphi_1,\varphi_2\in\pi^{K^S_G}$,
\begin{equation*}
  \left<E(\cdot,f(s,\xi))|_{G\times G},\,\ol{\varphi}_1\otimes\varphi^{\mvw}_2\right>=d^S(s,\xi)^{-1}\cdot L^S(s+\frac{1}{2},\pi\times\xi)\cdot\prod_{v\in S}\frac{Z_v(f_v(s,\xi),\ol{\varphi}_{1,v},\varphi_{2,v})}{\left<\ol{\varphi}_{1,v},\,\varphi_{2,v}\right>_v}\left<\ol{\varphi}_1,\varphi_2\right>.
\end{equation*}
Here $d^S(s,\xi)=\prod_{v\notin S}d_v(s,\xi)$ with
\begin{equation*}
   d_v(s,\xi):=L_v(s+\frac{2n+1}{2},\xi)\prod_{j=1}^{n}L_v(2s+2n+1-2j,\xi^2).
\end{equation*}
\end{theo}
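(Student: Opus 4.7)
The plan is to unfold the Siegel Eisenstein series $E(\cdot, f(s,\xi))$ restricted to $G \times G$ via the Bruhat decomposition, reducing \eqref{eq:Edouble} to a product of local zeta integrals in which the unramified places contribute the partial standard $L$-function.

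First I would analyze the double coset space $Q^\diamondsuit_H(\mathbb{Q}) \backslash H(\mathbb{Q}) / (G \times G)(\mathbb{Q})$. The key geometric input is that there is a distinguished open orbit whose representative can be chosen so that the stabilizer in $G \times G$ is the diagonal embedding $G^\Delta = \{(g, g^\mvw) : g \in G\}$ (the MVW twist appears here precisely because $Q^\diamondsuit_H$ is the stabilizer of $W^\diamondsuit$, not of $W$). Concretely, the parabolic $Q_H^\diamondsuit$ was chosen so that the intersection $W^\diamondsuit \cap (V \oplus 0) = 0$ and $W^\diamondsuit \cap (0 \oplus V) = 0$, which is exactly the openness condition. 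All other orbits correspond to subspaces $W^\diamondsuit \cdot \gamma$ whose projection to one of the two factors has positive-dimensional kernel; these orbits contribute terms to $E(\cdot, f(s,\xi))|_{G \times G}$ whose integral against $\ol\varphi$ vanishes by the cuspidality of $\pi$.

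Second, once the non-open orbits are discarded, I would unfold:
\begin{align*}
\sL_{\ol\varphi}\bigl(E(\cdot, f(s,\xi))|_{G \times G}\bigr)(g)
&= \int_{G(\mathbb{Q}) \backslash G(\mathbb{A})} \sum_{\gamma \in Q^\diamondsuit_H(\mathbb{Q}) \backslash H(\mathbb{Q})} f^\diamondsuit(s,\xi)(\gamma \iota(g', g)) \ol\varphi(g')\, dg' \\
&= \int_{G(\mathbb{A})} f^\diamondsuit(s,\xi)(\iota(g'_0 g, g)) \ol\varphi(g'_0)\, dg'_0,
\end{align*}
after absorbing the diagonal stabilizer into the quotient and changing variable $g' = g_0' g$ (up to the MVW twist, which is what produces the $(\cdot)^\mvw$ on the right-hand side of \eqref{eq:Edouble}). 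This global integral factors over places because both $f(s,\xi)$ and the matrix coefficient of $\pi$ (through our fixed factorization $\pi \cong \bigotimes'_v \pi_v$ and the Petersson product factorization) are factorizable. For $v \in S$ the resulting local factor is by definition $Z_v(f_v(s,\xi), \ol\varphi_{1,v}, \varphi_{2,v}) / \langle \ol\varphi_{1,v}, \varphi_{2,v}\rangle_v$ times the Petersson norm, giving the claimed $T_{f_v(s,\xi)}$ expression.

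The main obstacle, and the step I expect to occupy the most work, is the unramified local computation showing
\[
Z_v(f^{\mr{ur}}_v(s,\xi), v_1, \tilde v_2) = \frac{L_v(s + \tfrac{1}{2}, \pi_v \times \xi_v)}{d_v(s,\xi)} \cdot \langle v_1, \tilde v_2\rangle_v
\]
for every $v \notin S$ and spherical $v_1, \tilde v_2$. This is the basic identity of Piatetski-Shapiro--Rallis. My approach would be to use the Iwasawa decomposition $H(\mathbb{Q}_v) = Q^\diamondsuit_H(\mathbb{Q}_v) \cdot H(\mathbb{Z}_v)$ to rewrite $f^{\mr{ur},\diamondsuit}_v$, parametrize $G(\mathbb{Q}_v)$ by $B_G(\mathbb{Q}_v) \cdot K_{G,v}$, and reduce the integral \eqref{eq:localzeta} to an integral over the torus $T_G(\mathbb{Q}_v)$ against the unramified matrix coefficient, whose evaluation via the Satake parameters $\alpha_{v,i}^{\pm 1}$ produces the Euler product form of $L_v(s + \tfrac{1}{2}, \pi_v \times \xi_v)$ divided by the denominator $d_v(s,\xi)$. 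Pasting the unramified places together produces the partial $L$-function $L^S(s+\tfrac{1}{2}, \pi \times \xi)$ divided by $d^S(s,\xi)$, which, combined with the factorization from the previous paragraph, yields \eqref{eq:Edouble}.
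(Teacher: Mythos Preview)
The paper does not actually prove this theorem; it is stated with citations to \cite{GPSR,GaPull,ShEuler} and used as a black box. Your proposal is a correct outline of the standard proof from those references: unfold the Eisenstein series along the $Q^\diamondsuit_H\backslash H/(G\times G)$ orbit decomposition, use cuspidality to kill the non-open orbits, factor the remaining global integral into local zeta integrals, and invoke the Piatetski-Shapiro--Rallis unramified computation at $v\notin S$.
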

For later use we also define the normalized Siegel Eisenstein series
\begin{equation*}
   E^*(h,f(s,\xi))=d^S(s,\xi)E(h,f(s,\xi)).
\end{equation*}
Then the identity \eqref{eq:Edouble} from the above theorem becomes
\begin{equation}\label{eq:E*double}
   \sL_{\ol{\varphi}}\left(E^*(\cdot,f(s,\xi))|_{G\times G}\right)= L^S(s+\frac{1}{2},\pi\times\xi)\cdot\left(\prod_{v\in S}T_{f_v(s,\xi)}\ol{\varphi}\right)^\mvw.
\end{equation}

The identities provided by the doubling method reduce the study of the standard $L$-function $L^S(s,\pi\times\xi)$ to that of the Siegel Eisenstein series $E(\cdot,f(s,\xi))$, or more precisely its restriction to $G\times G$, and local zeta integrals at places $v\in S$. 

\subsection{\texorpdfstring{The modified Euler factor at $p$}{The modified Euler factor at p}}\label{sec:Ep}
Before starting the construction of $p$-adic $L$-functions, we first recall some basic theory of Jacquet modules and unfold the definition in \cite{CoaMot} in our case to write down explicitly the expected modified Euler factor at $p$ in the interpolation formula. We also define the modified Euler factor at $p$ for the improved $p$-adic $L$-function, and see that when restricting to the leftmost critical points with $\chi=\epsilon^P_d$, the difference of the two factors lies inside a finite extension of $\cO_F[[T_P(\bZ_p)^\circ]]$.

\subsubsection{\texorpdfstring{Jacquet modules and $\bU^P_p$-operators}{Jacquet modules and Up-operators}}\label{sec:JM}
Suppose $\pi_p$ is the component at the place $p$ of an irreducible automorphic representation $\pi$ generated by a $P$-ordinary Siegel modular form. Our discussion on Jacquet modules aims to: (1) show the uniqueness of the $P$-ordinary forms inside $\pi$, or more precisely that the space of $P$-ordinary Siegel modular forms projects into a one dimensional subspace inside $\pi_p$; (2) explain how to retrieve the information on $\pi_p$ from the eigenvalues of the $\bU^P_p$-operators. The uniqueness result will also play an important role in our later computation of the local zeta integral at $p$. 

Let $P_G$ (resp. $SP_G$, $U_{P_G}$) be the inverse image of $P$ (resp. $SP$, $U_{P}$) of the projection \eqref{eq:QG}. The Jacquet module of $\pi_p$ associated to the parabolic $P_G$ is defined as
\begin{equation*}
   {\mc{J}_{P_G}(\pi_p)=\left.V_{\pi_p}\middle/\left\{\pi_p(u)\cdot v-v:u\in U_{P_G}(\mb{Q}_p),\, v\in V_{\pi_p}\right\}\right.}.
\end{equation*}
It follows from Jacquet's lemma \cite[Theorem 4.1.2, Proposition 4.1.4]{CasselJ} that $\mc{J}_{P_G}(\pi_p)$ is naturally isomorphic to the following subspace of $V_{\pi_p}$, 
\begin{equation}\label{eq:Jm2}
   \bigcap_{\substack{\ua=(\underbrace{a_1,\dots,a_1}_{n_1},\dots,\underbrace{a_d,\dots,a_d}_{n_d})\\a_1\geq\cdots\geq a_d\geq 0}}\left\{\int_{U_{P_G}(\mb{Q}_p)}\pi_p(up^{\ua})\cdot v\,du:v\in V_{\pi_p}\right\},
\end{equation}
where $p^{\ua}=\mr{diag}(p^{a_1},\dots,p^{a_1},\dots,p^{a_d},\dots,p^{a_d},p^{-a_1},\dots,p^{-a_1},\dots,p^{-a_d},\dots,p^{-a_d})$. Denote by $M_P$ the Levi subgroup of $P$ and we identify it with the Levi subgroup of $P_G$ via \eqref{eq:QG}. Both $\mc{J}_{P_G}(\pi_p)$ and the space in \eqref{eq:Jm2} are equipped with a natural action of $M_P(\mb{Q}_p)$, and the isomorphism between them is $M_P(\mb{Q}_p)$-equivariant. 

Given irreducible smooth admissible representations $\sigma_i$ of $\mr{GL}(n_i,\mb{Q}_p)$, $1\leq i\leq d$, Frobenius reciprocity gives
\begin{equation}\label{eq:Frobreci}
   \mr{Hom}_{G(\mb{Q}_p)}\left(\pi_p,\mr{Ind}_{P_G(\mb{Q}_p)}^{G(\mb{Q}_p)}\sigma_1\boxtimes\sigma_2\boxtimes\cdots\boxtimes\sigma_d\right)\cong\mr{Hom}_{M_P(\mb{Q}_p)}\left(\mc{J}_{P_G}(\pi_p),(\sigma_1\boxtimes\sigma_2\boxtimes\cdots\boxtimes\sigma_d)\otimes \delta^{1/2}_{P_G}\right),
\end{equation}
where $\delta_{P_G}$ is the modulus character sending $\mr{diag}(b_1,b_2,\dots,b_d)\in M_P(\mb{Q}_p)$, $b_i\in\GL(n_i,\bQ_p)$, to $\prod_{i=1}^d|\mr{det}(b_i)|_p^{2n+1+n_i-2N_i}$.

Suppose that the $P$-ordinary Siegel modular form generating $\pi$ is of weight $\ut=\imath(\ut^P)$ with $t^P_1\geq\dots\geq t^P_d\geq n+1$, so $\pi_\infty\cong\mc{D}_{\imath(\ut^P)}$. Denote by $\pi_{\imath(\ut^P)}$ the subspace of $\pi$ consisting of forms whose projection to $\pi_\infty$ belongs to the lowest $K_\infty$-type in $\mc{D}_{\imath(\ut^P)}$. Let $X^\Sigma_{\Gamma\cap\Gamma_{SP}(p^l)}$ be a toroidal compactification of the Shimura variety of level ${\Gamma\cap\Gamma_{SP}(p^l)}$. There is the canonical embedding
\begin{equation}\label{eq:AdUp}
   \mr H^0\left(X^\Sigma_{\Gamma\cap\Gamma_{SP}(p^l)},\,\omega_{\imath(\ut^P)}\right)\lhra  M_{\imath(\ut^P)}\left(\bH_n,\Gamma\cap\Gamma_{SP}(p^l)\right)\stackrel{\varphi_G(\cdot,\mf{e}_{\mr{can}})}{\lhra}\mc{A}\left(G(\mb{Q})\backslash G(\mb{A})/\widehat{\Gamma}\cap\wh{\Gamma}_{SP}(p^l)\right)_{\imath(\ut^P)}
\end{equation}
from Siegel modular forms defined as global sections of the automorphic sheaf $\omega_{\ut}$ into automorphic forms on $G(\mb{A})$ of  $K_\infty$-type $\ut=\imath(\ut^P)$ (see, for example, \cite[(2.3.1)(2.4.1)]{LiuSLF} for precise definition of this embedding). Here for a congruence subgroup $\Gamma$, we denote by $\widehat{\Gamma}$ the corresponding compact subgroup of $G(\mb{A})$.

Under the embedding \eqref{eq:AdUp}, the $\mb{U}^P_p$-operator $U^P_{p,\ua}=\prod\limits_{i=1}^n\left( U^{P}_{p,i}\right)^{a_i}$, for $\ua=(a_1,a_2,\dots,a_n)$ with $a_1\geq a_2\geq \dots \geq a_n\geq 0$, on the left hand side corresponds to the following operator on the right hand side,
\begin{equation}\label{eq:AUp}
   U^P_{p,\ua}=p^{\left<\ut+2\rho_{G,c},\,\ua\right>}\int_{SP_G(\mb{Z}_p)}\pi_p(up^{\ua})\,du,
\end{equation}
where $\rho_{G,c}=(\frac{n-1}{2},\frac{n-3}{2},\dots,\frac{1-n}{2})$ is the half sum of positive compact roots of $G$.

We have assumed that $\pi$ contains a $P$-ordinary Siegel modular. It follows immediately from the definition of the $P$-ordinarity and \eqref{eq:Jm2}, \eqref{eq:AUp} that
\begin{equation*}
   \mc{J}_{P_G}(\pi_p)^{\mr{SL}(n_1,\mb{Z}_p)\times\dots\times\mr{SL}(n_d,\mb{Z}_p)}\neq\{0\},
\end{equation*}
which combined with the Frobenius reciprocity \eqref{eq:Frobreci} implies that there exists spherical representations $\sigma_i$ of $\mr{GL}(n_i,\mb{Q}_p)$, $1\leq i\leq d$, and continuous characters $\eta_1,\dots,\eta_d$ of $\mb{Q}_p^\times$ taking value $1$ at $p$, such that 
\begin{equation}\label{eq:embed}
   \pi_p\lhra \mr{Ind}_{P_G(\mb{Q}_p)}^{G(\mb{Q}_p)}(\sigma_1\otimes\eta_1\circ\mr{det})\boxtimes(\sigma_2\otimes\eta_2\circ\mr{det})\boxtimes\cdots\boxtimes(\sigma_d\otimes\eta_d\circ\mr{det}).
\end{equation}
In particular, $\pi_p$ embeds into a principal series. In general, $\pi_p$ being isomorphic a subquotient of a principal series is equivalent to $\pi$ containing a finite slope form. 
   
\begin{rem}
It does not make sense to say $P$-ordinarity for a purely local representation $\pi_p$ as the normalization in the definition of the $\mb{U}^P_p$-operators depends on the weight of the holomorphic discrete series at the archimedean space. However, being of finite slope is a purely local property.
\end{rem}

Next, we say more about the relation between the Satake parameters of the $\sigma_i's$ in \eqref{eq:embed} and the eigenvalues of the $\mb{U}^P_p$-operators. Let $\utheta=(\theta_1,\dots,\theta_n)$ be an $n$-tuple of  continuous characters of $\mb{Q}_p^\times$ (valued in $\mb{C}^\times$), viewed as a character of $T_G(\mb{Q}_p)$ via our fixed isomorphism of $\mb{G}^n_m$ with $T_G$, such that $\pi_p$ is isomorphic to a subquotient of the principal series $\mr{Ind}_{B_G(\mb{Q}_p)}^{G(\mb{Q}_p)} \utheta$. We consider the eigenvalues for the $\bU^P_p$-action on $\mr{Ind}_{B_G(\mb{Q}_p)}^{G(\mb{Q}_p)} \utheta$.

Denote by $W_G$ (resp. $W_{P_G}$) the Weyl group with respect to $T_G$ (resp. the subgroup of $W_G$ that maps $P_G$ to itself). Define $[W_G/W_{P_G}]$ to be the subset of $W_G$ consisting of representatives of smallest lengths of elements in $W_G/W_{P_G}$. An element $w\in W_G$ acts on $\utheta$ by sending it to $\utheta^w(t)=\utheta(w^{-1}tw)$, $t\in T_G$. Like $\utheta$, via our fixed isomorphism between $T_G$ and $\bG^n_m$, we can write $\utheta^w$ as an $n$-tuple of characters $(\theta^w_1,\dots,\theta^w_n)$.

It follows from \cite[Proposition 6.3.1, 6.3.3]{CasselJ} that the $M_P(\bQ_p)$-representation $\mc{J}_{P_G}\left(\mr{Ind}_{P_G(\mb{Q}_p)}^{G(\mb{Q}_p)} \utheta \right)$ has a filtration with graded pieces as
\begin{equation*}
    \btimes_{i=1}^d\, \mr{Ind}^{\mr{GL}(n_i,\mb{Q}_p)}_{B_{n_i}(\mb{Q}_p)} \left(\theta^w_{N_{i-1}+1},\dots,\theta^w_{N_{i}}\right)\cdot\delta_{B_{n_i}}^{-1/2}\left.\delta_{B_G}^{1/2}\right|_{B_{n_i}}, \quad w\in [W_G/W_{P_G}],
\end{equation*}
where $B_{n_i}$ is the standard Borel subgroup of $\mr{GL}(n_i)$ with modulus character $\delta_{B_{n_i}}$.

Thus, the dimension of the $\mr{SL}(n_1,\mb{Z}_p)\times\dots\times\mr{SL}(n_d,\mb{Z}_p)$-invariant space inside $\mc{J}_{P_G}(\pi_p)$ is at most $|W_G/W_{P_G}|=2^n\cdot|\mf{S}_n/(\mf{S}_{n_1}\times\cdots\times\mf{S}_{n_d})|$. Each $w\in [W_G/W_{P_G}]$ corresponds to an eigensystem of the $\bU^P_p$-operators, and the existence of a $P$-ordinary Siegel modular form in $\pi$ indicates that there exists $w\in W_G$ satisfying
\begin{align}
  \label{eq:TNewton}\sum_{j=1}^r v_p\left(\theta^w_{N_{i-1}+j}(p)\right)&\geq -r\left(t^P_i-N_{i-1}-\frac{r+1}{2}\right), &1\leq r\leq n_i,\,1\leq i\leq d, \\
 \sum_{j=1}^{n_i}v_p\left(\theta^w_{N_{i-1}+j}(p)\right)&=-n_i\left(t^P_i-\frac{N_{i-1}+N_{i}+1}{2}\right) &1\leq i\leq d.\label{eq:UNewton}
\end{align}
These conditions on the $p$-adic valuation of $\theta^w_{i}$, $1\leq i\leq n$, imply
\begin{align}
   &-(t^P_i-N_{i-1}+1)\leq v_p\big(\theta^w_{N_{i-1}+1}(p)\big),\dots,v_p\left(\theta^w_{N_{i}}(p)\right)\leq -(t^P_i-N_i), &1\leq i\leq d\label{eq:ures}.
\end{align} 
It is easily seen that given $\utheta$, there is at most one $w\in [W_G/W_{P_G}]$ to make \eqref{eq:ures} hold. By rearranging the $\theta^{\pm 1}_i$'s, we can assume that $w=1$ in \eqref{eq:TNewton}, \eqref{eq:UNewton}, \eqref{eq:ures}, and that $v_p(\theta_1(p))\leq\cdots\leq v_p(\theta_n(p))\leq 0$. 

The above discussion proves the following proposition.
\begin{prop}\label{prop:JL}
Suppose that $\pi$ is an irreducible automorphic representation of $G(\mb{A})$ containing a nonzero $P$-ordinary holomorphic Siegel modular form of weight $\imath(\ut^P)$, $t^P_d\geq n+1$, and $p$-nebentypus $\uep^P$. 
\begin{itemize}[leftmargin=*]
\item There exists unramified characters $\theta_1,\dots,\theta_n:\bQ^\times_p\ra\bC^\times$ satisfying 
\begin{align*}
   \utheta|_{\bZ^\times_p}&=(\underbrace{\epsilon^{P-1}_1,\dots,\epsilon^{P-1}_1}_{n_1},\dots,\underbrace{\epsilon^{P-1}_d,\dots,\epsilon^{P-1}_d}_{n_d}),\\
   \utheta(p)&=(\alpha_1,\alpha_2,\dots,\alpha_n), &\dots\leq -(t^P_i-N_{i-1}+1)\leq v_p(\alpha_{N_{i-1}+1})\leq\dots\leq v_p(\alpha_{N_i})\\
   &&\leq -(t^P_i-N_i)\leq-(t_{i+1}^P-N_i+1)\leq v_p(\alpha_{N_i+1})\leq\dots
\end{align*} 
such that $\pi_p\hra \mr{Ind}^{G(\bQ_p)}_{B_G(\bQ_p)}\utheta$.
\item Let $\mf{a}_i$ be the eigenvalue for the action of $ U^P_{p,i}$ on the $P$-ordinary form in $\pi$. Then $\mf{a}_{N_1},\dots,\mf{a}_{N_d}$ are $p$-adic units given by
\begin{equation}\label{eq:Ueig}
  \mf{a}_{N_i}=\prod_{j=1}^i p^{n_j\left(t^P_j-\frac{N_{j-1}+N_{j}+1}{2}\right)}\alpha_{N_{j-1}+1}\alpha_{N_{j-1}+2}\cdots\alpha_{N_j},
\end{equation}
More generally, for $N_j\leq i\leq N_{j+1}$, the eigenvalue $\mf{a}_i$ is an $p$-adic integer given as 
\begin{equation}\label{eq:Teig}
  \mf{a}_{i}=\mf{a}_{N_{j}} \cdot p^{(i-N_j)\left(t^P_{i+1}-\frac{N_j+i+1}{2}\right)}\sum_{\varrho\in\mf{S}_{n_{j+1}}/\mf{S}_{i-N_j}\times\mf{S}_{N_{j+1}-i}}\alpha_{N_{j}+\varrho(1)}\alpha_{N_{j}+\varrho(2)}\cdots\alpha_{N_{j}+\varrho(r)}.
\end{equation}
\item Inside $\mr{Ind}^{G(\bQ_p)}_{B_G(\bQ_p)}\utheta$, there is a unique generalized eigenvector (up to scalar) for the operator $U^P_p=\prod_{i=1}^d U^P_{p,N_i}$ with eigenvalue being a $p$-adic unit. In particular, under the projection $\pi\ra\pi_p$, the image of $P$-ordinary Siegel modular forms is one dimensional.
\end{itemize}
\end{prop}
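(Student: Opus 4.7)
The plan is to synthesize the material already developed in \S\ref{sec:JM}: Jacquet's lemma and the identification \eqref{eq:Jm2}, the $M_P$-equivariant Frobenius reciprocity \eqref{eq:Frobreci}, the filtration of $\mc{J}_{P_G}(\mr{Ind}^{G(\bQ_p)}_{B_G(\bQ_p)}\utheta)$ indexed by $[W_G/W_{P_G}]$ via \cite{CasselJ}, and the formula \eqref{eq:AUp} relating the $\bU^P_p$-operators to integrals over $SP_G(\bZ_p)$.

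For the first assertion, I would observe that by \eqref{eq:AUp} any $P$-ordinary eigenform projects to a nonzero element of $\mc{J}_{P_G}(\pi_p)^{\mr{SL}(n_1,\bZ_p)\times\cdots\times\mr{SL}(n_d,\bZ_p)}$, so Frobenius reciprocity yields $\pi_p\hookrightarrow\mr{Ind}^{G(\bQ_p)}_{P_G(\bQ_p)}(\sigma_1\otimes\eta_1\circ\det)\boxtimes\cdots\boxtimes(\sigma_d\otimes\eta_d\circ\det)$ with each $\sigma_i$ spherical and $\eta_i(p)=1$. Writing each $\sigma_i$ as a subquotient of an unramified principal series on $\mr{GL}(n_i)$ and applying induction in stages produces the desired embedding $\pi_p\hookrightarrow\mr{Ind}^{G(\bQ_p)}_{B_G(\bQ_p)}\utheta$. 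The restriction $\utheta|_{\bZ^\times_p}$ is fixed block by block by the central characters of the $\sigma_i$'s together with the nebentypus, giving the block-diagonal formula with entries $(\epsilon^P_i)^{-1}$.

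For the formulas \eqref{eq:Ueig}, \eqref{eq:Teig} and the valuation bounds, I would compute the $U^P_{p,i}$-action on the spherical line of the graded piece of $\mc{J}_{P_G}(\mr{Ind}^{G(\bQ_p)}_{B_G(\bQ_p)}\utheta)$ indexed by $w\in[W_G/W_{P_G}]$ using \eqref{eq:Jm2} and \eqref{eq:AUp}. The character values of $\utheta^w$ on the cocharacter $\mathbf{e}_{N_i}$, together with the $\delta^{1/2}_{P_G}$-shift coming from induction in stages and the prefactor $p^{\langle\imath(\ut^P)+2\rho_{G,c},\mathbf{e}_{N_i}\rangle}$ from \eqref{eq:AUp}, combine to yield \eqref{eq:Ueig}. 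For intermediate $N_j<i<N_{j+1}$, the cocharacter $\mathbf{e}_i$ is not stabilized by $W_{P_G}$, and decomposing the double coset $SP_G(\bZ_p)p^{\mathbf{e}_i}SP_G(\bZ_p)$ into $\mf{S}_{n_{j+1}}/(\mf{S}_{i-N_j}\times\mf{S}_{N_{j+1}-i})$ orbits produces the symmetric sum in \eqref{eq:Teig}. Demanding that every $\mf{a}_{N_i}$ be a $p$-adic unit imposes the block equalities \eqref{eq:UNewton}, while $v_p(\mf{a}_i)\geq 0$ at non-block indices imposes the stepwise inequalities \eqref{eq:TNewton}. After rearranging coordinates inside each $W_{P_G}$-block (absorbed in the choice of coset representative, so we may take $w=1$), the block-wise valuation ranges stated in the first assertion follow.

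For the final uniqueness statement, the Jacquet-module filtration puts the $\mr{SL}(n_1,\bZ_p)\times\cdots\times\mr{SL}(n_d,\bZ_p)$-spherical subspace of $\mr{Ind}^{G(\bQ_p)}_{B_G(\bQ_p)}\utheta$ in bijection with $[W_G/W_{P_G}]$, with distinct classes producing distinct systems of $\bU^P_p$-eigenvalues. The valuation constraints \eqref{eq:UNewton}--\eqref{eq:TNewton} pin down a unique multiset of block-wise valuations, so at most one $w$ can produce $\bU^P_p$-eigenvalues that are simultaneously $p$-adic units. This yields the one-dimensional generalized $U^P_p$-unit eigenspace in $\mr{Ind}^{G(\bQ_p)}_{B_G(\bQ_p)}\utheta$, and hence the one-dimensional image of the $P$-ordinary forms under $\pi\ra\pi_p$. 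The main obstacle I anticipate is the coset bookkeeping behind \eqref{eq:Teig}: one must carefully track how the decomposition of the Hecke double coset of $p^{\mathbf{e}_i}$ interacts with a spherical vector that is \emph{not} stable under the full block Weyl group, producing the symmetric function in $\alpha_{N_j+1},\ldots,\alpha_{N_{j+1}}$. The remaining assertions are combinatorics of Jacquet modules of principal series.
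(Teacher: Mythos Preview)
Your proposal is correct and follows essentially the same approach as the paper: indeed, the paper's proof consists solely of the sentence ``The above discussion proves the following proposition,'' referring to precisely the material in \S\ref{sec:JM} (Jacquet's lemma \eqref{eq:Jm2}, Frobenius reciprocity \eqref{eq:Frobreci}, the $\bU^P_p$-formula \eqref{eq:AUp}, the Casselman filtration of $\mc{J}_{P_G}(\mr{Ind}^{G(\bQ_p)}_{B_G(\bQ_p)}\utheta)$, and the valuation constraints \eqref{eq:TNewton}--\eqref{eq:ures}) that you have summarized. Your outline even makes explicit the double-coset bookkeeping behind \eqref{eq:Teig}, which the paper leaves implicit in the preceding discussion.
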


\begin{rem}\label{rem:mono}
Let $\utheta$ be as in the above proposition. If $\epsilon^{P}_d=\mr{triv}$ and $\alpha_n=p^{-1}$, since the $P$-ordinary condition implies that $\pi_p\hra \mr{Ind}^{G(\bQ_p)}_{B_G(\bQ_p)}\utheta$, in the Weil--Deligne representation attached to $\pi_p$, there should be a nontrivial monodromy between the eigenspaces with Frobenius  eigenvalues $1$ and $\alpha_n=p^{-1}$.
\end{rem}

\subsubsection{\texorpdfstring{The modified Euler factor at $p$ for $p$-adic interpolation}{The modified Euler factor at p for p-adic interpolation}}
If we consider the Weil--Deligne representation attached to $\pi_p$, the eigenvalues of Frobenius are $1$, $\alpha^{\pm 1}_1,\dots,\alpha^{\pm 1}_n$. Meanwhile, for the $p$-adic representation associated to $\pi$ \cite{Arthur,BP,Shin,CH}, the Hodge--Tate weights are $0,\pm (t^P_1-1),\dots,\pm (t^P_1-n_1),\dots,\pm (t^P_d-(N_{d-1}+1)),\dots,\pm (t^P_d-n)$. Thus, \eqref{eq:TNewton} and \eqref{eq:UNewton} essentially say that the Newton polygon is above the Hodge polygon and the two polygons meet at the points with horizontal coordinates $0, N_1, N_2,\dots,N_d,2n+1-N_d,\dots,2n+1-N_2,2n+1-N_1,2n+1$.

Since the definition of the modified Euler factor in \cite{CoaMot}, formulated in terms of the Weil--Deligne representation, does not depend on the monodromy operator, our above description of the Weil--Deligne representation associated to $\pi_p$ is enough for us to unfold the definition in this case to obtain the explicit modified Euler factor at $p$ in terms of the $\bU^P_p$-eigenvalues.

From now on we fix a (tame) finite order Dirichlet character $\phi:\mb{Q}^\times\backslash\mb{A}^\times\ra\mb{C}^\times$ unramified away from $N\infty$. Suppose $\chi\in\mr{Hom}_{\mr{cont}}(\mb{Z}^\times_p,\ol{\mb{Q}}^\times_p)$ is of finite order. We also view it also as a $\mb{C}^\times$-valued character of $\mb{Q}^\times\backslash\mb{A}^\times$ which sends the uniformizer in $\bQ_v$ to $\chi(q_v)$ for finite places $v\neq p$. In the same way, we view the finite order characters $\epsilon^P_1,\dots,\epsilon^P_d$ as adelic characters. Let $\utheta$ be as in Proposition \ref{prop:JL}. Denote by $\sigma_i$ the unramified representation of $\GL(n_i,\bQ_p)$ such that $\sigma_i\otimes\epsilon^P_{i,p}\circ\det =\mr{Ind}^{\GL(n_i,\bQ_p)}_{B_{n_i}(\bQ_p)}\left(\theta_{N_{i-1}+1},\dots,\theta_{N_i}\right)$. Denote by $\wt{\sigma}_i$ the contragredient of $\sigma_i$.

The modified Euler factor at $p$ for $p$-adically interpolating the critical values of $L^S(s,\pi\times\phi\chi)$ to the left of the center is
\begin{equation}\label{eq:Ep}
   E_p(s,\pi\times\phi\chi)=\prod_{i=1}^d \gamma_p\left(s,\sigma_i\otimes\phi_p\chi_p\epsilon^{P-1}_{i,p}\right)^{-1}.
\end{equation}
Here we omit our fixed additive character $\be_p$ from the usual notation for gamma factors. One can also write the gamma factors in terms of the Satake parameters as
\begin{equation}\label{eq:Epgamma}
   \gamma_p\left(s,\sigma_i\otimes\phi_p\chi_p\epsilon^{P-1}_{i,p}\right)^{-1}=\begin{cases}\prod\limits_{j=N_{i-1}+1}^{N_i}\frac{1-\phi_p(p)^{-1}\alpha_j^{-1}p^{s-1}}{1-\phi_p(p)\alpha_jp^{-s}},&\text{if } \chi\epsilon^{P-1}_{i}\text{ is trivial},\\
   G(\chi^{-1}\epsilon^{P}_{i})^{n_i}\prod\limits_{j=N_{i-1}+1}^{N_i}\left(\phi_p(p)^{-1}\alpha^{-1}_jp^{s-1}\right)^{c(\chi\epsilon^{P-1}_i)}, &\text{otherwise},\end{cases}
\end{equation}
where $p^{c(\chi\epsilon^{P-1}_i)}$ is the conductor of $\chi\epsilon^{P-1}_i$, and the Gauss sum is defined as
\begin{equation*}
   G(\chi^{-1}\epsilon^{P}_{i})=\int_{p^{-m}\mb{Z}_p}\chi_p\epsilon^{P-1}_{i,p}(x)\be_p(-x)\,dx,\quad m\gg 0.
\end{equation*}

We also define the improved modified Euler factor at $p$ for the $d$-variable improved $p$-adic $L$-function. The improved $p$-adic $L$-function is supposed to interpolate the leftmost critical $L$-values with $\chi=\epsilon^{P}_d$. Define
\begin{align*}
   E^{\imp}_p(s,\pi\times\phi\epsilon^{P}_{d})&=\prod_{i=1}^{d-1}\gamma_p\left(1-s,\wt{\sigma}_i\otimes\phi^{-1}_p\epsilon^{P-1}_{d,p}\epsilon^{P}_{i,p}\right)\cdot L_p(s,\sigma_d\otimes\phi_p)
\end{align*}
It is easy to see that by \eqref{eq:Ueig} and \eqref{eq:Teig} both $E_p(s,\pi\times\phi\chi)$ and $E^{\imp}_p(s,\pi\times\phi\epsilon^{P}_{d})$ can be written in term of the $\bU^P_p$-eigenvalues of the $P$-ordinary Siegel modular form contained in $\pi$. 

We have
\begin{equation}\label{eq:Ecompare}
   E_p(n+1-t^P_d,\pi\times\phi\epsilon^{P}_d)=\mc{A}^P(\pi\times\phi \epsilon^{P}_{d})\cdot E^{\imp}_p(n+1-t^P_d,\pi\times\phi\epsilon^{P}_{d})
\end{equation}
with
\begin{align*}
   \mc{A}^P(\pi\times\phi\epsilon^{P}_{d})&=\prod_{j=N_{d-1}+1}^n\left(1-\phi_p(p)^{-1}\alpha^{-1}_jp^{n-t^P_d}\right)\\
    &=\,1+\mf{a}_n^{-1}\mf{a}_{N_{d-1}}\left(-\phi_p(p)^{-1}p^{\frac{n_d-1}{2}}\right)^{n_d}+\mf{a}_n^{-1}\sum_{r=1}^{n_d-1}\mf{a}_{N_{d-1}+r}\left(-\phi_p(p)^{-1}p^{\frac{n_d-1-r}{2}}\right)^{n_d-r}.
\end{align*}
Since all the $\mf{a}_i$'s are the $\mb{U}^P_p$-eigenvalues of the $P$-ordinary Siegel modular forms, when (the eigensystem of) $\pi$ varies in a $P$-ordinary Hida family, $\mc{A}^P(\pi,\phi)$ becomes a $d$-variable $p$-adic analytic function lying inside a finite extension of $\mc{O}_F\llbracket T_P(\mb{Z}_p)^\circ\rrbracket$. This explains why when restricting to the leftmost critical values with $\chi=\epsilon^{P}_d$, one expects the existence of the improved $d$-variable $p$-adic $L$-function with the improved modified Euler factor $E^{\imp}_p(\pi,\phi)$ at $p$ (improved in the sense of saving part of the numerator from $E_p(n+1-t^P_d,\pi\times\phi\epsilon^{P}_d)$).

\subsection{The choices of local sections for the Siegel Eisenstein series}
Our choices of local sections for Siegel Eisenstein series on $\Sp(4n)$,  and the formulae for local Fourier coefficients as well as the doubling local zeta integrals corresponding to those selected sections are summarized in the two tables in \S\ref{sec:table}. This section explains the strategy for the section selections. The computation of the zeta integrals at the place $p$ is done in \S\ref{sec:zetaintegralp}.

\subsubsection{Criteria for selecting sections}We first describe the context   and criteria for choosing the sections for Siegel Eisenstein series on $\Sp(4n)$. Recall that $\mr{Hom}_{\mr{cont}}\left(T_P(\mb{Z}_p),\ol{\mb{Q}}^\times_p\right)$ is the weight space for Hida families which are ordinary for the parabolic for $P$. An arithmetic point in it corresponds to a character $\utau^P=\utau^P_{\mr{alg}}\cdot\utau^P_{\mr{f}}\in \mr{Hom}_{\mr{cont}}\left(T_P(\mb{Z}_p),\ol{\mb{Q}}^\times_p\right)$, a product of the algebraic part $\utau^P_{\mr{alg}}=\ut^P=(t^P_1,\dots,t^P_d)$ and the finite order part $\utau^P_{\mr{f}}=\uep^P=(\epsilon^P_1,\dots,\epsilon^P_d)$. Similarly, the parameterization space for the cyclotomic variable is $\mr{Hom}_{\mr{cont}}\left(\mb{Z}^\times_p,\ol{\mb{Q}}^\times_p\right)$, and an arithmetic point in it is a character $\kappa=\kappa_{\mr{alg}}\cdot\kappa_{\mr{f}}$, with algebraic part $\kappa_{\mr{alg}}=k$ and finite order part $\kappa_{\mr{f}}=\chi$. We call an arithmetic point $\utau^P $ (resp. $(\kappa,\utau^P)$) admissible if $t^P_1\geq\dots\geq t^P_d\geq n+1$ and (resp. $t^P_1\geq\dots\geq t^P_d\geq k\geq n+1$).

Let $\sC_P$ be a geometrically irreducible component of $\mr{Spec}\left(\mb{T}^{0,N}_{\Pord}\otimes_{\mc{O}_F}F\right)$. The projection of $\sC_P$ to the weight space is one of its $|T_P(\bZ/p)|$ connected components. We say the parity of $\sC_P$ is compatible with $\phi$ if all the points $\utau^P$ in that connected component satisfy $\uptau^P_d(-1)=\phi(-1)$.

A point $x\in\sC_P(\ol{\mb{Q}}_p)$ is called arithmetic if its projection $\utau^P$ inside the weight space is arithmetic, and an arithmetic pair $(\kappa,x)$ (resp. an arithmetic point $x$) is called admissible if $(\kappa,\utau^P)$ (resp. $\utau^P$) is admissible. If the Hecke eigensystem parametrized by $x$ appears in an irreducible cuspidal automorphic representation $\pi_x\in\cA_0(G(\bQ)\backslash G(\bA))$ with $\pi_{x,\infty}\cong \cD_{\imath(\ut^P)}$, we call such an $x$ classical, and one can define the corresponding $L^{Np\infty}(s,\pi_{x}\times\phi\chi)$, $E_p(s,\pi_{x}\times\phi\chi)$, $E^{P\text{-imp}}_p(\pi_{x},\phi)$. Note that because of the lack of strong multiplicity one, $\pi_x$ may not be unique, but the partial $L$-functions and the modified Euler factors at $p$ do not depend on the choice of $\pi_x$.

The $(d+1)$-variable $p$-adic $L$-function is intended to interpolate the critical values
\begin{equation*}
   E_p(n+1-k,\pi_x\times\phi\chi)\cdot L^{Np\infty}(n+1-k,\pi_{x}\times\phi\chi)
\end{equation*}
divided by a Petersson inner product period for $(\kappa,x)$ admissible with $\kappa(-1)=\phi(-1)$ and $x$ classical (by our construction, if $(\kappa,x)$ is admissible but $x$ is not classical, one can see that the evaluation of our $p$-adic $L$-function there is $0$). Its $d$-variable improvement (assuming the parity of $\sC_P$ is compatible with $\phi$) is supposed to interpolate
\begin{equation*}
   E^{\imp}_p(n+1-t^P_d,\pi_{x}\times\phi\epsilon^P_d)\cdot L^{Np\infty}(n+1-t^P_{d},\pi_{x}\times\phi\epsilon^{P}_{d})
\end{equation*}
divided by a Petersson inner product period for classical $x$. 

From Theorem \ref{thm:doubling}, we see that in order to get the above $L$-values, we need to pick a Siegel Eisenstein series on $\Sp(4n)$ with nice properties for each admissible $(\kappa,\utau^P)\in\mr{Hom}_{\mr{cont}}(\mb{Z}^\times_p\times T_P(\mb{Z}_p,\ol{\mb{Q}}^\times_p))$ with $\kappa(-1)=\phi(-1)$ as well as for each admissible $\utau^P\in\mr{Hom}_{\mr{cont}}(T_P(\mb{Z}_p,\ol{\mb{Q}}^\times_p))$ with $\uptau^P_d(-1)=\phi(-1)$, so that we can deduce the desired congruences among the $L$-values from those of the Siegel Eisenstein series. Picking the Siegel Eisenstein series amounts to selecting sections in the degenerate principal series. 

More precisely, for $(\kappa,\utau^P)$ (resp. $\utau^P$) as above and each place $v$ of $\bQ$, we need to pick a section $f_{\kappa,\utau^P,v}$ (resp. $f_{\utau^P,v}$)  from $I_{Q_H,v}(\frac{2n+1}{2}-k,\phi\chi)$ (resp. $I_{Q_H,v}(\frac{2n+1}{2}-t^P_d,\phi\epsilon^{P}_d)$), such that 
\begin{itemize}
\item We have enough control of the local zeta integrals at places dividing $Np\infty$. In particular, we are able guarantee the non-vanishing of the archimedean zeta integrals and compute the zeta integrals at $p$.
\item The collection of the Eisenstein series $E(\cdot,f_{\kappa,\utau^P})|_{G\times G}$ (resp. $E(\cdot,f_{\utau^P})|_{G\times G}$) (after suitable normalizations)  assembles to a $p$-adic family.
\end{itemize}
The way we treat the second requirement is via looking at their Fourier coefficients and invoking the $q$-expansion principle. Also, the second requirement provides us a hint for making the choices of the sections at $p$ based on our selection of archimedean sections.

\subsubsection{The Fourier coefficients for Siegel Eisenstein series}
For $\bbeta\in \mr{Sym}(2n,\mb{Q})$, the $\bbeta$-th Fourier coefficient of $E\big(\cdot,f(s,\xi)\big)$ is defined as
\begin{equation*}
   E_{\bbeta}(h,f(s,\xi)):=\int_{\mr{Sym}(2n,\mb{Q})\backslash\mr{Sym}(2n,\mb{A})} E\left(\begin{pmatrix}I_{2n}&\varsigma\\0&I_{2n}\end{pmatrix}h,f(s,\xi)\right)\be_{\mb{A}}(-\mr{Tr} \bbeta \varsigma)\,d\varsigma.
\end{equation*}
Suppose $f(s,\xi)=\otimes_v f_v(s,\xi)$ is factorizable. If $\det(\bbeta)\neq 0$ or there exists a finite place $v$ such that $f_v(s,\xi)$ is supported on the ``big cell'' $Q_H(\mb{Q}_v)\begin{pmatrix}0&-I_{2n}\\I_{2n}&0\end{pmatrix} Q_H(\mb{Q}_v)$, then
\begin{equation}\label{eq:FourierProd}
   E_{\bbeta}(h,f(s,\xi))=\prod_{v} W_{\bbeta,v}(h,f(s,\xi))
\end{equation} 
with
\begin{equation*}
   W_{\bbeta,v}(h_v,f_v(s,\xi))=\int_{\mr{Sym}(2n,\mb{Q}_v)}f_v(s,\xi)\left(\begin{pmatrix}0&-I_{2n}\\I_{2n}&\varsigma\end{pmatrix}h_v\right)\be_v(-\mr{Tr}\bbeta\varsigma)\,d_v\varsigma.
\end{equation*}

For $\bz=\bm{x}+\sqrt{-1}\bm{y}$, a point in the Siegel upper half space $\mb{H}_{2n}$, set $h_{\bz}=1_{\mr{f}}\cdot\begin{pmatrix}\sqrt{\bm{y}}&\bm{x}\sqrt{\bm{y}}^{-1}\\0&\sqrt{\bm{y}}^{-1}\end{pmatrix}\in H(\mb{A})$. It is a standard fact that if $E(h_{\bz},f(s_0,\xi))$ is nearly holomorphic as a function in $\bz$ for some $s_0\in 2^{-1}\cdot\mb{Z}$, then $E_{\bbeta}(h_{\bz},f(s_0,\xi))$ gives the $\bbeta$-th coefficient of the $q$-expansion associated to $E(h,f(s_0,\xi))$ viewed as a $p$-adic form by the maps \eqref{eq:embed2}\eqref{eq:AdUp}.

\subsubsection{The unramified places}
For $v\nmid Np\infty$, we simply take
\begin{align*}
   f_{\kappa,\utau^P,v}&=\left.f^{\mr{ur}}_v(s,\phi\chi)\right|_{s=\frac{2n+1}{2}-k}, & f_{\utau^P,v}&=\left.f^{\mr{ur}}_v(s,\phi\epsilon^{P}_d)\right|_{s=\frac{2n+1}{2}-t^P_d}.
\end{align*}
The formulae for $W_{\bbeta}\left(1_v,f_{\kappa,\utau^P,v}\right)$ and $W_{\bbeta}\left(1_v,f_{\utau^P,v}\right)$ are computed by Shimura \cite[Theorem 13.6, Proposition 14.9]{ShEuler} and are listed in the tables in \S\ref{sec:table}. The formulae for the local zeta integrals are part of Theorem \ref{thm:doubling}.

\subsubsection{The archimedean place}\label{eq:finfty}
For an integer $k\geq n+1$ satisfying $\xi(-1)=(-1)^k$, the classical section of weight $k$ in $I_{Q_H,\infty}(s,\xi)$ is defined as
\begin{equation*}
   f_{\infty}^k(s,\mr{sgn}^k)(h)=j(h,i)^{-k}|j(h,i)|^{k-(s+\frac{2n+1}{2})}
\end{equation*} 
where $j(h,i)=\det\left(\mu(h,i)\right)=\det (Ci+D)$ for $h=\begin{pmatrix}A&B\\C&D\end{pmatrix}$. 

Let $\wh{\mu}^+_0=\left(\wh{\mu}^+_{0,ij}\right)_{1\leq i,j\leq n}$, where the entries are elements inside $(\mr{Lie}H)_{\mb{C}}$ given as
\begin{equation*}
  \wh{\mu}^+_{0,ij}=\begin{pmatrix} I_{2n}&\sqrt{-1}\cdot I_{2n}\\\sqrt{-1}\cdot I_{2n}&I_{2n}\end{pmatrix}\begin{pmatrix}0&0&0&E_{ij}\\0&0&E_{ji}&0\\0&0&0&0\\0&0&0&0\end{pmatrix}\begin{pmatrix} I_{2n}&\sqrt{-1}\cdot I_{2n}\\\sqrt{-1}\cdot I_{2n}&I_{2n}\end{pmatrix}^{-1},
\end{equation*} 
where $E_{ij}$ is the $n\times n$ matrix with $1$ as the $(i,j)$-entry and $0$ elsewhere. 

The $\wh{\mu}^+_{0,ij}$'s act on $\mc{A}(H(\mb{Q})\backslash H(\mb{A}))$ by differentiating the right translation of $H(\mb{R})$. Their realizations on the Siegel upper half space are the Maa\ss{}--Shimura differential operators (see \cite[\S2.4]{LiuSLF}). 

For admissible $(\kappa,\utau^P)$ (resp. $\utau^P$) with $\phi\chi(-1)=(-1)^k$ (resp. $\phi\epsilon^P_d(-1)=(-1)^{t^P_d}$), set
\begin{align*}
   f_{\kappa,\utau^P,\infty}(s)=&\prod_{i=1}^{d-1}\mr{det}\left(\frac{(\wh{\mu}^+_0)_{N_i}}{4\pi \sqrt{-1}}\right)^{t^P_i-t^P_{i+1}}\mr{det}\left(\frac{\wh{\mu}^+_0}{4\pi\sqrt{-1}}\right)^{t^P_d-k} f^k_\infty(s,\mr{sgn}^k),\\
   f_{\utau^P,\infty}(s)=&\prod_{i=1}^{d-1}\mr{det}\left(\frac{(\wh{\mu}^+_0)_{N_i}}{4\pi \sqrt{-1}}\right)^{t^P_i-t^P_{i+1}} f^{t^P_d}_\infty(s,\mr{sgn}^{t^P_d})=f_{\tau^P_d,\utau^P,\infty}(s)
\end{align*}
and 
\begin{align*}
  f_{\kappa,\utau^P,\infty}&=f_{\kappa,\utau^P,\infty}\left(\frac{2n+1}{2}-k\right), &f_{\utau^P,\infty}&=f_{\utau^P,\infty}\left(\frac{2n+1}{2}-t^P_d\right)=f_{\tau^P_d,\utau^P,\infty}.
\end{align*}

The formulae for the corresponding Fourier coefficients (listed in tables in \S\ref{sec:table}) are deduced from Shimura's computation \cite[Theorem 4.2]{Sh6} for the classical scalar weight section and formulae for the action of differential operators on $p$-adic expansions (see the proof of \cite[Proposition 4.4.1]{LiuSLF}). The proof of the non-vanishing of the corresponding archimedean zeta integrals is postponed to \S\ref{sec:Zinfty}.

\subsubsection{The ``big cell'' section at a finite place}
We choose our sections at $v|Np$ from a special type of sections, the so-called ``big cell'' sections. Given a finite place $v$ and a compactly supported locally constant function $\alpha_v$ on $\mr{Sym}(2n,\mb{Q}_v)$, the ``big cell'' section inside $I_{Q_H,v}(s,\xi)$ associated to $\alpha_v$ is defined as 
\begin{equation}\label{eq:bigcell}
   f_v^{\alpha_v}(s,\xi)\left(\begin{pmatrix}A&B\\C&D\end{pmatrix}\right)=\begin{cases}\xi^{-1}(\det C)|\det C|^{-(s+\frac{2n+1}{2})}\alpha_v(C^{-1}D)&\text{ if }\det C\neq 0,\\ 0 &\text{ if }\det C=0.\end{cases}
\end{equation}
An easy computation shows that
\begin{equation}\label{eq:ramFour}
   W_{\bbeta,v}(1_v,f_v^{\alpha_v}(s,\xi))= \int_{\mr{Sym}(2n,\bQ_v)} \alpha_v(\varsigma)\be_v(-\mr{Tr} \bbeta\varsigma) \,d_v\varsigma =\wh{\alpha}_v(\bbeta).
\end{equation}

\subsubsection{\texorpdfstring{The volume sections at places dividing $N$}{The volume sections at places dividing N}}
For a positive integer $N$ and a place $v|N$, the volume section $f^{\mr{vol}}_v(s,\xi)$ inside $I_{Q_H,v}(s,\xi)$ is defined as the ``big cell'' section associated to the characteristic function of the open compact subset
\begin{equation}\label{eq:svol}
   -\begin{pmatrix}0&I_n\\I_n&0\end{pmatrix}+N\Sym(2n,\bZ_v)\subset\Sym(2n,\bQ_v).
\end{equation} 
We set
\begin{align*}
   f_{\kappa,\utau^P,v}&=\left.f^{\mr{vol}}_v(s,\phi\chi)\right|_{s=\frac{2n+1}{2}-k}, & f_{\utau^P,v}&=\left.f^{\mr{vol}}_v(s,\phi\epsilon^{P}_d)\right|_{s=\frac{2n+1}{2}-t^P_d}.
\end{align*}

The Fourier coefficients associated to the volume sections are easily computed by computing the Fourier transform of the characteristic function of \eqref{eq:svol}. The computation of local zeta integrals is also easy (the same as \cite[Proposition 4.2.1]{LiuSLF}). See the tables for the formulae.

\subsubsection{\texorpdfstring{The place $p$}{The place p}}\label{sec:placep}
It remains to pick Schwartz functions $\alpha_{\kappa,\utau^P}$ and $\alpha_{\utau^P}$ on $\Sym(2n,\bQ_p)$, and our choices for $f_{\kappa,\utau^P,p}$ (resp. $f_{\utau^P,p}$) will the ``big cell'' section attached to $\alpha_{\kappa,\utau^P}$ (resp. $\alpha_{\utau^P}$). The criterion for picking them is to make the ($p$-adic) $q$-expansions of the resulting Siegel Eisenstein series $p$-adically interpolable. In fact we will first pick $\wh{\alpha}_{\kappa,\utau^P}$ and $\wh{\alpha}_{\utau^P}$, and then apply inverse Fourier transform to get $\alpha_{\kappa,\utau^P}$ and $\alpha_{\utau^P}$ for computing the local zeta integrals. 

The theory of nearly holomorphic forms and Maa\ss{}--Shimura differential operators formulated in terms of automorphic sheaves and their interpretations as $p$-adic Siegel modular forms are needed in our construction. We will freely use the formulation and notation in \cite[\S2]{LiuNHF} and \cite[\S2]{LiuSLF}.

Recall some notation \textit{loc. cit.}; denote by $\cV^r_{\ut}$ the automorphic bundle of degree $r$ and weight $\ut$ nearly holomorphic forms over the Siegel variety defined as in \cite[\S2.2]{LiuSLF}, and by $\mr N^r_{\ut}\left(\bH_n,\Gamma\cap\Gamma_{SP}(p^l)\right)$ the space of vector-valued nearly holomorphic Siegel modular forms on the Siegel upper half space $\bH_n$ of degree $r$, weight $\ut$ and level $\Gamma\cap\Gamma_{SP}(p^l)$ in the sense of Shimura. There is the embedding
\begin{equation}\label{eq:GGembed}
\begin{aligned}
   \mr H^0\left(X^\Sigma_{G,\Gamma\cap \Gamma_{SP}(p^l)}\times X^\Sigma_{G,\Gamma\cap \Gamma_{SP}(p^l)},\cV^r_{\ut}\boxtimes \cV^r_{\ut}\right)&\lhra \mr  N^r_{\ut}\left(\bH_n,\Gamma\cap \Gamma_{SP}(p^l)\right)\otimes \mr N^r_{\ut}\left(\bH_n,\Gamma\cap \Gamma_{SP}(p^l)\right)\\
   &\stackrel{\varphi(\cdot,\fe_{\mr{can}})}{\lhra}\cA(G(\bQ)\times G(\bQ)\backslash G(\bA)\times G(\bA)).
\end{aligned}
\end{equation}

Generalizing the embedding \eqref{eq:embed2}, as explained in \cite[Proposition 3.2.1]{LiuNHF}, the space of nearly holomorphic Siegel modular forms of level $\Gamma\cap\Gamma_{SP}(p^l)$ also embeds into the space of $p$-adic Siegel modular forms,
\begin{equation}\label{eq:nembed}
  \mr H^0\left(X^\Sigma_{G,\Gamma\cap \Gamma_{SP}(p^l)}\times X^\Sigma_{G,\Gamma\cap \Gamma_{SP}(p^l)},\cV^r_{\imath(\ut^P)}\boxtimes \cV^r_{\imath(\ut^P)}\right)\lhra \left(\varprojlim_m\varinjlim_l V^{SP}_{m,l}\otimes_{\cO_F}V^{SP}_{m,l}\right)[1/p].
\end{equation}

When choosing $\alpha_{\kappa,\utau^P}$ and $\alpha_{\utau^P}$, we want to ensure that the restrictions to $G\times G$ of the resulting adelic Siegel Eisenstein series lie inside the image of the embedding \eqref{eq:GGembed}, so that we can study them as $p$-adic Siegel modular forms via \eqref{eq:nembed}. Let $\Sym(2n,\bQ_p)^*$ be the dual space of $\Sym(2n,\bQ_p)$, which can be identified with $\Sym(2n,\bQ_p)$ via the Trace pairing, and let $\Sym(2n,\bZ_p)^*$ be the dual of $\Sym(2n,\bZ_p)$, which identified with the matrices $\alpha$ in $\Sym(2n,\bQ_p)$ such that $\mr{Tr}(\alpha a) \in \bZ_p$ for all $a \in \Sym(2n,\bZ_p)$.  Hence we require:
\begin{enumerate}
\item $\wh{\alpha}_{\kappa,\utau^P}$ and $\wh{\alpha}_{\utau^P}$ take values in a finite extension of $\bQ$.
\item $\wh{\alpha}_{\kappa,\utau^P}$ and $\wh{\alpha}_{\utau^P}$ are supported on $\Sym(2n,\bZ_p)^*$, and for $x\in\Sym(2n,\bQ_p)^*$, $a_1,a_2\in SP(\bZ_p)$,
\begin{align*}
   \wh{\alpha}_{\kappa,\utau^P}(x)&=\wh{\alpha}_{\kappa,\utau^P}\left(\begin{psm}\ltrans{a}_1&0\\0&\ltrans{a}_2\end{psm}x\begin{psm}a_1&0\\0&a_2\end{psm}\right), & \wh{\alpha}_{\utau^P}(x)&=\wh{\alpha}_{\utau^P}\left(\begin{psm}\ltrans{a}_1&0\\0&\ltrans{a}_2\end{psm}x\begin{psm}a_1&0\\0&a_2\end{psm}\right).
\end{align*}
\end{enumerate}

With $\wh{\alpha}_{\kappa,\utau^P}$ and $\wh{\alpha}_{\utau^P}$ satisfying these conditions, we can define 
\begin{align*}
   \cE_{\kappa,\utau^P},\,\cE_{\utau^P}&\in \varinjlim_r\varinjlim_l \mr H^0\left(X^\Sigma_{G,\Gamma\cap \Gamma_{SP}(p^l)}\times X^\Sigma_{G,\Gamma\cap \Gamma_{SP}(p^l)},\cV^r_{\imath(\ut^P)}\boxtimes \cV^r_{\imath(\ut^P)}\right),
\end{align*}
as the preimage of the adelic forms
\begin{equation}\label{eq:norE}
\begin{aligned}
   &\chi(-1)^{n} \frac{\Gamma_{2n}\left(\frac{2n+1}{2}\right)}{2^{2nk-2n^2+n}\pi^{n+2n^2}}\, \left.E^*\left(\cdot,f_{\kappa,\utau^P}\right)\right|_{G\times G}, &\quad\epsilon^P_d(-1)^{n} \frac{\Gamma_{2n}\left(\frac{2n+1}{2}\right)}{2^{2nt^P_d-2n^2+n}\pi^{n+2n^2}}\cdot \left.E^*\left(\cdot,f_{\utau^P}\right)\right|_{G\times G}.
\end{aligned}
\end{equation}

Here for a positive integer $m$,
\begin{equation*}
   \Gamma_m(s):=\pi^{\frac{m(m-1)}{4}}\prod_{j=0}^{m-1}\Gamma\left(s-\frac{j}{2}\right).
\end{equation*}
In the following we will not distinguish $\cE_{\kappa,\utau^P}$, $\cE_{\utau^P}$ from their images under the embedding \eqref{eq:nembed}.

Set 
\begin{align*}
   \varepsilon_{\qexp}=(\varprojlim_{m}\varinjlim_l \varepsilon^{1,1}_{\qexp,m,l},\,\varprojlim_{m}\varinjlim_l \varepsilon^{1,1}_{\qexp,m,l}):\varprojlim_m&\varinjlim_l V^{SP}_{m,l}\otimes_{\cO_F}V^{SP}_{m,l}\lra \cO_F\llbracket N^{-1}\Sym(n,\bZ)^{\ast\oplus 2}_{\geq 0}\rrbracket
\end{align*}
with 
\begin{equation*}
   \varprojlim_{m}\varinjlim_l \varepsilon^{1,1}_{\qexp,m,l}:\varprojlim_m\varinjlim_l V^{SP}_{m,l}\lra \cO_{F}\llbracket N^{-1}S^2(X_n)_{\geq 0}\rrbracket=\cO_F\llbracket N^{-1}\Sym(n,\bZ)^*_{\geq 0}\rrbracket.
\end{equation*}
being the $p$-adic $q$-expansion map at infinity.

For any element $G$ of $\varprojlim_m \varinjlim_l V^{SP}_{m,l}\otimes_{\cO_F}V^{SP}_{m,l}$ with $q$-expansion \[
\varepsilon_{\qexp}(G) =\sum_{\beta_1,\beta_2\in N^{-1}\Sym(n,\bZ)^*_{>0}}\,\sum_{\bbeta=\begin{psm}\beta_1&\beta_0\\\ltrans{\beta_0}&\beta_2\end{psm}\in N^{-1}\Sym(2n,\bZ)^*_{>0}}\fc(\bbeta) q^{\beta_1}q^{\beta_2} \]
and any pair $(\beta_1,\beta_2)$ of elements in $N^{-1}\Sym(n,\bZ)^*_{\geq 0}$ we define the $(\beta_1,\beta_2)$ Fourier coefficient of $G$ as 
\[
\varepsilon_{\qexp}\left(\beta_1,\beta_2,G \right) =\sum_{\bbeta=\begin{psm}\beta_1&\beta_0\\\ltrans{\beta_0}&\beta_2\end{psm}\in N^{-1}\Sym(2n,\bZ)^*_{>0}}\fc(\bbeta).
\]

Regarding the $q$-expansion of $\cE_{\kappa,\utau^P}$ and $\cE_{\utau^P}$, we have the following proposition.
\begin{prop}\label{prop:FourierCoeffEis}
Let $(\beta_1,\beta_2)$ be a pair of elements in $N^{-1}\Sym(n,\bZ)^*_{\geq 0}$. For admissible $(\kappa,\utau^P)\in \Hom_{\mr{cont}}\left(\bZ^\times_p\times T_P(\bZ_p),\ol{\bQ}^\times_p\right)$ with $\phi\chi(-1)=(-1)^k$, we have
\begin{align*}
   \varepsilon_{\qexp}\left(\beta_1,\beta_2,\cE_{\kappa,\utau^P}\right)&=\sum_{\bbeta=\begin{psm}\beta_1&\beta_0\\\ltrans{\beta}_0&\beta_2\end{psm}\in N^{-1}\Sym(2n,\bZ)^*_{\geq 0}}\fc_{\kappa,\utau^P}(\bbeta),
\end{align*}
where for $\bbeta>0$, the coefficient $\fc_{\kappa,\utau^P}(\bbeta)$ is given as
\begin{align*}
   \fc_{\kappa,\utau^P}(\bbeta)=&N^{-n(2n+1)}\prod_{v|N}\be_v(2\mr{Tr}\beta_0) \cdot L^{Np\infty}(n+1-k,\phi\chi\lambda_{\bbeta})\cdot \prod_{\substack{v|\det(2\bbeta)\\v\nmid Np\infty}} g_{\bbeta,v}\left(\phi\chi(q_v)q_v^{k-2n-1}\right)\\
   &\times\, (-1)^{nk}\chi(-1)^n\cdot\wh{\alpha}_{\kappa,\utau^P}(\bbeta) \prod_{i=1}^{d-1}\mr{det}((2\beta_0)_{N_i})^{t^P_i-t^P_{i+1}}\mr{det}(2\beta_0)^{t^P_d-k}.
\end{align*}
For admissible $\utau\in \Hom_{\mr{cont}}(T_P(\bZ_p),\ol{\bQ}^\times_p)$ with $\phi\epsilon^P_d(-1)=(-1)^{t^P_d}$, we have 
\begin{align*}
   \varepsilon_{\qexp}\left(\beta_1,\beta_2,\cE_{\utau^P}\right)&=\sum_{\bbeta=\begin{psm}\beta_1&\beta_0\\\ltrans{\beta}_0&\beta_2\end{psm}\in N^{-1}\Sym(2n,\bZ)^*_{\geq 0}}\fc_{\utau^P}(\bbeta),
\end{align*}
where for $\bbeta\geq 0$, the coefficient $\fc_{\utau^P}(\bbeta)$ is given as
\begin{align*}
   \fc_{\utau^P}(\bbeta)=&N^{-n(2n+1)}\prod_{v|N}\be_v(2\mr{Tr}\beta_0) \cdot L^{Np\infty}(n+1-t^P_d-\frac{r}{2},\phi\chi\lambda_{\bbeta})\cdot \prod_{j=1}^{\frac{\mr{corank}(\bbeta)}{2}}L^{Np\infty}(2n+3-2t^P_d-2j,(\phi\epsilon^{P}_d)^2)\\
   &\,\times(-1)^{nt^P_d}\epsilon^P_d(-1)^n\prod_{\substack{v|\det^*(2\bbeta)\\v\nmid Np\infty}} g_{\bbeta,v}\left(\phi\chi(q_v)q_v^{t^P_d-2n-1}\right)\cdot \wh{\alpha}_{\utau^P}(\bbeta) \prod_{i=1}^{d-1}\mr{det}((2\beta_0)_{N_i})^{t^P_i-t^P_{i+1}}
\end{align*}
if $\mr{rank}(\bbeta)$ is even, and
\begin{align*}
   \fc_{\utau^P}(\bbeta)=&N^{-n(2n+1)}\prod_{v|N}\be_v(2\mr{Tr}\beta_0) \cdot \prod_{j=1}^{\frac{\mr{corank}(\bbeta)+1}{2}}L^{NP\infty}(2n+3-2t^P_d-2j,(\phi\epsilon^{P}_d)^2)\\
   &\,\times(-1)^{nt^P_d}\epsilon^P_d(-1)^n\prod_{\substack{v|\det^*(2\bbeta)\\v\nmid Np\infty}} g_{\bbeta,v}\left(\phi\chi(q_v)q_v^{t^P_d-2n-1}\right)\cdot \wh{\alpha}_{\utau^P}(\bbeta) \prod_{i=1}^{d-1}\mr{det}((2\beta_0)_{N_i})^{t^P_i-t^P_{i+1}}   
\end{align*}
if $\mr{rank}(\bbeta)$ is odd.

Here for $\bbeta\in \Sym(2n,\bQ)\cap \Sym(2n,\bZ_v)^*$, $\det^*(2\bbeta)$ denotes the product of all the nonzero eigenvalues of $2\bbeta$. If $\mr{rank}(\bbeta)$ is even, the quadratic character $\lambda_{\bbeta}$ is defined as $\lambda_{\bbeta}(q_v)=\left(\frac{(-1)^{\mr{rank}(\bbeta)/2}\det^*(\bbeta)}{q_v}\right)$. The $g_{\bbeta,v}(\cdot)$ appearing in above formulae is a polynomial with coefficients in $\bZ$. For an integer $m$, $(2\beta_0)_m$ denotes the upper left $m\times m$-minor of $2\beta_0$.
\end{prop}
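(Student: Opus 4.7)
The plan is to compute the two $q$-expansions by invoking the factorization \eqref{eq:FourierProd} of the $\bbeta$-th Fourier coefficient of a Siegel Eisenstein series into a product of local Whittaker integrals, and then identify each local piece with the formulas tabulated in \S\ref{sec:table}. Since at the place $p$ we have selected a ``big cell'' section, the support condition in \eqref{eq:FourierProd} is automatically satisfied for \emph{every} $\bbeta\in N^{-1}\Sym(2n,\bZ)^{\ast}_{\geq 0}$, degenerate or not, so the factorization is available throughout the proof. One then translates the adelic Fourier expansion at the cusp at infinity into the $p$-adic $q$-expansion via \eqref{eq:nembed}; the normalization prefactor in \eqref{eq:norE}--\eqref{eq:norE2} is chosen precisely so that the archimedean Gamma factors $\Gamma_{2n}\!\left(\frac{2n+1}{2}\right)$ and the powers of $2\pi\sqrt{-1}$ cancel, leaving an algebraic answer.

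Next I would compute the local Fourier coefficients place by place. At a finite place $v\nmid Np\infty$, Shimura's formula \cite[Theorem 13.6, Proposition 14.9]{ShEuler} for the unramified section yields the standard partial $L$-function $L^{Np\infty}(n{+}1{-}k,\phi\chi\lambda_{\bbeta})$ together with the correction polynomials $g_{\bbeta,v}$ at primes dividing $\det(2\bbeta)$. At a place $v\mid N$, the Fourier transform of the characteristic function of $-\left(\begin{smallmatrix}0&I_n\\I_n&0\end{smallmatrix}\right)+N\Sym(2n,\bZ_v)$ is the elementary integral which produces the volume factor $N^{-n(2n+1)}$ (combined across all $v\mid N$) times the character $\be_v(2\,\mr{Tr}\,\beta_0)$. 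At the place $p$, \eqref{eq:ramFour} directly evaluates the Whittaker integral to $\wh{\alpha}_{\kappa,\utau^P}(\bbeta)$ or $\wh{\alpha}_{\utau^P}(\bbeta)$. At the archimedean place, I would first use Shimura's computation \cite[Theorem 4.2]{Sh6} for the scalar weight classical section $f^{k}_\infty$ (respectively $f^{t^P_d}_\infty$), which for $\bbeta>0$ gives a confluent hypergeometric function that degenerates to an elementary factor in our range of parameters; then the action of the differential operators $\prod_{i}\det\bigl((\wh{\mu}_0^+)_{N_i}/(4\pi\sqrt{-1})\bigr)^{t^P_i-t^P_{i+1}}\det(\wh{\mu}_0^+/(4\pi\sqrt{-1}))^{t^P_d-k}$ produces the minor-determinant factors $\prod_i \det((2\beta_0)_{N_i})^{t^P_i-t^P_{i+1}}\det(2\beta_0)^{t^P_d-k}$, exactly as in the proof of \cite[Proposition 4.4.1]{LiuSLF}. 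Multiplying the local contributions and clearing the normalizing constants proves the formula for $\cE_{\kappa,\utau^P}$.

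For $\cE_{\utau^P}$ and non-degenerate $\bbeta>0$, the same argument applies verbatim, with $k$ replaced by $t^P_d$ and the final differential operator $\det(\wh{\mu}_0^+/(4\pi\sqrt{-1}))^{t^P_d-k}$ absent (so the factor $\det(2\beta_0)^{t^P_d-k}$ disappears). The essential new point is the treatment of \emph{degenerate} $\bbeta\geq 0$, which is exactly where the non-cuspidal behaviour on $\Sp(2n)\times\Sp(2n)$ enters. Here I would invoke Shimura's extension of the Fourier-coefficient formula to semi-positive indices (cf.\ \cite[\S18]{ShEuler} or the analogous statements used in \cite{BS}): for corank $r=\mr{corank}(\bbeta)$ the singular-series term at every $v\nmid Np\infty$ collapses and contributes an extra product $\prod_{j=1}^{\lceil r/2\rceil} L_v(2n+3-2t^P_d-2j,(\phi\epsilon^P_d)^2)$, which globally assembles into the extra $L$-factors appearing in the statement, while the partial $L$-function $L^{Np\infty}$ either remains (even rank case) with shifted argument and the modified quadratic character $\lambda_{\bbeta}$ associated to $\det^*$, or disappears (odd rank case). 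The archimedean confluent hypergeometric function still degenerates to an elementary expression on our chosen weight, and the differential operators act on each minor as before, producing the same product $\prod_i \det((2\beta_0)_{N_i})^{t^P_i-t^P_{i+1}}$.

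The main obstacle is the archimedean computation in the degenerate case: Shimura's scalar weight formula for semi-positive $\bbeta$ involves a confluent hypergeometric function of matrix argument whose behaviour under the differential operators $(\wh{\mu}_0^+)_{N_i}$ must be compatible with the restriction to $\Sp(2n)\times\Sp(2n)$ of the resulting nearly holomorphic form, and one has to verify that the resulting form lies in the image of \eqref{eq:GGembed} (so that the $p$-adic realization via \eqref{eq:nembed} makes sense). Once this is established, the remaining bookkeeping --- matching the precise power of $N$, the Gauss-sum-free normalizations at $v\mid N$, and the sign $(-1)^{nk}$ or $(-1)^{nt^P_d}$ in \eqref{eq:norE}--\eqref{eq:norE2} --- is a direct, if tedious, unwinding of the formulas in the table of \S\ref{sec:table}.
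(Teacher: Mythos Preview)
Your proposal is correct and follows essentially the same approach as the paper: the paper's own proof consists of a single sentence pointing to the local Fourier coefficient formulae in the two tables of \S\ref{sec:table} together with the action of the differential operators on $p$-adic $q$-expansions, exactly as in \cite[Proposition~4.4.1]{LiuSLF}. You have simply made explicit the place-by-place breakdown (including the degenerate case for $\cE_{\utau^P}$) that the paper leaves implicit in the tables and in the reference to \cite{LiuSLF}.
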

\begin{proof}
The proof is similar as \cite[Proposition 4.4.1]{LiuSLF}. It relies on the formulae for local Fourier coefficients as listed in the two tables in \S\ref{sec:table}, and uses formulae of differential operators on $p$-adic $q$-expansions. 
\end{proof}

It is not difficult to observe that all the terms in the above formulae for $\fc_{\kappa,\utau^P}(\bbeta)$, $\fc_{\utau^P}(\bbeta)$ are ready for $p$-adic interpolation with respect to $(\kappa,\utau^P)$, $\utau^P$, except the last terms
\begin{align}
  &\wh{\alpha}_{\kappa,\utau^P}(\bbeta) \prod_{i=1}^{d-1}\mr{det}((2\beta_0)_{N_i})^{t^P_i-t^P_{i+1}}\mr{det}(2\beta_0)^{t^P_d-k},\label{eq:rterm}\\
   &\wh{\alpha}_{\utau^P}(\bbeta)\prod_{i=1}^{d-1}\mr{det}((2\beta_0)_{N_i})^{t^P_i-t^P_{i+1}}. \label{eq:rterm2}
\end{align}
(The $p$-adic interpolation of the Dirichlet $L$-values in the formulae follows from the existence of the Kubota--Leopoldt $p$-adic $L$-function \cite[Theorem 4.4.1]{H}.)

In order to make \eqref{eq:rterm} and \eqref{eq:rterm2} $p$-adically interpolable, one needs to require that if $\bbeta$ belongs to the support of the Schwartz function $\wh{\alpha}_{\kappa,\utau^P}$ (resp. $\wh{\alpha}_{\utau^P}$), then $\det\left((2\beta_0)_{N_i}\right)$ is a $p$-adic unit for $1\leq i\leq d$ (resp. $1\leq i\leq d-1$). The very natural choices of $\wh{\alpha}_{\kappa,\utau^P}$ and $\wh{\alpha}_{\utau^P}$ are
\begin{align*}
\numberthis\label{eq:whalpha1}   \wh{\alpha}_{\kappa,\utau^P}(\bbeta) =&\mathds{1}_{p^2\Sym(n,\bZ_p)^*}(\beta_1)\mathds{1}_{\Sym(n,\bZ_p)^*}(\beta_2)\prod_{i=1}^{d}\mathds{1}_{\GL(N_i,\bZ_p)}\left((2\beta_0)_{N_i}\right)\\
   &\times\,\prod_{i=1}^{d-1}\epsilon^P_i\epsilon^{P-1}_{i+1}(\mr{det}((2\beta_0)_{N_i}))\cdot \epsilon^{P}_d\chi^{-1}(\mr{det}(2\beta_0)),
\end{align*}
and
\begin{align*}
\numberthis\label{eq:whalpha2}   \wh{\alpha}_{\utau^P}(\bbeta) =&\mathds{1}_{p^2\Sym(n,\bZ_p)^*}(\beta_1)\mathds{1}_{\Sym(n,\bZ_p)^*}(\beta_2)\mathds{1}_{M_n(\bZ_p)}(\beta_0)\prod_{i=1}^{d-1}\mathds{1}_{\GL(N_i,\bZ_p)}\left((2\beta_0)_{N_i}\right)\\
   &\times\,\prod_{i=1}^{d-1}\epsilon^P_i\epsilon^{P-1}_{i+1}(\mr{det}((2\beta_0)_{N_i})).
\end{align*}
Here $M_n$ denotes the space of $n\times n$ matrices.

Since $W_{\bbeta,p}(1_p,f_{\kappa,\utau^P,p})=\wh{\alpha}_{\kappa,\utau^P}(\bbeta)$, our choice \eqref{eq:whalpha1} makes $\fc_{\kappa,\utau^P}(\bbeta)$ vanish unless $\bbeta$ is invertible. The semi-positivity of $\bbeta$ implies that both $\beta_1$ and $\beta_2$ are positive definite. Thus $\varepsilon_{\qexp}\left(\beta_1,\beta_2,\cE_{\kappa,\utau^P}\right)$ is nonzero only if $\beta_1,\beta_2>0$. Similarly, $\varepsilon_{\qexp}\left(\beta_1,\beta_2,\cE_{\utau^P}\right)$ is nonzero only if $\beta_1,\beta_2\geq 0$ and their ranks are at least $n-n_d$.

With $\wh{\alpha}_{\kappa,\utau^P}$, $\wh{\alpha}_{\utau^P}$ being set as in \eqref{eq:whalpha1}, \eqref{eq:whalpha2}, we have
\begin{align*}
   \varepsilon_{\qexp}\left(\cE_{\kappa,\utau^P}\right)&=\sum_{\beta_1,\beta_2\in N^{-1}\Sym(n,\bZ)^*_{>0}}\,\sum_{\bbeta=\begin{psm}\beta_1&\beta_0\\\ltrans{\beta_0}&\beta_2\end{psm}\in N^{-1}\Sym(2n,\bZ)^*_{>0}}\fc_{\kappa,\utau^P}(\bbeta)\,q^{\beta_1}q^{\beta_2},\\
   \varepsilon_{\qexp}\left(\cE_{\utau^P}\right)&=\sum_{\substack{\beta_1,\beta_2\in\Sym(n,\bZ)^*_{\geq 0}\\\mr{rk}(\beta_1),\mr{rk}(\beta_2)\geq n-n_d}}\,\sum_{\bbeta=\begin{psm}\beta_1&\beta_0\\\ltrans{\beta_0}&\beta_2\end{psm}\in N^{-1}\Sym(2n,\bZ)^*_{\geq 0}}\fc_{\utau^P}(\bbeta)\,q^{\beta_1}q^{\beta_2},
\end{align*}
and each $\fc_{\kappa,\utau^P}(\bbeta)$ (resp. $\fc_{\utau^P}(\bbeta)$) appearing here admits $p$-adic interpolation with respect to $(\kappa,\utau^P)$ (resp. $\utau^P$).

If we look at the $q$-expansions of $\cE_{\kappa,\utau^P}$ and $\cE_{\utau^P}$ at other cusps in the ordinary locus (which is equivalent to look at $W_{\bbeta}\left(\iota(g_1,g_2),f_{\kappa,\utau^P}\right)$, $W_{\bbeta}\left(\iota(g_1,g_2),f_{\utau^P}\right)$ for $g_i\in G(\bA)$ with $g_{i,p}=1$, $g_{i,\infty}=g_{z_i}$, $i=1,2$), the support of $\wh{\alpha}_{\kappa,\utau^P}$ (resp. $\wh{\alpha}_{\utau^P}$) again makes the term indexed by degenerate $(\beta_1,\beta_2)$ (resp. $\beta_1$ or $\beta_2$ of rank $<n-n_d$) vanish. Hence
\begin{align*}
   \numberthis\label{eq:cEV}\cE_{\kappa,\utau^P}&\in \varprojlim_m\varinjlim_l V^{SP,0}_{m,l}\otimes_{\cO_F} V^{SP,0}_{m,l}, &\cE_{\utau^P}&\in \varprojlim_m\varinjlim_l V^{SP,n_d}_{m,l}\otimes_{\cO_F} V^{SP,n_d}_{m,l}.
\end{align*}

\begin{rem}\label{rem:biggersupport}
Compared to $\wh{\alpha}_{\kappa,\utau^P}$, the support of $\wh{\alpha}_{\utau^P}$ is enlarged. As the cyclotomic variable $\kappa$ is fixed to be equal to $\uptau^P_d$, the term $\det(2\beta_0)$ does not appear in \eqref{eq:rterm2}, and one does not need to require $2\beta_0\in\GL(n,\bZ_p)$ for the support of $\wh{\alpha}_{\utau^P}(\bbeta)$. Later, we will see that it is this relaxation on the support that saves us the factor $\cA^P(\pi \times \phi\epsilon^P_d)$ in the local zeta integral for $f^{\alpha_{\utau^P}}_p(\frac{2n+1}{2}-t^P_d,\phi\epsilon^{P}_d)$ compared to that for $\left.f^{\alpha_{\kappa,\utau^P}}_p(\frac{2n+1}{2}-k,\phi\chi)\right|_{\kappa=\uptau^P_d}$. This relaxation also means that the resulting $\cE_{\utau^P}$ is not necessarily cuspidal as $p$-adic forms. Thus, the Hida theory for non-cuspidal Siegel modular forms developed in \S\ref{sec:NCH} is needed to construct the improved $p$-adic $L$-function from the $\cE_{\utau^P}$'s.
\end{rem}

\subsubsection{The two tables}\label{sec:table}
In the tables on the next two pages, we summarize our choices of sections for Siegel Eisenstein series, the formulae of the corresponding local Fourier coefficients, and the local zeta integrals.

We explain some notation. In the tables, $\varphi\in\pi$ is a $P$-ordinary cuspidal holomorphic Siegel modular form  of weight $\imath(\ut^P)$ fixed by $\wh{\Gamma}\cap SP_G(\mb{Z}_p)$ with $p$-nebentypus $\uep^P$. 

The operator $\pW$ is defined as
\begin{equation}\label{eq:pW}
   \pW(\varphi)(g)=\int_{SP_G(\mb{Z}_p)}\ol{\varphi}^\mvw(gu)\,du.
\end{equation}
As observed before Remark \ref{rem:notationMVW}, the form $\ol{\varphi}^\mvw$ belongs to $\pi$ and so $\pW(\varphi)$ belongs to $\pi$ too. If $\varphi$ is holomorphic of weight $\ut^P$, then so is $\pW(\varphi)$. The operator $\pW$ should be viewed as an analogue of the operator sending a modular form $f$ of level $\Gamma_0(N)$ to $f^c\left|\begin{pmatrix} 0 & -1\\N & 0\end{pmatrix}\right.$. In the same way as \cite[Proposition 5.7.2]{LiuSLF}, one can show that the $P$-ordinary projection $e_P\pW(\varphi)$ is nonzero if $e_P\varphi$ is nonzero. 

We denote by $v_{\imath(\ut^P)}\in\cD_{\imath(\ut^P)}$ the highest weight vector inside the lowest $K_{G,\infty}$-type, and $v^\vee_{\imath(\ut^P)}\in\wt{\cD}_{\imath(\ut^P)}$ is the dual vector to $v_{\imath(\ut^P)}$.


\clearpage

\begin{table}
{\footnotesize
\begin{center}
\begin{tabular}{p{2em}|c|c|c}
 & \begin{minipage}{12.5em}\begin{equation*}f_{\kappa,\utau^P,v}\end{equation*}\end{minipage}
 &\begin{minipage}{18.5em}$W_{\bbeta,v}\left(h_{\bz,v},f_{\kappa,\utau^P,v}\right)$ with $\bbeta=\left(\begin{smallmatrix}\beta_1&\beta_0\\\ltrans{\beta}_0&\beta_2\end{smallmatrix}\right)$\end{minipage}
 & \begin{minipage}{14em}\begin{equation*}\left(T_{f_{\kappa,\utau^P,v}}\ol{\varphi}\right)^\mvw\end{equation*}\end{minipage} \\
\hline 
 $v\nmid Np\infty$ &\begin{minipage}{12.5em}the standard unramified section \begin{equation*}\left.f^{\mr{ur}}_v(s,\phi\chi)\right|_{s=\frac{2n+1}{2}-k}\end{equation*}\end{minipage} 
 &{\begin{minipage}{18.5em}  \begin{align*}&\mathds{1}_{\mr{Sym}(2n,\bZ_v)^*}(\bbeta)\\
 \times &\,d_v(n+1-k,\phi\chi)^{-1}L_v(n+1-k,\phi\chi\lambda_{\bbeta})\\
 \times &\, g_{\bbeta,v}\left(\phi\chi(q_v)q_v^{k-2n-1}\right)\end{align*}
for $\mr{det}(\bbeta)\neq 0$, where $g_{\bbeta,v}(T)\in\mb{Z}[T]$ with $g_{\bbeta,v}(0)=1$ and degree at most $4n\cdot \mr{val}_v(\mr{det}(2\bbeta))$ \\\end{minipage}} 
& {\begin{minipage}{14em}\begin{align*} &d_v(n+1-k,\phi\chi)^{-1}\\  \cdot &L_v(n+1-k,\pi\times\phi\chi)\cdot \pW(\varphi)\end{align*}\end{minipage}} \\
 \hline
 $v|N$ & \begin{minipage}{12.5em} \phantom{l}\\ the ``big cell'' section\begin{equation*}\left.f^{\mr{vol}}_{v}(s,\phi\chi)\right|_{s=\frac{2n+1}{2}-k}\end{equation*} associated to  the characteristic function of \begin{equation*}-\left(\begin{smallmatrix}0&I_n\\I_n&0\end{smallmatrix}\right)+N\mr{Sym}(2n,\bZ_v)\end{equation*}\\\end{minipage}
 & \begin{minipage}{18.5em}\begin{equation*}|N|_v^{n(2n+1)}\be_v(2\mr{Tr}\beta_0)\cdot\mathds{1}_{N^{-1}\mr{Sym}(2n,\bZ_v)^*}(\bbeta)\end{equation*}\end{minipage}
 &\begin{minipage}{14em}\begin{equation*}\phi_v\chi_v((-1)^n)\mr{vol}(\Gamma(N)_v)\cdot \pW(\varphi)\end{equation*}\end{minipage}\\
 \hline
 $v=p$ & \begin{minipage}{12.5em}the ``big cell'' section\begin{equation*} \left.f^{\alpha_{\kappa,\utau^P}}_p(s,\phi\chi)\right|_{s=\frac{2n+1}{2}-k},\end{equation*} where $\alpha_{\kappa,\utau^P}$ is the inverse Fourier transform of the Schwartz function in the next column\end{minipage} 
 &\begin{minipage}{18.5em}{\begin{align*}&\,\wh{\alpha}_{\kappa,\utau^P}(\bbeta)\\
 =&\,\mathds{1}_{p^2\mr{Sym}(n,\mb{Z}_p)^*}(\beta_1)\mathds{1}_{\mr{Sym}(n,\mb{Z}_p)^*}(\beta_2)\\
 &\times\prod_{i=1}^d\mathds{1}_{\mr{GL}(N_i,\mb{Z}_p)}((2\beta_0)_{N_i})\\
 &\times\prod_{i=1}^{d-1}\epsilon^P_i\epsilon^{P-1}_{i+1}(\mr{det}((2\beta_0)_{N_i}))\\&\times \epsilon^{P}_d\chi^{-1}(\mr{det}(2\beta_0))\end{align*}} \\\end{minipage}
 &\begin{minipage}{14em} {\begin{align*}&\chi(-1)^n\frac{\prod_{i=1}^d\prod_{j=1}^{n_i}(1-p^{-j})}{\prod_{j=1}^n(1-p^{-2j})}\\&\cdot E_p(n+1-k,\pi\times\phi\chi)\cdot e_P\pW(\varphi)\end{align*}}(after $f_{\kappa,\utau^P,p}$ being modified by appropriate $\bU^P_p$-operators in accordance with ordinary projection applied to $\left.E^*(\cdot,f_{\kappa,\utau^P})\right|_{G\times G}$) \end{minipage}\\
\hline
$v=\infty$ & \begin{minipage}{12.5em}{\begin{align*}\Bigg(&\prod_{i=1}^{d-1}\mr{det}\left(\frac{(\wh{\mu}^+_0)_{N_i}}{4\pi \sqrt{-1}}\right)^{t^P_i-t^P_{i+1}}\\&\cdot\mr{det}\left(\frac{\wh{\mu}^+_0}{4\pi\sqrt{-1}}\right)^{t^P_d-k}\\
&\cdot f^k_\infty(s,\mr{sgn}^k)\left.\Bigg)\right|_{s=\frac{2n+1}{2}-k}\end{align*}}\end{minipage}
&\begin{minipage}{19em}\phantom{l}\\vanishing unless $\bbeta\geq 0$  and equals {\begin{align*}&(-1)^{nk}\frac{2^{2nk-2n^2+n}}{\Gamma_{2n}\left(\frac{2n+1}{2}\right)}\pi^{n+2n^2}\\\times&\prod_{i=1}^{d-1}\mr{det}((2\beta_0)_{N_i})^{t^P_i-t^P_{i+1}}\mr{det}(2\beta_0)^{t^P_d-k}\\\times&\prod_{i=1}^{d-1}\mr{det}((y_0)_{N_i})^{t^P_i-t^P_{i+1}}\mr{det}(y_0)^{t^P_d-k}\\\times&\mr{det}(\mathbf{y})^{\frac{k}{2}}\be_\infty(\mr{Tr}\bbeta\bz)\\&+\text{terms irrelevant to } \varepsilon_{\qexp}(\cE_{\kappa,\utau^P})\end{align*}}where $\mathbf{y}=\mr{Im}(\bz)=\begin{psm}y_1&y_0\\\ltrans{y}_0&y_2\end{psm}$\end{minipage}
&\begin{minipage}{12em}{\begin{align*}&\frac{2^{-2n^2+n+2nk}\pi^{2n^2+n}}{\Gamma_{2n}\left(\frac{2n+1}{2}\right)}\\  \cdot &\,\frac{\sqrt{-1}^{\frac{n^2}{2}-\frac{n}{2}}2^{-\frac{3n^2}{2}-\frac{3n}{2}-\sum_{j=1}^d n_jt^P_j}}{\dim\left(\GL(n),\imath(\ut^P)\right)}\\ \cdot &E_\infty(1+n-k,\pi\times\phi\chi)\cdot \pW(\varphi)\end{align*}}\end{minipage}
\end{tabular}
\caption{data for $(d+1)$-variable $p$-adic $L$-function}
\label{tb:d+1}
\end{center}
}
\end{table}
\clearpage

\begin{table}
{\footnotesize
\begin{center}
\begin{tabular}{p{2em}|c|c|c}
 & \begin{minipage}{12.7em}\begin{equation*}f_{\utau^P,v}\end{equation*}\end{minipage}&\begin{minipage}{20.5em}$W_{\bbeta,v}\left(h_{\bz},f_{\utau^P,v}\right)$ with $\bbeta=\left(\begin{smallmatrix}\beta_1&\beta_0\\\ltrans{\beta}_0&\beta_2\end{smallmatrix}\right)$\end{minipage}& \begin{minipage}{12em}\begin{equation*}\left(T_{f_{\utau^P,v}}\ol{\varphi}\right)^\mvw \end{equation*}\end{minipage} \\
\hline 
 $v\nmid Np\infty$ &
 \begin{minipage}{12.7em}the standard unramified section \begin{equation*}\left.f^{\mr{ur}}_v(s,\phi\epsilon^{P}_d)\right|_{s=\frac{2n+1}{2}-t^P_d}\end{equation*}\end{minipage} & 
  {\begin{minipage}{20.5em} 
   \begin{align*}&\mathds{1}_{\mr{Sym}(2n,\bZ_v)^*}(\bbeta)\cdot d_v(n+1-k,\phi\epsilon^{P}_d)^{-1}\\
  \times&\left\{ 
  \begin{array}{l} 
 L_v(n+1-t^P_d-\frac{r}{2},\phi\epsilon^{P}_d\lambda_{\bbeta})\hspace{2.5em}r\text{ even,}\\
   \times\prod\limits_{j=1}^{r/2} L_v(2n+3-2t^P_d-2j,(\phi\epsilon^{P}_d)^2)\\[3ex]
   \prod\limits_{j=1}^{\frac{r+1}{2}}L_v(2n+3-2t^P_d-2j,(\phi\epsilon^{P}_d)^2)\hspace{0.5em}r\text{ odd,}
  \end{array}
  \right.\\
  \times &\, g_{\bbeta,v}\left(\phi\chi(q_v)q_v^{k-2n-1}\right)\end{align*}
for $\bbeta \geq 0$ with rank $2n-r$, where $g_{\bbeta,v}(T)\in\mb{Z}[T]$ with $g_{\bbeta,v}(0)=1$ and degree at most $4n\cdot \mr{val}_v(\mr{det}^*(2\bbeta))$ \\\end{minipage}} 
& {\begin{minipage}{12em}\begin{align*}&d_v(n+1-k,\phi\epsilon^{P}_d)^{-1}\\\cdot&L_v(n+1-t^P_d,\pi\times\phi\epsilon^{P}_d)\cdot \pW(\varphi)\end{align*}\end{minipage}} 
\\
 \hline
 $v|N$ 
 &  \begin{minipage}{12.7em}  \phantom{l}\\the ``big cell'' section\begin{equation*}\left.f^{\mr{vol}}_{v}(s,\phi\epsilon^{P}_d)\right|_{s=\frac{2n+1}{2}-t^P_d}\end{equation*} associated to  the characteristic function of \begin{equation*}-\left(\begin{smallmatrix}0&I_n\\I_n&0\end{smallmatrix}\right)+N\mr{Sym}(2n,\bZ_v)\end{equation*}\\ \end{minipage}
 & \begin{minipage}{20.5em} \begin{equation*}|N|_v^{n(2n+1)}\be_v(2\mr{Tr}\beta_0)\cdot\mathds{1}_{N^{-1}\mr{Sym}(2n,\bZ_v)^*}(\bbeta)\end{equation*}\end{minipage}
 &\begin{minipage}{12em}\begin{equation*}\phi_v(-1)^n\mr{vol}(\Gamma(N)_v)\cdot \pW(\varphi)\end{equation*}\end{minipage}\\
 \hline
 $v=p$ & \begin{minipage}{12.7em}the ``big cell'' section\begin{equation*} \left.f^{\alpha_{\utau^P}}_p(s,\phi\epsilon^{P}_d)\right|_{s=\frac{2n+1}{2}-t^P_d},\end{equation*} where $\alpha_{\utau^P}$ is the inverse Fourier transform of the Schwartz function in the next column\end{minipage} &\begin{minipage}{20.5em}{\begin{align*}\wh{\alpha}_{\utau^P}(\bbeta)
 =&\,\mathds{1}_{p^2\mr{Sym}(n,\mb{Z}_p)^*}(\beta_1)\mathds{1}_{\mr{Sym}(n,\mb{Z}_p)^*}(\beta_2)\\
 &\times\mathds{1}_{M_n(\mb{Z}_p)}(\beta_0)\prod_{i=1}^{d-1}\mathds{1}_{\mr{GL}(N_i,\mb{Z}_p)}((2\beta_0)_{N_i})\\
 &\times\prod_{i=1}^{d-1}\epsilon^P_i\epsilon^{P-1}_{i+1}(\mr{det}((2\beta_0)_{N_i}))\end{align*}}The major difference from the previous $\wh{\alpha}_{\kappa,\utau^P}$ is that here the support of $\wh{\alpha}_{\utau^P}$ has been enlarged and is no longer contained in $\GL(2n,\bZ_p)$.\\\end{minipage}
 &\begin{minipage}{12em} {\begin{align*}&\epsilon^P_d(-1)^n\frac{\prod_{i=1}^d\prod_{j=1}^{n_i}(1-p^{-j})}{\prod_{j=1}^n(1-p^{-2j})}\\&\cdot E^{\imp}_p(n+1-t^P_d,\pi\times\phi\epsilon^{P}_d)\\&\cdot e_P\pW(\varphi)\end{align*}(after $f_{\utau^P,p}$ being modified by appropriate $\bU^P_p$-operators in accordance with ordinary projection applied to $\left.E^*(\cdot,f_{\utau^P})\right|_{G\times G}$)}
  \end{minipage}\\
\hline
$v=\infty$ & \begin{minipage}{12.7em}{\begin{align*}\Bigg(&\prod_{i=1}^{d-1}\mr{det}\left(\frac{(\wh{\mu}^+_0)_{N_i}}{4\pi \sqrt{-1}}\right)^{t^P_i-t^P_{i+1}}\\
&\cdot f^{t^P_d}_\infty(s,\mr{sgn}^{t^P_d})\left.\Bigg)\right|_{s=\frac{2n+1}{2}-t^P_d}\end{align*}}\end{minipage}
&\begin{minipage}{20.5em}\phantom{l}\\vanishing unless $\bbeta\geq 0$  and equals {\begin{align*}&(-1)^{nt^P_d}\frac{2^{2nt^P_d-2n^2+n}}{\Gamma_{2n}\left(\frac{2n+1}{2}\right)}\pi^{n+2n^2}\\\times&\prod_{i=1}^{d-1}\mr{det}((2\beta_0)_{N_i})^{t^P_i-t^P_{i+1}}\\\times&\prod_{i=1}^{d-1}\mr{det}((y_0)_{N_i})^{t^P_i-t^P_{i+1}}\mr{det}(\mathbf{y})^{\frac{t^P_d}{2}}\be_\infty(\mr{Tr}\bbeta\bz)\\&+\text{terms irrelevant to } \varepsilon_{\qexp}(\cE_{\kappa,\utau^P})\end{align*}} where $\mathbf{y}=\mr{Im}(\bz)=\begin{psm}y_1&y_0\\\ltrans{y}_0&y_2\end{psm}$\end{minipage}
&\begin{minipage}{12em}
{\begin{align*}&\frac{2^{-2n^2+n+2nt^P_d}\pi^{2n^2+n}}{\Gamma_{2n}\left(\frac{2n+1}{2}\right)}\\  \cdot &\,\frac{\sqrt{-1}^{\frac{n^2}{2}-\frac{n}{2}}2^{-\frac{3n^2}{2}-\frac{3n}{2}-\sum_{j=1}^d n_jt^P_j}}{\dim\left(\GL(n),\imath(\ut^P)\right)}\\ \cdot &E_\infty(1+n-t^P_d,\pi\times\phi\epsilon^P_d)\cdot \pW(\varphi) 
\end{align*}}\end{minipage}
\end{tabular}
\caption{data for $d$-variable improved $p$-adic $L$-function}\label{tb:d}
\end{center}
}
\end{table}

\clearpage

\subsection{\texorpdfstring{The construction of $\mu_{\cE,\Pord}$ and $\cE^{\mr{imp}}_{\Pord}$}{The construction of mu E and E-imp}}

Assume $\phi^2\neq \mr{triv}$. Then it follows from our choices of the sections $f_{\kappa,\utau^P}\in I_{Q_H}(\frac{2n+1}{2}-k,\phi\chi)$ and $f_{\utau^P}\in I_{Q_H}(\frac{2n+1}{2}-t^P_d,\phi\epsilon^P_d)$ that there exist $p$-adic measures 
\begin{align*}
  \mu_{\cE,\qexp}&\in \mM eas\left(\bZ^\times_p\times T_P(\bZ_p),\cO_F \llbracket N^{-1}\Sym(n,\bZ)^{*\oplus 2}_{>0}\rrbracket\right),\\ 
  \mu_{\cE^{\mr{imp}},\qexp}&\in\mM eas \left(T_P(\bZ_p),\cO_F\llbracket N^{-1}\Sym(n,\bZ)^{*\oplus 2}_{\geq 0}\rrbracket\right)
\end{align*}
satisfying the interpolation properties
\begin{align*}
   \int_{\bZ_p^\times\times T_P(\bZ_p)}(\kappa,\utau^P)\,d\mu_{\cE,\qexp}&=\left\{\begin{array}{ll}\varepsilon_{\qexp}\left(\cE_{\kappa,\utau^P}\right) &\text{if }(\kappa,\utau^P) \text{ is admissible with }\phi\chi(-1)=(-1)^k, \\0 &\text{if }\phi(-1)\neq\kappa(-1),\end{array}\right.\\
   \int_{T_P(\bZ_p)}\utau^P\,d\mu_{\cE^{\mr{imp}},\qexp}&=\left\{\begin{array}{ll}\varepsilon_{\qexp}\left(\cE_{\utau^P}\right) &\text{if }\utau^P \text{ is admissible with }\phi\epsilon^P_d(-1)=(-1)^{t^P_d}, \\0 &\text{if }\phi(-1)\neq\uptau^P_d(-1),\end{array}\right.
\end{align*}
(see \cite[\S5.1, 5.2]{LiuSLF} for more details on how to obtain these $p$-adic measures from the formulae for $\varepsilon_{\qexp}\left(\cE_{\kappa,\utau^P}\right)$, $\varepsilon_{\qexp}\left(\cE_{\utau^P}\right)$ in \S\ref{sec:placep}).

\begin{rem}
The assumption $\phi^2\neq\mr{triv}$ is not essential. Without it,  due to the pole of the Kubota--Leopoldt $p$-adic $L$-function, we need to make some modification accordingly to allow a possible pole in the constructed measures.
\end{rem}

Let $V^{SP,r,\Delta}$ be the subspace of $\varprojlim\limits_m\varinjlim\limits_l V^{SP,r}_{m,l}\otimes_{\cO_F/p^m} V^{SP,r}_{m,l}$ consisting of elements annihilated by $\gamma\otimes 1-1\otimes \gamma$ for all $\gamma\in P(\bZ_p)$. By definition and \eqref{eq:cEV}, we know that $\cE_{\kappa,\utau^P}\in V^{SP,0,\Delta}$ and $\cE_{\utau^P}\in V^{SP,n_d,\Delta}$. Then due to the Zariski density of the admissible points $(\kappa,\utau^P)$ (resp. $\utau^P$) inside $\bZ^\times_p\times T_P(\bZ_p)$ (resp. $T_P(\bZ_p)$), the measure $\mu_{\cE,\qexp}$ (resp. $\mu_{\cE^{\mr{imp}},\qexp}$) lies inside the image of the following embedding induced by $p$-adic $q$-expansion
\begin{align*}
   \mM eas\left(\bZ^\times_p\times T_P(\bZ_p),V^{SP,0,\Delta}\right)&\lhra \mM eas\left(\bZ^\times_p\times T_P(\bZ_p),\cO_F\llbracket N^{-1}\Sym(n,\bZ)^{*\oplus 2}_{>0}\rrbracket\right)\\
   (\text{resp. } \mM eas\left(T_P(\bZ_p),V^{SP,n_d,\Delta}\right)&\lhra \mM eas\left(T_P(\bZ_p),\cO_F\llbracket N^{-1}\Sym(n,\bZ)^{*\oplus 2}_{\geq 0}\rrbracket\right)).
\end{align*}
Viewing  $\mu_{\cE,\qexp}$ (resp. $\mu_{\cE^{\mr{imp}},\qexp}$) as a $p$-adic measure valued in $V^{SP,0,\Delta}$ (resp. $V^{SP,n_d,\Delta}$) , Propositions \ref{prop:key} and \ref{prop:bconv} show that one can apply $e_P\times e_P$ to it and get
\begin{equation*}
   \mu_{\cE,\Pord}\in \mM eas\left(\bZ^\times_p\times T_P(\bZ_p),e_PV^{SP,0,\Delta}\right)\quad(\text{resp. }\mu_{\cE^{\mr{imp}},\Pord}\in \mM eas\left(T_P(\bZ_p),e_PV^{SP,n_d,\Delta}\right)).
\end{equation*}
For $\unu\in\Hom_{\cont}\left(T_P(\bZ/p),\mu_{p-1}\right)$ and an $\cO_F\llbracket T_P(\bZ_p)\rrbracket$-module, we use a subscript $_{\unu}$ to denote its $\unu$-isotypic part for the action of $T_P(\bZ/p)$. Then like \cite[(6.1.8)]{LiuSLF},  for all $0\leq r\leq n_d$ and $\utau^P$ such that $\utau^P|_{T_P(\bZ/p)}=\unu$, we have the commutative diagram 
$$\xymatrix{
   \mM eas\left(T_P(\bZ_p),e_PV^{SP,r,\Delta}\right)^\natural_{\unu}\ar[r]^{\Phi^\Delta_{\unu}} \ar[dr]_{\mu\mapsto \int_{T_P(\bZ_p)}\utau^P\,d\mu\quad\quad}&\cM^r_{\Pord,\unu}\otimes_{\Lambda_P}\cM^r_{\Pord,\unu}\ar[d]^{\mod \cP_{\utau^P}}\\
   &\varprojlim\limits_m\varinjlim\limits_l e_PV^{SP,r}_{m,l}[\utau^P]\otimes_{\cO_F}e_PV^{SP,r}_{m,l}[\utau^P],
}$$
where $\mM eas\left(T_P(\bZ_p),e_PV^{SP,r,\Delta}\right)^\natural$ is the subspace of  $\mM eas\left(T_P(\bZ_p),e_PV^{SP,r,\Delta}\right)$ consisting of measures $\mu$ satisfying
\begin{equation*}
   \int_{T_P(\bZ_p)}\utau^P\,d\mu\in e_PV^{SP,r,\Delta}[\utau^P],
\end{equation*} 
where $e_PV^{SP,r,\Delta}[\utau^P]$ denotes the subspace where $T_P(\bZ_p)$ acts via $\utau^P$.

Set
\begin{align*}
   \Phi^\Delta:=\bigoplus_{\unu}\Phi^\Delta_{\unu}:&\mM eas\left(T_P(\bZ_p),e_PV^{SP,r,\Delta}\right)^\natural=\bigoplus_{\unu}\mM eas\left(T_P(\bZ_p),e_PV^{SP,r,\Delta}\right)^\natural_{\unu}\\
   &\quad\lra\cM^r_{\Pord}\otimes_{\cO_F\llbracket T_P(\bZ_p)\rrbracket}\cM^r_{\Pord}=\bigoplus_{\unu}\cM^r_{\Pord,\unu}\otimes_{\Lambda_P}\cM^r_{\Pord,\unu},
\end{align*}
where $\unu$ runs over $\Hom_{\cont}(T_P(\bZ/p),\mu_{p-1})$. This $\Lambda_P$-module morphism $\Phi^\Delta$ also induces
\begin{equation*}
   \Phi^{\Delta}:\mM eas\left(\bZ^\times_p\times T_P(\bZ_p),e_PV^{SP,r,\Delta}\right)^\natural\lra \mM eas\left(\bZ^\times_p,\cM^r_{\Pord}\otimes_{\cO_F\llbracket T_P(\bZ_p)\rrbracket}\cM^r_{\Pord}\right).
\end{equation*}
As checked before Remark \ref{rem:biggersupport}, the measure $\mu_{\cE,\Pord}$ (resp. $\mu_{\cE^{\mr{imp}},\Pord}$) belongs to the domain of $\Phi^{\Delta}$  with $r=0$ (resp. $r=n_d$). Hence we can define (with a small abuse of notation)
\begin{align*}
   \mu_{\cE,\Pord}&=\Phi^\Delta(\mu_{\cE,\Pord})\in \mM eas\left(\bZ^\times_p,\cM^0_{\Pord}\otimes_{\cO_F\llbracket T_P(\bZ_p)\rrbracket}\cM^0_{\Pord}\right),\\
   \cE^{\mr{imp}}_{\Pord}&=\Phi^\Delta(\mu_{\cE^{\mr{imp}},\Pord})\in \cM^{n_d}_{\Pord}\otimes_{\cO_F\llbracket T_P(\bZ_p)\rrbracket}\cM^{n_d}_{\Pord}.
\end{align*}
Let $\cM^{n_d,\mr{cl}}_{\Pord}$ denote the sub-$\Lambda_P$-module of $\cM^{n_d}_{\Pord}$ consisting of $P$-ordinary families whose specializations are classical Siegel modular forms at $\ut^P$ with $t^P_1\gg t^P_2\gg\cdots\gg t^P_d\gg 0$. It follows from our construction that the $P$-ordinary family $\cE^{\mr{imp}}_{\Pord}$ is contained in $\cM^{n_d,\mr{cl}}_{\Pord}\otimes_{\cO_F\llbracket T_P(\bZ_p)\rrbracket}\cM^{n_d,\mr{cl}}_{\Pord}$

\subsection{\texorpdfstring{The $p$-adic $L$-functions and their interpolation properties}{The p-adic L-functions and their interpolation properties}}\label{sec:pthm}
Let $\bT^{0,N}_{\Pord}$ (resp. $\bT^{n_d,N}_{\Pord}$) be the $\cO_F\llbracket T_P(\bZ_p)\rrbracket$-algebra as defined at the end of \S\ref{sec:HF}. Let $\sC_P$ be a geometrically irreducible component of $\mr{Spec}\left(\bT^{0,N}_{\Pord}\right)$. Because of the natural quotient map $\bT^{n_d,N}_{\Pord}\ra\bT^{0,N}_{\Pord}$, $\sC_P$ can also be viewed as a geometrically irreducible component of $\mr{Spec}\left(\bT^{n_d,N}_{\Pord}\right)$. Denote by $F_{\sC_P}$ the function field of $\sC_P$ and by $\bI_{\sC_P}$ the integral closure of $\Lambda_P$ inside $F_{\sC_P}$. Let $\lambda_{\sC_P}:\bT^{n_d,N}_{\Pord}\ra \bI_{\sC_P}$ be the homomorphism of $\Lambda_P$-algebra associated to $\sC_P$.

As $F_{\sC_P}$-algebras, we have the decomposition
\begin{equation*}
   \bT^{n_d,N}_{\Pord}\otimes_{\Lambda_P} F_{\sC_P}=F_{\sC_P}\oplus R_{\sC_P}
\end{equation*}
such that the projection onto $F_{\sC_P}$ is given by $\lambda_{\sC_P}$. Define $\mathds{1}_{\sC_P}$ as the element in $\bT^{n_d,N}_{\Pord}\otimes_{\Lambda_P} F_{\sC_P}$ which corresponds to $(\mr{id},0)$ in $F_{\sC_P}\oplus R_{\sC_P}$.

\begin{prop}
\begin{equation*}
   \mathds{1}_{\sC_P}\left(\cM^{n_d,\mr{cl}}_{\Pord}\right)\subset \cM^0_{\Pord}\otimes_{\cO_F\llbracket T_P(\bZ_p)\rrbracket} F_{\sC_P}.
\end{equation*}   
\end{prop}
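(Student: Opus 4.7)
The plan is to prove the inclusion by induction along the $n_d$-step filtration of $\cM^{n_d}_{\Pord}$ provided by iterating the fundamental exact sequence of Theorem \ref{thm:main}(iv). For each $1\leq r\leq n_d$ this Hecke-equivariant sequence reads
\begin{equation*}
  0\lra \cM^{r-1}_{\Pord}\lra \cM^{r}_{\Pord}\lra Q_r\lra 0,\qquad Q_r:=\bigoplus_{\substack{V\in\fC_{\bV}/\Gamma\\\mathrm{rk}\,V=r}}\cM^0_{V,P_{n-r}\text{-}\,\ord}\otimes_{\cO_F\llbracket T_{P_{n-r}}(\bZ_p)\rrbracket }\cO_F\llbracket T_P(\bZ_p)\rrbracket,
\end{equation*}
and tensoring with $F_{\sC_P}$ over $\Lambda_P$ preserves exactness. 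Arguing by induction on $r$, with the base case $\mathds{1}_{\sC_P}(\cM^0_{\Pord})\subset \cM^0_{\Pord}\otimes F_{\sC_P}$ being trivial, it suffices to show that $\mathds{1}_{\sC_P}\cdot (Q_r\otimes_{\Lambda_P}F_{\sC_P})=0$ for every $r\geq 1$. Indeed, given this vanishing, the image $\mathds{1}_{\sC_P}(\cM^r_{\Pord})$ lies in $\cM^{r-1}_{\Pord}\otimes F_{\sC_P}$, and applying $\mathds{1}_{\sC_P}=\mathds{1}_{\sC_P}^2$ once more pushes it into $\cM^0_{\Pord}\otimes F_{\sC_P}$ by the inductive hypothesis.

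The Hecke action of $\bT^{n_d,N}_{\Pord}$ on each direct summand $\cM^0_{V,P_{n-r}\text{-}\,\ord}$ of $Q_r$ factors through a homomorphism from $\bT^{n_d,N}_{\Pord}$ to the Hecke algebra acting on $p$-adic families of $P_{n-r}$-ordinary cuspidal Siegel modular forms of degree $n-r$ on $\Sp(2n-2r)$. Consequently any Hecke eigensystem of $\bT^{n_d,N}_{\Pord}$ appearing on $Q_r\otimes F_{\sC_P}$ arises by pull-back from such a lower-genus cuspidal family. The required vanishing is thus equivalent to the assertion that $\sC_P$ is not contained, as a closed subscheme of $\mathrm{Spec}\,\bT^{n_d,N}_{\Pord}$, in the support of $Q_r$, or equivalently, that $\lambda_{\sC_P}$ does not factor through the boundary Hecke algebra above.

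This separation of eigensystems is the main obstacle. The plan is to check it at a Zariski-dense subset of admissible arithmetic points: by Theorem \ref{thm:main}(ii)-(iii), a generic admissible arithmetic point $x\in\sC_P$ corresponds to a Hecke eigensystem realized by a cuspidal automorphic representation $\pi_x$ of $G(\bA)$, whose $(2n+1)$-dimensional standard parameters are generically tempered. By contrast, any pull-back of a $P_{n-r}$-ordinary cuspidal eigensystem from $\Sp(2n-2r)$ corresponds to a Klingen--Eisenstein type induced eigensystem, whose standard $L$-function factorizes as a $(2n-2r+1)$-dimensional factor (attached to the inducing cuspidal datum) times $r$ degenerate ``Tate-twisted'' Euler factors of the form $\prod_j L_v(s-a_j,\eta_j)$; such eigensystems are never tempered and differ from the cuspidal Satake parameters of $\pi_x$ at almost all unramified primes outside $Np$. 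Comparing Hecke eigenvalues at such primes for a Zariski-dense set of admissible $x\in\sC_P$ shows that $\lambda_{\sC_P}$ cannot coincide with a pulled-back boundary eigensystem on $Q_r$, hence $\sC_P$ is disjoint from the support of $Q_r$. This yields $\mathds{1}_{\sC_P}\cdot (Q_r\otimes F_{\sC_P})=0$ for each $r\geq 1$ and completes the induction, giving the desired inclusion $\mathds{1}_{\sC_P}(\cM^{n_d}_{\Pord})\subset\cM^0_{\Pord}\otimes_{\Lambda_P}F_{\sC_P}$.
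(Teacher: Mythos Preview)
Your inductive framework via the fundamental exact sequence of Theorem~\ref{thm:main}(iv) is a reasonable alternative to the paper's direct argument, but the crucial ``separation of eigensystems'' step contains a genuine gap. Your claim that the standard parameters of a generic cuspidal $\pi_x$ are tempered is the generalized Ramanujan conjecture for $\Sp(2n)$, which is not known. Even your weaker fallback claim---that the boundary eigensystems on $Q_r$ have non-tempered, factored standard $L$-functions and hence cannot match $\pi_x$ at almost all unramified primes---requires first identifying the action of the spherical Hecke operators away from $Np$ on $Q_r$ (the paper only establishes $\bU^{P,[N]}_p$-equivariance of the exact sequence, cf.\ Proposition~\ref{prop:UpEquiv}), and second ruling out CAP-type coincidences. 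Neither step is carried out.

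The paper's proof bypasses the filtration entirely. It observes that cuspidality is a closed condition (vanishing along the boundary), so it suffices to check that $\mathds{1}_{\sC_P}$ lands in cuspidal forms after specialization at a Zariski-dense set of very regular weights $t^P_1\gg\cdots\gg t^P_d\gg 0$, where Theorem~\ref{thm:main}(iii) guarantees classicality. The problem then reduces to the purely classical statement that a Hecke eigenform in $M^{n_d}_{\imath(\ut^P)}(\Gamma(N)\cap\Gamma_P(p);\bC)$ sharing the eigenvalues of a cuspidal eigenform must itself be cuspidal; for $t^P_d>2n+1$ this is supplied by a theorem of Harris on Eisenstein cohomology. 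Your approach, once the gap is filled, essentially reproves this classical separation result; the paper simply cites it.
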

\begin{proof}
The cuspidality condition is about the vanishing of the restriction to the boundary, therefore an element in $\cM^{n_d}_{\Pord}$ is cuspidal as long as its specializations at a Zariski dense subset in $\Hom_{\cont}\left(T_P(\bZ_p),\ol{\bQ}_p^\times\right)$ are cuspidal. For all $t^P_1\gg t^P_2\gg\dots\gg t^P_d\gg 0$, the specialization of $\cM^{n_d,\mr{cl}}_{\Pord}$ at the algebraic point $\ut^P\in\Hom_{\cont}\left(T_P(\bZ_p),\ol{\bQ}_p^\times\right)$ consists of classical holomorphic Siegel modular forms of weight $\imath(\ut^P)$ and level $\Gamma(N)\cap \Gamma_{P}(p)$. We reduce to show that for all $t^P_1\geq t^P_2\geq\dots\geq t^P_d\gg 0$, if a Hecke eigenform $\varphi$ (for all unramified Hecke operators away from $Np\infty$) in $\mr M^{n_d}_{\imath(\ut^P)}\left(\Gamma(N)\cap\Gamma_P(p);\bC\right)$ shares the same eigenvalues as a cuspidal Hecke eigenform in $\mr M^0_{\imath(\ut^P)}\left(\Gamma(N)\cap\Gamma_P(p);\bC\right)$, then $\varphi$ is cuspidal. When $t^P_d> 2n+1$ this is true according to \cite[Theorem 2.5.6]{HaEC}. 
\end{proof}

As explained in \cite[\S 6.1.5]{LiuSLF}, for each pair $(\beta_1,\beta_2)\in N^{-1}\Sym(n,\bZ)^{*\oplus 2}_{>0}$ there is a $\Lambda_P$-linear map
\begin{equation*}
   \varepsilon_{\qexp,\beta_1,\beta_2}:\cM^{0}_{\Pord}\otimes_{\cO_F\llbracket T_P(\bZ_p)\rrbracket}\cM^{0}_{\Pord}\lra \Lambda_P,
\end{equation*}
which, when specialized at primes $\cP_{\utau^P}$, gives the map of taking the coefficients indexed by $\beta_1,\beta_2$ in the $p$-adic $q$-expansion $\varepsilon_{\qexp}$.

Define
\begin{align*}
   \mu_{\sC_P,\phi,\beta_1,\beta_2}&= \varepsilon_{\qexp,\beta_1,\beta_2}\circ\mathds{1}_{\sC_P}\left(\mu_{\cE,\Pord}\right)\in\mM eas\left(\bZ_p^\times, F_{\sC_P}\right),\\
   \cL^{\mr{imp}}_{\sC_P,\phi,\beta_1,\beta_2}&=\varepsilon_{\qexp,\beta_1,\beta_2}\circ\mathds{1}_{\sC_P}\left(\cE^{\mr{imp}}_{\Pord}\right)\in F_{\sC_P}.
\end{align*}

\begin{thm}\label{thm:twopadicLfun}
For a Dirichlet character $\phi$ with conductor dividing $N$ and $\phi^2\neq 1$, a geometrically irreducible component $\sC_P\subset \mr{Spec}\left(\bT^{0,N}_{\Pord}\right)$, and $(\beta_1,\beta_2)\in N^{-1}\Sym(n,\bZ)^{*\oplus 2}_{>0}$, the above constructed $\mu_{\sC_P,\phi,\beta_1,\beta_2}$ and $\cL^{\imp}_{\sC_P,\phi,\beta_1,\beta_2}$ satisfy the following interpolation properties. Let $x:\bI_{\sC_P}\ra F'$ be an $F'$-point of $\sC_P$ with $F'$ being a finite extension of $F$. Suppose that the weight projection map $\Lambda_P\ra \bT^{n_d,N}_{\Pord}$ is \'{e}tale at $x$ mapping $x$ to an arithmetic point $\utau^P\in\Hom_{\cont}\left(T_P(\bZ_p),F^{\prime\times}\right)$.

If $(\kappa,\utau^P)$ is admissible, and the finite part of $\kappa$ is $\chi$, then
\begin{align*}
   \left(\int_{\bZ^\times_p}\kappa\,d\mu_{\sC_P,\phi,\beta_1,\beta_2}\right)(x)=&\,C_{x,\beta_1,\beta_2,\phi,N}\cdot E_p(n+1-k,\pi_x\times\phi\chi)\,E_\infty(n+1-k,\pi_x\times\phi\chi)\\
   &\times L^{Np\infty}(n+1-k,\pi_x\times\phi\chi),
\end{align*}
if $\kappa(-1)=\phi(-1)$ and $x$ is classical, and otherwise vanishes.

Assume that the parity of $\sC_P$ is compatible with $\phi$ and that $\utau^P$ is admissible. If $x$ is classical, then
\begin{align*}
   \cL^{\mr{imp}}_{\sC_P,\phi,\beta_1,\beta_2}(x)=\,&C_{x,\beta_1,\beta_2,\phi,N}\cdot E^{P\textnormal{-imp}}_p(n+1-t^P_d,\pi_x\times\phi\epsilon^P_d)\,E_\infty(n+1-t^P_d,\pi_x\times\phi\epsilon^P_d)\\
   &\times  L^{Np\infty}(n+1-t^P_d,\pi_x\times\phi\epsilon^P_d),
\end{align*}
and if $x$ is not classical, then we have $0=0$.

The constant $C_{x,\beta_1,\beta_2,\phi,N}$ is defined as
\begin{align*}
   C_{x,\beta_1,\beta_2,\phi,N}=&\,\phi(-1)^n\mr{vol}\left(\wh{\Gamma}(N)\right)\frac{\prod_{l=1}^d\prod_{j=1}^{n_l}(1-p^{-j})}{\prod_{j=1}^n(1-p^{-2j})}\\
   &\,\sqrt{-1}^{\frac{n^2+n}{2}}2^{\frac{n^2-n}{2}}\frac{2^{-\sum_{j=1}^d n_jt^P_{x,j}}}{\dim\left(\GL(n),\imath(\ut^P_x)\right)}\,\sum_{\varphi\in \fs_x}\frac{\fc(\beta_1,\varphi)\fc(\beta_2,e_P\pW(\varphi))}{\left<\varphi,\ol{\varphi}\right>},
\end{align*}
where
\begin{itemize}
\item $\fs_x$ is a finite set consisting of an orthonormal basis of the eigenspace for the Hecke eigensystem parametrized by $x$ inside $e_P \mr M^0_{\imath(\ut^P)}\left(\Gamma(N)\cap\Gamma_{SP}(p^\infty),\uep;\bC\right)$. The set $\fs_x$ is empty and the evaluation is $0$ if $x$ is not classical, \textit{i.e.} if there exists no cuspidal irreducible automorphic representation $\pi_x\subset\cA_0(G(\bQ)\backslash G(\bA))$ with $\pi_{x,\infty}\cong \cD_{\imath(\ut^P)}$ such that the Hecke eigensystem parametrized by $x$ appears in $\pi_x$.
\item $\fc(\beta_i,\cdot)$, $i=1,2$, denotes the coefficient indexed by $\beta_i$ in the $q$-expansion.
\item The operator $\pW$ is defined as in \eqref{eq:pW}, and by the reasoning as in \cite[Proposition 5.7.2]{LiuSLF} we know that $e_P\pW(\varphi)\neq 0$ for $\varphi\in\fs_x$.
\end{itemize}
\end{thm}
\begin{proof}
By our construction, for a classical $x$ as in the theorem, the evaluations of $\mathds{1}_{\sC_P}\left(\int_{\bZ^\times_p}\kappa\,d\mu_{\cE,\Pord}\right)$ and $\mathds{1}_{\sC_P}\left(\cE^{\rm{imp}}_{\Pord}\right)$ at $x$ are classical cuspidal Siegel modular forms obtained by projecting the forms in \eqref{eq:norE} to the eigenspace associated to the Hecke eigensystem parametrized by $x$. Thus the interpolation formulae follow from the formulae for
\begin{align*}
   \numberthis\label{eq:tar}&\sL_{\ol{\varphi}}\left((e_P\times e_P)\left.E^*(\cdot,f_{\kappa,\utau^P})\right|_{G\times G}\right)& \text{and}& &\sL_{\ol{\varphi}}\left((e_P\times e_P)\left.E^*(\cdot,f_{\utau^P})\right|_{G\times G}\right),
\end{align*}
for $\varphi\in\fs_x$, which are proved later in Proposition \ref{thm:itp}.
\end{proof}
\begin{rem}
For each $j\in(\bZ/(p-1)$ such that $\phi\omega^j(-1)=1$, applying the $p$-adic Mellin transform with respect to $\omega^j$ to the measure $\mu_{\sC_P,\phi,\beta_1,\beta_2}$, one gets the $p$-adic $L$-function $\cL_{\sC_P,\phi\omega^j,\beta_1,\beta_2}\in\bI_{\sC_P}[[S]]\otimes_{\bI_{\sC_P}}F_{\sC_P}$ as described in the theorem in the introduction.
\end{rem}

Before finishing the calculations that prove the theorem, let us recall the state of the art concerning \'etaleness of eigenvarieties. First of all, the \'etale points are an open subset of $\mr{Spec}(\sC_P)$. In general for a given point we don't have good criterion for \'etalness (except for $n=1$). The typical strategy to prove that a point $x$ is \'etale over the weight space consists of two steps: first one shows that the system of eigenvalues corresponding to $x$ correspond to a classical holomorphic Siegel modular forms, and then one uses automorphic methods to control the dimension of the space of eigenforms corresponding to the eigensystem $x$.

In our situation, for what concerns the first step, as we consider only admisible points the condition $t_d^P \geq n+1$ ensure that the weights are cohomological and it is widely believed that all systems of eigenvalues of cohomological weights are classical, even though the best result on classicality at the moment \cite{BPS} does not cover the case we study in the next section, where $t_d^P =n+1$, as they require $t^P_d > \frac{n(n+1)}{2}$.

For the second point, one needs to prove that $\delta(x)\leq \delta(y)$, where $y$ is a point near $x$ such that  the weight projection map is {\'e}tale at $y$ and where $\delta(x)$ (resp. $\delta(y)$) is the dimension of the eigenspace corresponding to $x$ (resp. $y$) inside ordinary Siegel modular forms of a fixed tame level. One approach for proving such an equality is to show that $\delta(x)=1$. This can be done by adding $\bU_{\ell}$-operators for tame ramified places $\ell$ and assuming that they have distinct eigensystems at the point we consider, or for some cases of $n=2$ and Iwahori tame level, by using the exhaustive classification of local representations in \cite{RSLocal}. For example, the dimension one result can be proved for the automorphic representation of $\mathrm{GSp}(4,\bA)$ associated to the symmetric cube of $\pi_{f_{X_0(11)}}$, for $f_{X_0(11)}$ is the only normalized weight two elliptic cusp form of level $\Gamma_0(11)$. Another possible approach for proving the inequality is to add suitable conditions at tame ramified places to guarantee that all eigenforms for $x$ belongs to one irreducible automorphic representation, and then try to prove an analogue of \cite[Lemma 4.5]{Chfern} for symplectic groups.

\subsection{The archimedean zeta integral}\label{sec:Zinfty}
Let $E_\infty(s,\pi\times\xi)$ be the modified archimedean Euler factor for $p$-adic interpolation of critical values of $L(s,\pi\times\xi)$ to the left of the center according to the conjecture of Coates--Perrin-Riou. Unfolding the definition in \cite[\S5]{CoaMot}, if $\pi_\infty\cong \cD_{\ut}$ with $\ut=(t_1,\dots,t_n)$, then
\begin{equation}\label{eq:Euler-infty}
   E_\infty(s,\pi\times\xi)=\,\prod_{j=1}^ne^{-(s+t_j-j)\frac{\pi \sqrt{-1}}{2}} \Gamma_{\bC}(s+t_j-j)).
\end{equation}

\begin{prop}
With $f_{\kappa,\utau^P,\infty}\in I_{Q_H,\infty}(\frac{2n+1}{2}-k,\mr{sgn}^k)$ as in \eqref{eq:finfty} and $\pi_\infty\cong \cD_{\imath(\utau^P)}$,
\begin{align*}
   &\frac{\Gamma_{2n}\left(\frac{2n+1}{2}\right)}{2^{-2n^2+n+2nk}\pi^{2n^2+n}}\,\left.Z_\infty\left(f_{\kappa,\utau^P,\infty}(s),v^\vee_{\imath(\utau^P)},v_{\imath(\utau^P)}\right)\right|_{s=\frac{2n+1}{2}-k}\\
  =&\,\sqrt{-1}^{\frac{n^2}{2}+\frac{n}{2}}2^{\frac{n^2}{2}-\frac{n}{2}}\,\frac{2^{-\sum_{j=1}^d n_jt^P_j}}{\dim\left(\GL(n),\imath(\ut^P)\right)}\,E_\infty(1+n-k,\pi\times\phi\chi).
\end{align*}
\end{prop}
\begin{proof}
According to \cite{LapidRallis}, the local zeta integral for $\pi_\infty$ and $f_{\kappa,\utau^P,\infty}(s,\mr{sgn}^k)\in I_{Q_{H,\infty}}(s,\mr{sgn}^k)$ satisfies the functional equation 
\begin{equation}\label{eq:feq2}
   Z_\infty\left(f_{\kappa,\utau^P,\infty}(s),\cdot,\cdot\right)=\Gamma_\infty(s,\pi_\infty,\mr{sgn}^k)^{-1}Z_\infty\left(M(s,\mr{sgn}^k)f_{\kappa,\utau^P,\infty}(s),\cdot,\cdot\right),
\end{equation}
where $M(s,\mr{sgn}^k):I_{Q_H,\infty}(s,\mr{sgn}^k)\ra I_{Q_H,\infty}(-s,\mr{sgn}^k)$ is the intertwining operator, and
\begin{equation}\label{eq:feq3}
\begin{aligned}
   &\Gamma_\infty(s,\pi_\infty,\mr{sgn}^k)^{-1}\\
   =&\,\pi_\infty(-1)\,\gamma_\infty(s+\frac{1}{2},\pi_\infty\times\sgn^k)^{-1}\gamma_{\infty}\left(s-\frac{2n-1}{2},\sgn^k\right)\,\prod_{j=1}^n \gamma_\infty(2s-2j+2,\triv).
\end{aligned}
\end{equation}
It follows from \cite[(1.31)(4.34K)]{Sh6} and the definition of $f_{\kappa,\utau^P,\infty}$ that
\begin{align*}
   \numberthis \label{eq:feq1}&M(s,\mr{sgn}^k)f_{\kappa,\utau^P,\infty}(s,\mr{sgn}^k)\\
   =\,&(-1)^{nk} 2^{2n-2ns}\pi^{n(2n+1)}\cdot \frac{\Gamma_{2n}(s)}{\Gamma_{2n}\left(\frac{1}{2}(s+\frac{2n+1}{2})+\frac{k}{2}\right)\Gamma_{2n}\left(\frac{1}{2}(s+\frac{2n+1}{2})-\frac{k}{2}\right)}\, f_{\kappa,\utau^P\infty}(-s,\mr{sgn}^k).
\end{align*}
Combining \eqref{eq:feq1}\eqref{eq:feq2}\eqref{eq:feq3}, we get
\begin{equation}\label{eq:feq4}
\begin{aligned}
   \left.Z_\infty\left(f_{\kappa,\utau^P,\infty}(s),\cdot,\cdot\right)\right|_{s=\frac{2n+1}{2}-k}=&\,\sqrt{-1}^{k+n^2+n}2^{2nk-k-2n^2+n+1}\pi^{-(2n+1)k+2n^2+2n}\frac{\Gamma_{2n}(k)\Gamma(k-n)}{\Gamma_{2n}\left(\frac{2n+1}{2}\right)}\\
   &\times \pi_\infty(-1)\gamma_\infty(s+\frac{1}{2},\pi_\infty\times\sgn^k)\,\left.Z_\infty\left(f_{\kappa,\utau^P,\infty}(s),\cdot,\cdot\right)\right|_{s=k-\frac{2n+1}{2}}.
\end{aligned}
\end{equation}
Let $(t_1,\dots,t_n)=\imath(\ut^P)$. Easy computation shows that
\begin{align*}
   \gamma_\infty(k-n,\pi_\infty\times\sgn^k)=\frac{\sqrt{-1}^{-k+n^2}2^{k-n-1}\pi^{k-n}}{\Gamma(k-n)}\pi_\infty(-1)\prod_{j=1}^n\frac{\Gamma_{\bC}(n+1-k+t_j-j)}{\Gamma_{\bC}(k-n+t_j-j)},
\end{align*}
and \cite[Theorem 2.4.1]{LiuAZI} says that
\begin{align*}
   \left.Z_\infty\left(f_{\kappa,\utau^P,\infty}(s),v^\vee_{\imath(\ut^P)},v_{\imath(\ut^P)}\right)\right|_{s=k-\frac{2n+1}{2}}=\frac{\sqrt{-1}^{nk-\sum_{j=1}^nt_j}2^{-\sum_{j=1}^n t_j+\frac{n^2+3n}{2}}\pi^{2nk}}{\Gamma_{2n}(k)\dim\left(\GL(n),\imath(\ut^P)\right)}\,\prod_{j=1}^n\Gamma_{\bC}(k-n+t_j-j).
\end{align*}
Plugging these two formulas into \eqref{eq:feq4} proves the proposition.
\end{proof}

\subsection{\texorpdfstring{Computing the zeta integrals at $p$}{Computing the zeta integrals at p}}\label{sec:zetaintegralp}
The goal of this section is to prove the following proposition, which will give the interpolation properties for our $p$-adic $L$-functions.

\begin{prop}\label{thm:itp}
Let $\pi\subset \cA_0(G(\bQ)\backslash G(\bA))$ be an irreducible cuspidal automorphic representation with $\pi_\infty\cong \cD_{\imath(\ut^P)}$. Also, assume that $\pi$ contains a $P$-ordinary Siegel modular form $\varphi$ of weight $\imath(\ut^P)$ and $p$-nebentypus $\uep^P$, invariant under the translation of $(\Gamma(N)\cap\Gamma_{SP}(p^\infty))^\wedge\subset G(\bA_{\mr{f}})$. Then
\begin{equation}\label{eq:ktp}
\begin{aligned}
   &\chi(-1)^{n} \frac{\Gamma_{2n}\left(\frac{2n+1}{2}\right)}{2^{2nk-2n^2+n}\pi^{n+2n^2}}\,\sL_{\ol{\varphi}}\left((e_P\times e_P)\left.E^*(\cdot,f_{\kappa,\utau^P})\right|_{G\times G}\right)\\
   =&\,\phi(-1)^n\mr{vol}\left(\wh{\Gamma}(N)\right)\frac{\prod_{l=1}^d\prod_{j=1}^{n_l}(1-p^{-j})}{\prod_{j=1}^n(1-p^{-2j})}\,\sqrt{-1}^{\frac{n^2-n}{2}}2^{\frac{n^2-n}{2}}\,\frac{2^{-\sum_{j=1}^d n_jt^P_j}}{\dim\left(\GL(n),\imath(\ut^P)\right)}\\
   &\,\times E_p(n+1-k,\pi\times\phi\chi)\,E_\infty(n+1-k,\pi\times\phi\chi)\, L^{Np\infty}(n+1-k,\pi\times \phi\chi)\cdot e_P\pW(\varphi),
\end{aligned} 
\end{equation}
and 
\begin{equation}\label{eq:tp}
\begin{aligned}
   &\epsilon^P_d(-1)^{n} \frac{\Gamma_{2n}\left(\frac{2n+1}{2}\right)}{2^{2nt^P_d-2n^2+n}\pi^{n+2n^2}}\,\sL_{\ol{\varphi}}\left((e_P\times e_P)\left.E^*(\cdot,f_{\utau^P})\right|_{G\times G}\right)\\
   =&\,\phi(-1)^n\mr{vol}\left(\wh{\Gamma}(N)\right)\frac{\prod_{l=1}^d\prod_{j=1}^{n_l}(1-p^{-j})}{\prod_{j=1}^n(1-p^{-2j})}\,\sqrt{-1}^{\frac{n^2-n}{2}}2^{\frac{n^2-n}{2}}\frac{2^{-\sum_{j=1}^d n_jt^P_j}}{\dim\left(\GL(n),\imath(\ut^P)\right)}\\
   &\,\times E^{P\textnormal{-imp}}_p(n+1-t^P_d,\pi\times\phi\epsilon^{P}_d)\,E_\infty(n+1-t^P_d,\pi\times\phi\epsilon^P_d)\, L^{Np\infty}(n+1-t^P_d,\pi\times \phi\epsilon^P_d)\cdot e_P\pW(\varphi).
\end{aligned} 
\end{equation}
\end{prop}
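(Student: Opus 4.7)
The plan is to apply the doubling method identity \eqref{eq:E*double} and then compute the local zeta integrals place by place using the data collected in the tables of \S\ref{sec:table}, finally incorporating the effect of the ordinary projector $e_P \times e_P$. Starting from
\begin{equation*}
   \sL_{\ol{\varphi}}\!\left(\left.E^*(\cdot,f_{\kappa,\utau^P})\right|_{G\times G}\right)=L^{Np\infty}(n+1-k,\pi\times\phi\chi)\cdot\Bigl(\prod_{v\in S}T_{f_{\kappa,\utau^P,v}}\ol{\varphi}\Bigr)^{\mvw},
\end{equation*}
with $S=\{\infty\}\cup\{v\mid Np\}$, the two target formulas \eqref{eq:ktp} and \eqref{eq:tp} are reduced to explicit computations of $T_{f_v}\ol\varphi$ at $v\in S$. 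At the unramified places dividing $N$ the computation for the volume section is immediate and yields the factor $\phi\chi(-1)^n\operatorname{vol}(\Gamma(N)_v)$ as listed in the table; at $\infty$ one simply extracts the archimedean zeta integral $Z_\infty/\langle v_{\imath(\ut^P)},v^\vee_{\imath(\ut^P)}\rangle$ by definition. The uniqueness of the $P$-ordinary line in $\pi_p$ from Proposition~\ref{prop:JL} guarantees that after applying $e_P\times e_P$ the result is a scalar multiple of $e_P\pW(\varphi)$, so the entire task is to identify this scalar at $p$.

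The main obstacle, which is the heart of the proof, is the computation at $v=p$ of
\begin{equation*}
   Z_p\!\left(f^{\alpha_{\kappa,\utau^P}}_p\!\Bigl(\tfrac{2n+1}{2}-k,\phi\chi\Bigr),\,\ol\varphi_p,\,\pi_p(u)\ol\varphi_p^{\,\mvw}\right)\quad\text{and}\quad Z_p\!\left(f^{\alpha_{\utau^P}}_p\!\Bigl(\tfrac{2n+1}{2}-t^P_d,\phi\epsilon^P_d\Bigr),\,\ol\varphi_p,\,\pi_p(u)\ol\varphi_p^{\,\mvw}\right)
\end{equation*}
followed by the ordinary projection on both variables. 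First I would unfold the zeta integral \eqref{eq:localzeta}, use the explicit form of the big-cell section \eqref{eq:bigcell} and the Bruhat decomposition to identify the $G(\bQ_p)$-integral against the variable $C^{-1}D$ with a sum of translates of $\ol\varphi_p$. Then I would invoke the $P$-ordinary embedding $\pi_p\hra \operatorname{Ind}_{B_G}^G(\utheta)$ of Proposition~\ref{prop:JL} and realize $\pi_p(a)\ol\varphi_p$ inside the principal series so that the zeta integral becomes an explicit integral of a Schwartz function $\alpha$ against characters of the torus. The support conditions on $\wh\alpha_{\kappa,\utau^P}$ (resp.\ $\wh\alpha_{\utau^P}$) from \eqref{eq:whalpha1} (resp.\ \eqref{eq:whalpha2}) have been tailored precisely so that, after the twist by $\mvw$ and after applying $e_P\times e_P$ (which kills all non-$P$-ordinary terms in the principal series decomposition, cf.\ \eqref{eq:TNewton}--\eqref{eq:ures}), only the subregular Weyl-element contributes. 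This will yield the modified Euler factor $E_p(n+1-k,\pi\times\phi\chi)$ in the first case, exactly because each of the $d$ gamma factors in \eqref{eq:Ep} arises from the integration over one block $\mathrm{GL}(n_i)$ of the Levi of $P$, while in the second case the enlargement of support of $\wh\alpha_{\utau^P}$ (where the condition $\det(2\beta_0)\in\bZ_p^\times$ has been dropped) precisely replaces the last gamma factor by the local $L$-factor $L_p(s,\sigma_d\otimes\phi_p)$, producing $E^{P\text{-imp}}_p$. The constant $\prod_l\prod_j(1-p^{-j})/\prod_j(1-p^{-2j})$ will emerge as the ratio of volumes of the compact subgroups of $G(\bZ_p)$ appearing in the calculation, and the sign $\phi\chi(-1)^n$ (resp.\ $\phi\epsilon^P_d(-1)^n$) from the character $\phi\chi\circ\det$ (resp.\ $\phi\epsilon^P_d\circ\det$) of the $\mvw$-twist.

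Once these local identifications are in place, multiplying all local factors together and using that at $v\nmid Np\infty$ the ratio $Z_v/\langle\,,\,\rangle_v$ equals $d_v(\tfrac12+s,\phi\chi)^{-1}L_v(s+\tfrac12,\pi\times\phi\chi)$ (which is absorbed into the normalization $E^*$), yields precisely the right-hand sides of \eqref{eq:ktp} and \eqref{eq:tp}. The parallel computation for the improved $p$-adic $L$-function requires no new ideas once the unramified/archimedean cancellations are handled and the modified support of $\wh\alpha_{\utau^P}$ is correctly inserted into the zeta integral. The expected difficulty is essentially bookkeeping: tracking the normalization $p^{\langle\ut+2\rho_{G,c},\ua\rangle}$ in \eqref{eq:AUp} through the ordinary projection in order to match $E_p$ with the product of gamma factors \eqref{eq:Epgamma} written in terms of the $\bU^P_p$-eigenvalues via \eqref{eq:Ueig}--\eqref{eq:Teig}, and to verify the identity \eqref{eq:Ecompare} on the nose.
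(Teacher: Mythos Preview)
Your overall strategy is correct and matches the paper: reduce via \eqref{eq:E*double} to local zeta integrals, read off the contributions at $v\mid N$ and $v=\infty$ from the tables, and use the one-dimensionality of the $P$-ordinary line (Proposition~\ref{prop:JL}) to identify the result as a scalar multiple of $e_P\pW(\varphi)$, with all the content in the scalar at $p$.

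Where your sketch diverges from the paper is in the mechanism for computing that scalar. You propose to work directly with $\ol\varphi_p$ inside $\pi_p$, embed into the principal series, and argue that the support of $\wh\alpha$ together with $e_P$ kills all but one Weyl-element contribution. The paper instead observes that the ratio in question is independent of the test vector, and therefore replaces $\varphi'$ by a completely explicit function $\pG\in\mr{Ind}^{G(\bQ_p)}_{B_G(\bQ_p)}\utheta$ supported on the single coset $B_G(\bQ_p)\,\mvw\, SP_G(\bZ_p)\,\mvw$ with $\pG(1)=1$. With this choice there is no Weyl-element sum to control at all: the integral collapses immediately to a $\GL(n,\bQ_p)$-integral of the form $\int|\det a|^{s+n/2}\phi\chi(\det a)\,\Phi(a)\,\fw(ap^{\ub})\,da$ against an explicit Schwartz function $\Phi$ and an explicit Iwahori-type function $\fw\in\mr{Ind}^{\GL(n)}_B\utheta$. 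This is a genuine simplification over your plan, where making precise ``only the subregular Weyl element survives'' would itself require an argument.

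The second ingredient you do not mention is how the resulting $\GL(n)$-integral is actually evaluated: the paper proceeds by induction on $d$, splitting $a$ into a $\GL(N_{d-1})$-block and a $\GL(n_d)$-block, and reduces the $\GL(n_d)$-piece to the explicit Fourier-transform formulas of B\"ocherer--Schmidt \eqref{eq:BScF1}--\eqref{eq:BScF2}. It is precisely at this step that one sees why replacing $\cF_{\epsilon^P_d\chi^{-1}}$ by $\mathds{1}_{M_{n_d}(\bZ_p)}$ (the effect of enlarging the support of $\wh\alpha_{\utau^P}$) trades the last gamma factor $\gamma_p(\tfrac12-s,\wt\sigma_d\otimes\phi_p^{-1})$ for the $L$-factor $L_p(s+\tfrac12,\sigma_d\otimes\phi_p)$, giving $E_p$ versus $E_p^{P\text{-}\mathrm{imp}}$. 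Your proposal asserts this outcome but does not indicate the computational route; the B\"ocherer--Schmidt identities are the missing lemma.
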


With the results in the last section, the proof of the proposition is mainly about computing the zeta integrals at $p$. Because of the ordinary projection $e_P\times e_P$, we need to compute at $p$ the zeta integrals for $R(U^P_p)\times R(U^P_p)f_{\kappa,\utau^P}$ and $R(U^P_p)\times R(U^P_p)f_{\utau^P}$, where $R$ is a polynomial depending on $\pi_p$ and a sufficiently small open compact subgroup $K_p\subset G(\bZ_p)$ satisfying $e_P=R(U^P_p)$ on $\pi^{K_p}$. Recall that with a fixed $\ut^P$, we make $\bU^P_p$-operators act on smooth $G(\bQ_p)$-representations by \eqref{eq:AUp}.

The computation will essentially use Proposition \ref{prop:JL} and is similar as the computation in \cite[p.39-42]{LiuSLF}.

\begin{proof}
Since the projections to $\pi_p$ of $P$-ordinary forms inside $\pi$ span a one-dimensional subspace, we know that $\sL_{\ol{\varphi}}\left((e_P\times e_P)\left.E^*(\cdot,f_{\kappa,\utau^P})\right|_{G\times G}\right)$ (resp. $\sL_{\ol{\varphi}}\left((e_P\times e_P)\left.E^*(\cdot,f_{\utau^P})\right|_{G\times G}\right)$) equals $e_P\pW(\varphi)$ up to a scalar, given by 
\begin{align*}
  &\frac{ \left<\sL_{\ol{\varphi}}\left((e_P\times e_P)\left.E^*(\cdot,f_{\kappa,\utau^P})\right|_{G\times G}\right),\varphi^{\prime\mvw}\right>}{\left<e_P\pW(\varphi),\varphi^{\prime\mvw}\right>}, &(\text{resp.  }\frac{ \left<\sL_{\ol{\varphi}}\left((e_P\times e_P)\left.E^*(\cdot,f_{\utau^P})\right|_{G\times G}\right),\varphi^{\prime\mvw}\right>}{\left<e_P\pW(\varphi),\varphi^{\prime\mvw}\right>})
\end{align*}
for an arbitrary $\varphi'\in\pi$ such that the denominator is nonzero. We will always assume that $\varphi'$ is fixed by both $K_p$ and $\mvw SP_G(\bZ_p)\mvw$ and $e_P\varphi'\neq 0$. Then
\begin{equation*}
   \left<e_P\pW(\varphi),\varphi^{\prime\mvw}\right>=\left<\ol{\varphi},e_P\varphi'\right>\neq 0.
\end{equation*}
Let $\ub=(\underbrace{b_1,\dots,b_1}_{n_1},\dots,\underbrace{b_d,\dots,b_d}_{n_d})$ for $b_1>b_2>\cdots>b_d>0$ and denote $\mr{diag}(p^{b_1},\dots,p^{b_1},\dots,p^{b_d},\dots,p^{b_d})$ by $p^{\ub}$. A direct computation shows that
\begin{align*} 
   &\left<\sL_{\ol{\varphi}}\left((e_P\times U^P_{p,\ub})\left.E^*(\cdot,f_{\kappa,\utau^P})\right|_{G\times G}\right),\varphi^{\prime\mvw}\right>\\
   =&\,\phi(-1)^n\mr{vol}\left(\wh{\Gamma}(N)\right)\cdot\frac{Z_\infty\left(f_{\kappa,\utau^P,\infty},v_{\imath(\ut^P)},v^\vee_{\imath(\ut^P)}\right)}{\left<v_{\imath(\ut^P)},v^\vee_{\imath(\ut^P)}\right>}\cdot L^{Np\infty}(n+1-k,\pi\times\phi\chi)\\
   &\,\left.\times p^{\left<\ut+2\rho_{G,c},\,\ub\right>}\left<\ol{\varphi},R(U^P_p) \int_{G(\bQ_p)}f^{\alpha_{\kappa,\utau^P}}_p(s,\phi\chi)\left(\cS^{-1}\iota(g^{-1},1)\right)\pi_p(gp^{\ub})\varphi'\,dg\right>\right|_{s=\frac{2n+1}{2}-k}.
\end{align*}
The ratio
\begin{equation}\label{eq:ratio}
    \frac{\left<\ol{\varphi},R(U^P_p) \int_{G(\bQ_p)}f^{\alpha_{\kappa,\utau^P}}_p(s,\phi\chi)\left(\cS^{-1}\iota(g^{-1},1)\right)\pi_p(gp^{\ub})\varphi'\,dg\right>}{\left<\ol{\varphi},e_P\varphi'\right>}
\end{equation}
is independent of the choice of $\varphi$. Let $\theta_1,\dots,\theta_n$, $\alpha_1,\dots,\alpha_n$ be as in Proposition \ref{prop:JL}, and $\sigma_1,\dots,\sigma_d$ be as in the definition \eqref{eq:Ep}. We know that the $P$-ordinary space (with respect to $\ut^P$) inside $\mr{Ind}^{G(\bQ_p)}_{B_G(\bQ_p)}\utheta$ is one dimensional. Therefore, if we take from $\mr{Ind}^{G(\bQ_p)}_{B_G(\bQ_p)}\utheta$ a function $\pG:G(\bQ_p)\ra \bC$ invariant under the right translation by $K_p$ and $\mvw SP_G(\bZ_p)\mvw$, then
\begin{equation*}
   \eqref{eq:ratio}=\frac{\left(R(U^P_p) \int_{G(\bQ_p)}f^{\alpha_{\kappa,\utau^P}}_p(s,\phi\chi)\left(\cS^{-1}\iota(g^{-1},1)\right)\pi_p(gp^{\ub})\pG\,dg\right)(1)}{\left(R(U^P_p)\pG\right)(1)},
\end{equation*}
as long as the denominator is nonzero. Now we further assume that $\pG(1)=1$. Then by our description of the $\bU^P_p$-eigenvalues for the $P$-ordinary vector in terms of the $\alpha_i$'s in Proposition \ref{prop:JL}, we have $\left(R(U^P_p)\pG\right)(1)=1$. 

Now let $\pG\in\mr{Ind}^{G(\bQ_p)}_{B_G(\bQ_p)}\utheta$ be the smooth function on $G(\bQ_p)$ supported on $B_G(\bQ_p)\mvw SP_G(\bZ_p)\mvw$ and taking the value $1$ on $\mvw SP_G(\bZ_p)\mvw$. We also put $\fw(a)=|\det a|^{-\frac{n+1}{2}}_p\pG\left(\begin{pmatrix}a&0\\0&\ltrans{a}^{-1}\end{pmatrix}\right)$. Then $\fw\in \mr{Ind}^{\GL(n,\bQ_p)}_{B(\bQ_p)}\utheta$ and is invariant under the right translation of $u$ for  $\ltrans{u}\in SP(\bZ_p)$ and takes the value $1$ at identity. Let $\Phi_{\chi,\uep^P}$ be the Schwartz function on $M_n(\bQ_p)$ whose Fourier transform is  
\begin{align*}
   \wh{\Phi}_{\chi,\uep^P}(\beta_0)&=\prod_{i=1}^{d}\mathds{1}_{\GL(N_i,\bZ_p)}\left((\beta_0)_{N_i}\right)\times\,\prod_{i=1}^{d-1}\epsilon^P_i\epsilon^{P-1}_{i+1}(\mr{det}(-(\beta_0)_{N_i}))\cdot \epsilon^{P}_d\chi^{-1}(\mr{det}(-\beta_0)),&\beta_0\in M_n(\bQ_p).
\end{align*}
An easy computation shows that 
\begin{align*}
   f^{\alpha_{\kappa,\utau^P}}_p(s,\phi_p\chi_p)\left(\cS^{-1}\iota\left(\begin{psm}a&b\\c&d\end{psm},1\right)\right)=&|\det a|_p^{-(s+\frac{2n+1}{2})}\phi\chi(\det(-a))^{-1}p^{-n(n+1)}\\
   &\,\times\mathds{1}_{\GL(n,\bQ_p)}(a)\cdot\mathds{1}_{p^{-2}\Sym(n,\bZ_p)}(a^{-1}b)\mathds{1}_{\Sym(n,\bZ_p)}(ca^{-1})\cdot \Phi_{\chi,\uep^P}(a^{-1}),
\end{align*}
and
\begin{align*}
   &\left(U^P_{p,\uc} \int_{G(\bQ_p)}f^{\alpha_{\kappa,\utau^P}}_p(s,\phi_p\chi_p)\left(\cS^{-1}\iota(g^{-1},1)\right)\pi_p(gp^{\ub})\pG\,dg\right)(1)\\
   =&\,\frac{\prod_{j=1}^n(1-p^{-j})}{\prod_{j=1}^n(1-p^{-2j})}p^{-\left<\rho_{G,n},\,\ub\right>}\prod_{i=1}^d \mf{a}^{c_i}_{N_i}\int_{\GL(n,\bQ_p)}|\det a|_p^{s+\frac{n}{2}}\phi\chi(\det(-a))\Phi_{\chi,\uep^P}(a)\fw\left(ap^{\ub}\right)\,da,
\end{align*}
for $\uc=(\underbrace{c_1,\dots,c_1}_{n_1},\dots,\underbrace{c_d,\dots,c_d}_{n_d})$, $c_1>c_2>\cdots>c_d>0$.

Therefore, in order to show \eqref{eq:ktp}, it suffices to show that with our chosen $\fw\in\mr{Ind}^{\GL(n,\bQ_p)}_{B(\bQ_p)}\utheta$, 
\begin{equation}\label{eq:fint1}
\begin{aligned}
   \int_{\GL(n,\bQ_p)}|\det a|_p^{s+\frac{n}{2}}&\phi_p\chi_p(\det a)\Phi_{\chi,\uep^P}(a)\fw\left(ap^{\ub}\right)\,da\\
   &=\frac{\prod_{i=1}^d\prod_{j=1}^{n_i}(1-p^{-j})}{\prod_{j=1}^n(1-p^{-j})} \prod_{i=1}^d \prod_{j=N_{i-1}+1}^{N_i}\alpha_j^{b_i}\cdot E_p(s,\pi\times\phi\chi).
\end{aligned}
\end{equation}
By the same reasoning, in order to show \eqref{eq:tp}, it suffices to show 
\begin{equation}\label{eq:fint2}
\begin{aligned}
   \int_{\GL(n,\bQ_p)}|\det a|_p^{s+\frac{n}{2}}&\phi_p\epsilon^P_{d,p}(\det a)\Phi_{\uep^P}(a)\fw\left(ap^{\ub}\right)\,da\\
   &=\frac{\prod_{i=1}^d\prod_{j=1}^{n_l}(1-p^{-j})}{\prod_{j=1}^n(1-p^{-j})}  \prod_{i=1}^d \prod_{j=N_{i-1}+1}^{N_i}\alpha_j^{b_i} \cdot E^{P\text{-imp}}_p(s,\pi\times\phi\epsilon^P_d),
\end{aligned}
\end{equation}
where $\Phi_{\utau^P}$ is the Schwartz function on $M_n(\bQ_p)$ whose Fourier transform is
\begin{align*}
   \wh{\Phi}_{\uep^P}(\beta_0)&=\mathds{1}_{M_n(\bZ_p)}(\beta_0)\prod_{i=1}^{d-1}\mathds{1}_{\GL(N_i,\bZ_p)}\left((2\beta_0)_{N_i}\right)\times\,\prod_{i=1}^{d-1}\epsilon^P_i\epsilon^{P-1}_{i+1}(\mr{det}(-(2\beta_0)_{N_i})),&\beta_0\in M_n(\bQ_p).
\end{align*}

We will show \eqref{eq:fint1} and \eqref{eq:fint2} by induction. Write $n'=N_{d-1}$, $a=\begin{blockarray}{ccc}&n'&n_d\\\begin{block}{c(cc)}n'&a'&\eta\\n_d&0&\lambda\\ \end{block}\end{blockarray}\cdot\begin{pmatrix}I_{n'}&0\\\lambda^{-1}\ltrans{\mu}&I_{n_d}\end{pmatrix}$. Define $\fw'\in\mr{Ind}_{B_{n'}(\bQ_p)}^{\GL(n',\bQ_p)}(\theta_1,\dots,\theta_{n'})$ by $
   \fw'(a')=|\det a'|_p^{-\frac{n_d}{2}}\fw\left(\begin{pmatrix}a'&0\\0&I_{n_d}\end{pmatrix}\right)$, and the $\SL(n_d,\bZ_p)$-fixed $\fw_d\in\mr{Ind}^{\GL(n_d,\bQ_p)}_{B_{n_d}(\bQ_p)}(\theta_{n'+1},\dots,\theta_n)$ by $\fw_d(\lambda)=|\det a|_p^{\frac{n'}{2}}\fw\left(\begin{pmatrix}I_{n'}&0\\0&\lambda\end{pmatrix}\right)$. Also, let
\begin{align*}
   \wh{\Phi}'_{\chi,\uep^P}(\beta'_0)&=\prod_{i=1}^{d-1}\mathds{1}_{\GL(N_i,\bZ_p)}\left((\beta'_0)_{N_i}\right)\times\,\prod_{i=1}^{d-2}\epsilon^P_i\epsilon^{P-1}_{i+1}(\mr{det}(-(\beta'_0)_{N_i}))\cdot \epsilon^{P}_{d-1}\chi^{-1}(\mr{det}(-\beta'_0)),&\beta'_0\in M_{n'}(\bQ_p),\\
   \wh{\Phi}'_{\uep^P}(\beta'_0)&=\prod_{i=1}^{d-1}\mathds{1}_{\GL(N_i,\bZ_p)}\left((\beta'_0)_{N_i}\right)\times\,\prod_{i=1}^{d-2}\epsilon^P_i\epsilon^{P-1}_{i+1}(\mr{det}(-(\beta'_0)_{N_i}))\cdot \epsilon^{P}_{d-1}\epsilon^{P-1}_d(\mr{det}(-\beta'_0)),&\beta'_0\in M_{n'}(\bQ_p),
\end{align*}
and let $\Phi'_{\chi,\uep^P}$, $\Phi'_{\uep^P}$ be the inverse Fourier transform of $\wh{\Phi}'_{\chi,\uep^P}$, $\wh{\Phi}'_{\uep^P}$. Denote by $\cF_{\uep^P_d\chi^{-1}}$ the inverse Fourier transform of the Schwartz function $\lambda\mapsto\mathds{1}_{\GL(n_d,\bZ_p)}(\lambda)\cdot\uep^P_d\chi^{-1}(\det(-\lambda))$ on $M_{n_d}(\bQ_p)$. Then by a routine computation we get
\begin{align*}
   \numberthis\label{eq:fint3}\text{LHS of } \eqref{eq:feq1}=&\frac{\prod_{j=1}^{n'}(1-p^{-j})\prod_{j=1}^{n_d}(1-p^{-j})}{\prod_{j=1}^n(1-p^{-j})}\int_{\GL(n',\bQ_p)}|\det a'|_p^{s+\frac{n'}{2}}\phi_p\chi_p(\det a')\fw'\left(a'p^{\ub^\prime}\right)\Phi'_{\chi,\uep^P}(a')\,da'\\
      &\,\times\prod_{i=N_{d-1}+1}^n\alpha_i^{b_d}\int_{\GL(n_d,\bQ_p)}|\det\lambda|_p^{s+\frac{n_d}{2}}\phi_p\chi_p(\det \lambda) \fw_d(\lambda)\cF_{\epsilon^P_d\chi^{-1}}(\lambda)\,d\lambda,\\
    \numberthis\label{eq:fint4}  \text{LHS of } \eqref{eq:feq2}=&\frac{\prod_{j=1}^{n'}(1-p^{-j})\prod_{j=1}^{n_d}(1-p^{-j})}{\prod_{j=1}^n(1-p^{-j})}\int_{\GL(n',\bQ_p)}|\det a'|_p^{s+\frac{n'}{2}}\phi_p\chi_p(\det a')\fw'\left(a'p^{\ub^\prime}\right)\Phi'_{\uep^P}(a')\,da'\\
      &\,\times\prod_{i=N_{d-1}+1}^n\alpha_i^{b_d}\int_{\GL(n_d,\bQ_p)}|\det\lambda|_p^{s+\frac{n_d}{2}}\phi_p\chi_p(\det \lambda) \fw_d(\lambda) \mathds{1}_{M_{n_d}(\bZ_p)}(\lambda)\,d\lambda.
\end{align*}

For $\cF_{\epsilon^P_d\chi^{-1}}$, we have the following formula \cite[Proposition 6.1, Appendix of \S6]{BS}. If $\mr{cond}(\epsilon^P_d\chi^{-1})=p^c>1$, 
\begin{align*}
   \numberthis\label{eq:BScF1}\cF_{\epsilon^P_d\chi^{-1}}(\lambda)&=p^{-\frac{n_d(n_d+1)c}{2}}G(\epsilon^P_d\chi^{-1})^{n_d}\cdot \mathds{1}_{\GL(n_d,\bZ_p)}\left(p^{c}\lambda\right)\epsilon^{P-1}_d\chi(\det(p^c\lambda)),
\end{align*}
and if $\epsilon^P_d=\chi$,
\begin{align*}
  \numberthis\label{eq:BScF2}\cF_{\mr{triv}}(\lambda)=\sum_{j=0}^{n_d}(-1)^jp^{\frac{j(j-1-n_d)}{2}}\int_{\GL(n_d,\bZ_p)\begin{psm}pI_j&0\\0&I_{n_d-j}\end{psm}\GL(n_d,\bZ_p)}\mathds{1}_{M_{n_d}(\bZ_p)}(\lambda g)\,dg.
\end{align*}
We first look at the easier case \eqref{eq:fint4}, where for the integral in the second line of it, since $\fw_d$ is fixed by $\SL(n_d,\bZ_p)$ and $\GL(n_d,\bZ_p)$ acts on it by $\epsilon^{P-1}_{d,p}\circ\det$, we have
\begin{equation}\label{eq:d2}
   \int_{\GL(n_d,\bQ_p)}|\det\lambda|_p^{s+\frac{n_d}{2}}\phi_p\epsilon^{P}_{d,p}(\det \lambda) \fw_d(\lambda) \mathds{1}_{M_{n_d}(\bZ_p)}(\lambda)\,d\lambda=L_p\left(s+\frac{1}{2},\sigma_d\otimes\phi_p\right).
\end{equation}
Next we treat the integral in the second line of \eqref{eq:fint3} with $\epsilon^P\chi^{-1}\neq \mr{triv}$. Plugging in \eqref{eq:BScF1}, we get
\begin{equation}\label{eq:d12}
\begin{aligned}
   &\int_{\GL(n_d,\bQ_p)}|\det\lambda|_p^{s+\frac{n_d}{2}}\chi_p(\det \lambda) \fw_d(\lambda)\cF_{\epsilon^P_d\chi^{-1}}(\lambda)\,d\lambda=\prod_{j=n'+1}^n G(\epsilon^P_d\chi^{-1})\left(\phi(p)^{-1}\alpha^{-1}_jp^{s-\frac{1}{2}}\right)^{c}\\
   =&\,\prod_{j=n'+1}^n\gamma_p\left(\frac{1}{2}-s,\phi^{-1}_p\chi^{-1}_p\theta^{-1}_j\right)=\gamma_p\left(\frac{1}{2}-s,\wt{\sigma}_d\otimes\phi^{-1}_p\chi^{-1}_p\epsilon^P_{d,p}\right).
\end{aligned}
\end{equation}
Lastly, we consider the integral in the second line of \eqref{eq:fint3} with $\epsilon^P\chi^{-1}= \mr{triv}$. The formula for the Hecke action on spherical representations of $\GL(\bQ_p)$ gives
\begin{equation*}
   \int_{\GL(n_d,\bZ_p)\begin{psm}p^{-1}I_j&0\\0&I_{n_d-j}\end{psm}\GL(n_d,\bZ_p)}\fm_d(\lambda g)\,dg=p^{\frac{j(n_d-j)}{2}}\sum_{\substack{\iota:\{1,\dots,j\}\hra\{1,\dots,n_d\}\\\iota(1)<\dots<\iota(j)}}\alpha^{-1}_{n'+\iota(1)}\dots\alpha^{-1}_{n'+\iota(j)}\fm_d(\lambda).
\end{equation*}
Combining this with \eqref{eq:BScF2}, we get
\begin{equation}\label{eq:d11}
\begin{aligned}
   &\int_{\GL(n_d,\bQ_p)}|\det\lambda|_p^{s+\frac{n_d}{2}}\chi_p(\det \lambda) \fw_d(\lambda)\cF_{\mr{triv}}(\lambda)\,d\lambda\\
  =&\,\prod_{j=n'+1}^n(1-\phi_p(p)^{-1}\alpha^{-1}_jp^{s-\frac{1}{2}})\int_{\GL(n_d,\bQ_p)}|\det\lambda|_p^{s+\frac{n_d}{2}}\phi_p\epsilon^{P}_{d,p}(\det \lambda) \fw_d(\lambda) \mathds{1}_{M_{n_d}(\bZ_p)}(\lambda)\,d\lambda\\
  =&\prod_{j=n'+1}^n(1-\phi_p(p)^{-1}\alpha^{-1}_jp^{s-\frac{1}{2}})\cdot L_p\left(s+\frac{1}{2},\sigma\otimes\phi_p\right)=\gamma_p\left(\frac{1}{2}-s,\wt{\sigma}_d\otimes\phi^{-1}_p\right).
\end{aligned}
\end{equation}
The identities \eqref{eq:d2}, \eqref{eq:d12}, and \eqref{eq:d11} allows us to compute \eqref{eq:fint1} and \eqref{eq:fint2} by induction, and this finishes the proof.
\end{proof}

\section{\texorpdfstring{The derivative of the $p$-adic standard $L$-function}{The derivative of the p-adic standard L-function}}

In this section we explicit the trivial zeros of the $p$-adic $L$-function $\mu_{\sC_P,\phi,\beta_1,\beta_2}$ of Theorem \ref{thm:twopadicLfun} and interpret them from the point of view of the (conjectural) associated $p$-adic Galois representation. This will allow us to interpret a factor appearing in the derivative of the $p$-adic $L$-function as Greenberg's $\ell$-invariant in the case when the trivial zero is semi-stable (or of type $M$ as called in \cite{TTT}). At the end, we shall prove the main theorem of the paper which relates the derivative of $\cL_{\sC_P,\phi\omega^{n+1},\beta_1,\beta_2}$ at the semi-stable trivial zero to the $\ell$-invariant and the complex special value.

\subsection{\texorpdfstring{Greenberg--Benois conjecture and $\ell$-invariants}{Greenberg-Benois conjecture and l-invariants}}

That idea at the base of the conjecture by Greenberg and Benois is that when a $p$-adic $L$-function has trivial zeros one should be able to recover the value of the complex $L$-function from a suitable derivative of the $p$-adic $L$-function. The first studied case is the one of an elliptic curve with split multiplicative reduction at $p$ \cite{MTT}. They conjectured that the order of the $p$-adic $L$-function is $1$ plus the order of the complex $L$-function and that the leading coefficient of the $p$-adic $L$-function is the same as the one of the complex $L$-function, up to an error term equal to $\log_p(q_E)/\mr{ord}_p(q_E)$ (for $q_E$ the Tate uniformizer of $E$) that  they call the $\ell$-invariant. 

Later Greenberg interpreted this number in Galois theoretical terms and proposed a similar conjecture for a great number of ordinary motives  \cite{TTT}; succesively Benois generalised this conjecture to most semistable representations \cite{BenLinv}. Let us be more precise. Let $V$ the $p$-adic Galois representation associated with the motive, and suppose that it is absolutely irreducible and satisfies Pantchishkine condition, {\it i.e.} that there is a sub space $V'$ of $V$ stable under the action of $G_{\mb Q_p}$ and containing all positive Hodge--Tate weights. Assume moreover that the Frobenius acts semisimply on the semi-stable module (in the sense of Fontaine) associated with $V$. 

Greenberg defines two subspaces $ V^{11} \subset V^+ \subset V^{00}$ such that $V^+/V^{11}$ contains all the eigenvalues $p$ and $V^{00}/V^+$ contains all the eigenvalues $1$. Then we can decompose $V^{00}/V^{11}$ as $\mb Q_p^{t_0} \oplus M \oplus \mb Q_p(1)^{t_1}$, where $M$ is a non split extension of $\mb Q_p^t$ by $\mb Q_p(1)^t$. According to Greenberg's conjecture, the number of trivial zeros of the $p$-adic $L$-function is $e=t_0+t+t_1$. Assume furthermore that $t_1=0$ (certain motivic conjectures imply $t_1t_0=0$, so this hypothesis is not really restrictive as if $t_0=0$ can always consider the dual motive for which $t_0=0$). Then by picking a subspace $\wt{\mathbf{T}}\subset H^1(G_{\bQ},V)$ of dimension $e$, Greenberg defines an $e$-dimensional subspace $\wt{\mathbf{T}}_p$ inside $H^1(G_{\mb Q_p},V)/H^{1}_{\mr f}(G_{\mb Q_p},V)$ which injects into \begin{align*}
H^{1}(G_{\mb Q_p},V^1/V')\cong \bigoplus^{t+e_0}_{i=1}\mb Q_p \cdot \mr{ord}_p \oplus \mb Q_p \cdot \log_p.
\end{align*}  
If we denote by $p_u$ (resp. $p_c$) the projection of $\wt{\mathbf{T}}_p$ to  $\bigoplus^{t+e_0}_{i=1}\mb Q_p \cdot \mr{ord}_p$ (resp. $\bigoplus^{t+e_0}_{i=1}\mb Q_p \cdot \log_p$), then Greenberg shows that $p_c$ is an invertible map and defines $\ell(V):=\mr{det}(p_u \circ p_c^{-1})$. Note that in general $\wt{\mathbf{T}}_p$ depends on $V$ as $G_{\mb Q}$-representation, but if $e_0=0$ too then it depends only on the restriction to $G_{\mb Q_p}$ \cite[p.~169]{TTT}, \cite[p.~1239]{RosLinv}. We can finally state:
\begin{conj}\label{conj:greenberg}[Greenberg--Benois conjecture]
Let $r$ be  the order of the complex $L$-function $L(s,V)$ at $s=0$, and suppose that $\cL_p(S,V)$ has $e_0+t$ trivial zeros, then $S^{e_0+t+r}$ divides $\cL_p(S,V)$ exactly and
\begin{align*}
\cL_p(S,V)/S^{e_0+t+r} \equiv \ell(V^*(1)) {(E_p(0,V)L_p(0,V)^{-1})}^* \frac{L^{\rm{alg},(r)}(0,V)}{\log_p(1+p)^{e_0+t+r} (e_0+t+r)!}  \bmod S,
\end{align*}
where $E_p(s,V)$ is defined as in \cite[\S6]{CoaMot}, $V^*(1)$ is the dual representation twisted by $1$, $L_p(s,V)$ is the Euler factor of the motivic $L$-function for $V$ and ${(E_p(s,V)L_p(s,V)^{-1})}^*$ is obtained from $E_p(s,V)L_p(s,V)^{-1}$ by eliminating all the Euler factors vanishing at $s=0$. The function $L^{\rm{alg}}(s,V)$ is the $L$-function for $V$ divided by the period.
\end{conj}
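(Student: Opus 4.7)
The plan is to execute the Greenberg--Stevens method, exactly as it was carried out in \cite{SSS} for elliptic curves, using the two $p$-adic $L$-functions constructed earlier (the full $(d+1)$-variable $\cL_{\sC_P,\phi\omega^{n+1},\beta_1,\beta_2}$ and its improvement $\cL^{\imp}_{\sC_P,\phi,\beta_1,\beta_2}$). First I would analyze the trivial zero carefully. By the semi-stable hypothesis $\alpha_{n,x_0}=p^{-1}$ together with $\phi_p(p)=1$, the Euler factor $1-\phi_p(p)^{-1}\alpha_{n,x_0}^{-1}p^{k-n-1}$ appearing in $E_p(n+1-k,\pi_{x_0}\times\phi\omega^{n+1})$ vanishes at $k=n+1$, so the interpolation formula of Theorem \ref{thm:twopadicLfun} forces $\cL_{\sC_P,\phi\omega^{n+1},\beta_1,\beta_2}$ to vanish identically along the hyperplane $S=(1+p)^{n+1}-1$. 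This provides the ``$a_0=0$'' vanishing direction in the Greenberg--Stevens setup.

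Second, I would study the improvement direction. Along the line obtained by moving only in the $t_d^P$-weight variable and coupling it with the cyclotomic variable, the quantity $\alpha_n\cdot p^{t_d^P-n}$ varies $p$-adic analytically in the Hida family because, by Proposition \ref{prop:JL} and the $P$-ordinarity of $\sC_P$, it is expressible as a ratio of $\mb U^P_p$-eigenvalues. Hence the comparison \eqref{eq:Ecompare} between $E_p$ and $E^{\imp}_p$ interpolates to an identity of two-variable $p$-adic analytic functions
\[
   \cL_{\sC_P,\phi\omega^{n+1},\beta_1,\beta_2}(S,\utau^P)\bigr|_{\text{line }L_2}
   \;=\;\cA(S)\cdot\cL^{\imp}_{\sC_P,\phi,\beta_1,\beta_2}(\utau^P)\bigr|_{\text{line }L_2}
\]
on the line $L_2:\,S\mapsto((1+p)^{n+1+S}-1,\utau^P_0+S\cdot e_d)$, where $\cA(S)$ is the specialization of $\cA^P(\pi\times\phi\omega^{n+1})$, a polynomial in the $\mb U^P_p$-eigenvalues vanishing at $S=0$.

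Third, I would exploit the two-dimensional geometry near $((1+p)^{n+1}-1,x_0)$. Since $\cL_{\sC_P,\phi\omega^{n+1},\beta_1,\beta_2}$ vanishes identically along $L_1:\,S\mapsto((1+p)^{n+1}-1,\utau^P_0+S\cdot e_d)$, its directional derivative at the trivial zero along $L_2$ equals exactly $\log_p(1+p)$ times the partial derivative in the cyclotomic variable. Differentiating the above factorization and evaluating at $S=0$ gives
\[
   \log_p(1+p)\cdot\partial_S\cL_{\sC_P,\phi\omega^{n+1},\beta_1,\beta_2}\bigr|_{((1+p)^{n+1}-1,x_0)}
   =\cA'(0)\cdot\cL^{\imp}_{\sC_P,\phi,\beta_1,\beta_2}(x_0),
\]
because $\cA(0)=0$ cancels the other term. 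The value $\cL^{\imp}_{\sC_P,\phi,\beta_1,\beta_2}(x_0)$ is then read off directly from the interpolation formula of Theorem \ref{thm:twopadicLfun}, producing the archimedean period, $C_{\ut_0^P}$, and $E^{\imp}_p(0,\pi_{x_0}\times\phi)\cdot L^{Np\infty}(0,\pi_{x_0}\times\phi)$.

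Finally, I would invoke \cite{RosLinv} to identify the factor $\cA'(0)$ (a logarithmic derivative of $\mb U^P_p$-eigenvalues along the Hida family) with $-\log_p(1+p)\cdot\ell(\rho_{x_0})$; this is where the local--global compatibility hypothesis on $\rho_{x_0}$ together with the semi-stability (ensuring $\ell$-invariant of type $M$) is essential, as it translates the Galois-theoretic definition of $\ell(V)$ into the automorphic derivative computed on the eigenvariety. The main obstacle is precisely this last identification: showing that the analytic $\ell$-invariant extracted from the Hida deformation coincides with Greenberg's Galois-theoretic $\ell$-invariant requires the monodromy structure predicted in Remark \ref{rem:mono}, and a delicate comparison between the infinitesimal deformation of $\pi_{x_0,p}$ inside the family and the deformation of the filtered $(\varphi,N)$-module attached to $\rho_{x_0}|_{G_{\bQ_p}}$. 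Once all these ingredients are assembled, multiplying through by the factor $\log_p(1+p)^{-1}$ cleanly produces the stated formula.
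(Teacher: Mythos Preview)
The statement you were asked to address is a \emph{conjecture}, not a theorem; the paper does not prove it and offers no proof for you to compare against. Conjecture~\ref{conj:greenberg} is formulated for an arbitrary $p$-adic Galois representation $V$ satisfying the Panchishkin condition, and it remains wide open in that generality.

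What you have written is essentially a correct outline of the proof of Theorem~\ref{thm:mainzero}, which is a different statement: it computes the first derivative of the specific $p$-adic $L$-function $\cL_{\sC_P,\phi\omega^{n+1},\beta_1,\beta_2}$ at a semi-stable trivial zero for a $P$-ordinary Siegel cusp form. That result is a partial instance of the conjecture, not a proof of it. Even within that special case, the paper itself notes (see the paragraph following the statement of Theorem~\ref{thm:mainzero} in the introduction) that the result falls short of the full conjecture: one would still need the non-vanishing of the $\ell$-invariant and of the imprimitive $L$-value, and the theorem only handles the first derivative (i.e.\ the case $e_0+t+r=1$), whereas the conjecture predicts the exact order of vanishing and the leading term in general.

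So the gap is not in your argument for Theorem~\ref{thm:mainzero} --- that sketch is fine and matches the paper's approach --- but in the scope: you have not, and with these tools cannot, establish the Greenberg--Benois conjecture as stated.
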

Note that the conjecture implies the non-vanishing of the $\ell$-invariant $\ell(V)$.

This conjecture has been first shown in the case of an elliptic curve with bad multiplicative reduction by Greenberg--Stevens \cite{SSS}. Their method has been generalised many times to different contexts \cite{Mok,RosOC,RosH,RosSiegel} and is at the base of our current approach. It is worth to point out that many new strategies have recently arisen \cite{DDP,Dasg,Spi1,Deppe}.

\subsection{\texorpdfstring{The trivial zero of the $p$-adic standard $L$-function}{The trivial zero of the p-adic standard L-function}}\label{sec:trivzero}
Let $\cL_{\sC_P,\phi\omega^{j},\beta_1,\beta_2}=\cL_{\sC_P,\phi,\beta_1,\beta_2}(S,x)\in \bI_{\sC_P}[[S]]\otimes_{\bI_{\sC_P}}F_{\sC_P}$ to be the Mellin transform of the component corresponding to the character $\omega^j$ on $\left(\bZ/p\right)^\times$ of the measure $\mu_{\sC_P,\phi,\beta_1,\beta_2}$ constructed in Theorem \ref{thm:twopadicLfun}.

Suppose that $x_0:\bI_{\sC_P}\ra F$ is a point as in Theorem \ref{thm:twopadicLfun} and it is classical. Denote by $\pi_{x_0}$ a cuspidal irreducible automorphic representation of $\Sp(2n,\bA)$ generated by a $P$-ordinary Siegel modular $\bT^{0,N}_{\Pord}$-eigenform of weight $\iota(\ut^P)$ whose eigenvalues are parametrized by $x_0$. The isomorphism class of $\pi_{x_0,v}$ is determined by $x_0$ for $v\nmid N$ (the isomorphism class of $\pi_{x_0,p}$ can be read off from the eigenvalues for all $\bU^P_p$-operators by the discussion in \S\ref{sec:JM}). We are interested in possible trivial zeros of $\cL_{\sC_P,\phi\omega^{n+1},\beta_1,\beta_2}$ at $S=(1+p)^{n+1}-1$, where the corresponding critical $L$-value is the near-central value $L(0,\pi_x\times\phi)$. There are two types of trivial zeroes that can show up there.

\subsubsection{Crystalline trivial zero} 
We say that $\cL_{\sC_P,\phi\omega^{n+1},\beta_1,\beta_2}$ has a crystalline trivial zero at $((1+p)^{n+1}-1,x_0)$ if  $\phi_p(p)=1$ and the local $L$-factor $L_p(s,\pi_{x_0}\times \phi_p)$ contains the factor $(1-\phi_p(p)p^{-s})$. In particular, if $\pi_{x_0,p}$ is unramified, then a crystalline trivial zero shows up at $S=(1+p)^{n+1}-1$. 

One can also think of types of trivial zeroes in terms of the corresponding Galois representation restricted to $G_{\bQ_p}$ (if admitting the local-global compatibility for $\rho_{x_0}$ attached to $\pi_{x_0}$). Let $\rho_{x_0}:G_{\bQ}\ra \GL(2n+1,\ol{\bQ}_p)$ be the Galois representation attached to $\pi_{x_0}$ \cite{Arthur,CH}. Then conjecturally $\rho_{x_0}|_{G_{\bQ_p}}$ admits the following description. The Hodge--Tate weights are $0,\pm (t^P_1-1),\dots,\pm (t^P_1-N_1),\pm(t^P_2-N_1-1),\dots,\pm(t_2-N_2),\dots,\pm (t^P_d-N_{d-1}-1),\dots,\pm (t^P_d-n)$, and the eigenvalues of the Frobenius are $1,\alpha^{\pm 1}_1,\dots,\alpha^{\pm 1}_n$. Hence the Hodge polygon and the Newton polygon meet at the points with horizontal coordinates $0, N_1, N_2,\dots,N_d,2n+1-N_d,\dots,2n+1-N_2,2n+1-N_1,2n+1$. Therefore, $\rho_{x_0}$ admits a decreasing filtration $\mr{Fil}^j$, $-d\leq j\leq d$, such that $\mr{Fil}^j/\mr{Fil}^{j+1}$ has Hodge--Tate weights $t^P_{d-j+1}-N_{d-j}-1,\dots,t^P_{d-j+1}-N_{d-j+1}$ (resp. $t^P_{-j}-N_{-j+1}-1,\dots,t^P_{-j}-N_{-j}$) if $1\leq j\leq d$ (resp. $-d\leq j\leq -1$), and $\mr{Fil}^0/\mr{Fil}^{1}$ is one dimensional with trivial $G_{\bQ_p}$-action. If $\mr{Fil}^0/\mr{Fil}^2$ is a trivial extension of $\ol{\bQ}_p$ by $\mr{Fil}^1/\mr{Fil}^2$, then $\cL_{\sC_P,\phi\omega^{n+1},\beta_1,\beta_2}$ has a crystalline trivial zero at $((1+p)^{n+1}-1,x_0)$ if  $\phi_p(p)=1$.

\subsubsection{Semi-stable trivial zero}\label{sec:trivzerosst}
The other case is when the local $L$-factor $L_p(s,\pi_{x_0}\times\phi_p)$ does not contain the factor $(1-\phi_p(p)p^{-s})$, but the factor $(1-\phi_p(p)^{-1}\alpha_n^{-1} p^{s-1})$ in $E_p(s,\pi_{x_0}\times\phi)$ contributes a trivial zero at $((1+p)^{n+1}-1,x_0)$ when $\phi_p(p)=1$ and $\alpha_n=p^{-1}$. We call this type of trivial zero semi-stable. Since Newton polygon lies above the Hodge polygon and for $\rho_{x_0}|_{G_{\bQ_p}}$ they coincide along the slope $0$ segment, we have $v_p(\alpha_n)\leq -t^P_d+n\leq -1$. If $\alpha_n=p^{-1}$, then $t^P_d=n+1$ and the Newton and Hodge polygons coincide along the the segments of slope $-1,0,1$. Therefore, if $\cL_{\sC_P,\phi\omega^{n+1},\beta_1,\beta_2}(S,x_0)\in\cO_F[[S]]\otimes_{\cO_F} F$ has a semi-stable trivial zero at $S=(1+p)^{n+1}-1$, then there exists a partition $n=n_1+\dots +n_d$ with $n_d=1$ such that $\pi_{x_0}$ is also $P'$-ordinary for $P'\subset\GL(n)$ the standard parabolic subgroup attached to this partition. 

The above discussion shows that in order to use $p$-adic deformation of $\pi_{x_0}$ to study the trivial zero at $S=(1+p)^{n+1}-1$ of the $p$-adic $L$-function attached to $\pi_{x_0}$, one can always choose $P$ such that $n_d=1$ and consider the $P$-ordinary families passing through $x_0$. In the following, we assume that $n_d=1$. 

In terms of the local Galois representation, a semi-stable trivial zero appears if $\mr{Fil}^0/\mr{Fil}^2$ is a non-trivial extension of $\ol{\bQ}_p$ by $\ol{\bQ}_p(1)$.

\subsection{A formula for the derivative}
In this section we prove the main theorem of the paper applying the strategy of Greenberg--Stevens. Recall that $\mf{a}_i$, $1\leq i \leq n$, denotes the eigenvalue of the operator $U^P_{p,i}$ (see Proposition~\ref{prop:JL}). They are invertible elements inside $\bI_{\sC_P}$. In order to state \cite[Theorem 2]{BenLinv2} in our setting, we need to fix a coordinate for a rigid analytic open neighborhood of $x_0$ in $\sC_P$. Since we have assumed that the weight projection map is \'{e}tale at $x_0$, we can take the coordinates to be the natural coordinates $T^P_1,\dots,T^P_d$ of the weight space. Then it follows from \cite[Theorem 2]{BenLinv2} and \cite[Proposition 2.2.24]{BenLinv} that 
\begin{theo}\label{teo:linva}
Let $x_0\in\sC_P(F)$ be a classical point where the weight projection map is \'{e}tale and has image $\utau^P_{x_0}\in\Hom_{\cont}(T^P(\bZ_p),F^\times)$. Suppose that $S=(1+p)^{n+1}-1$ is a semi-stable trivial zero for $\cL_{\sC_P,\phi\omega^{n+1},\beta_1,\beta_2}(S,x_0)$ and the local-global compatibility is satisfied by the $p$-adic Galois representation $\rho_{x_0}$. Then 
\begin{align*}
\ell(\rho_{x_0})=\ell(\rho_{x_0}^*(1)) =- \left. \frac{\partial \log_p \left(\mf{a}_{n}(T^P_1,\dots T^P_d)/\mf{a}_{n-1}(T^P_1,\dots,T^P_d)\right)}{\partial T^P_d}  \right|_{(T^P_1,\dots,T^P_d)=\utau^P_{x_0}(1+p)}.
\end{align*}
\end{theo}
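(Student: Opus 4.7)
The plan is to reduce the statement to a direct application of Benois's trianguline $\ell$-invariant formula \cite[Theorem 2]{BenLinv2} (cited as such in the excerpt), combined with the comparison \cite[Proposition 2.2.24]{BenLinv} between $\ell(\rho_{x_0})$ and $\ell(\rho_{x_0}^*(1))$. First, I would unpack the semi-stable trivial zero hypothesis using the analysis of \S\ref{sec:trivzero}. It forces $n_d=1$, $t^{P}_{d,0}=n+1$, $\phi_p(p)=1$, and the Frobenius eigenvalue $\alpha_{n,x_0}=p^{-1}$; moreover the two-dimensional subquotient $\mr{Fil}^0\rho_{x_0}|_{G_{\bQ_p}}/\mr{Fil}^2\rho_{x_0}|_{G_{\bQ_p}}$ is a non-trivial extension of $\bQ_p$ by $\bQ_p(1)$. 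Under local--global compatibility, $\rho_{x_0}|_{G_{\bQ_p}}$ is semi-stable with the expected Frobenius eigenvalues and a non-trivial monodromy operator between the $\bQ_p$- and $\bQ_p(1)$-pieces, as predicted in Remark~\ref{rem:mono}.

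The second step is to describe the trianguline deformation produced by $\sC_P$. The $P$-ordinary structure endows $\rho_{x_0}|_{G_{\bQ_p}}$ with a canonical refinement whose ordered parameters are the $\alpha_i$'s of Proposition~\ref{prop:JL}, and $\alpha_n$ is the parameter whose value specialises to $p^{-1}$ and causes the trivial zero. The étaleness of the weight map at $x_0$ identifies a rigid neighbourhood of $x_0$ in $\sC_P$ with a neighbourhood of $\utau^P_{x_0}$ in the weight space via coordinates $T^P_1,\dots,T^P_d$, and the refinement varies analytically in this family. In particular, $\alpha_n$ becomes a $p$-adic analytic function $\alpha_n(T^P_1,\dots,T^P_d)$ near $\utau^P_{x_0}$. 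This is exactly the trianguline deformation datum needed for Benois's theorem in this setting (the so-called trivial zero of type $M$).

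The third step is to apply \cite[Theorem 2]{BenLinv2} to obtain
\begin{equation*}
\ell(\rho_{x_0}^*(1))=-\left.\frac{\partial\log_p\alpha_n(T^P_1,\dots,T^P_d)}{\partial T^P_d}\right|_{(T^P_1,\dots,T^P_d)=\utau^P_{x_0}(1+p)}.
\end{equation*}
Only the derivative along $T^P_d$ appears because $n_d=1$, so the singular refinement parameter $\alpha_n$ lives in the last block of the $P$-ordinary decomposition. Translating to $\bU^P_p$-eigenvalues via identity \eqref{eq:Ueig} of Proposition~\ref{prop:JL} with $N_{d-1}=n-1$, $N_d=n$, one finds $\mf{a}_n/\mf{a}_{n-1}=p^{t^P_d-n}\alpha_n$, and since $\log_p p=0$, we get $\log_p(\mf{a}_n/\mf{a}_{n-1})=\log_p\alpha_n$ as $p$-adic analytic functions on the family. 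Substituting yields the formula in the statement. Finally, $\ell(\rho_{x_0})=\ell(\rho_{x_0}^*(1))$ follows from \cite[Proposition 2.2.24]{BenLinv}: because the trivial zero is semi-stable and not crystalline, one has $e_0=0$ in Greenberg's notation, so $\wt{\mathbf{T}}_p$ depends only on $\rho_{x_0}|_{G_{\bQ_p}}$ and the two $\ell$-invariants coincide.

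The main obstacle is making rigorous the passage from the $P$-ordinary family $\sC_P$ to the trianguline deformation to which Benois's theorem directly applies. Concretely, one needs to (i) invoke local--global compatibility to identify the Frobenius eigenvalues of $\rho_x|_{G_{\bQ_p}}$ in terms of the $\bU^P_p$-eigenvalues $\mf{a}_i(x)$ throughout the family; (ii) verify that the triangulation used to define the Greenberg--Benois $\ell$-invariant at $x_0$ is exactly the one deformed by the refined $P$-ordinary family, so that $\alpha_n(T^P_1,\dots,T^P_d)$ is the ``accessible'' parameter in Benois's sense; and (iii) check that the derivations of the absence of $T^P_1,\dots,T^P_{d-1}$ in the formula is consistent with the fact that only one Frobenius eigenvalue ($\alpha_n$) is responsible for the semi-stable trivial zero. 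Each of these is essentially encoded in the hypothesis of the theorem, but the book-keeping linking our $P$-ordinary framework to Benois's general trianguline setup is where the work lies.
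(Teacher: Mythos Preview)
Your proposal is correct and follows essentially the same route as the paper, which simply cites \cite[Theorem~2]{BenLinv2} and \cite[Proposition~2.2.24]{BenLinv} and then refers to \cite[Theorem~1.3]{RosLinv} for the details (noting that the parallel-weight argument there carries over verbatim). Your unpacking of the semi-stable hypothesis, the identification of $\alpha_n$ as the relevant refinement parameter, the translation $\mf{a}_n/\mf{a}_{n-1}=p^{t^P_d-n}\alpha_n$ via \eqref{eq:Ueig}, and the use of $\log_p p=0$ to pass between $\log_p\alpha_n$ and $\log_p(\mf{a}_n/\mf{a}_{n-1})$ are exactly the bookkeeping that the citation to \cite{RosLinv} is meant to cover.
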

(For the proof see  \cite[Theorem 1.3]{RosLinv}, where the theorem is stated for parallel weight, but the proof is the same.)

\begin{rem}
When $n=2$ one can calculate the $\ell$-invariant also for crystalline trivial zero using the method of \cite{HIwa}.\end{rem}

\begin{lemma}\label{rem:alltrivialzero}
The $p$-adic $L$-function $\cL_{\sC_P,\phi\omega^{n+1},\beta_1,\beta_2}$ vanishes identically on the hyper-plane $S=(1+p)^{n+1}-1$ whenever $\phi_p(p)=1$. 
\end{lemma}
\begin{proof}
We shall show that if $\phi_p(p)=1$ then $\mu_{\cE,\Pord}$ vanishes at all interpolation points with $S=(1+p)^{n+1}-1$. This will imply that $\mu_{\cE,\Pord}$ vanishes on the hyper-plane $S=(1+p)^{n+1}-1$, so $\cL_{\sC_P,\phi\omega^{n+1},\beta_1,\beta_2}$ will also vanish on that hyper-plane.

Indeed, from Proposition \ref{prop:FourierCoeffEis} we see that all Fourier coefficients $\fc_{\kappa,\utau^P}(\bbeta)$ present the factor $ L^{Np\infty}(0,\phi\lambda_{\bbeta})$; we recall that $\lambda_{\bbeta}(p)$ is the Legendre symbol $\left(\frac{(-1)^{\mr{rank}(\bbeta)/2}\det^*(\bbeta)}{p}\right)$, where $\det^*(\bbeta)$ denotes the product of all the nonzero eigenvalues of $\bbeta$. 

Note that in the construction of $\mu_{\cE,\Pord}$, our choice of local sections at $p$ makes $W_{\bbeta,p}$ vanish unless $\bbeta=\begin{pmatrix}\beta_1&\beta_0\\\ltrans{\beta}_0&\beta_2\end{pmatrix}$ with $\beta_0\in\GL(n,\bZ_p)$ and $\beta_1\equiv 0\mod p$ (see Table \ref{tb:d+1}). 

Such $\bbeta$'s are of maximal rank $2n$, and hence 
\[{\det}^*(\bbeta)=\det(\bbeta)\equiv \begin{pmatrix} 0&\beta_0\\\ltrans{\beta}_0&\beta_2\end{pmatrix}  \equiv (-1)^n\det(\beta_0)^2 \bmod p.\]  Thus, $(-1)^{\mr{rank}(\bbeta)/2}\det(\bbeta)$
 belongs to $\bZ^\times_p$ and 
 \[(-1)^{\mr{rank}(\bbeta)/2}\det(\bbeta) \equiv(-1)^{\mr{rank}(\bbeta)/2} (-1)^n\det(\beta_0)^2  \equiv \det(\beta_0)^2 \bmod p.\]

It follows that $\lambda_{\bbeta}(p)=1$. Therefore, as we also have $\phi_p(p)=1$, we get
\[L^{Np\infty}(0,\phi\lambda_{\bbeta}) = (1-\lambda_{\bbeta}(p)\phi_p(p)p^0)L^{N\infty}(0,\phi\lambda_{\bbeta})=0,\]
which implies the vanishing of all the $\fc_{\kappa,\utau^P}(\bbeta)$'s and the vanishing of the measure $\mu_{\cE,\Pord}$ along $S=(1+p)^{n+1}-1$.
\end{proof}
\begin{rem} It is possible to define an algebraic $\ell$-invariant $\ell(\sC_P)$, which is a meromorphic function on $\sC_P$, using the definition of \cite{BenLinv} in the context of \cite{PottDerived}.
\end{rem}
Define $j(\sC_P)\in\bZ/(p-1)$ by $\omega^{j(\sC_P)}=\uptau^P_d|_{(\bZ/p)^\times}$ for a $\utau^P$ inside the image of the projection of $\sC_P$ to the weight space. The relation of the improved $p$-adic $L$-function $\cL^{\imp}_{\sC_P,\phi,\beta_1,\beta_2}$ and the restriction of $\cL_{\sC_P,\phi\omega^{j(\sC_P)},\beta_1,\beta_2}$ to $\kappa=\uptau^P_{d}$ is given as:
\begin{prop}
Suppose that $n_d=1$. As elements in $F_{\sC_P}$ we have 
\begin{equation*}
  \left. \cL_{\sC_P,\phi\omega^{j(\sC_P)},\beta_1,\beta_2}\right\vert_{\kappa=\uptau^P_{d}}=\left(1-\phi_p(p)^{-1} \mf{a}_{n-1}/\mf{a}_n \right)\cL^{\imp}_{\sC_P,\phi,\beta_1,\beta_2}.
\end{equation*}
\end{prop}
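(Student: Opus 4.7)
The plan is to verify the identity at a Zariski dense subset of $\sC_P$ and conclude by the identity principle in $F_{\sC_P}$. Both sides are well-defined elements of $F_{\sC_P}$: the right hand side by Theorem \ref{thm:twopadicLfun}, and the left hand side by specializing the power series $\cL_{\sC_P,\phi\omega^{j(\sC_P)},\beta_1,\beta_2}(S,\cdot)\in\bI_{\sC_P}[[S]]\otimes_{\bI_{\sC_P}}F_{\sC_P}$ at $S=\uptau^P_d(1+p)-1$, which lies in $p\bI_{\sC_P}$ and so permits $p$-adic substitution. The classical admissible points at which the weight projection is étale form a Zariski dense subset of $\sC_P$ by Theorem \ref{thm:main}, so it suffices to verify the equality pointwise on such $x$.

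Fix a classical admissible étale point $x$ of weight $\utau^P=\ut^P\cdot\uep^P$. The specialization $\kappa=\uptau^P_{d,x}$ has algebraic part $k=t^P_d$ and finite order part $\epsilon^P_{d,x}$; by the very definition of $j(\sC_P)$ one has $\kappa|_{(\bZ/p)^\times}=\omega^{j(\sC_P)}$, and the parity compatibility ensures $\kappa(-1)=\phi(-1)$. The interpolation formula of Theorem \ref{thm:twopadicLfun} applies, and since the character $\phi\chi^\circ\omega^{j(\sC_P)-t^P_d}$ there simplifies to $\phi\epsilon^P_d$, we obtain
\begin{align*}
\cL_{\sC_P,\phi\omega^{j(\sC_P)},\beta_1,\beta_2}(\uptau^P_{d,x},x)=\,&C_{t^P_d,\ut^P}\cdot\sum_{\varphi\in\fs_x}\frac{\fc(\varphi,\beta_1)\fc(e_P\pW(\varphi),\beta_2)}{\left<\varphi,\ol{\varphi}\right>}\\
&\times E_p(n+1-t^P_d,\pi_x\times\phi\epsilon^P_d)\cdot L^{Np\infty}(n+1-t^P_d,\pi_x\times\phi\epsilon^P_d).
\end{align*}
The interpolation formula for $\cL^{\imp}_{\sC_P,\phi,\beta_1,\beta_2}(x)$ has identical shape: the constants agree ($C_{\ut^P}=C_{t^P_d,\ut^P}$ by definition, as the operator $\det(\wh{\mu}^+_0/4\pi\sqrt{-1})^{t^P_d-k}$ reduces to the identity when $k=t^P_d$ and so $f_{\kappa,\utau^P,\infty}=f_{\utau^P,\infty}$), and the only change is $E_p\mapsto E^{P\text{-imp}}_p$.

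By identity \eqref{eq:Ecompare}, the ratio of the two interpolation values is $\mc{A}^P(\pi_x\times\phi\epsilon^P_d)$. Under the hypothesis $n_d=1$, the product $\prod_{j=N_{d-1}+1}^n(1-\phi_p(p)^{-1}\alpha_j^{-1}p^{n-t^P_d})$ collapses to the single term $1-\phi_p(p)^{-1}\alpha_n^{-1}p^{n-t^P_d}$. Finally, specializing \eqref{eq:Ueig} at $i=d$ with $n_d=1$, $N_{d-1}=n-1$, $N_d=n$ yields $\mf{a}_n=\mf{a}_{n-1}\cdot p^{t^P_d-n}\cdot\alpha_n$, whence $\alpha_n^{-1}p^{n-t^P_d}=\mf{a}_{n-1}(x)/\mf{a}_n(x)$. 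This completes the pointwise verification, and Zariski density lifts the equality to $F_{\sC_P}$. The proposition is thus a direct comparison of two interpolation formulas engineered to differ by a single Euler factor; the only conceptual point worth noting is that the archimedean Satake combination $\alpha_n^{-1}p^{n-t^P_d}$ is identified with the $\bI_{\sC_P}$-valued function $\mf{a}_{n-1}/\mf{a}_n$ on $\sC_P$, which is what allows the correction factor to be expressed globally rather than only at individual classical points—a feature essential to the Greenberg--Stevens derivative argument in the next step.
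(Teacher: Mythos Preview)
Your proof is correct and follows essentially the same approach as the paper: compare the two interpolation formulae of Theorem~\ref{thm:twopadicLfun} at classical admissible points, observe that when $n_d=1$ the correction factor $\cA^P(\pi\times\phi\epsilon^P_d)$ reduces to the single term $1-\phi_p(p)^{-1}\alpha_n^{-1}p^{n-t^P_d}$, and rewrite this via \eqref{eq:Ueig} as $1-\phi_p(p)^{-1}\mf{a}_{n-1}/\mf{a}_n$. The paper's proof is a terse two lines that leave the Zariski density passage implicit; you have simply made that step explicit.
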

\begin{proof}
This follows straightforwardly from the two interpolation formulae of Theorem \ref{thm:twopadicLfun} by noticing that when the point $x$ is classical and $n_d=1$, $\cA^P(\pi_x\times\xi)=(1-\phi_p(p)^{-1}\alpha_n^{-1}p^{n-t^P_d})$, so
\begin{equation*}
   E_p(n+1-t^P_{d,x},\pi_x\times\phi\epsilon^P_d)= \left(1-\phi_p(p)^{-1} \mf{a}_{n-1}(x)/\mf{a}_n(x) \right)E^{P\text{-imp}}_p(n+1-t^P_{d,x},\pi_x\times\phi\epsilon^P_d)
\end{equation*}
(cf. \eqref{eq:Ecompare}).
\end{proof}

Now we are ready to prove the main theorem. 
\begin{theo}\label{thm:mainzero}
Let $x_0$ be an $F$-point of $\sC_P$ where the weight projection map $\Lambda_P\ra \bT^{1,N}_{\Pord}$ is \'{e}tale and maps $x_0$ to $\utau^P_0$. Suppose that $x_0$ is classical and that the $p$-adic $L$-function $\cL_{\sC_P,\phi\omega^{n+1},\beta_1,\beta_2}\in  \bI_{\sC_P}[[S]] \otimes_{\bI_{\sC_P} } F_{\sC_P}$ has a semi-stable trivial zero at $((1+p)^{n+1}-1,x_0)$ (so $j(\sC_P)=n+1$) and the local-global compatibility is satisfied by the $p$-adic Galois representation $\rho_{x_0}$. (This in particular implies that $\epsilon^P_d$ is trivial as $\phi$ has level prime to $p$.) Then we have 
\begin{align*}
&\left.\frac{\textup{d}\cL_{\sC_P,\phi\omega^{n+1},\beta_1,\beta_2}(S,x_0)}{\textup{d} S}\right|_{S=(1+p)^{n+1}-1}\\
   =& \, -\ell(\rho_{x_0})\cdot C_{x_0,\beta_1,\beta_2,\phi,N}\cdot E^{P\textnormal{-imp}}_p(0,\pi_{x_0}\times\phi)\, E_\infty(0,\pi_{x_0}\times\phi)\, L^{Np\infty}(0,\pi_{x_0}\times\phi),
\end{align*}
\end{theo}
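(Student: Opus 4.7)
The plan is to apply the Greenberg--Stevens method, combining the vanishing of $\cL_{\sC_P,\phi\omega^{n+1},\beta_1,\beta_2}(S,x)$ along the cyclotomic hyperplane $S = S_0 := (1+p)^{n+1}-1$ with the improvement identity from the proposition preceding the theorem, which holds along the diagonal hyperplane $S = (1+p)^{t^P_{d,x}}-1$. Both hyperplanes meet at the trivial-zero point $(S_0,x_0)$ since the semi-stable hypothesis forces $t^P_{d,x_0} = n+1$. Since the weight projection is \'etale at $x_0$, local coordinates on $\sC_P$ near $x_0$ are given by the weight-space coordinates, so I may restrict to a one-parameter subfamily by fixing $t^P_1,\dots,t^P_{d-1}$ at their values at $x_0$ and letting $u := t^P_d$ vary, with $u = n+1$ at $x_0$.

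The first step is to show $\cL_{\sC_P,\phi\omega^{n+1},\beta_1,\beta_2}(S_0,x) \equiv 0$ on $\sC_P$. On the Zariski dense subset of classical $x$ where $\pi_{x,p}$ is unramified, the interpolation formula of Theorem \ref{thm:twopadicLfun} expresses the value at $(S_0,x)$ (up to a nonzero constant and $q$-expansion factors) as $E_p(0,\pi_x\times\phi)\cdot L^{Np\infty}(0,\pi_x\times\phi)$; since $\phi_p(p) = 1$, the local factor $L_p(s,\pi_x\times\phi)$ has a pole at $s=0$ arising from $(1-\phi_p(p)p^{-s})^{-1}$, which forces $L^{Np\infty}(0,\pi_x\times\phi)=0$ (as $L(0,\pi_x\times\phi)$ is finite). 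Zariski density then gives the vanishing on all of $\sC_P$, whence $\partial_u \cL|_{(S_0,x_0)} = 0$.

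The second step differentiates the improvement identity
\begin{equation*}
\cL_{\sC_P,\phi\omega^{n+1},\beta_1,\beta_2}\bigl((1+p)^u-1,\,u\bigr) = \Bigl(1 - \phi_p(p)^{-1}\frac{\mf{a}_{n-1}(u)}{\mf{a}_n(u)}\Bigr)\,\cL^{\mr{imp}}_{\sC_P,\phi,\beta_1,\beta_2}(u)
\end{equation*}
with respect to $u$ at $u = n+1$. The semi-stable hypotheses $\phi_p(p)=1$, $\alpha_{n,x_0}=p^{-1}$, $t^P_{d,x_0}=n+1$, together with the relation $\mf{a}_n/\mf{a}_{n-1} = p^{t^P_d-n}\alpha_n$ from \eqref{eq:Ueig}, give $\mf{a}_{n-1}(x_0)/\mf{a}_n(x_0) = 1$, so the prefactor on the right vanishes at $u=n+1$ and the term involving $\tfrac{d}{du}\cL^{\mr{imp}}$ drops out. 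Using the chain rule on the left and step one to kill $\partial_u\cL|_{(S_0,x_0)}$, one obtains
\begin{equation*}
(1+p)^{n+1}\log_p(1+p)\cdot\partial_S\cL|_{(S_0,x_0)} = -\frac{d}{du}\log_p\bigl(\mf{a}_{n-1}/\mf{a}_n\bigr)\Big|_{u=n+1}\cdot \cL^{\mr{imp}}_{\sC_P,\phi,\beta_1,\beta_2}(x_0),
\end{equation*}
where I use that $f(u):=\mf{a}_{n-1}(u)/\mf{a}_n(u)$ satisfies $f(n+1)=1$ so that $f'(n+1) = (\log_p f)'(n+1)$. Invoking Theorem \ref{teo:linva} and converting between the algebraic coordinate $u$ and the multiplicative weight coordinate $T^P_d = (1+p)^u$ via $\partial_{T^P_d} = \bigl((1+p)^u\log_p(1+p)\bigr)^{-1}\tfrac{d}{du}$ identifies this logarithmic derivative with $(1+p)^{n+1}\log_p(1+p)\cdot\ell(\rho_{x_0})$. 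Cancelling the common factor yields $\partial_S\cL|_{(S_0,x_0)} = -\ell(\rho_{x_0})\cL^{\mr{imp}}_{\sC_P,\phi,\beta_1,\beta_2}(x_0)$, and substituting the interpolation formula of Theorem \ref{thm:twopadicLfun} for $\cL^{\mr{imp}}_{\sC_P,\phi,\beta_1,\beta_2}(x_0)$ produces the claimed identity.

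The main conceptual inputs are all in place: the construction of the improved $p$-adic $L$-function (which relies on the non-cuspidal Hida theory developed in Section \ref{sec:NCH}) and the Galois-theoretic identification of the logarithmic derivative of $\mf{a}_n/\mf{a}_{n-1}$ with $\ell(\rho_{x_0})$ (Theorem \ref{teo:linva}, depending on \cite{BenLinv2,RosLinv} and requiring the local-global compatibility assumption). Given these tools, the derivative computation itself is essentially a formal application of the chain rule. The most delicate step is step one: establishing the vanishing along the cyclotomic hyperplane over the full family rather than pointwise. This rests on the Zariski density of unramified classical points in $\sC_P$ and on the observation that $L^{Np\infty}(0,\pi_x\times\phi)=0$ at such points arises precisely from the pole, at $s=0$, of the local factor $L_p(s,\pi_x\times\phi)$ that has been removed in passing to the imprimitive $L$-function.
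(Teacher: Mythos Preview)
Your proposal is correct and follows essentially the same Greenberg--Stevens approach as the paper's proof: use the identical vanishing of $\cL_{\sC_P,\phi\omega^{n+1},\beta_1,\beta_2}$ along $S=(1+p)^{n+1}-1$ to add a weight-direction derivative for free, then differentiate the improvement identity along the diagonal and invoke Theorem~\ref{teo:linva}. The only cosmetic difference is your use of the algebraic weight coordinate $u=t^P_d$ with an explicit change of variables to $T^P_d$, whereas the paper works directly in $T^P_d$; your version is slightly more explicit about the chain-rule factors. One small refinement: in step one you may not literally have a Zariski-dense set of $x$ with $\pi_{x,p}$ unramified (the tame nebentypus on $T_P(\bZ/p)$ is fixed along $\sC_P$), but the argument only needs that the local Euler factor $L_p(s,\pi_x\times\phi)$ contains $(1-\phi_p(p)p^{-s})^{-1}$, which holds for any $P$-ordinary $x$ since $\pi_{x,p}$ embeds in a principal series and the standard parameter always has the trivial $1$-dimensional summand.
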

\begin{proof}
Again, we use the natural coordinate $T^P_1,\dots, T^P_d$ of the weight space to parametrize a rigid analytic open neighborhood of $x_0$ in $\sC_P$. Note that by Lemma \ref{rem:alltrivialzero}, we know that $\cL_{\sC_P,\phi\omega^{n+1},\beta_1,\beta_2}((1+p)^{n+1}-1,x)$ is identically vanishing, so  
\begin{equation*}
\left.\frac{\partial\cL_{\sC_P,\phi\omega^{n+1},\beta_1,\beta_2}(S,T^P_1,\dots,T^P_d)}{\partial T^P_d}\right|_{S=(1+p)^{n+1}-1}=0.
\end{equation*}
It follows that
\begin{align*}
   &\left.\frac{\textup{d}\cL_{\sC_P,\phi,\beta_1,\beta_2}(S,x_0)}{\textup{d} S}\right|_{S=(1+p)^{n+1}-1}\\
   =&\left.\left(\frac{\partial\cL_{\sC_P,\phi\omega^{n+1},\beta_1,\beta_2}(S,T^P_1,\dots,T^P_d)}{\partial S}+\frac{\partial\cL_{\sC_P,\phi\omega^{n+1},\beta_1,\beta_2}(S,T^P_1,\dots,T^P_d)}{\partial T^P_d}\right)\right|_{S=(1+p)^{n+1}-1,(T^P_1,\dots,T^P_d)=\utau^P_0(1+p)}\\
   =&\left.\frac{\partial}{\partial T^P_d}\left(\left(1-\phi_p(p)^{-1} \mf{a}_{n-1}(T^P_1,\dots,T^P_d)/\mf{a}_n(T^P_1,\dots,T^P_d) \right)\cL^{\mr{imp}}_{\sC_P,\phi,\beta_1,\beta_2}(T^P_1,\dots,T^P_d)\right)\right|_{(T^P_1,\dots,T^P_d)=\utau^P_0(1+p)}\\
   =&\left. \frac{\partial \left(\mf{a}_{n}(T^P_1,\dots T^P_d)/\mf{a}_{n-1}(T^P_1,\dots,T^P_d)\right)}{\partial T^P_d}  \right|_{(T^P_1,\dots,T^P_d)=\utau^P_{0}(1+p)}\cdot \,\cL^{\mr{imp}}_{\sC_P,\phi,\beta_1,\beta_2}(x_0).
\end{align*}
Then the theorem follows from Theorem~\ref{teo:linva} and the interpolation property of $\cL^{\mr{imp}}_{\sC_P,\phi,\beta_1,\beta_2}$ in Theorem~\ref{thm:twopadicLfun}.
\end{proof}

\bibliographystyle{alpha}
\bibliography{Bibliografy} 

\begin{thebibliography}{BSDGP96}

\bibitem[AIP15]{AIP}
Fabrizio Andreatta, Adrian Iovita, and Vincent Pilloni.
\newblock {$p$}-adic families of {S}iegel modular cuspforms.
\newblock {\em Ann. of Math. (2)}, 181(2):623--697, 2015.

\bibitem[Art13]{Arthur}
James Arthur.
\newblock {\em {T}he Endoscopic Classification of Representations Orthogonal and Symplectic Groups}, volume~61.
\newblock American Mathematical Soc., 2013.

\bibitem[Ben10]{BenLinv2}
Denis Benois.
\newblock Infinitesimal deformations and the {$\ell$}-invariant.
\newblock {\em Doc. Math.}, (Extra volume: Andrei A. Suslin sixtieth birthday):5--31, 2010.

\bibitem[Ben11]{BenLinv}
Denis Benois.
\newblock A generalization of {G}reenberg's {$\mathcal{ L}$}-invariant.
\newblock {\em Amer. J. Math.}, 133(6):1573--1632, 2011.

\bibitem[BPS16]{BPS}
St\'ephane Bijakowski, Vincent Pilloni, and Beno{\^\i}t Stroh.
\newblock Classicit\'e de formes modulaires surconvergentes.
\newblock {\em Ann. of Math. (2)}, 183(3):975--1014, 2016.

\bibitem[BR89]{BR}
D.~Blasius and J.~Rogawski.
\newblock Galois representations for {H}ilbert modular forms.
\newblock {\em Bull. Amer. Math. Soc. (N.S.)}, 21(1):65--69, 1989.

\bibitem[BR15]{BrasRos}
Riccardo Brasca and Giovanni Rosso.
\newblock Eigenvarieties for non-cuspidal modular forms over certain {PEL} type {S}himura varieties.
\newblock preprint, \href{https://arxiv.org/abs/1605.05065}{arXiv:1605.05065}, 2015.

\bibitem[BR19]{BraRosmu}
Riccardo Brasca and Giovanni Rosso.
\newblock {H}ida theory over some unitary {S}himura varieties without ordinary locus.
\newblock {\em American Journal of Mathematics}, 2019.
\newblock to appear.

\bibitem[BS00]{BS}
S.~B{\"o}cherer and C.-G. Schmidt.
\newblock {$p$}-adic measures attached to {S}iegel modular forms.
\newblock {\em Ann. Inst. Fourier (Grenoble)}, 50(5):1375--1443, 2000.

\bibitem[BSDGP96]{Saint}
Katia Barr{\'e}-Sirieix, Guy Diaz, Fran{\c{c}}ois Gramain, and Georges Philibert.
\newblock Une preuve de la conjecture de {M}ahler-{M}anin.
\newblock {\em Invent. Math.}, 124(1-3):1--9, 1996.

\bibitem[Cas]{CasselJ}
William~A. Casselman.
\newblock Introduction to the theory of admissible representations of p-adic reductive groups.
\newblock unpublished notes distributed by P. Sally, draft dated May 1, 1995.

\bibitem[CH13]{CH}
Ga\"etan Chenevier and Michael Harris.
\newblock Construction of automorphic {G}alois representations, {II}.
\newblock {\em Camb. J. Math.}, 1(1):53--73, 2013.

\bibitem[Che11]{Chfern}
Ga\"{e}tan Chenevier.
\newblock On the infinite fern of {G}alois representations of unitary type.
\newblock {\em Ann. Sci. \'{E}c. Norm. Sup\'{e}r. (4)}, 44(6):963--1019, 2011.

\bibitem[CHLN11]{BP}
Laurent Clozel, Michael Harris, Jean-Pierre Labesse, and Bao-Ch\^{a}u Ng\^{o}.
\newblock {\em Stabilization of the trace formula, {S}himura varieties, and arithmetic applications}.
\newblock Boston: International Press, 2011.

\bibitem[Coa91]{CoaMot}
John Coates.
\newblock Motivic {$p$}-adic {$L$}-functions.
\newblock In {\em {$L$}-functions and arithmetic ({D}urham, 1989)}, volume 153 of {\em London Math. Soc. Lecture Note Ser.}, pages 141--172. Cambridge Univ. Press, Cambridge, 1991.

\bibitem[Das16]{Dasg}
Samit Dasgupta.
\newblock Factorization of {$p$}-adic {R}ankin {$L$}-series.
\newblock {\em Invent. Math.}, 205(1):221--268, 2016.

\bibitem[DDP11]{DDP}
Samit Dasgupta, Henri Darmon, and Robert Pollack.
\newblock Hilbert modular forms and the {G}ross-{S}tark conjecture.
\newblock {\em Ann. of Math. (2)}, 174(1):439--484, 2011.

\bibitem[Dep16]{Deppe}
Holger Deppe.
\newblock {$p$}-adic {L}-functions of automorphic forms and exceptional zeros.
\newblock {\em Doc. Math.}, 21:689--734, 2016.

\bibitem[DJR18]{DJR}
Mladen Dimitrov, Fabian Januszewski, and A~Raghuram.
\newblock ${L}$-functions of $\mathrm{{GL}}(2n)$: p-adic properties and nonvanishing of twists.
\newblock {\em preprint available at https://arxiv.org/abs/1802.10064}, 2018.

\bibitem[EHLS16]{EHLS}
E.~Eischen, M.~Harris, J.~Li, and C.~Skinner.
\newblock $p$-adic {$L$}-functions for unitary groups, part {II}: zeta-integral calculations.
\newblock 2016.

\bibitem[EM17]{EischenMantovan}
Ellen Eischen and Elena Mantovan.
\newblock $p$-adic families of automorphic forms in the $\mu$-ordinary setting.
\newblock preprint. \href{https://arxiv.org/abs/1710.01864}{arXiv:1710.01864}, 2017.

\bibitem[EW16]{EisWan}
Ellen Eischen and Xin Wan.
\newblock {$p$}-adic {E}isenstein series and {$L$}-functions of certain cusp forms on definite unitary groups.
\newblock {\em J. Inst. Math. Jussieu}, 15(3):471--510, 2016.

\bibitem[FC90]{FC}
Gerd Faltings and Ching-Li Chai.
\newblock {\em Degeneration of abelian varieties}, volume~22 of {\em Ergebnisse der Mathematik und ihrer Grenzgebiete (3) [Results in Mathematics and Related Areas (3)]}.
\newblock Springer-Verlag, Berlin, 1990.
\newblock With an appendix by David Mumford.

\bibitem[Gar84]{GaPull}
Paul~B. Garrett.
\newblock Pullbacks of {E}isenstein series; applications.
\newblock In {\em Automorphic forms of several variables ({K}atata, 1983)}, volume~46 of {\em Progr. Math.}, pages 114--137. Birkh\"auser Boston, Boston, MA, 1984.

\bibitem[GPSR87]{GPSR}
Stephen Gelbart, Ilya Piatetski-Shapiro, and Stephen Rallis.
\newblock {\em Explicit constructions of automorphic {$L$}-functions}, volume 1254 of {\em Lecture Notes in Mathematics}.
\newblock Springer-Verlag, Berlin, 1987.

\bibitem[Gre94]{TTT}
Ralph Greenberg.
\newblock Trivial zeros of {$p$}-adic {$L$}-functions.
\newblock In {\em {$p$}-adic monodromy and the {B}irch and {S}winnerton-{D}yer conjecture ({B}oston, {MA}, 1991)}, volume 165 of {\em Contemp. Math.}, pages 149--174. Amer. Math. Soc., Providence, RI, 1994.

\bibitem[GS93]{SSS}
Ralph Greenberg and Glenn Stevens.
\newblock {$p$}-adic {$L$}-functions and {$p$}-adic periods of modular forms.
\newblock {\em Invent. Math.}, 111(2):407--447, 1993.

\bibitem[Har81]{Ha81}
Michael Harris.
\newblock Special values of zeta functions attached to {S}iegel modular forms.
\newblock {\em Ann. Sci. \'Ecole Norm. Sup. (4)}, 14(1):77--120, 1981.

\bibitem[Har84]{HaEC}
M.~Harris.
\newblock {E}isenstein series on {S}himura varieties.
\newblock {\em Ann. of Math.}, 119(1):59--94, 1984.

\bibitem[Hid90]{H6}
Haruzo Hida.
\newblock {$p$}-adic {$L$}-functions for base change lifts of {${\rm GL}_2$} to {${\rm GL}_3$}.
\newblock In {\em Automorphic forms, {S}himura varieties, and {$L$}-functions, {V}ol.\ {II} ({A}nn {A}rbor, {MI}, 1988)}, volume~11 of {\em Perspect. Math.}, pages 93--142. Academic Press, Boston, MA, 1990.

\bibitem[Hid93]{H}
Haruzo Hida.
\newblock {\em Elementary theory of {$L$}-functions and {E}isenstein series}, volume~26 of {\em London Mathematical Society Student Texts}.
\newblock Cambridge University Press, Cambridge, 1993.

\bibitem[Hid02]{HPEL}
Haruzo Hida.
\newblock Control theorems of coherent sheaves on {S}himura varieties of {PEL} type.
\newblock {\em J. Inst. Math. Jussieu}, 1(1):1--76, 2002.

\bibitem[Hid06]{HIwa}
Haruzo Hida.
\newblock {\em Hilbert modular forms and {I}wasawa theory}.
\newblock Oxford Mathematical Monographs. The Clarendon Press Oxford University Press, Oxford, 2006.

\bibitem[Hsi14]{Hsieh}
Ming-Lun Hsieh.
\newblock Eisenstein congruence on unitary groups and {I}wasawa main conjectures for {CM} fields.
\newblock {\em J. Amer. Math. Soc.}, 27(3):753--862, 2014.

\bibitem[Jan15]{Januszewski1}
Fabian Januszewski.
\newblock On {$p$}-adic {$L$}-functions for {$\mathrm{GL}(n)\times\mathrm{GL}(n-1)$} over totally real fields.
\newblock {\em Int. Math. Res. Not. IMRN}, (17):7884--7949, 2015.

\bibitem[Jan16]{Januszewski2}
Fabian Januszewski.
\newblock {$p$}-adic {$L$}-functions for {R}ankin-{S}elberg convolutions over number fields.
\newblock {\em Ann. Math. Qu{\'e}.}, 40(2):453--489, 2016.

\bibitem[KR90]{KRPoles}
Stephen~S. Kudla and Stephen Rallis.
\newblock Poles of {E}isenstein series and {$L$}-functions.
\newblock In {\em Festschrift in honor of {I}. {I}. {P}iatetski-{S}hapiro on the occasion of his sixtieth birthday, {P}art {II} ({R}amat {A}viv, 1989)}, volume~3 of {\em Israel Math. Conf. Proc.}, pages 81--110. Weizmann, Jerusalem, 1990.

\bibitem[Lanar]{LanOrd}
Kai-wen Lan.
\newblock {\em Compactifications of PEL-Type Shimura Varieties and Kuga Families with Ordinary Loci}.
\newblock World Scientific Publishing Co., Singapore, to appear.

\bibitem[Liu16]{LiuSLF}
Zheng Liu.
\newblock {$p$}-adic {$L$}-functions for ordinary families on symplectic groups.
\newblock {\em to appear in J. Inst. Math. Jussieu}, 2016.
\newblock available at \url{https://doi.org/10.1017/S1474748018000415}.

\bibitem[Liu19a]{LiuAZI}
Zheng Liu.
\newblock The doubling {A}rchimedean zeta integrals for {$p$}-adic interpolation.
\newblock available at \url{https://arxiv.org/pdf/1904.07121.pdf}, 2019.

\bibitem[Liu19b]{LiuNHF}
Zheng Liu.
\newblock Nearly overconvergent {S}iegel modular forms.
\newblock {\em Ann. Inst. Fourier (Grenoble)}, 69(6):2439--2506, 2019.

\bibitem[LR05]{LapidRallis}
Erez~M. Lapid and Stephen Rallis.
\newblock {O}n the local factors of representations of classical groups.
\newblock In {\em Automorphic representations, {$L$}-functions and applications: progress and prospects}, volume~11 of {\em Ohio State Univ. Math. Res. Inst. Publ.}, pages 309--359. de Gruyter, Berlin, 2005.

\bibitem[MgVW87]{MVW}
Colette M\oe~glin, Marie-France Vign\'eras, and Jean-Loup Waldspurger.
\newblock {\em Correspondances de {H}owe sur un corps {$p$}-adique}, volume 1291 of {\em Lecture Notes in Mathematics}.
\newblock Springer-Verlag, Berlin, 1987.

\bibitem[Mok09]{Mok}
Chung~Pang Mok.
\newblock The exceptional zero conjecture for {H}ilbert modular forms.
\newblock {\em Compos. Math.}, 145(1):1--55, 2009.

\bibitem[MTT86]{MTT}
B.~Mazur, J.~Tate, and J.~Teitelbaum.
\newblock On {$p$}-adic analogues of the conjectures of {B}irch and {S}winnerton-{D}yer.
\newblock {\em Invent. Math.}, 84(1):1--48, 1986.

\bibitem[Pil12]{PilHida}
Vincent Pilloni.
\newblock Sur la th\'eorie de {H}ida pour le groupe {${\rm GSp}_{2g}$}.
\newblock {\em Bull. Soc. Math. France}, 140(3):335--400, 2012.

\bibitem[Pot13]{PottDerived}
Jonathan Pottharst.
\newblock Analytic families of finite-slope {S}elmer groups.
\newblock {\em Algebra Number Theory}, 7(7):1571--1612, 2013.

\bibitem[PSR87]{PSR}
I.~Piatetski-Shapiro and S.~Rallis.
\newblock ${L}$-functions for the classical groups.
\newblock volume 1254 of {\em Lecture Notes in Math}, pages 1--52. Springer, 1987.

\bibitem[Ros15]{RosLinv}
Giovanni Rosso.
\newblock {$\mathcal{L}$}-invariant for {S}iegel--{H}ilbert forms.
\newblock {\em Doc. Math.}, 20:1227--1253, 2015.

\bibitem[Ros16a]{RosH}
Giovanni Rosso.
\newblock Derivative at {$s=1$} of the {$p$}-adic {$L$}-function of the symmetric square of a {H}ilbert modular form.
\newblock {\em Israel J. Math.}, 215(1):255--315, 2016.

\bibitem[Ros16b]{RosOC}
Giovanni Rosso.
\newblock A formula for the derivative of the {$p$}-adic {$L$}-function of the symmetric square of a finite slope modular form.
\newblock {\em Amer. J. Math.}, 138(3):821--–878, 2016.

\bibitem[Ros18]{RosSiegel}
Giovanni Rosso.
\newblock Derivative of the standard {$p$}-adic {$L$}-function associated with a {S}iegel form.
\newblock {\em Trans. Amer. Math. Soc.}, 370(9):6469--6491, 2018.

\bibitem[RS07]{RSLocal}
Brooks Roberts and Ralf Schmidt.
\newblock {\em Local newforms for {GS}p(4)}, volume 1918 of {\em Lecture Notes in Mathematics}.
\newblock Springer, Berlin, 2007.

\bibitem[Shi82]{Sh6}
Goro Shimura.
\newblock Confluent hypergeometric functions on tube domains.
\newblock {\em Math. Ann.}, 260(3):269--302, 1982.

\bibitem[Shi97]{ShEuler}
Goro Shimura.
\newblock {\em Euler products and {E}isenstein series}, volume~93 of {\em CBMS Regional Conference Series in Mathematics}.
\newblock Published for the Conference Board of the Mathematical Sciences, Washington, DC; by the American Mathematical Society, Providence, RI, 1997.

\bibitem[Shi00]{Sh00}
Goro Shimura.
\newblock {\em {A}rithmeticity in the theory of automorphic forms}, volume~82 of {\em Mathematical Surveys and Monographs}.
\newblock American Mathematical Society, Providence, RI, 2000.

\bibitem[Shi11]{Shin}
Sug~Woo Shin.
\newblock Galois representations arising from some compact {S}himura varieties.
\newblock {\em Ann. of Math. (2)}, 173(3):1645--1741, 2011.

\bibitem[Spi14]{Spi1}
Michael Spie\ss.
\newblock On special zeros of {$p$}-adic {$L$}-functions of {H}ilbert modular forms.
\newblock {\em Invent. Math.}, 196(1):69--138, 2014.

\bibitem[SU14]{SU}
Christopher Skinner and Eric Urban.
\newblock The {I}wasawa {M}ain {C}onjectures for {$GL_2$}.
\newblock {\em Invent. Math.}, 195(1):1--277, 2014.

\bibitem[Wei83]{WeiVek}
Rainer Weissauer.
\newblock Vektorwertige {S}iegelsche {M}odulformen kleinen {G}ewichtes.
\newblock {\em J. Reine Angew. Math.}, 343:184--202, 1983.

\end{thebibliography}
\end{document}